\DeclareMathOperator*{\bigboxplus}{\scalerel*{\boxplus}{\sum}}
\theoremstyle{plain}
\newtheorem{thm}{\protect\theoremname}
\theoremstyle{plain}
\newtheorem{conjecture}[thm]{\protect\conjecturename}
\theoremstyle{plain}
\newtheorem{question}[thm]{\protect\questionname}
\theoremstyle{remark}
\newtheorem{claim}[thm]{\protect\claimname}
\theoremstyle{plain}
\newtheorem{lem}[thm]{\protect\lemmaname}
\theoremstyle{plain}
\newtheorem{prop}[thm]{\protect\propositionname}
\theoremstyle{remark}
\theoremstyle{remark}
\newtheorem{rem}[thm]{\protect\remarkname}
\theoremstyle{definition}
\newtheorem{defn}[thm]{\protect\definitionname}
\theoremstyle{definition}
\newtheorem{example}[thm]{\protect\examplename}
\theoremstyle{plain}
\newtheorem{cor}[thm]{\protect\corollaryname}
\theoremstyle{plain}
\numberwithin{thm}{section}
\newenvironment{nouppercase}{%
	\renewcommand{\uppercasenonmath}[1]{}}{}
\providecommand{\claimname}{\inputencoding{latin9}Claim}
\providecommand{\conjecturename}{\inputencoding{latin9}Conjecture}
\providecommand{\corollaryname}{\inputencoding{latin9}Corollary}
\providecommand{\definitionname}{\inputencoding{latin9}Definition}
\providecommand{\examplename}{\inputencoding{latin9}Example}
\providecommand{\lemmaname}{\inputencoding{latin9}Lemma}
\providecommand{\notename}{\inputencoding{latin9}Note}
\providecommand{\propositionname}{\inputencoding{latin9}Proposition}
\providecommand{\questionname}{\inputencoding{latin9}Question}
\providecommand{\remarkname}{\inputencoding{latin9}Remark}
\providecommand{\theoremname}{\inputencoding{latin9}Theorem}
\providecommand{\problemname}{\inputencoding{latin9}Problem}
\newcommand\twoheaduparrow{\mathrel{\rotatebox{90}{$\twoheaduparrow$}}}
\newcommand\twoheaddownarrow{\mathrel{\rotatebox{270}{$\twoheaddownarrow$}}}
\begin{document}
	\author{Nick Early}
	\thanks{Perimeter Institute for Theoretical Physics and the Institute for Advanced Study. \\
		email: \href{mailto:earlnick@ias.edu}{earlnick@ias.edu}}

\title[Planarity in Generalized Scattering Amplitudes: PK Polytope, Generalized Root Systems, Worldsheet Associahedra]{Planarity in Generalized Scattering Amplitudes: PK Polytope, Generalized Root Systems and Worldsheet Associahedra}

\begin{nouppercase}
	\maketitle
\end{nouppercase}
	\begin{abstract}
In this paper we study the role of planarity in generalized scattering amplitudes, through several closely interacting structures in combinatorics, algebraic and tropical geometry.	

The generalized biadjoint scalar amplitude, introduced recently by Cachazo-Early-Guevara-Mizera (CEGM), is a rational function of homogeneous degree $-(k-1)(n-k-1)$ in $\binom{n}{k}-n$ independent variables, calculated by summing over all solutions of the generalized scattering equations; its poles are in bijection with coarsest positroidal subdivisions of the hypersimplex $\Delta_{k,n}$ and can be constructed directly from the rays of the positive tropical Grassmannian.

We introduce for each pair of integers $(k,n)$ with $2\le k\le n-2$ a system of generalized positive roots which arises as a specialization of the planar basis of kinematic invariants.  We prove that the higher root polytope $\mathcal{R}^{(k)}_{n-k}$ has volume $C^{(k)}_{n-k}$ by exhibiting a flag unimodular triangulation into simplices, in bijection with noncrossing collections of $(k-1)(n-k-1)$ $k$-element subsets of $\{1,\ldots, n\}$.  Here $C^{(k)}_{n-k}$ is the $k$-dimensional Catalan number, which counts for instance the number of standard rectangular Young tableaux.  As an application of our results for generalized roots, we give a bijection between certain positroidal subdivisions of the hypersimplex $\Delta_{3,n}$, which we call tripods, and the set of noncrossing pairs of 3-element subsets that are not weakly separated.

We show that the facets of the Planar Kinematics (PK) polytope, introduced recently by Cachazo and the author, are exactly the $\binom{n}{k}-n$ generalized positive roots.  We show that the PK specialization of the generalized biadjoint amplitude evaluates to $C^{(k)}_{n-k}$.  

Looking forward, we give defining equations, and conjecture explicit solutions using the space $(\mathbb{CP}^{n-k-1})^{\times (k-1)}$ via a notion of compatibility degree for noncrossing collections, for a two parameter family of generalized worldsheet associahedra $\mathcal{W}^+_{k,n}$.  These specialize when $k=2$ to the dihedrally invariant partial compactification of the configuration space $M_{0,n}$ of $n$ distinct points in $\mathbb{CP}^{1}$.  Many detailed examples are given throughout to motivate future work.

	\end{abstract}

	\begingroup
	\let\cleardoublepage\relax
	\let\clearpage\relax
	\tableofcontents
	\endgroup

\section{Introduction}
	In 1948, Richard Feynman introduced a pictorial formalism in Quantum Field Theory (QFT) to organize the expansion of scattering amplitudes as sums of elementary building blocks, labeled by graphs. These graphs, now called Feynman diagrams, specify which singularities of the amplitude are compatible and can be reached simultaneously.

In \cite{CEGM2019}, Cachazo, Early Guevara and Mizera (CEGM) introduced higher analog of scattering equations, and a higher analog
of the biadjoint scalar partial amplitude $m^{(2)}_n (=m_n$, in the usual notation).  In the higher analog, the $(k,n)$ generalized biadjoint amplitude $m^{(k)}_n$ acts as an interpolation between $n$ copies of $m^{(k-1)}_{n-1}$ and $n$ copies of $m^{(k)}_{n-1}$.  There is a generalized notion of Feynman diagrams for the theory, given by \textit{collections} of Feynman diagrams \cite{BC2019} and \cite{Early2019PlanarBasis} as well as \cite{CGUZ2019}, and parametrized by arrangements of metric trees on certain boundaries of a polytope.  The singularities of $m^{(k)}_n$ are extremely complex.  Each of these is a hyperplane defined by the vanishing of some linear function of \textit{Mandelstam} parameters $\mathfrak{s}_J$, but the decomposition is not unique and does not provide much intuition.  Therefore it would be very helpful to have an efficient and compact way to organize them.  They key idea is to look at subdivisions of convex polytopes called hypersimplices $\Delta_{k,n}$ which arise in combinatorial geometry.  In the case at hand, one considers those subdivisions which are regular: they are induced by projecting down the facets in the lower envelope of the convex hull of lifted vertices of $\Delta_{k,n}$.  A particular kind of tropical hypersurface, called a blade \cite{EarlyBlades}, plays a special role here.  The standard blade $((1,2,\ldots, n))$, pinned to vertices of $\Delta_{k,n}$, induces a basis of $\binom{n}{k}-n$ kinematic functions $\eta_J(\mathfrak{s})$ such that $\eta_J(\mathfrak{s})=0$ characterizes a pole of $m^{(k)}_n$.  Moreover, other poles possess particularly simple expansions as integer linear combination of the $\eta_J(\mathfrak{s})$'s.  Then $m^{(k)}_n$ becomes a (finite) sum of simple rational functions which have distinct nonzero iterated residues, where each denominator factor is dual to a \textit{weighted} blade arrangement \cite{Early2020WeightedBladeArrangements}.

Returning to CEGM amplitudes $m^{(k)}_n$: they have an extremely rich structure and the classification poses a significant challenge\footnote{Still it could be worse!  Poles of the CEGM scattering equations formula for $m^{(k)}_n$ are known \cite{AHLT2019} to be in bijection with coarsest regular \textit{positroidal} subdivisions, which are actually well-behaved in the wider context of all regular matroid subdivisions of $\Delta_{k,n}$, that is the full Dressian $\text{Dr}_{k,n}$.}.  A key item on the wishlist is a fully detailed and constructive description of the residues of $m^{(k)}_n$, but this has been considered completely unattainable in any generality.  One problem is that coarsest regular positroidal subdivisions of $\Delta_{k,n}$ have (so far, at least) resisted classification; another source of difficulty seems intriguing: there is a rich structure of linear dependencies among the residues which arises from non-simplicial cones in the secondary fan, the positive tropical Grassmannian $\text{Trop}^+G(k,n)$.  One could in principle try to resolve such linear dependencies, by enlarging the kinematic space and triangulating the maximal cones in $\text{Trop}^+G(k,n)$.  This could be implemented in practice by for instance including certain higher degree polynomials in the Plucker coordinates into the CEGM potential, such as those coming directly from the cluster algebra of the Grassmannian.  This in itself seems like a difficult and potentially interesting problem; but we take a different route.  Instead of enlarging the CEGM potential, we \textit{deform} it to something that is extremely well-behaved.

Thus, the purpose of this work is to construct a deformation $m^{(k),NC}_n$ of $m^{(k)}_n$ which is possible to describe in complete generality.  More precisely, we deform the Plucker coordinates $p_J$ on $G(k,n)$ with certain Laurent polynomials.  After fixing a parametrization, these Laurent polynomials become multi-homogeneous \textit{polynomials} $\tau_J(x_{i,j})$ for $(i,j) \in \lbrack 1,k-1\rbrack \times \lbrack 1,n-k\rbrack$.  We are interested in the Newton polytope of the product $\prod_J\tau_J$; we conjecture a combinatorial formula for its face lattice.  Our deformation $m^{(k),NC}_n$ is at the very least a rich testing ground for properties of CEGM amplitudes, but seems interesting in its own right from the perspective of algebraic, convex, combinatorial and tropical geometry.

We study the $k=3$ case $m^{(3),NC}_n$ in depth and formulate a conjecture which leads to an all (k,n) formula for the rational function expansion of $m^{(k),NC}_n$ as a sum of reciprocals of products of simple linear functions in $\binom{n}{k}-n$ independent parameters, and we obtain supporting evidence for our formula for large values of $(k,n)$; on the other hand, the analogous calculation CEGM amplitudes seems far out of reach.

In parallel, we construct a triangulation of a generalization of the type $A$ root polytope, defined in \cite{CE2020}, denoted $\mathcal{R}^{(k)}_{n}$, which is the convex hull of a set of generalized positive roots which live in the Cartesian product of $k-1$ root systems of type $A_{n-k-1}$.  We exhibit an explicit triangulation of $\mathcal{R}^{(k)}_{n}$ and show that the complex of simplices in the triangulation is isomorphic to the (reduced) noncrossing complex $\mathbf{NC}_{k,n}$.  In particular this means that $\mathcal{R}^{(k)}_n$ has volume the $k$-dimensional Catalan number $C^{(k)}_{n-k}$, where for example $C^{(2)}_{n-2} = 2,5,14,42,\ldots $ for $n=4,5,6,7,\ldots$ and $C^{(3)}_{n-3} = 5,42,462,6006,\ldots, $ for $n=5,6,7,8,\ldots$.  Here $\mathbf{NC}_{k,n}$ was studied by Petersen, Pylyavskyy and Speyer in \cite{PKPS} and in for instance the work of Santos, Stump and Welker in \cite{GrassmannAssociahedron} on the so-called Grassmann associahedron\footnote{Here the noncrossing complex $\mathbf{NC}_{k,n}$ was denoted $\tilde{\Delta}^{NC}_{k,n}$.  We change notation to avoid conflict with the hypersimplex.}.  Then the noncrossing triangulation of $\mathcal{R}^{(k)}_{n}$ should be dual to the Grassmann associahedron.  Our main conjecture makes this duality explicit.

We finally initiate the systematic study of generalized positive roots, in the context of the poset of cones in the positive tropical Grassmannian $\text{Trop}^+G(k,n)$.  In particular, generalized positive roots and the noncrossing complex provides a rich framework to explore the poset of cones in $\text{Trop}^+G(k,n)$, and to explore the poset of regular positroidal subdivisions of $\Delta_{k,n}$.

\subsection{Outline of the rest of the Introduction}
	In this paper, we have included an extended introduction whose aim is to articulate and motivate our objectives for a diverse audience.  The structure is as follows.
	\begin{enumerate}
		\item Section \ref{sec: background and summary} gives a concise summary of our main results, and Section \ref{sec: structure of paper} contains an outline of the paper.
		\item Section \ref{sec: notation and conventions} contains key definitions and notations.  In particular it has the definition of the noncrossing complex $\mathbf{NC}_{k,n}$.
		\item Section \ref{sec: intro II amplitudes} formulates the question we consider from a physical point of view and then motivates our strategy through constructions in combinatorial geometry.

	\end{enumerate}
\subsection{Background and Summary of Results}\label{sec: background and summary}

	In \cite{CEGM2019}, Cachazo, Guevara, Mizera and the author (CEGM) introduced a generalization of the Cachazo-He-Yuan (CHY) definition of biadjoint double partial amplitudes $m_n(\mathbb{I}, \mathbb{I})$ where $\mathbb{I} = (12\cdots n)$ is the standard cyclic order, from integrals over the moduli space of $n$ points on $\mathbb{CP}^{1}$ modulo projective equivalence, localized to points satisfying the scattering equations \cite{CHY2014A,CHY2014B,CHY2014C,Fairly}, to integrals over the space 
	$$X(k,n) = \left\{g\in G(k,n): \prod_J p_J \not=0\right\}\big\slash (\mathbb{C}^\ast)^n$$
	localized to solutions of the generalized scattering equations.  Here $p_J$ is a Plucker coordinate, the $k\times k$ determinant with column set $J$.
	
	In 2013, CHY \cite{CHY2014C} solved the scattering equations for a special choice of kinematics, via a high-energy limit of the type $A_1$ Y-system, proving analytically, by summing all $\lfloor (n-1)/2\rfloor$ solutions to the Planar Kinematics scattering equations, that the biadjoint partial amplitude evaluates to the Catalan number\footnote{Note that we follow the convention of the original paper of CHY \cite{CHY2014C}, where there is an alternating sign.}
	$$m_n(\mathbb{I},\mathbb{I}) = (-1)^{n+1}C^{(2)}_{n-2}$$
	where 
	$$C^{(2)}_{n-2} = 2,5,14,42,132,\ldots $$ 
    with $ n=4,5,6,7,8,\ldots$ the 2-dimensional Catalan numbers.
	
	The generalized biadjoint partial amplitude $m^{(k)}_n(\mathbb{I},\mathbb{I})$ is a highly structured rational function $m^{(k)}(\mathbb{I}_n,\mathbb{I}_n)$ on a dimension $\binom{n}{k}-n$ kinematic space $\mathcal{K}(k,n)$; it is calculated by summing over the critical points of the potential function
	\begin{eqnarray}\label{eq: potential function intro}
		\mathcal{S}_{k,n} & = & \sum_{1\le j_1<\cdots <j_k \le n}\log(p_{j_1,\ldots, j_k})s_{j_1\cdots j_k},
	\end{eqnarray}
	where $s_{j_1\cdots j_k}$ are parameters with distinct (unordered) indices.  It grows quickly in size and complexity as $k$ increases.  The poles alone can be as complicated as any coarsest regular positroidal subdivision of the hypersimplex $\Delta_{k,n}$, see \cite{BC2019,CGUZ2019,Early19WeakSeparationMatroidSubdivision,Early2020WeightedBladeArrangements,LPW2020,SpeyerStumfelsTropGrass,arkani2020positive} and have as such resisted attempts to make a systematic classification.  
	
	In this paper, we study a polytope which is defined from Equation \eqref{eq: potential function intro} at the special kinematic point, the Planar Kinematics (PK) point.  With respect to the cyclic order $(12\cdots n)$, the PK point is given by 
\begin{eqnarray*}\label{planarK}
	s_{12\ldots k}= s_{23\ldots k+1}=\ldots =s_{n1\ldots k-1}& =& 1, \\
	s_{n1\ldots k-2,k} = s_{12\ldots k-1,k+1}=\ldots = s_{n-1,n\ldots k-3,k-1}& = & -1,\nonumber
\end{eqnarray*}
where all other $s_J$ are set to zero.  The PK point has an equivalent characterization in terms of the \textit{planar basis} of linear functions $\eta_J$ on the kinematic space $\mathcal{K}(k,n) \subset \mathbb{R}^{\binom{n}{k}}$, wherein all $\eta_J$ are set to 1.  This basis was introduced in \cite{CHY2014C} for $k=2$ and subsequently generalized to all $k$ in \cite{Early2019PlanarBasis,Early2020WeightedBladeArrangements}.  The planar kinematics scattering equations appear as the set of equations occurring as the first derivatives of the PK potential function
	$$\mathcal{S}^{(PK)}_{k,n} = \sum_{j=1}^n\log\left(\frac{p_{j,j+1,\ldots, j+k-2,j+k-1}}{p_{j,j+1,\ldots, j+k-2,j+k}}\right),$$
	which provides a $(\mathbb{C}^\ast)^n$-invariant analog of the mirror superpotential \cite{Rietsch,RietschWilliams}, see also \cite{karp2019moment}, 
	\begin{eqnarray*}\label{kapi}
		{\mathcal F}_q := \sum_{i=1,\, i\neq n-k}^{n}\frac{p_{i,i+1,\ldots, i+(k-2),i+k}}{p_{i,i+1,\ldots, i+(k-2),i+(k-1)}} + q \frac{p_{n-k,n-k+1,\ldots,n-1,1}}{p_{n-k,n-k+1,\ldots, n-1,n}}.
	\end{eqnarray*}
	
	In \cite{CE2020B}, the PK potential function $\mathcal{S}^{(PK)}_{k,n}$ was defined as a specialization of the potential function arising in the generalized scattering equations \cite{CEGM2019}.  Here the main result was to prove that critical points of $\mathcal{S}^{(PK)}_{k,n}$ lift to (equivalence classes of) critical points of $\mathcal{F}_q$.
	
	In more detail, it was shown that the critical points of $\mathcal{S}^{(PK)}_{k,n}$ can be represented by images of certain $k$-tuples of roots of unity which inject via the Veronese mapping into the torus quotient $G(k,n) \slash (\mathbb{C}^\ast)^n$.  Specifically, the injection is from the set of $k$-tuples of roots of unity which do not sum to zero, into the set of binary Lyndon words with $k$ ones and $n-k$ zeros.  The critical points of the PK potential function are equivalence classes of those critical points of the mirror superpotential studied in \cite{Rietsch} and \cite{RietschWilliams}, coming from $k$-tuples or roots of unity which do not sum to zero.
	
	In this paper, we study the PK potential function in its own right; in \cite{CE2020B} Cachazo and the author introduced the Planar Kinematics (PK) polytope $\Pi^{(k)}_{n-k}$, which is the convex hull of all exponent vectors of monomials appearing in $\exp\left(-\mathcal{S}^{(PK)}_{k,n}\right)$ under a certain parameterization.  More precisely, we need that 
	$$\exp\left(-\mathcal{S}^{(PK)}_{k,n}\right) = \prod_{j=1}^n\frac{p_{j,j+1,\ldots, j+k-2,j+k}}{p_{j,j+1,\ldots, j+k-2,j+k-1}}$$
	becomes a Laurent polynomial when it is evaluated on the image of the positive parameterization \cite{PositiveGrassmannian} of the nonnegative Grassmannian; see Section \ref{sec: positive parameterization} below for a Newton-polytopal reinterpretation.  Based on that finding\footnote{Note the departure from \cite{CE2020B}, $\Pi^{(k)}_{n-k}$ was defined in terms of facet inequalities; in this paper we prove that the latter is equal to the Newton polytope here.}, we make the definition
	\begin{eqnarray}
		\Pi^{(k)}_{n-k} = \text{Newt}\left(\frac{P_1\cdots P_{k-1}Q_1\cdots Q_{n-k-1}}{\prod_{(i,j)\in \lbrack 1,k-1\rbrack \times \lbrack 1,n-k\rbrack}x_{i,j}}\right),
	\end{eqnarray}
	where 
	$$P_i = \sum_{j=1}^{n-k}x_{i,j},$$
	whose Newton polytope is a simplex of dimension $n-k-1$,
	$$\text{Newt}(P_i) = \left\{\sum_{j=1}^{n-k}\alpha_{i,j} e_{i,j}: \sum_{j=1}^{n-k}\alpha_{i,j}=1\right\},$$
	and where
	$$Q_j = \sum_{\{(t_i)\in \{0,1\}^{k-1}:\ t_1\le \cdots \le  t_{k-1}\}}x_{1,j+t_1}x_{2,j+t_2}\cdots x_{k-1,j+t_{k-1}}$$
	has Newton polytope 
	$$\text{Newt}(Q_j) = \left\{\sum_{i=1}^{k-1}\alpha_{i,j}e_{i,j} + \alpha_{i,j+1} e_{i,j+1}: 1\ge \alpha_{1,j} \ge \alpha_{2,j} \ge \cdots \ge \alpha_{k-1,j} \ge 0,\ \alpha_{i,j} + \alpha_{i,j+1}=1\right\},$$
	a simplex of dimension $k-1$.
	
	Our approach this paper to study $\Pi^{(k)}_{n-k}$ relies on a generalization \cite{CE2020B} of the system of positive roots of type $A$ is embedded in a $(k-1)$-fold Cartesian product of root lattices of type Dynkin type $A_{n-k}$.  We observe that the set of linear dependencies for this lattice of higher roots is governed by noncrossing combinatorics of $k$-element sets; we prove that the higher rank root polytope $\mathcal{R}^{(k)}_{n-k}$, admits a unimodular triangulation into $C^{(k)}_{n-k}$ simplices of maximal dimension $(k-1)(n-k-1)$, where $C^{(k)}_{n-k}$ is a $k$-dimensional Catalan number, which counts for instance the number of rectangular $k\times n$ standard Young tableaux:
		\begin{center}
		\begin{tabular}{|c|c|c|c|c|c|}
			\hline
			$k\setminus n$ & $4$ & $5$ & $6$  & $7$ & $8$ \\ \hline 
			$2$      & $2$ & $5$ & $14$ & $42$ & $132$ \\
			$3$      &     & $5$ & $42$ & $462$ & $6\,006$ \\
			$4$      &     &     & $14$ & $462$ & $24\,024$ \\ 
			$5$      &     &     &      &  $42$ & $6\, 006$ \\
			$6$      &     &     &      &       &  $132$   \\
			\hline
		\end{tabular}.
	\end{center}
	
Let us first recall from \cite{CE2020B} the construction of higher positive roots, that is to say the linear functions $\gamma_J$, defined as follows.  Let $e_{i,j}$ be the standard basis for $\mathbb{R}^{(k-1)\times (n-k)}$ with coordinate functions $\alpha_{i,j}$.  For any subset $J$ of $\{1,\ldots, n\}$ define $\alpha_{i,J} = \sum_{t\in J} \alpha_{i,t}$.  When $J = \emptyset$ then $\alpha_{i,\emptyset} = 0$.  

Now for each $k$-element subset $J$ of $\{1,\ldots, n\}$, denote by $\gamma_J:\mathbb{R}^{(k-1)\times (n-k)} \rightarrow \mathbb{R}$ the generalized root
\begin{eqnarray}\label{eq:gamma intro}
	\gamma_J(\alpha) = \sum_{i=1}^{k-1} \alpha_{i,\lbrack j_i-(i-1),j_{i+1}-i-1\rbrack}.
\end{eqnarray}
Choosing new variables $\alpha'_{i,j}$ such that $\alpha_{i,j} = \alpha'_{i,t} - \alpha'_{i,t+1}$, for each $i=1,\ldots, k-1$ we recover one copy of the usual system of positive roots $\alpha'_{i,a} - \alpha'_{i,b}$ for $a<b$.

Note that $\gamma_J$ is identically zero when $J$ is a subinterval $J = \lbrack i,i+(k-1)\rbrack$ of $\{1,\ldots, n\}$.  Usually we omit the argument of $\gamma_J(\alpha)$ and write simply $\gamma_J$. 

Given a $k-1$ tuple of integers $\mathbf{\lambda} = (\lambda_1,\ldots, \lambda_{k-1})$, denote by $\mathcal{H}^{\lambda}_{k,n}$ the codimension $(k-1)$ (affine) subspace of $\mathbb{R}^{(k-1)\times (n-k)}$,
\begin{eqnarray}\label{eq:Hkn}
	\mathcal{H}^{\lambda}_{k,n} & = &  \left\{(\alpha) \in \mathbb{R}^{(k-1)\times (n-k)}: \sum_{j=1}^{n-k}\alpha_{i,j}=\lambda_i,\ i=1,\ldots, k-1 \right\}.
\end{eqnarray}
When $\lambda = (0,\ldots, 0)$ we write just $\mathcal{H}_{k,n}$.

Denote by $\hat{\mathcal{R}}^{(k)}_{n-k}$ the root polytope, the convex hull of the linear functions $\gamma_J$,
$$\hat{\mathcal{R}}^{(k)}_{n-k} = \text{convex hull}\left(\{0\}\cup \left\{\gamma_J: J \in \binom{\lbrack n\rbrack}{k} \right\}\right)$$
and by $\mathcal{R}^{(k)}_{n-k}$ its dimension $(k-1)(n-k-1)$ projection, which is the convex hull of the restricted functions
$$\mathcal{R}^{(k)}_{n-k} = \text{convex hull}\left\{\gamma_J\vert_{\mathcal{H}_{k,n}}: J \in \binom{\lbrack n\rbrack}{k}^{nf} \right\}.$$

Our main result for the PK polytope is the explicit set of (irredundant) facet inequalities.
\begin{thm}
	We have
\begin{eqnarray}\label{eqn: facet inequalities intro}
	\Pi^{(k)}_{n-k} & = & \left\{(\alpha) \in \mathbb{R}^{(k-1)\times (n-k)}: \sum_{j=1}^{n-k}\alpha_{i,j}=0;\  \gamma_J(\alpha)+1\ge 0,\ J \in \binom{\lbrack n\rbrack}{k}^{nf}\right\}
\end{eqnarray}
and each inequality defines a facet.
\end{thm}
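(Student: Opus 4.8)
The plan is to realize $\Pi^{(k)}_{n-k}$ as a Minkowski sum of the Newton polytopes of the factors and then read off its facets. Since the Newton polytope of a product is the Minkowski sum of the Newton polytopes, and division by a monomial is a translation, we have
\[
\Pi^{(k)}_{n-k} \;=\; \sum_{i=1}^{k-1}\operatorname{Newt}(P_i) \;+\; \sum_{j=1}^{n-k-1}\operatorname{Newt}(Q_j) \;-\; \mathbf{1},\qquad \mathbf{1}:=\sum_{(i,j)}e_{i,j}.
\]
One checks that each summand lies in a translate of the linear space $\{\sum_j\alpha_{i,j}=0\}$ and that together they span $\mathcal{H}_{k,n}$, so $\dim\Pi^{(k)}_{n-k}=(k-1)(n-k-1)$, and moreover that the linear span of every summand decomposes as a direct sum over the rows $i$. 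Because the minimum of a linear functional over a Minkowski sum is the sum of the minima over the summands, everything reduces to analyzing, for a non-face $J=\{j_1<\cdots<j_k\}$, the functional $\gamma_J$, which is the $0/1$ vector with $(\gamma_J)_{i,t}=\mathbf{1}[t\in I_i]$ for the window $I_i=[\,j_i-(i-1),\ j_{i+1}-i-1\,]$.

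The combinatorial heart is the claim that, for a non-face $J$, the windows $I_1,\dots,I_{k-1}$ are pairwise disjoint subintervals of $[1,n-k]$, none of them equal to all of $[1,n-k]$, whose union is a single nonempty interval $[L,R]$. Indeed $\min I_{i+1}=\max I_i+1$ whenever both are nonempty (and an empty window simply lets its two neighbours meet across it in the same way), while the union is empty only for an integer interval and a single window fills $[1,n-k]$ only for a wrapping cyclic interval -- both faces. Granting this, $\operatorname{Newt}(P_i)$ contributes $\min_c\mathbf{1}[c\in I_i]=0$ to $\min_{\Pi}\gamma_J$ (since $I_i\subsetneq[1,n-k]$), $\operatorname{Newt}(Q_j)$ contributes $\min(c_j,c_{j+1})$ where $c_t=\#\{i:t\in I_i\}=\mathbf{1}[t\in[L,R]]$ (a short computation over the $k$ vertices of $\operatorname{Newt}(Q_j)$, using disjointness of the $I_i$), and $-\mathbf{1}$ contributes $-\sum_t c_t=-(R-L+1)$; the sum telescopes to $\min_{\Pi}\gamma_J=(R-L)-(R-L+1)=-1$. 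Hence $\Pi^{(k)}_{n-k}\subseteq\{\alpha\in\mathcal{H}_{k,n}:\gamma_J(\alpha)+1\ge 0\}$, and every one of these inequalities is attained.

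To see that each inequality defines a facet I would compute the optimal face $\Pi^{-\gamma_J}$, once more as a Minkowski sum of optimal faces: on $\operatorname{Newt}(P_i)$ it is the subsimplex on $\{e_{i,c}:c\notin I_i\}$, and on $\operatorname{Newt}(Q_j)$ it is the subsimplex on those vertices $v_{j,b}=\sum_{i\le b}e_{i,j}+\sum_{i>b}e_{i,j+1}$ with $b$ among the minimizers of $f_j(b)=\gamma_J(v_{j,b})$ found above. Because the span of a Minkowski sum decomposes over rows, $\dim\Pi^{-\gamma_J}$ is the sum over $i$ of the dimension of its row-$i$ part; the $\operatorname{Newt}(P_i)$-faces already give, in row $i$, the full $(n-k-|I_i|-1)$-dimensional space of sum-zero vectors supported off $I_i$, so the $P$-part of $\Pi^{-\gamma_J}$ has codimension $\sum_i|I_i|=R-L+1$ in the linear part of $\mathcal{H}_{k,n}$; the $\operatorname{Newt}(Q_j)$-faces then supply exactly $R-L$ further directions, independent modulo the $P$-part -- the adjacent-column differences $e_{s,t}-e_{s,t+1}$ inside $[L,R]$ -- so that $\Pi^{-\gamma_J}$ spans the hyperplane $\{\gamma_J=-1\}\cap\mathcal{H}_{k,n}$ and is therefore a facet. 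The bookkeeping of which $Q_j$-faces contribute which of these differences, and verifying the count is exactly $R-L$ rather than $R-L+1$, is the main technical labor here.

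It remains to show there are no further facets, i.e.\ that $\mathcal{P}:=\{\alpha\in\mathcal{H}_{k,n}:\gamma_J(\alpha)+1\ge 0\ \forall J\in\binom{[n]}{k}^{nf}\}$ actually equals $\Pi^{(k)}_{n-k}$; since $\Pi^{(k)}_{n-k}\subseteq\mathcal{P}$ is in hand, one needs $\mathcal{P}\subseteq\Pi^{(k)}_{n-k}$. First, $\mathcal{P}$ is bounded: for every pair $(i,c)$ the broken interval $J=[c,c+k]\setminus\{c+i\}$ is a non-face with $\gamma_J=e_{i,c}^{\ast}$, and $-e_{i,c}^{\ast}\equiv\sum_{c'\ne c}e_{i,c'}^{\ast}$ modulo the defining equations of $\mathcal{H}_{k,n}$, so the $\gamma_J$ positively span $(\mathcal{H}_{k,n})^{\ast}$. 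Then, given $\alpha\in\mathcal{P}$, one must produce $p_i\in\operatorname{Newt}(P_i)$ and $q_j\in\operatorname{Newt}(Q_j)$ with $\alpha+\mathbf{1}=\sum_i p_i+\sum_j q_j$; writing $x_{i,j}$ for the column-$j$ entry of $q_j$ in row $i$, the nonnegativity of the $p_i$ becomes a system of difference constraints in the $x_{i,j}$ together with the chain inequalities $1\ge x_{1,j}\ge\cdots\ge x_{k-1,j}\ge 0$, and one shows by a max-flow/Hall-type argument that this system is feasible exactly when $\gamma_J(\alpha)\ge-1$ for all non-face $J$ -- already the single-row constraints force $-1\le\alpha_{i,[1,m]}\le 1$ for all $i,m$, each of which is one of the inequalities defining $\mathcal{P}$. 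Identifying the feasibility cuts of this transportation problem with precisely the non-face $k$-subsets is the main obstacle; alternatively, once the flag unimodular triangulation of the root polytope $\mathcal{R}^{(k)}_{n-k}$ is available, it suffices to check that each vertex of $\mathcal{P}$ -- the point dual to a maximal noncrossing collection -- arises as a vertex of the Minkowski sum $\Pi^{(k)}_{n-k}$, which also completes the proof.
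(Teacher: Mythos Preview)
Your treatment of the forward inclusion and of the fact that each hyperplane $\gamma_J+1=0$ cuts out a codimension-one face mirrors the paper's Lemma~\ref{lem: minimum value} closely: both decompose $\Pi^{(k)}_{n-k}$ as the Minkowski sum of the $\text{Newt}(P_i)$ and $\text{Newt}(Q_j)$, compute $\min\gamma_J$ summand by summand, and then perform a dimension count on the optimal face. Your window bookkeeping via the contiguous column-interval $[L,R]=[j_1,\,j_k-k]$ is a clean repackaging of the paper's four-case analysis on $Q_{j_0}$.

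The genuine gap is the reverse inclusion $\mathcal{P}\subseteq\Pi^{(k)}_{n-k}$, i.e.\ ruling out any further facets. Your max-flow/transportation route is new relative to the paper, but as you concede, matching the feasibility cuts of that system with precisely the nonfrozen $k$-subsets is left as ``the main obstacle'' and is not carried out. Your second alternative is closer in spirit to what the paper does, but as stated it assumes too much: the flag unimodular triangulation of $\mathcal{R}^{(k)}_{n-k}$ does \emph{not} by itself tell you that the vertices of $\mathcal{P}$ are indexed by maximal noncrossing collections. For that you would need, for every maximal $\mathcal{J}\in\mathbf{NC}_{k,n}$, that the unique solution of $\{\gamma_J=-1:J\in\mathcal{J}\}$ satisfies $\gamma_{J'}\ge-1$ for every other $J'$; polar duality alone (or the triangulation) does not deliver this, since the triangulation records simplices, not the facets of $\mathcal{R}^{(k)}_{n-k}$.

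The paper closes this gap via a different lemma (Lemma~\ref{lem: minimized simultaneously gamma}): for any noncrossing pair $(I,J)\in\mathbf{NC}_{k,n}$ the functionals $\gamma_I$ and $\gamma_J$ are \emph{simultaneously} minimized on every $\text{Newt}(Q_t)$, hence on $\Pi^{(k)}_{n-k}$ itself. Because $\mathbf{NC}_{k,n}$ is a flag complex this extends from pairs to all noncrossing collections, so every cone of the complete simplicial noncrossing fan of Theorem~\ref{thm: noncrossing subdivision Wkn} sits inside a cone of the normal fan of $\Pi^{(k)}_{n-k}$; equivalently, the normal fan of $\Pi^{(k)}_{n-k}$ is a coarsening of the noncrossing fan, and in particular every facet normal is already one of the $\gamma_J$. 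Together with the forward direction this yields Corollary~\ref{cor: facets Pikn}. This simultaneous-minimization lemma is exactly the ingredient that would make your second alternative go through, and it is what your proposal is missing.
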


In particular it turns out that $\Pi^{(k)}_{n-k}$ lives in the intersection of a codimension $k-1$ subspace of $\mathbb{R}^{(k-1)\times(n-k)}$ with the cube $\lbrack -1,2\rbrack^{(k-1)\times (n-k)}$.

Our main result for the root polytope $\mathcal{R}^{(k)}_{n-k}$ is that it admits a triangulation which is governed by the complex of pairwise noncrossing collections of $k$-element subsets $\mathbf{NC}_{k,n}$, see Definition \ref{defn: noncrossing}.  This generalizes the result due \cite{GelfanGraevPostnikov1997}, for the triangulation of the root polytope, the convex hull of the positive roots $e_i-e_j$ together with the origin, using certain alternating trees.  See also \cite{Postnikov2006}, and \cite{Meszaros2011}.

For any collection $\mathcal{J}  = \{J_1,\ldots, J_{m}\}\in \mathbf{NC}_{k,n}$ of nonfrozen subsets $J_i$, define 
\begin{eqnarray*}
	\lbrack \mathcal{J}\rbrack & = & \text{Convex hull}(\left\{0,\gamma_{J_1},\ldots, \gamma_{J_m}\right\})
\end{eqnarray*}
\begin{thm}
	The set of simplices $\left\{\lbrack \mathcal{J}\rbrack: \mathcal{J} \in \mathbf{NC}_{k,n} \right\}$ defines a flag, unimodular triangulation\footnote{A triangulation is flag if it is the clique complex of its 1-skeleton: every complete subgraph of the 1-skeleton is the 1-skeleton of a simplex in the triangulation.  A triangulation is unimodular if every (maximal dimension) simplex has the same volume $\frac{1}{d!}$, where $d$ is the dimension.} of $\mathcal{R}^{(k)}_{n-k}$.
\end{thm}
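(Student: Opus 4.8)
The plan is to establish three things for the family $\{[\mathcal{J}] : \mathcal{J}\in\mathbf{NC}_{k,n}\}$. First, that each maximal member is a unimodular simplex of full dimension $m := (k-1)(n-k-1)$. Second, that the maximal members tile $\mathcal{R}^{(k)}_{n-k}$ with pairwise disjoint interiors, in a way that identifies the geometric complex combinatorially with $\mathbf{NC}_{k,n}$ (so that $[\mathcal{J}]\subseteq[\mathcal{J}']$ precisely when $\mathcal{J}\subseteq\mathcal{J}'$, and every face is of this form). Third, that the triangulation is flag --- and this last point is then free, because $\mathbf{NC}_{k,n}$ is by construction the clique complex of the pairwise-noncrossing relation on $\binom{[n]}{k}^{nf}$, hence a flag complex, and flagness transfers across a combinatorial isomorphism. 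The volume identity $\operatorname{vol}\mathcal{R}^{(k)}_{n-k}=C^{(k)}_{n-k}$ then follows by counting maximal noncrossing collections and invoking unimodularity.

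For unimodularity, fix a maximal $\mathcal{J}=\{J_1,\dots,J_m\}$ and consider the integer matrix whose columns express $\gamma_{J_1}|_{\mathcal{H}_{k,n}},\dots,\gamma_{J_m}|_{\mathcal{H}_{k,n}}$ in a fixed lattice basis; the goal is to show its determinant is $\pm1$, i.e.\ that these restricted roots form a lattice basis. The approach is to order $\mathcal{J}$ so that this matrix is unitriangular after an integral change of basis. For $k=2$, a maximal noncrossing collection is a triangulation of the $n$-gon, the $\gamma_J$ are the type $A$ roots attached to its diagonals, and the statement is the classical unimodularity of the associated root-polytope triangulation (compare \cite{GelfanGraevPostnikov1997} and \cite{Meszaros2011}): one orders the diagonals by repeatedly removing an ``ear'' of the triangulation, each step peeling off a single new basis vector. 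For general $k$, I would run the analogous recursion: the recursive description of $\mathbf{NC}_{k,n}$ \cite{PKPS,GrassmannAssociahedron} always exhibits a member whose removal, after the appropriate deletion of an index, lands in $\mathbf{NC}_{k-1,n-1}$ or $\mathbf{NC}_{k,n-1}$, and one inducts on $n$ and $k$, checking that the peeled root $\gamma_{J_i}$ contributes exactly one new unit coordinate direction. Equivalently, the substitution $\alpha_{i,j}=\alpha'_{i,t}-\alpha'_{i,t+1}$ turns each $\gamma_J$ into a sum of differences $\alpha'_{i,a_i}-\alpha'_{i,b_i}$, reducing the claim to a spanning-tree statement in each of the $k-1$ type $A$ factors, coupled across factors by the noncrossing condition.

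For the tiling, the cleanest route is to realize the claimed triangulation as a regular (coherent) one. I would exhibit an explicit height function $w$ on $\{0\}\cup\{\gamma_J:J\in\binom{[n]}{k}^{nf}\}$ --- a convex ``defect'' weight measuring how far $J$ is from being a cyclic $k$-interval, minimized on frozen data, so that the lower envelope is controlled by noncrossingness --- and show that the induced regular subdivision has maximal cells exactly the $[\mathcal{J}]$ with $\mathcal{J}$ maximal noncrossing. Since a regular subdivision into simplices is automatically a triangulation, both the fitting-together and the identification of the face poset with $\mathbf{NC}_{k,n}$ then fall out of tracking which affine functionals are tight on which $\gamma_J$. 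If a clean height function resists being pinned down, the fallback is an induction on $(k,n)$ matching the slicing of $\mathcal{R}^{(k)}_{n-k}$ along the coordinate hyperplanes attached to a distinguished index --- under which $\mathcal{R}^{(k)}_{n-k}$ decomposes into prism-like pieces over copies of $\mathcal{R}^{(k-1)}_{n-k}$ and $\mathcal{R}^{(k)}_{n-k-1}$ --- against the parallel recursive description of $\mathbf{NC}_{k,n}$ \cite{PKPS,GrassmannAssociahedron}, with the unimodular maximal simplices from the previous step supplying the base cases.

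I expect the main obstacle to be precisely this global fitting-together: proving that distinct maximal simplices $[\mathcal{J}],[\mathcal{J}']$ have disjoint interiors, equivalently that the cone with apex $0$ over the geometric realization of $\mathbf{NC}_{k,n}$ embeds as $\mathcal{R}^{(k)}_{n-k}$. A purely topological attack is also available --- combine the unimodular full-dimensional cells, the fact that two maximal collections related by a single flip give simplices sharing a facet and lying on opposite sides of it, and the shellability of $\mathbf{NC}_{k,n}$ \cite{PKPS,GrassmannAssociahedron}, and run a degree/covering argument --- but whichever route one takes, the crux is a separation property of the generalized roots distinguishing noncrossing from crossing pairs of $k$-subsets. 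This should be viewed as dual to the preceding theorem, that the inequalities $\gamma_J(\alpha)+1\ge 0$ are exactly the facets of the PK polytope $\Pi^{(k)}_{n-k}$. By contrast, the unimodularity step and the flag property are comparatively mechanical once the right ``peeling'' order on noncrossing collections and the right height function are in hand.
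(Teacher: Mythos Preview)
Your three-part outline (unimodularity of maximal cells; global tiling; flagness for free because $\mathbf{NC}_{k,n}$ is a clique complex) matches the paper's logical structure, but the execution of the two substantive parts differs notably.

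For unimodularity, the paper (Proposition~\ref{prop: unimodular simplices noncrossing}) inducts on $k$ via a specific decomposition: the projections $\varphi_{k-2}$ onto the first $k-2$ rows and $\varphi_{-1}$ onto the last row satisfy $\varphi_{k-2}+\varphi_{-1}=\mathrm{id}$, and a maximal noncrossing $\mathcal{J}$ splits into $\mathcal{J}_{k-2}$ and $\mathcal{J}_{-1}\setminus\mathcal{J}_{k-2}$, which by induction are lattice bases for the sublattices $\mathcal{L}^{(k-1)}_{n-k}$ and $\mathcal{L}^{(2)}_{n-k}$. Your ear-peeling recursion and spanning-tree reduction are plausible alternatives but less direct; the row-projection idea is cleaner and does the job in one stroke.

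For the tiling, your preferred route has a documented obstacle. The paper does \emph{not} realize this triangulation as regular; in fact Section~\ref{sec: regular subdivisions root polytope} (see Example~\ref{example: octahedral remark}) shows that the natural height functions $\eta_J(s_0)$ fail to induce the noncrossing triangulation for $k\ge 3$: certain flips involving pairs that are noncrossing but not weakly separated carry indeterminate sign, e.g.\ $\eta_{136}+\eta_{245}-\eta_{145}-\eta_{236}=s_{236}-s_{245}$. So finding a height function that works is genuinely open in the paper's framework. Instead, the paper (Theorem~\ref{thm: noncrossing subdivision Wkn}) proves directly that the cones $\langle\mathcal{J}\rangle_+$ form a complete simplicial fan: given $v\in\mathcal{H}_{k,n}$, the same row decomposition writes $v$ as a positive combination indexed by a union $\mathcal{I}_{k-2}\cup\mathcal{I}_{-1}$ of two noncrossing collections (by induction), and then an explicit uncrossing algorithm based on the three-term identity $v_{J_\ell}+v_{I_j}=v_{\{j_1,\dots,j_{k-1},b_j\}}$ (see Equation~\eqref{eq: induction noncrossing}) iteratively merges them into a single noncrossing collection with all coefficients still positive. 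This constructive algorithm, together with a case analysis showing only one crossing configuration can arise between $\mathcal{I}_{k-2}$ and $\mathcal{I}_{-1}$, is the paper's main technical device and is absent from your proposal. Your fallback routes (prism-slicing induction, shellability plus degree) are not developed enough to assess; the shellability route could in principle work, but you would still need to verify that adjacent maximal simplices lie on opposite sides of their common facet, which is essentially the same separation issue.
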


We formulate a conjectural formula for the \textit{resolved} biadjoint scalar $\hat{m}^{(3),NC}_n(\mathbb{I},\mathbb{I})$ which also uses the noncrossing complex.

For any $1\le i<j<k\le n$, define a compound determinant
$$A_{ijk} =  \det\left((u_1\times u_2)\times (u_i\times u_{i+1}),u_j,(u_{j+1}\times u_{j+2})\times (u_{k}\times u_1)  \right),$$
for a given matrix with columns $u_1,\ldots, u_n \in \mathbb{C}^3$.
 
For each $2\le j<k\le n$ define 
$$\hat{p}_{1jk} = p_{1jk},$$
while for $2\le i<j<k\le n$ define the \textit{resolved} minor
$$\hat{p}_{ijk} = \frac{A_{ijk}}{p_{1,2,i+1}p_{1,j+1,j+2}}$$
and the \textit{resolved} potential function 
$$\hat{\mathcal{S}}_{3,n} = \sum_{1\le i<j<k \le n}\log(\hat{p}_{ijk})s_{ijk}$$
Finally, denote by $\hat{m}^{(3),NC}_n(\mathbb{I},\mathbb{I})$ the \textit{resolved} amplitude
$$\hat{m}^{(3),NC}_n(\mathbb{I},\mathbb{I}) = \sum_{(x)\in \text{critical points}(\hat{\mathcal{S}}_{3,n}(s))}\frac{1}{\det'\Phi(x)} \prod_{j=1}^n\left(\frac{1}{\hat{p}_{j,j+1,j+2}(x)}\right)^2,$$
where $\det'\Phi$ is the so-called reduced Hessian determinant (see \cite[Equation 2.4]{CEGM2019} for details) of $\hat{\mathcal{S}}_{3,n}$.  See also \cite{CachazoMasonSkinner2014} for general results, and \cite{SturmfelsTelen} for a reformulation using the Euler operator.

\begin{conjecture}
	There exists a set of linear functions $\hat{\eta}_J(s)$ such that 
\begin{eqnarray}\label{CEGM biadjoint scalar defin critical point intro}
	\hat{m}^{(3),NC}_n(\mathbb{I},\mathbb{I}) = \sum_{\{J_1,\ldots, J_{(2)(n-4)}\}\in \mathbf{NC}_{3,n}}\prod_{j=1}^{(2)(n-4)}\frac{1}{\hat{\eta}_{J_j}}.
\end{eqnarray}
\end{conjecture}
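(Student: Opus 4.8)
The plan is to realize $\hat m^{(3),NC}_n(\mathbb I,\mathbb I)$ as the canonical rational function (the adjoint) of a simple polytope whose vertices are the maximal noncrossing collections in $\mathbf{NC}_{3,n}$, using the flag unimodular triangulation of the root polytope $\mathcal R^{(3)}_{n-3}$ to produce that polytope's normal fan. The first step is to \emph{linearize} the resolved potential. The role of the resolved minors $\hat p_{ijk}$ --- the compound determinants $A_{ijk}$ together with the normalizing factors $p_{1,2,i+1}p_{1,j+1,j+2}$ --- should be precisely that, after substituting the positive parametrization of the nonnegative Grassmannian, each $\hat p_{ijk}$ becomes a Laurent monomial in the $x_{i,j}$ times a single affine-linear form $\ell_J(x)$, where $\ell_J$ is the pullback of the facet functional $\gamma_J+1$ from the facet description of $\Pi^{(3)}_{n-3}$. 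Establishing this is an intricate but elementary Plücker/determinant computation, and it simultaneously exhibits the linear forms $\hat\eta_J(s)$ in the conjecture: collecting exponents in $\hat{\mathcal S}_{3,n}=\sum_{i<j<k}\log(\hat p_{ijk})\,s_{ijk}$ gives
$$\hat{\mathcal S}_{3,n}(x;s)\ =\ \sum_{J\in\binom{\lbrack n\rbrack}{3}^{nf}}\hat\eta_J(s)\,\log\ell_J(x)\ +\ \big(\text{a form linear in the }\log x_{i,j}\big),$$
with each $\hat\eta_J$ an integer linear combination of the $s_{ijk}$. As a consistency check, at the PK point all $\hat\eta_J=1$, and the conjectured formula then collapses to $|\mathbf{NC}_{3,n}^{\max}|=C^{(3)}_{n-3}$, in agreement with the PK specialization of the amplitude.

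The second step is the pushforward of the scattering-equation sum. With the potential in the form above, the critical-point equations $d\hat{\mathcal S}_{3,n}=0$ become, in logarithmic coordinates, the stationarity equations of a strictly convex function on the positive chart, so over each point of the positive part of kinematic space there is a single critical point, the associated algebraic moment map is a diffeomorphism onto the interior of a polytope $\mathcal N$, and
$$\sum_{(x)}\frac{1}{\det'\Phi(x)}\prod_{j=1}^n\left(\frac{1}{\hat p_{j,j+1,j+2}(x)}\right)^{2}$$
computes the pushforward of the canonical form of $\mathcal N$. Here one must (i) identify $\mathcal N$, and (ii) check that the measure factor $(\det'\Phi)^{-1}\prod\hat p_{j,j+1,j+2}^{-2}$ is exactly the one turning the pushforward into the canonical form; part (ii) is the analog of the Cachazo--He--Yuan evaluation at $k=2$, and I would model it on that argument together with the $\alpha\to0$ degeneration of the corresponding stringy canonical form. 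For (i), the point of the resolution should be that $\mathcal N$ is \emph{simple}, with facets $\hat\eta_J\ge0$ indexed by the nonfrozen $J$ and with normal fan equal to the cone over the flag unimodular triangulation of $\mathcal R^{(3)}_{n-3}$ --- that is, the resolution is engineered to simplicially subdivide the (in general non-simplicial) normal fan of $\Pi^{(3)}_{n-3}$, curing the linear dependencies among residues of the un-resolved $m^{(3)}_n$ that come from non-simplicial cones of $\text{Trop}^+G(3,n)$. Granting this, the normalized volume of $\mathcal N$ equals the number of maximal simplices of that triangulation, which is $C^{(3)}_{n-3}$, so the generic count of critical points comes out right.

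The third step is then immediate: the canonical form of a simple polytope is the sum over its vertices of the reciprocal of the product of the facet-forms meeting that vertex; the vertices of $\mathcal N$ correspond to the maximal cones of its normal fan, hence to the maximal simplices $\mathcal J=\{J_1,\dots,J_{2(n-4)}\}\in\mathbf{NC}_{3,n}$ of the triangulation of $\mathcal R^{(3)}_{n-3}$, and the facet-forms meeting the vertex of $\mathcal J$ are exactly $\{\hat\eta_J:J\in\mathcal J\}$. Summing gives
$$\hat m^{(3),NC}_n(\mathbb I,\mathbb I)\ =\ \sum_{\{J_1,\dots,J_{2(n-4)}\}\in\mathbf{NC}_{3,n}}\ \prod_{j=1}^{2(n-4)}\frac{1}{\hat\eta_{J_j}},$$
which is the conjecture; flagness of the triangulation guarantees that the maximal collections appearing are precisely the maximal cliques in the noncrossing graph, so no spurious terms arise.

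The main obstacle is the second step. Beyond the measure bookkeeping, the real content is identifying $\mathcal N$: one must control the Newton polytope of the product of the resolved minors and prove that its normal fan is the triangulated fan --- equivalently, that the simplicial fan coned from the noncrossing triangulation of $\mathcal R^{(3)}_{n-3}$ is polytopal, i.e. that the worldsheet associahedron $\mathcal W^+_{3,n}$ exists as an honest polytope and coincides with $\mathcal N$. One also needs the resolved minors $\hat p_{ijk}$ to remain positive on the positive chart, so that $\hat{\mathcal S}_{3,n}$ is genuinely a Morse function with one critical point per chamber. If the direct pushforward route proves elusive, I would fall back on induction on $n$: show that $\hat m^{(3),NC}_n$ has only simple poles, at $\hat\eta_J=0$ for nonfrozen $J$, that the residue at $\hat\eta_J=0$ factorizes into resolved amplitudes of smaller $(k,n)$ in accordance with the join decomposition of the link of $J$ in $\mathbf{NC}_{3,n}$, and that the behavior at infinity is controlled; uniqueness of a rational function with these prescribed poles and unit residues would then force the stated formula --- but this only trades the difficulty for an analysis of the boundary and factorization structure of the resolved configuration space.
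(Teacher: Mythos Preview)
First, note that this statement is a \emph{conjecture} in the paper: there is no proof to compare against. The paper supports it only with a numerical check for $(k,n)=(3,6)$ at one prime-valued kinematic point, together with a conjectural closed form for the shifted invariants $\hat\eta_J$ verified computationally for $n\le 9$. So your proposal is an outline toward an open problem, not an alternative to an existing argument.

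Your overall architecture is the one the paper points toward, and you correctly flag Step~2 as the crux. But Step~1 contains a concrete error. In the positive parametrization the resolved minors are \emph{not} Laurent monomials times affine-linear forms in the $x_{i,j}$: the paper's classification of resolved minors shows that each $\hat p_{ijk}\vert_{BCFW}$ is a monomial times a \emph{face polynomial} $\delta^{(\bullet)}_{\bullet}$, for instance $\hat p_{236}\vert_{BCFW}=x_{1,1}(x_{2,1}+x_{2,2}+x_{2,3})+x_{1,2}(x_{2,2}+x_{2,3})$, which is irreducible of degree two with no monomial factor. The functionals $\gamma_J+1$ live on the \emph{exponent} space $\mathbb R^{(k-1)\times(n-k)}$ and cut out facets of Newton polytopes; they are not linear factors of the polynomials themselves. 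So the potential does not linearize in the way you describe, and the $\hat\eta_J$ must instead be extracted by straightening $\hat{\mathcal S}_{3,n}$ to the basis of resolved cross-ratios $\log u_J$, as the paper does explicitly for small $n$.

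The genuine obstruction, as you anticipate, is identifying the polytope $\mathcal N$. What is needed is that $\text{Newt}\bigl(\prod_J\hat p_J\bigr)$---essentially the PK associahedron $\mathbb K^{(3)}_{n-3}$---is simple with face poset anti-isomorphic to $\mathbf{NC}_{3,n}$. The paper proves this only for the degenerate Minkowski sub-summand $\Pi^{(3)}_{n-3}$ and explicitly leaves the full associahedron case open. Note also that the CEGM sum runs over \emph{all} complex critical points (the paper reports $35$ for $n=6$ and $3127$ for $n=7$), so the identification with the canonical form of $\mathcal N$ must go through the $\alpha'\to 0$ limit of the stringy integral rather than through uniqueness of the positive critical point alone. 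In short, the plan is sound in spirit, but Step~1 is incorrect as stated and Step~2 reduces to a conjecture the paper itself leaves open.
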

See Appendix \ref{sec: numerical amplitude evaluation} for a detailed, numerical calculation for large prime-number kinematics in the case $(k,n) = (3,6)$.

A formula for the \textit{resolved} planar kinematic functions $\hat{\eta}_{J}$ is conjectured in Equation \eqref{eq: kinematic shift 3n}.

Next, we define variables, generalized cross-ratios $u_{ijk}$, which can be extended via the positive parameterization (see Section \ref{sec: positive parameterization} for a reinterpretation which is convenient for our purposes) to provide a combinatorial characterization of a compactification of the configuration space of $n$ generic points in $\mathbb{CP}^{2}$ modulo projective equivalence; conjecturally the strata in the compactification are in bijection with pairwise noncrossing collections of $3$-element subsets of $\{1,\ldots, n\}$.  It is now natural to propose analogous constructions for all $k\ge 2$, which we do, in Section \ref{sec: generalized worldsheet associahedron}.

Planar face polynomials $\tau_J$ are defined in Section \ref{sec: noncrossing complex}.  For each $i=2,3,\ldots, n$, define $\tau_{1,i} = 1.$
With integers $s\ge 0$ and $m\ge 2$, and an $m$-element subset $J = \{j_1,\ldots, j_m\}$ of $\{2,3,\ldots, n\}$, set
$$\tau^{(s)}_{J} = \sum_{\{A \in \mathcal{I}_J: a_1\le a_2\le \cdots \le a_m\}}x_{1+s,a_1}x_{2+s,a_2}\cdots x_{m+s,a_m},$$
where $A = \{a_1,\ldots, a_m\}$, and 
$$\mathcal{I}_J = \lbrack j_1-1,j_2-1\rbrack \times \lbrack j_2-2,j_3-2\rbrack \times \cdots \lbrack j_{m-1}-(m-1),j_m-m\rbrack.$$
We are now ready to define the face polynomials $\tau_J$.
\begin{defn}
	For any $I \in \binom{\lbrack n\rbrack}{k}$, let $s\ge 0$ be such that $I=\lbrack 1,s\rbrack \cup J$, where $J = I\setminus \lbrack 1,s\rbrack$.  Now define 
	$$\tau_{I} = \tau^{(s)}_{j_1-s,j_2-s,\ldots, j_m-s}.$$
	Here $\lbrack 1,0\rbrack = \emptyset$ is the empty set.
\end{defn}
Note that the $\tau_J$ polynomials are not irreducible in general.
\begin{figure}[h!]
	\centering
	\includegraphics[width=0.7\linewidth]{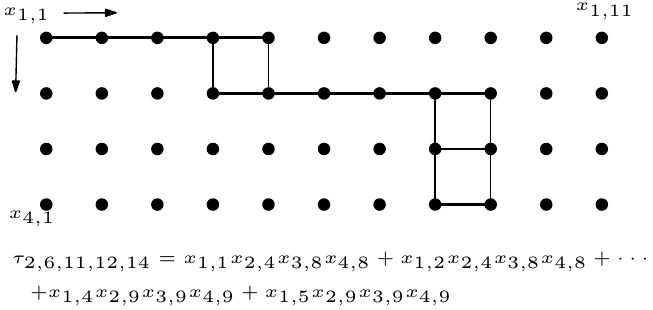}
	\caption{The Staircase: the (irreducible) Planar Face polynomial $\tau_{2,6,11,12,14}$.}
\end{figure}

Then for instance for $k=3$ we have $\tau_{1,2,j}=1$ for all $j=3,\ldots, n$.  For $\{1,j,k\}$ such that $3\le j<k \le n$, then
\begin{eqnarray*}
	\tau_{1,j,k} & := & \sum_{a \in \lbrack j-1,k-2\rbrack} x_{2,a}.
\end{eqnarray*}
Whenever $\{i,j,k\}$ satisfies $2\le i\le j<k\le n$, then
\begin{eqnarray*}
	\tau_{i,j,k} & := & \sum_{\{(a,b)\in \lbrack i-1,j-1\rbrack \times \lbrack j-2,k-3\rbrack,\ a\le b\}} x_{1,a}x_{2,b}
\end{eqnarray*}
We finally define the u-variables for $k=3$ and then formulate the main conjecture.

For any $\{i,j,k\} \subset \{1,\ldots, n\}$ that is not one of the $n$ cyclic intervals $\{j,j+1,j+2\}$, define
\begin{eqnarray*}
	u_{i,j,k}(x) = \begin{cases}
		\frac{\tau_{i,n-1,n}}{\tau_{i-1,n-1,n}}, & (i,j,k) = (i,n-1,n)\\
		\frac{\tau_{i+1,j,k}\tau_{i,j+1,j+2}}{\tau_{i,j,k}\tau_{i+1,j+1,j+2}}, & j+1<k,\ \ k=n,\\
		\frac{\tau_{i+1,j,k}\tau_{i,j,k+1}}{\tau_{i,j,k}\tau_{i+1,j,k+1}}, & k <n.
	\end{cases}
\end{eqnarray*}
When it is clear from context, we will abbreviate $u_{ijk}(x)$ with just $u_{ijk}$.
\begin{conjecture}\label{conjecture: binary equations intro}
	For any $J\in \binom{\lbrack n\rbrack}{3}^{nf}$, we have
	\begin{eqnarray}\label{eq: binary relations intro}
		u_J & =& 1 - \prod_{\{I:\ (I,J) \not\in\mathbf{NC}_{3,n}\}}u^{c_{I,J}}_I,
	\end{eqnarray}
	where for any crossing pair $(i_1i_2i_3,j_1j_2j_3) \not\in \mathbf{NC}_{3,n}$,
	\begin{eqnarray}\label{eq: binary equations exponents intro}
		c_{(i_1,i_2,i_3),(j_1,j_2,j_3)} = \begin{cases}
			2 & \text{if } i_1<j_1<i_2<j_2<i_3<j_3\text{ or } j_1<i_1<j_2<i_2<j_3<i_3\\
			1 & \text{otherwise}.
		\end{cases}
	\end{eqnarray}
\end{conjecture}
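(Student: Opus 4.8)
The plan is to verify the identity in Equation \eqref{eq: binary relations intro} by direct computation with the Laurent monomial expressions for the $u$-variables and the face polynomials $\tau_J$, organized around the combinatorics of the noncrossing complex $\mathbf{NC}_{3,n}$. First I would observe that each $u_{ijk}$ is, by definition, a ratio of products of $\tau$'s, so that $1 - u_{ijk}$ is again a rational function whose numerator factors; the goal is to show that this numerator, after clearing the appropriate denominator, coincides with the product $\prod_{(I,J)\notin \mathbf{NC}_{3,n}}\tau_I^{c_{I,J}}$ (up to the matching denominator), so that the claimed $u$-equation holds identically on the image of the positive parameterization. Concretely, for the generic case $k<n$ one has $1-u_{ijk} = 1 - \tfrac{\tau_{i+1,j,k}\tau_{i,j,k+1}}{\tau_{i,j,k}\tau_{i,j,k+1}}$, wait --- rather $1-\tfrac{\tau_{i+1,j,k}\tau_{i,j,k+1}}{\tau_{i,j,k}\tau_{i+1,j,k+1}}$, so the content of the conjecture is a \emph{three-term Plücker-like relation} among the staircase polynomials $\tau$:
\[
\tau_{i,j,k}\,\tau_{i+1,j,k+1} \;-\; \tau_{i+1,j,k}\,\tau_{i,j,k+1} \;=\; \tau_{i,j,k+1}\,\tau_{i+1,j,k}\cdot\!\!\prod_{(I,(i,j,k))\notin \mathbf{NC}_{3,n}}\!\!\tau_I^{\,c_I}\Big/(\text{denominator}),
\]
and similarly in the boundary cases. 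So the first real step is to rewrite the conjecture purely as a polynomial identity among the $\tau$'s, eliminating all reference to the $u$'s.

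\textbf{Key steps.} (1) Set up explicit combinatorial models: index the monomials of $\tau_{ijk}$ by lattice points in the product of intervals $\mathcal I_{\{i,j,k\}}$ subject to the weak-increasing condition, as in the definition of $\tau^{(s)}_J$; this presents each $\tau$ as a sum over certain order ideals / lattice paths. (2) Prove the basic ``short'' three-term relation $\tau_{i,j,k}\tau_{i+1,j,k+1} - \tau_{i+1,j,k}\tau_{i,j,k+1}$ factors, by a sign-reversing involution on pairs of lattice-path configurations (a Lindström--Gessel--Viennot-style argument on the staircase shapes), identifying the surviving terms with a product of smaller $\tau$'s. (3) Match the factors appearing in that product with exactly the subsets $I$ such that $(I,J)\notin\mathbf{NC}_{3,n}$: here I would use the characterization of crossing pairs of $3$-subsets (the cyclic ``interleaving'' conditions $i_1<j_1<i_2<j_2<i_3<j_3$ etc.) to read off which $\tau_I$ occur and with which multiplicity, recovering precisely the exponents $c_{I,J}\in\{1,2\}$ of Equation \eqref{eq: binary equations exponents intro}; the exponent $2$ should arise exactly from the ``doubly interleaved'' configurations, which contribute a square in the factorization. (4) Handle the two boundary families ($k=n$ with $j+1<k$, and $(i,j,k)=(i,n-1,n)$) by the same method with the modified $u$-variable formulas, checking the cyclic/dihedral bookkeeping at the wrap-around index $n\leftrightarrow 1$. (5) Finally, confirm that the denominators on both sides agree --- this is where the precise choice of normalization ($\tau_{1,i}=1$, and the $\hat p_{ijk}$ resolution) is used --- and invoke that the positive parameterization image is Zariski-dense, so an identity of Laurent polynomials there is an identity of rational functions, completing the proof. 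I would cross-check the whole scheme against the worked $(k,n)=(3,6)$ data referenced in the appendix.

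\textbf{Main obstacle.} The crux --- and the step I expect to consume most of the work --- is step (3): proving that the factorization of the short three-term difference produces \emph{exactly} the $\tau_I$ with $(I,J)$ crossing, with the correct multiplicities $c_{I,J}$, and nothing extra. The combinatorics of which $\tau_I$ divide $\tau_{i,j,k}\tau_{i+1,j,k+1}-\tau_{i+1,j,k}\tau_{i,j,k+1}$ is governed by a delicate interaction between the staircase (weak-inequality) constraints defining the $\tau$'s and the cyclic interleaving patterns defining $\mathbf{NC}_{3,n}$; in particular isolating the source of the exponent $2$ requires understanding when two independent ``crossings'' coexist. A clean way to manage this is to induct on $n$, using that $m^{(3),NC}_n$ (and the $u$-equations) should degenerate compatibly under the boundary maps $\Delta_{3,n}\to\Delta_{3,n-1}$ and $\Delta_{3,n-1}$, reducing each instance of the identity to smaller cases plus a base computation; the noncrossing complex $\mathbf{NC}_{3,n}$ has a compatible recursive (shelling-type) structure from \cite{PKPS,GrassmannAssociahedron} that one can lean on. A secondary but nontrivial point is verifying that the $u_{ijk}$ as defined are consistent (well-defined independent of how one navigates the boundary cases) and that all of them lie in $(0,1)$ on the positive locus, which is what makes the right-hand side of Equation \eqref{eq: binary relations intro} a genuine product of positive quantities less than $1$; this should follow once the polynomial identities are in hand, but it is worth stating separately.
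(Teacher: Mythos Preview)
The statement you are attempting to prove is labeled a \emph{Conjecture} in the paper, and the paper does not supply a proof. The only evidence offered is computational: the authors report having verified Equations~\eqref{eq: binary relations intro} and~\eqref{eq: binary equations exponents intro} symbolically for all $(3,n)$ with $n\le 15$, and the same conjecture reappears later (as part of Conjecture~\ref{conjecture: worldsheet compactification} and the all-$(k,n)$ Conjecture~\ref{conjecture: binary relations All kn}) again with only numerical checks. So there is no argument in the paper to compare your proposal against; you are proposing an attack on an open problem.

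As a strategy your outline is plausible in spirit but has a real gap at its core. Your step~(2) asserts that the quadratic difference $\tau_{i,j,k}\tau_{i+1,j,k+1}-\tau_{i+1,j,k}\tau_{i,j,k+1}$ factors as a product of other $\tau_I$'s, and your step~(3) claims that the factors are indexed exactly by the $I$ with $(I,J)$ crossing. But the right-hand side of the conjectured identity is a product of \emph{$u_I$'s}, not $\tau_I$'s, and each $u_I$ is itself a ratio of two or four $\tau$'s. When you clear denominators you will not obtain a simple product of $\tau_I$ for crossing $I$; rather you get a large alternating product whose telescoping must be controlled. The paper's worked examples (e.g.\ Example~\ref{example: (4,8) binary identity} for $k=4$, and the $(3,6)$ list in Appendix~\ref{sec: u-variables}) show that the cancellation is substantial and not governed by a single three-term relation per $J$. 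In particular, the exponent $c_{I,J}=2$ does not arise from a repeated $\tau$-factor in a single short Pl\"ucker-type relation; it reflects a genuinely higher-order cancellation among many $u$'s simultaneously. Your LGV-style bijection would have to be set up on the full product over all crossing $I$, not on a single quadratic difference, and you have not indicated how to do that. The inductive reduction you mention in the obstacle paragraph is more promising, but note that the boundary maps on $\Delta_{3,n}$ do not obviously send the $u$-variables to $u$-variables of smaller type, so the compatibility you would need is itself part of what must be proved.
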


We have confirmed explicitly that Equations \eqref{eq: binary relations intro} and \eqref{eq: binary equations exponents intro} hold for u-variables of type $(3,n)$ for all $n\le 15$.  For $k=4$, the u-variables are defined explicitly in Equation \eqref{eqn: u variables 4n}.

In Definition \ref{defn: generalized worldsheet conclusion} (reproduced below) we formulate the all-(k,n) definition of the generalized worldsheet.  It is not difficult to see that it recovers the case $k=3$ above, as well as the dihedral compactification studied in \cite{FrancisBrownDihedral} and \cite{Worldsheet2017} in the case $k=2$.  These all provide examples of positive geometries \cite{PositiveGeometries}.  We also conjecture a parameterization of the solution space, given explicitly in the cases $3,4$ with $k\ge 5$ straightforward to infer.  It would be very interesting to find a combinatorial proof that our parameterization solves the binary equations below!

For any $I,J\in \binom{n}{k}^{nf}$, the \textit{compatibility degree} $c_{I,J}$ is the number of violations of the noncrossing condition in the pair $(I,J)$.  Let us be more precise.
\begin{defn}\label{defn: compatibility degree intro}
	Given $I,J\in \binom{n}{k}^{nf}$, then the compatibility degree $c_{I,J}\ge 0$ is the number of pairs $(\{i_1,i_2\},\{j_1,j_2\})$ with $\{i_a,i_b\} \subseteq I$ and $\{j_c,j_d\} \subseteq J$, 
	such that $(i_a,i_b),(j_c,j_d)$ is not weakly separated and we are in either of the following situations: $b=a+1$ and $d=c+1$ or $i_\ell = j_\ell$ for each $\ell = a+1,a+2,\ldots, b-1$.
\end{defn}

\begin{defn}\label{defn: generalized worldsheet conclusion intro}
	The generalized worldsheet associahedron $\mathcal{W}^+_{k,n}$ is the set of all points $(u_J) \in \lbrack 0,1\rbrack^{\binom{n}{k}-n}$ such that the following set of equations hold: for each $J\in \binom{\lbrack n\rbrack}{k}^{nf}$, one equation
	\begin{eqnarray}\label{eq: binary relations worldsheet associahedron defn All kn intro}
		u_J & =& 1 - \prod_{\{I:\ (I,J) \not\in\mathbf{NC}_{k,n}\}}u^{c_{I,J}}_I,
	\end{eqnarray}
	where $c_{I,J}$ is the compatibility degree from Definition \ref{defn: compatibility degree}.
\end{defn}
\begin{conjecture}\label{conjecture: binary relations All kn intro}
	Equations \eqref{eq: binary relations worldsheet associahedron defn All kn} generate the ideal of relations among the planar face ratios $u_J(\tau)$.
\end{conjecture}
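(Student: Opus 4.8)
\medskip

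\noindent\emph{Proof strategy.} The plan is to phrase the conjecture as an equality of ideals and reduce it to two assertions. Write $d=(k-1)(n-k-1)$; let $U\subset(\mathbb{CP}^{n-k-1})^{\times(k-1)}$ be the open locus on which every $\tau_I$ occurring in a denominator of some $u_J$ is nonzero, so that $U$ is irreducible and the ring map $\phi:\mathbb{C}[u_J:J\in\binom{\lbrack n\rbrack}{k}^{nf}]\to\mathcal{O}(U)$, $u_J\mapsto u_J(\tau)$, is well defined with prime kernel $P=\ker\phi$; let $I$ be the ideal generated by the binary relations \eqref{eq: binary relations worldsheet associahedron defn All kn intro}. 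The conjecture says $I=P$, and I would split it as: (A) $I\subseteq P$, i.e.\ the binary relations are polynomial identities in the $u_J(\tau)$; and (B) $V(I)$ is irreducible of dimension $d$ and $I$ is radical. Granting both, $V(P)\subseteq V(I)$ are irreducible of the same dimension (that $\dim V(P)=d$ comes out of the parameterization below), hence equal, and since $I$ is radical, $I=P$.

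\medskip

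\noindent For (A), the key is a determinantal model for the $\tau_J$. I would show that, up to explicit monomial prefactors, each $\tau_I$ is a connected minor of one banded, Toeplitz-type matrix $M(x)$ built from the positive parameterization of the nonnegative Grassmannian that underlies $\exp(-\mathcal{S}^{(PK)}_{k,n})$, the row and column windows being read off from the interval data $\mathcal{I}_I$; this is a finite check, since both sides are multihomogeneous of the same multidegree and their monomials match the lattice-path (semistandard) fillings of the relevant staircase. Given this, the numerator of $1-u_J$ is, after clearing the displayed denominator, a two-term bilinear expression $\tau_{I_1}\tau_{I_2}-\tau_{I_3}\tau_{I_4}$ in four adjacent $\tau$'s, which under the matrix model is exactly a Desnanot--Jacobi (Dodgson condensation) relation for $M(x)$ and hence equals a product of two further connected minors over the denominator already present. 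Proving (A) then reduces to the cancellation identity that $\prod_{(I,J)\notin\mathbf{NC}_{k,n}}u_I^{\,c_{I,J}}$ telescopes to precisely that ratio, with the multiplicities of Definition \ref{defn: compatibility degree intro} (the $b=a+1,d=c+1$ versus $i_\ell=j_\ell$ dichotomy, and the exceptional exponent $2$) playing the role of condensation multiplicities; this simultaneously settles Conjecture \ref{conjecture: binary equations intro} and its $k=4$ counterpart.

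\medskip

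\noindent For (B), I would use our theorem that the simplices $[\mathcal{J}]$, $\mathcal{J}\in\mathbf{NC}_{k,n}$, form a flag, unimodular triangulation of $\mathcal{R}^{(k)}_{n-k}$: its maximal simplices have dimension $d$, so maximal pairwise-noncrossing collections of nonfrozen $k$-subsets have exactly $d$ elements. Fix such an $\mathcal{M}$. Using the binary relations one expresses every $u_J$ rationally in terms of $(u_I)_{I\in\mathcal{M}}$ by an induction that removes one index at a time along flips of $\mathbf{NC}_{k,n}$ (the dual of the Grassmann associahedron); since the $u_I(\tau)$ with $I\in\mathcal{M}$ are algebraically independent on $U$, this shows $\phi(U)$ is $d$-dimensional, $V(P)$ is birational to $\mathbb{C}^d$, and the component of $V(I)$ containing $V(P)$ has dimension $d$. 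To rule out further components and prove $I$ radical, I would construct a flat one-parameter degeneration of $I$ whose special fibre has radical equal to the Stanley--Reisner ideal of $\mathbf{NC}_{k,n}$; constancy of Hilbert functions in flat families then forces $I$ to be pure of dimension $d$ and radical, and connectivity of the flip graph of $\mathbf{NC}_{k,n}$ forces irreducibility, so $I=P$.

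\medskip

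\noindent The main obstacle is the last step. First, the monomials $\prod_I u_I^{\,c_{I,J}}$ in the binary relations are not square-free when $c_{I,J}=2$, so a naive initial ideal is non-reduced and one must work with its radical and show nothing is lost; second, the face structure of $\mathbf{NC}_{k,n}$ for $k\ge3$ --- the Grassmann associahedron of Santos--Stump--Welker --- is far more intricate than the $k=2$ associahedron that governs the analogous statement for $\overline{M}_{0,n}$, so the inductive peeling in (B) and the connectivity input both need genuine work. A safer route is probably to carry out the whole argument for $k=3$ first, where the $\tau_I$ are the transparent staircase sums $\sum_{a\le b}x_{1,a}x_{2,b}$ over products of two intervals and the determinantal model and condensation identities of the second paragraph are essentially explicit, and then to propagate to all $k$ via the CEGM interpolation $m^{(k)}_n\leftrightarrow m^{(k\pm1)}_{n-1}$, realized at the level of $u$-equation ideals through the facet description in Definition \ref{defn: generalized worldsheet conclusion intro}, each facet $\{u_J=0\}$ of $\mathcal{W}^+_{k,n}$ being a product of lower worldsheet associahedra.
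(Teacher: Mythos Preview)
The statement you are attempting to prove is a \emph{conjecture} in the paper, not a theorem: the paper offers no proof, only numerical verification for particular $(k,n)$ (up to $(3,15)$, $(4,8)$, $(5,14)$ and similar). So there is no ``paper's own proof'' to compare against; your proposal is an attempt to settle an open problem.

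That said, your outline has a genuine gap precisely at the point where the conjecture lives. In part (A) you assert that each $\tau_I$ is, up to a monomial, a connected minor of a single banded Toeplitz-type matrix $M(x)$, and that the numerator of $1-u_J$ becomes a Desnanot--Jacobi identity. Neither claim is established. The $\tau_I$ are staircase sums over the index sets $\mathcal{I}_I$, and while some of them (e.g.\ the $\delta^{(i)}_J$ for small $m$) do coincide with minors in the positive parameterization, the paper's Section on resolved minors shows that for general $I$ one needs \emph{compound} determinants $\hat p_I$, not ordinary minors of a fixed matrix; the whole point of the ``resolution'' $p_J\mapsto\hat p_J$ is that the naive minors fail to have the right Newton polytopes. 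Even granting a determinantal model, the step ``$\prod_{(I,J)\notin\mathbf{NC}_{k,n}} u_I^{\,c_{I,J}}$ telescopes to precisely that ratio, with the multiplicities of Definition~\ref{defn: compatibility degree intro} playing the role of condensation multiplicities'' is the content of the conjecture, not a lemma you can invoke. You have not given a bijection or recursion that matches the compatibility-degree count to a sequence of condensation moves, and for $k\ge 3$ the exponents $c_{I,J}$ can exceed $2$ (they count all violating sub-pairs), so a single Dodgson step cannot suffice.

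Part (B) is more standard in shape, but you correctly flag its own obstruction: the non-squarefree exponents prevent the initial ideal from being the Stanley--Reisner ideal of $\mathbf{NC}_{k,n}$ on the nose, and passing to the radical loses control of the Hilbert function, which is exactly what your flatness argument needs. The ``safer route'' via $k=3$ and CEGM-type induction on $k$ is appealing, but the factorization of facets $\{u_J=0\}$ of $\mathcal{W}^+_{k,n}$ into products of lower worldsheet associahedra is itself conjectural in the paper (it is part of Conjecture~\ref{conjecture: worldsheet compactification}), so you would be assuming what you want to prove.
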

Finally, Root Kinematics, and the Root Kinematics potential function $\mathcal{S}^{(Rt)}_{k,n}$, are formulated in Definition \ref{def: Root Kinematics}, where it is shown that, in a precise sense, it induces a \textit{non-recursive} construction of the positive parameterization of $G(k,n)\slash (\mathbb{C}^\ast)^n$, as the solution to a differential equation; however it requires a nontrivial amount of machinery to do properly and it would not make sense to repeat the discussion here in the Introduction.  Nonetheless we can extract the key piece: $\mathcal{S}^{(Rt)}_{k,n}$ can be reconstituted from its value on the positive parameterization\footnote{We use the notation BCFW, after the recursion of Britto-Cachazo-Feng-Witten \cite{BCFW2005}; see \cite{PositiveGrassmannian} for further discussion.} of the nonnegative Grassmannian \cite{PostnikovGrassmannian,TalaskaWilliams} (Section \ref{sec: positive parameterization}).  This value is:
	\begin{eqnarray}
	\mathcal{S}^{(Rt)}_{k,n}\big\vert_{BCFW} & = & \sum_{(i,j) \in \lbrack 1,k-1\rbrack \times \lbrack 1,n-k\rbrack} \log\left(\frac{x_{i,j}}{\sum_{\ell=1}^{n-k}x_{i,\ell}}\right)\alpha_{i,j},
\end{eqnarray}
where $x_{i,j}$ are homogeneous coordinates on $(\mathbb{CP}^{n-k-1})^{\times(k-1)}$.

\subsection{Structure of the rest of the paper}\label{sec: structure of paper}
The paper is structured as follows.
\begin{itemize}
	\item Section \ref{sec: notation and conventions} contains key notation and conventions, as well as an extended discussion of motivation.
	\item Sections \ref{sec: root polytope triangulation} and \ref{sec: triangulation} contain our main result for the higher root polytopes $\mathcal{R}^{(k)}_{n-k}$, namely that they admit a flag unimodular triangulation into simplices in bijection with noncrossing collections in $\mathbf{NC}_{k,n}$.
	\item Section \ref{sec: regular subdivisions root polytope} studies the use of the planar kinematic invariants $\eta_J$ as height functions over the vertices of root polytopes.  For $k=2$ it works perfectly in that it induces the noncrossing triangulation; however for $k\ge 3$ some subtle issues emerge.
	\item Section \ref{sec: noncrossing complex} defines the Planar Kinematics (PK) associahedron $\mathbb{K}^{(k)}_{n-k}$, of which the PK polytope is a Minkowski subsummand.
	\item Section \ref{sec: positive parameterization} reinterprets the matrix entries of the positive parameterization in terms of Newton polytopes.
	\item Section \ref{sec: noncrossing lattice rays} builds on the earlier noncrossing triangulation of $\mathcal{R}^{(k)}_{n-k}$ to derive a noncrossing representation of the rays of the positive tropical Grassmannian.  In particular, a notion of a noncrossing degree is defined.
	\item Section \ref{sec: minors resolved delta polynomials} formulates the main ingredients for the $k=3$ instance of the generalized worldsheet associahedron, $\mathcal{W}^+_{3,n}$, as a compactification of the configuration space of $n$ generic points in $\mathbb{CP}^{2}$.  
	\item Section \ref{sec:facet PK polytope} brings our PK polytope story to a close, with a complete description of its facets.  The proof invokes the noncrossing expansion coming form the complete simplicial fan in Theorem \ref{thm: noncrossing subdivision Wkn}.
	\item In Section \ref{sec: generalized worldsheet associahedron} we define the generalized worldsheet associahedron $\mathcal{W}^+_{k,n}$ for arbitrary $(k,n)$, in terms of an explicit set of binary equations, with exponents involving a certain compatibility degree for the noncrossing complex.
	\item Appendix \ref{sec: blades} reviews material about blades from previous work.
	\item Appendix \ref{sec: numerical amplitude evaluation} includes a numerical evaluation of the \textit{resolved} amplitude $\hat{m}^{(3),NC}_n$ using the generalized scattering equations \cite{CEGM2019}.  This is provides the first consistency check against our formulation of the generalized worldsheet associahedron $\mathcal{W}^+_{3,n}$.
	\item Appendix \ref{sec: u-variables} formulates the $k=3$ and $k=4$ planar face ratios and lays the groundwork for the computation of noncrossing binary identities for all $k$.  In this way, we define for each $(k,n)$ a generalized worldsheet associahedron which specializes to the dihedrally invariant partial compactification of the configuration space of $n$ distinct points in $\mathbb{CP}^1$, see for instance \cite{KobaNielsen,RobertsPh.D., Brown2006,Worldsheet2017}.
\end{itemize}

	\subsection{Notation and Key Definitions}\label{sec: notation and conventions}

Let us first fix some notation.  

Set $\lbrack n\rbrack = \{1,\ldots, n\}$ and denote by $\binom{\lbrack n\rbrack}{k}$ the set of $k$-element subsets of $\lbrack n\rbrack$.  Call a subset $J$ \textit{frozen} if its indices form a cyclic interval with respect to the cyclic order $(12\cdots n)$; otherwise it is nonfrozen.  Let $\binom{\lbrack n\rbrack}{k}^{nf}$ be the set of all nonfrozen $k$-element subsets of $\{1,\ldots, n\}$.

The \textit{hypersimplex} $\Delta_{k,n}$ is the convex polytope 
$$\Delta_{k,n} = \left\{x \in \lbrack 0,1\rbrack^n: \sum_{j=1}^n x_j=k\right\}.$$
It is the convex hull of the set of all points in the unit cube $\lbrack 0,1\rbrack^n$ with $k$ ones and $n-k$ zeros.

Given subsets $I,J\in \binom{\lbrack n\rbrack}{k}$, write $I\prec J$ if $I$ is lexicographically smaller than $J$.

The weak separation condition in Definition \ref{defn: noncrossing} is due to Leclerc and Zelevinsky in \cite{LeclercZelevinsky}; the formulation of the noncrossing criterion given here is our interpretation of the definition given in \cite{GrassmannAssociahedron}.

\begin{defn}[\cite{GrassmannAssociahedron}]\label{defn: noncrossing}
	A pair $I,J \in \binom{\lbrack n\rbrack}{k}$ is said to be \textit{weakly separated}, with respect to the cyclic order $(1,2,\ldots, n)$, provided that the coordinates in the difference  $e_I-e_J$ of vertices $e_I,e_J\in \Delta_{k,n}$ does not contain the pattern\footnote{In the context of \cite{Early19WeakSeparationMatroidSubdivision}, the sign pattern avoidance has the direct interpretation that the arrangement of $\beta_I,\beta_J$ of the blade $\beta=((1,2,\ldots, n))$ on the vertices $e_I,e_J\in \Delta_{k,n}$, respectively, cuts any octahedral face of $\Delta_{k,n}$ at most once, and consequently the subdivision induced in $\Delta_{k,n}$ is matroidal, and in particular positroidal.} $e_a-e_b+e_c-e_d$ for $a<b<c<d$, up to cyclic rotation.
	
	A pair $I,J \in \binom{\lbrack n\rbrack}{k}$ of $k$-element subsets of $\{1,\ldots, n\}$ is said to be \textit{non-crossing}, with respect to the linear order $1<2<\cdots <n$, provided that for each $1\le a<b\le k$, then either
	\begin{enumerate}
		\item The pair $\{\{i_a,i_{a+1},\ldots, i_{b}\},\{j_a,j_{a+1},\ldots, j_b\}\}$ is weakly separated, or 
		\item The interiors of the respective intervals do not coincide, that is we have
		$$\{i_{a+1},\ldots, i_{b-1}\} \not= \{j_{a+1},\ldots, j_{b-1}\}.$$
	\end{enumerate}
\end{defn}
Clearly there is some redundancy above that could be eliminated, but for our purposes this formulation is preferred.

Denote by $\mathbf{WS}_{k,n}$ the poset of all collections of pairwise weakly separated \textit{nonfrozen} $k$-element subsets, ordered by inclusion; according to the purity conjecture for weakly separated collections states that, among these, the maximal (by inclusion) collections of these each have exactly $(k-1)(n-k-1)$ $k$-element subsets.  The purity conjecture was proved independently in \cite{Danilov Purity}, \cite{Purity OhPostnikovSpeyer}.

Denote by $\mathbf{NC}_{k,n}$ the poset of all collections of pairwise non-crossing \textit{nonfrozen} k-element subsets, ordered by inclusion; again, it is known that the maximal (by inclusion) collections of these each have exactly $(k-1)(n-k-1)$ $k$-element subsets, see \cite{PKPS}, \cite{GrassmannAssociahedron}.

We emphasize that in our definition of the noncrossing complex $\mathbf{NC}_{k,n}$, the $n$ frozen subsets that consist of a single cyclic interval are excluded; sometimes $\mathbf{NC}_{k,n}$ has been called the \textit{reduced} noncrossing complex, as in for instance \cite{GrassmannAssociahedron}.  For us, this reduction has in fact it has a physical origin, namely that the generalized biadjoint scalar \cite{CEGM2019} is a rational function of total degree $-(k-1)(n-k-1)$.  In particular, the planar kinematic invariants $\eta_J$ with $J$ a frozen subset are all identically zero on the kinematic space.

The kinematic space $\mathcal{K}(k,n)$ is a dimension $\binom{n}{k}-n$ subspace of $\mathbb{R}^{\binom{n}{k}}$, given by 
$$\mathcal{K}(k,n) = \left\{(s) \in \mathbb{R}^{\binom{n}{k}}: \sum_{J\ni a}s_J,\ a=1,\ldots, n\right\}.$$
Denote by $\mathcal{K}_D(k,n)$ the following planar cone inside the kinematic space:
\begin{eqnarray}\label{eq: upper kinematic space}
	\mathcal{K}_D(k,n) & = & \left\{(s) \in\mathcal{K}(k,n): s_J \le 0,\ J \in \binom{\lbrack n\rbrack}{k}^{nf},\ s_{j,j+1,\ldots, j+k-1} \ge 0\right\}.
\end{eqnarray}
Call a point $\mathcal{K}_D(k,n)$ \textit{interior} if all inequalities are strict.  This cone will be used in Section \ref{sec: regular subdivisions root polytope} in the construction of certain regular subdivisions of the root polytope $\mathcal{R}^{(k)}_{n-k}$.  The cells in the subdivision form what is called a polyhedral complex.

\begin{defn}
	A polyhedral complex $\mathcal{C}$ is a set of polyhedra, such that
	\begin{enumerate}
		\item Every face of a polyhedron from $\mathcal{C}$ is also in $\mathcal{C}$,
		\item The intersection of any two polyhedra $C_1,C_2 \in \mathcal{C}$ is a face of both $C_1$ and $C_2$.
	\end{enumerate}
	A polyhedral \textit{fan} is a polyhedral complex such that every polyhedron is a cone from the origin; it is simplicial if every polyhedron is simplicial.  A polyhedral fan $\mathcal{C}$ in some $\mathbb{R}^m$ is \textit{complete} if for any $v \in \mathbb{R}^m$, then there exists a unique cone $C\in \mathcal{C}$ such that $v$ is in the relative interior of $C$.
\end{defn}

\subsection{Motivation}\label{sec: intro II amplitudes}

Recent progress in the study of scattering amplitudes digs deep into structures in combinatorial and tropical geometry, revealing two closely related notions of the generalized biadjoint scalar amplitude \cite{CEGM2019} and stringy integrals \cite{AHL2019Stringy}, both of which -- on their common intersection -- are governed by degenerations of point configurations in complex projective space that are subject to a certain condition of \textit{planarity}.  The generalized biadjoint scalar amplitude is known to control only the leading order term in the so-called $\alpha'$-expansion\footnote{Here $\alpha'$ corresponds in the case $k=2$ to the so-called inverse string tension.} of a stringy integral, but does not suffer from convergence issues and can be calculated in somewhat greater generality.  

One immediate lesson is that it may be time to revisit of how physical processes are represented.  What should be the correct notion of the worldsheet in the generalization?  This is not completely obvious given what is now known as it involves selecting the right compactification of the configuration space $X(k,n)$ of $n$ generic points in $\mathbb{CP}^{k-1}$ modulo $PGL(k)$, where $k\ge 2$ is fixed and $X(2,n)$ gives, after a certain planar compactification, the worldsheet for the usual cubic scalar amplitude.  In this paper, we formulate the rules for a compactification whose strata are -- conjecturally -- in bijection with the set of complete subgraphs of the so-called noncrossing complex of $k$-element subsets $\mathbf{NC}_{k,n}$.

One drawback -- or, more optimistically, feature -- is that our compactification ultimately has to break a cyclic symmetry which was an essential feature for the biadjoint scalar partial amplitude $m^{(k=2)}_n$, arising from a certain color-order decomposition using structure constants of some gauge group $U(N)$.

Next, what does it mean for particles to go on-shell in these models which are governed by point configurations in higher dimensional projective spaces?  Is the on-shell condition the end of the story for particle interactions?  The answer to these questions is manifold and involves an intricate and growing network of connections between algebraic, combinatorial and tropical geometry, the scattering equations formalism and the formulation of stringy integrals.  In what follows, we shall refer to both the generalized biadjoint scalar and stringy integrals as instances of \textit{generalized scattering amplitudes}, though some fundamental physical questions remain to be explored before this designation can be made precise.

According to the blade model \cite{Early2019PlanarBasis,Early2020WeightedBladeArrangements} for generalized scattering amplitudes \cite{CEGM2019,AHL2019Stringy}, \textit{planarity} is an emergent property of a simple fundamental building block: the simplex, $x_1\le x_2\le \cdots \le x_n \le x_1+1$.  The key player is the \textit{normal fan} to this simplex, or more precisely the \textit{blade} $((1,2,\ldots, n))$ see \cite{EarlyBlades,OcneanuVideo}, which has a certain cyclic symmetry.  The prototypical nondegenerate blade $((1,2,\ldots, n))$ can be thought of as a discrete curvature; the blade generates physical events, such as when a collection of particles goes \textit{on-shell}.  At first sight the setup of the blade model may seem somewhat too simplistic to describe physical processes; but its flexibility becomes more apparent when it is applied in the context of the biadjoint scalar, using the scattering equations \cite{CHY2014A,CHY2014B}, and the subsequent generalization to higher Grassmannians and their compactified torus quotients, \cite{CEGM2019}.  Taking weighted arrangements of $((1,2,\ldots, n))$ on the vertices of certain convex polytopes, called hypersimplices $\Delta_{k,n}$ and imposing a compatibility condition on octahedral faces of $\Delta_{k,n}$ gives rise to \textit{matroidal weighted blade arrangements} \cite{Early2020WeightedBladeArrangements}, and among these one finds the singularities of these generalized amplitudes.

The blade $((1,2,\ldots, n))$ is a special kind of tropical hypersurface which induces a decomposition of an n-1 dimensional space into $n$ chambers, simplicial cones $C_1,\ldots, C_{n}$, where cyclically adjacent pairs intersect in a copy of the simple roots of $SL_{n-1}$, more precisely, the intersections of cones $C_j\cap C_{j+1}$ have the $n-2$ edges $e_{j+1}-e_{j+2},e_{j+2}-e_{j+3},\ldots, e_{j-2}-e_{j-1}$, compatibly with the cyclic order $(1,\ldots, n)$, and the set of all edges $e_i-e_{i+1}$ of the blade sums to zero as in Figure \ref{fig:bladesbijectionpathdashed intro}.

The relevant question here is \textit{how} planarity emerges: it emerges for generalized amplitudes through its selection of a preferred \textit{planar} basis of linear functions on the kinematic space,
$$\eta_{j_1\cdots j_k} = -\frac{1}{n}\sum_{I\in \binom{\lbrack n\rbrack}{k}}\min\{(e_{t+1}+2e_{t+2}+\cdots +(n-1)e_{t-1})\cdot (e_I-e_J): t=1,\ldots, n\}s_I,$$
which are identified with arrangements of the blade $((1,2,\ldots,n))$ on the vertices $e_J = \sum_{j\in J} e_j$ of $\Delta_{k,n}$.  Here $J = \{j_1,\ldots, j_k\}$, and the $s_I$ are coordinate functions on $\mathbb{R}^{\binom{n}{k}}$.  The planar basis is in duality with height functions which induce certain matroid subdivisions of the hypersimplex $\Delta_{k,n}$.  In case $k=2$, then one has the remarkable identity
$$\eta_{ij} = \sum_{i+1\le a<b\le j} s_{ab},$$
where $s_{ij}$ are coordinate functions on $\mathbb{R}^{\binom{n}{2}}$.  In the case of the cubic scalar theory, one often restricts to the subset of $\mathbb{R}^{\binom{n}{2}}$ where $s_{ij} = p_i\cdot p_j$ with respect to the Minkowski bilinear pairing on $n$ given points in momentum space $p_1,\ldots, p_n$, in which case $\eta_{ij}=0$ expresses the condition that a cyclically consecutive subset of particles goes \textit{on-shell} and we have
$$ (p_{i+1}+p_{i+2}+\cdots +p_j)^2 = \sum_{i+1\le a<b\le j} s_{ab}=0,$$
where we will following the usual convention in physics in saying that the $s_{ij}$ are called Mandelstam invariants\footnote{However, for the purposes of the amplitude one can simply take $s_{i,j}$ to be formal parameters, as we do in the rest of this paper.}.  Note that here as usual $p_j\cdot p_j=0$.
Due to momentum conservation, such singularities exhibit the usual two-fold symmetry
\begin{eqnarray}\label{eq: momentum symmetry}
	\left(p_{i+1}+p_{i+2}+\cdots p_{j}\right)^2 = \left(p_{j+1}+p_{j+2}+\cdots p_{i}\right)^2.
\end{eqnarray}

However, generalized amplitudes are constructed in \cite{CEGM2019} as a generalization of the so-called biadjoint scalar amplitude using the CHY scattering equations formalism \cite{CHY2014A}, now for all $k\ge 2$; now Mandelstam variables are a priori formal parameters $s_{i_1\ldots i_k}$ indexed by $k$ distinct, unordered indices, and right away one faces the perplexing situation of a singularity of the generalized amplitude that exhibits behaviors that are a priori unexpected from a physical point of view; the aim of this paper is to answer some of this challenge.

For instance, already for the $n=6$ point generalized biadjoint scalar amplitude $m^{(3)}(\alpha,\beta)$ introduced in \cite{CEGM2019}, where we fix the same cyclic order $\alpha = \beta = (1,2,\ldots, 6)$, one has the following \textit{three} manifestly different ways to write the same pole $\eta_{1,3,5}$:
\begin{eqnarray}\label{eq: cyclic symmetry 3-split}
	\eta_{135} & = & s_{123}+s_{126}+s_{136}+s_{234}+s_{235}+s_{236}\nonumber\\
	& = & s_{145}+s_{234}+s_{235}+s_{245}+s_{345}+s_{456}\\
	& = & s_{126}+s_{136}+s_{145}+s_{146}+s_{156}+s_{456},\nonumber
\end{eqnarray}
in contradistinction with the more familiar Equation \eqref{eq: momentum symmetry}, which exhibits a binary-type symmetry, suggesting the existence of rich, new structures that require exploration.
\begin{figure}[h!]
	\centering
	\includegraphics[width=1\linewidth]{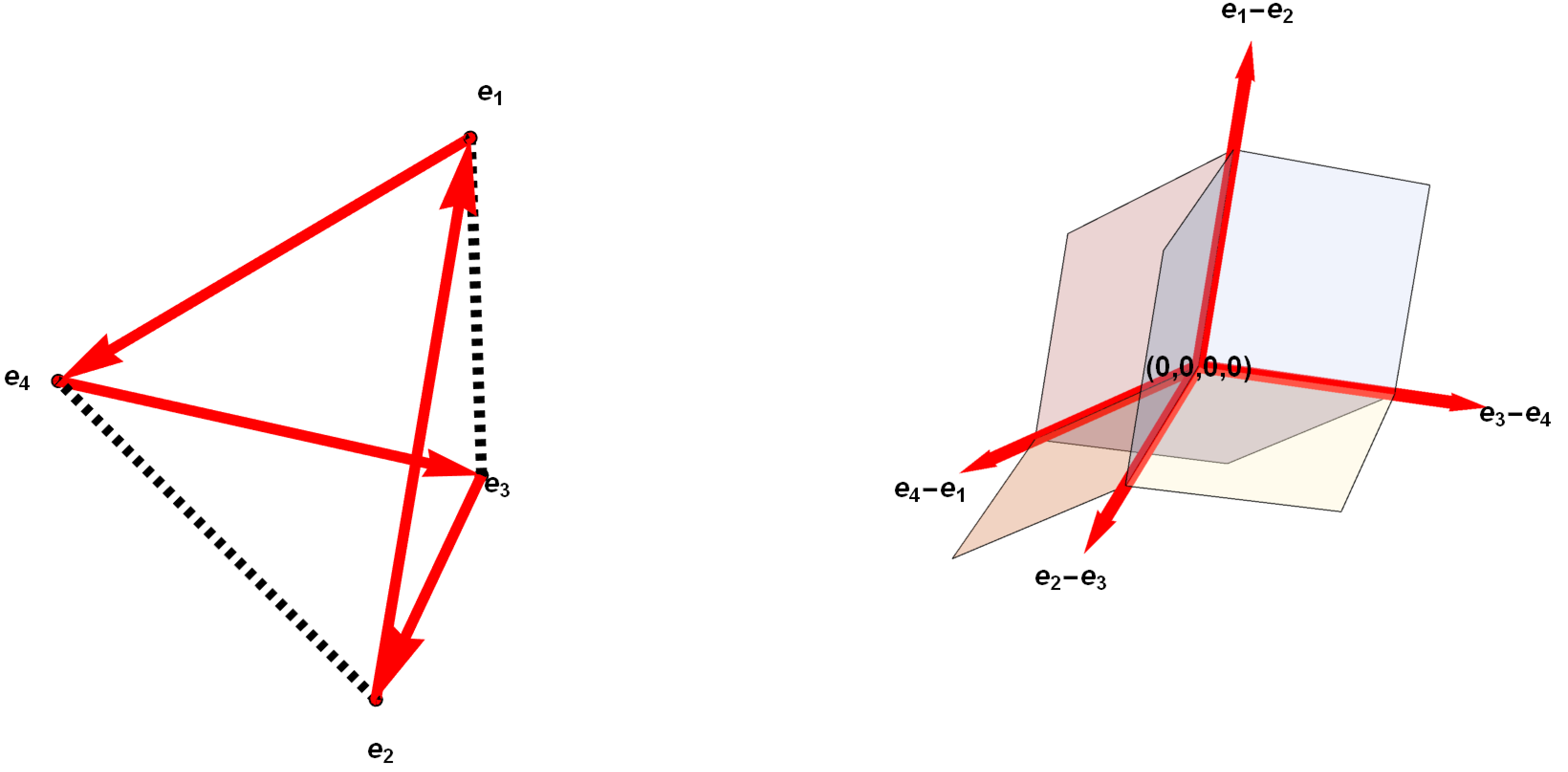}
	\caption{Blades induce a planar (cyclic) order.  Choose a cyclic path (left) on the edges of the standard coordinate simplex; construct the blade $((1,2,3,4))$ (right) from the edge directions.  The blade is also the 2-skeleton of the normal fan to the tetrahedron $x_1\le x_2\le x_3\le x_4 \le x_1+1$.}
	\label{fig:bladesbijectionpathdashed intro}
\end{figure}

But this identity becomes completely transparent and well-motivated in the blade model, in which case we can rewrite Equation \eqref{eq: cyclic symmetry 3-split} rather suggestively as
$$\eta_{((16_1 23_1 45_1))} = \eta_{((23_1 45_1 16_1))} = \eta_{((45_1 16_1 23_1))}.$$
This reflects the cyclic symmetry of the 3-split matroid subdivision of the hypersimplex $\Delta_{3,6}$ that is induced by the blade $((1,2,3,4,5,6))$, pinned to the vertex $e_1+e_3+e_5 = (1,0,1,0,1,0)$.  For details, see \cite{Early19WeakSeparationMatroidSubdivision}; from those general results, one has the following set theoretic identity for the intersections of the two blades $((1,2,3,4,5,6))_{e_{135}}$ and $((16_1 23_1 45_1))$ with the hypersimplex,
$$((1,2,3,4,5,6))_{e_{135}}\cap \Delta_{3,6} = ((16_1 23_1 45_1))\cap \Delta_{3,6},$$
which is analogous to the degree three cyclic symmetry around the vertices in the black tripods in Figures \ref{fig:hexagonbladearrangementfilled} and \ref{fig:weightpermtessellation2}.
\begin{figure}[h!]
	\centering
	\includegraphics[width=0.7\linewidth]{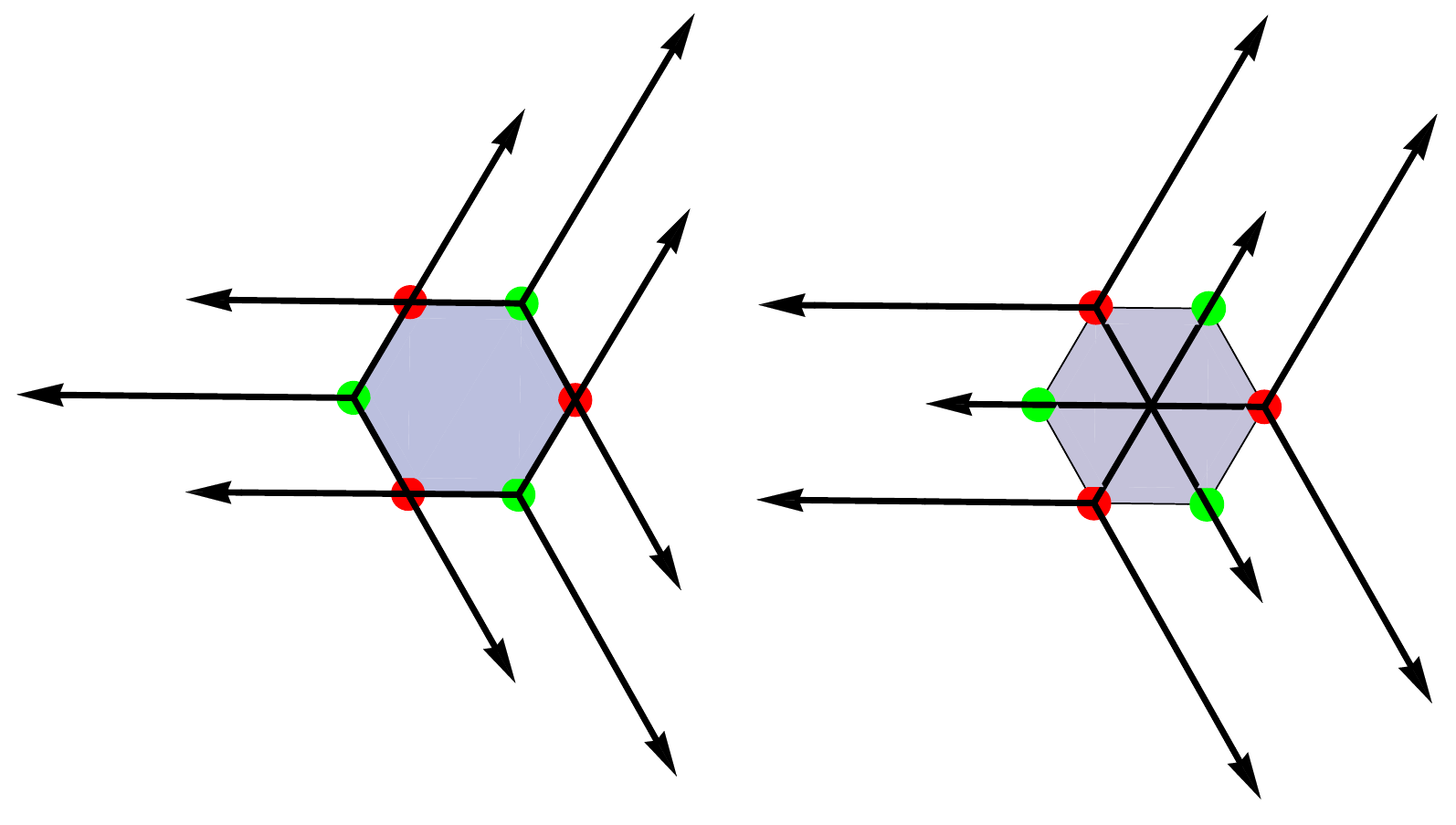}
	\caption{Two arrangements of three copies of the blade $((1,2,3,4,5,6))$ in the plane, visualized in the plane as projected from the hypersimplex $\Delta_{3,6}$ via the projection 
		$(x_1,\ldots, x_6) \mapsto (x_1+x_6,x_2+x_3,x_4+x_5)$, say.  The generalized amplitude singularity induced by the arrangement on the right contributes to the residue of $m^{(3)}(\mathbb{I}_6,\mathbb{I}_6)$ at the generalized on-shell condition $\eta_{135}=0$, giving 
		$$\text{Res}\big\vert_{\eta_{135}=0}\left(m^{(3)}(\mathbb{I}_6,\mathbb{I}_6)\right) = \frac{1}{\eta_{135}}\left(\frac{1}{\eta _{136}}+\frac{1}{\eta _{125}}\right) \left(\frac{1}{\eta _{235}}+\frac{1}{\eta _{134}}\right) \left(\frac{1}{\eta _{356}}+\frac{1}{\eta _{145}}\right).$$}
	\label{fig:hexagonbladearrangementfilled}
\end{figure}
\begin{figure}[h!]
	\centering
	\includegraphics[width=0.7\linewidth]{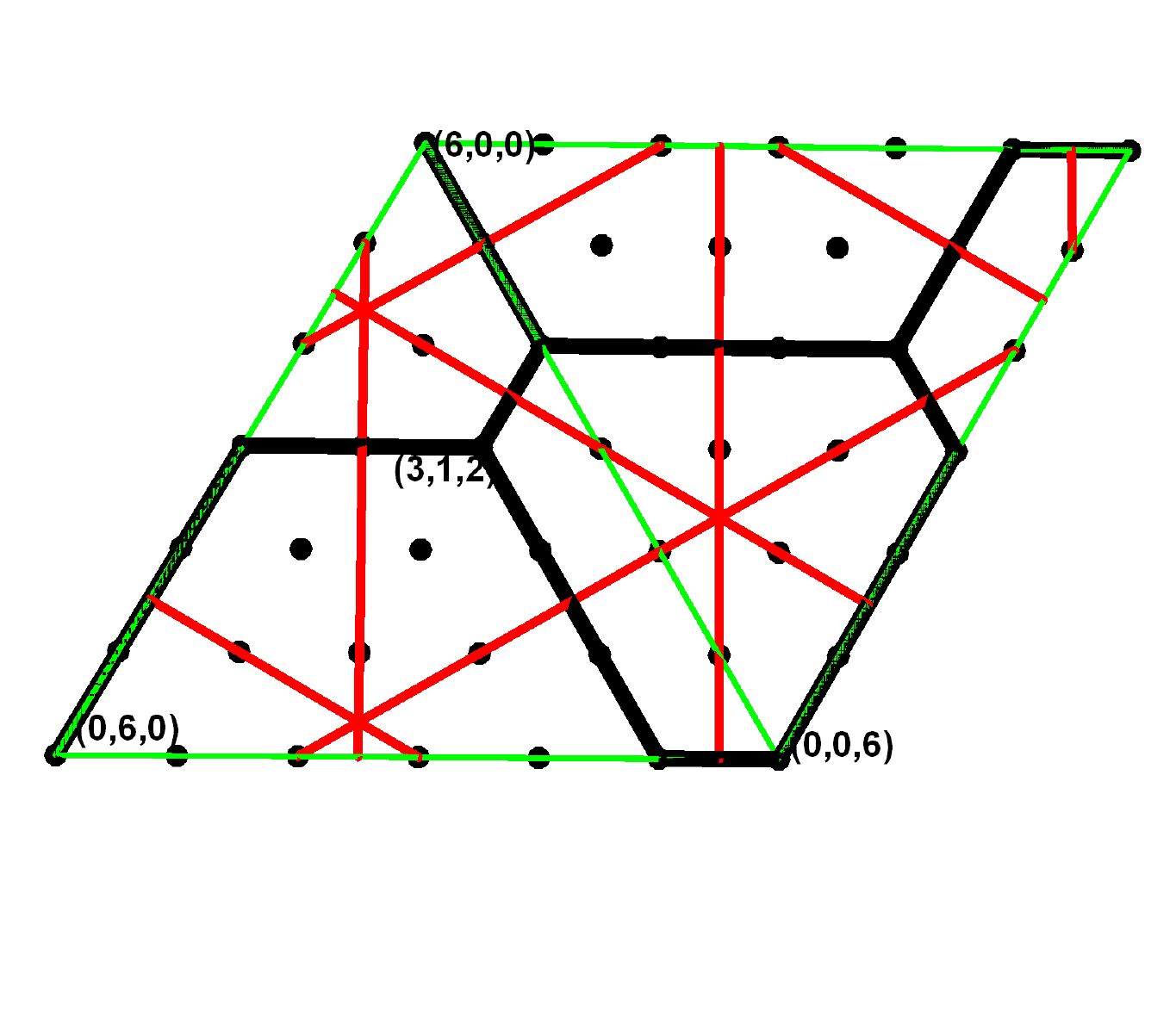}
	\caption{Tessellation of the plane arising from arranging the blades $((1,2,3))$ and $((1,3,2))$ on the root lattice of $SL_3$.  See \cite{Postnikov2006}, as well as \cite{OcneanuVideo}, where such structures have appeared in the context of representation theory.}
	\label{fig:weightpermtessellation2}
\end{figure}

It is interesting that Richard Feynman came rather near to the definition of blade arrangements more than 70 years ago, in his diagrammatic representation of perturbative Quantum Field Theory.  However, the key structures in combinatorial geometry had yet not been been developed formally.

It turns out that (tree-level) Feynman Diagrams are \textit{dual} to blade arrangements on the hypersimplex $\Delta_{2,n}$, not only combinatorially but realizably, in the sense of tropical geometry: in general, start with a subdivision of a polytope into a number of maximal cells.  Then place a vertex in the middle of each cell and connect with an edge any pair of vertices that is separated by an internal facet.  For certain polytopes and certain finest matroid subdivisions of them, the result of this process is a tree, and in the case that the polytope is the hypersimplex $\Delta_{2,n}$, then varying over all such finest regular subdivisions one obtains all $(2n-5)!!$ Feynman diagrams occurring in the cubic scalar at tree-level, and in particular all $C_{n-2}$ planar tree-level Feynman diagrams in the biadjoint scalar amplitude have a prominent role, where $C_{n-2} = 2,5,14,\ldots$, is the Catalan number: such trees are in bijection with a restricted set of subdivisions that are regular and \textit{positroidal} when the cyclic order is the standard one, $\alpha = (12\cdots n)$.

Feynman diagrams have a natural higher dimensional holographic-type generalization on faces of the hypersimplex.  Borges-Cachazo \cite{BC2019} discovered the prototype for Feynman diagrams for generalized scattering amplitudes, consisting of compatible \textit{collections} of Feynman diagrams on the $n$ facets of the hypersimplex $\Delta_{3,n}$.  This was developed further in \cite{CGUZ2019,Early2019PlanarBasis,GuevaraZhang2020}.  See also \cite{HeRenZhang2020}.

However this is only part of the picture: this description amounts to a holographic representation of a dual blade arrangement which is defined in the interior of the hypersimplex $\Delta_{k,n}$; but it is not a priori clear all of the information from the interior of the hypersimplex should be preserved in the process of passing to such a high codimension boundary, particularly for large $k$ and $n$.  Moreover, in the generalized Feynman diagram model there is no preferred notion of a standard basis for the dual kinematic space.  However, blade arrangements not only resolve this ambiguity, but they do so \textit{canonically}, in the sense that there is one such basis for each cyclic order.  Therefore, in the blade model planarity is \textit{emergent}.  A priori, for the generalized biadjoint scalar there is no systematic method to check when two singularities are compatible; also, there are just too many of them!  One has to run a complicated algorithm on the computer every time which becomes infeasible quickly beyond the simplest cases; this suggests again that there is more to the story and it begs for a simpler and more straightforward model to study first.  Indeed, searching for a scattering amplitude whose possible singularities are limited to the $\binom{n}{k}-n$ blades on the hypersimplex $\Delta_{k,n}$, then one is pulled inexorably to the construction in this paper of an amplitude and worldsheet which are so simple, belying the very rich combinatorial structures that they encode, that hand computations are again possible: one can check with a pencil and paper whether two poles are compatible, using a simple noncrossing rule on $k$-tuples of integers which specializes to the Steinmann compatibility relations for poles of the cubic scalar amplitude when $k=2$, and which is distinct from -- and in some respects much better behaved than -- the closely related combinatorial notion of weak separation.  The geometric content of the noncrossing rule is that the generalized root polytope $\mathcal{R}^{(k)}_{n-k}$, introduced in \cite{CE2020B}, should be given a very particular flag unimodular triangulation with $C^{(k)}_{n-k}$ maximal simplices, where $C^{(k)}_{n-k}$ is a richly structured integer in combinatorics, the $k$-dimensional Catalan number, O.E.I.S. number A060854 \cite{oeisMDCatalan}.

Let us conclude the introduction with a blade-theoretic calculation of the biadjoint scalar $m^{(2)}(\mathbb{I}_6,\mathbb{I}_6)$ (see \cite{CHY2014B} for the Definition) at a manifestly planar kinematic point.  Explicitly, we specify values for the basis of the kinematic space which is emergent from the blade model; otherwise in this paper we are concerned with the analogous situation for all generalized amplitudes with $k\ge 2$.

Later on, we will interpret the variables $\alpha_j$ as simple roots and then add a second index, $\alpha_{i,j}$ where $i=1,\ldots, k-1$ and $j=1,\ldots, n-k$.  Many of the enumerative results for the $k=2$ root lattice carries over to $k\ge 3$, including for instance the triangulation of the root polytope, the convex hull of the positive roots and the origin, into simplices which are in bijection with certain alternating trees, see \cite{GelfanGraevPostnikov1997,Postnikov2006,Meszaros2011}.

\begin{figure}
	\centering
	\includegraphics[width=0.7\linewidth]{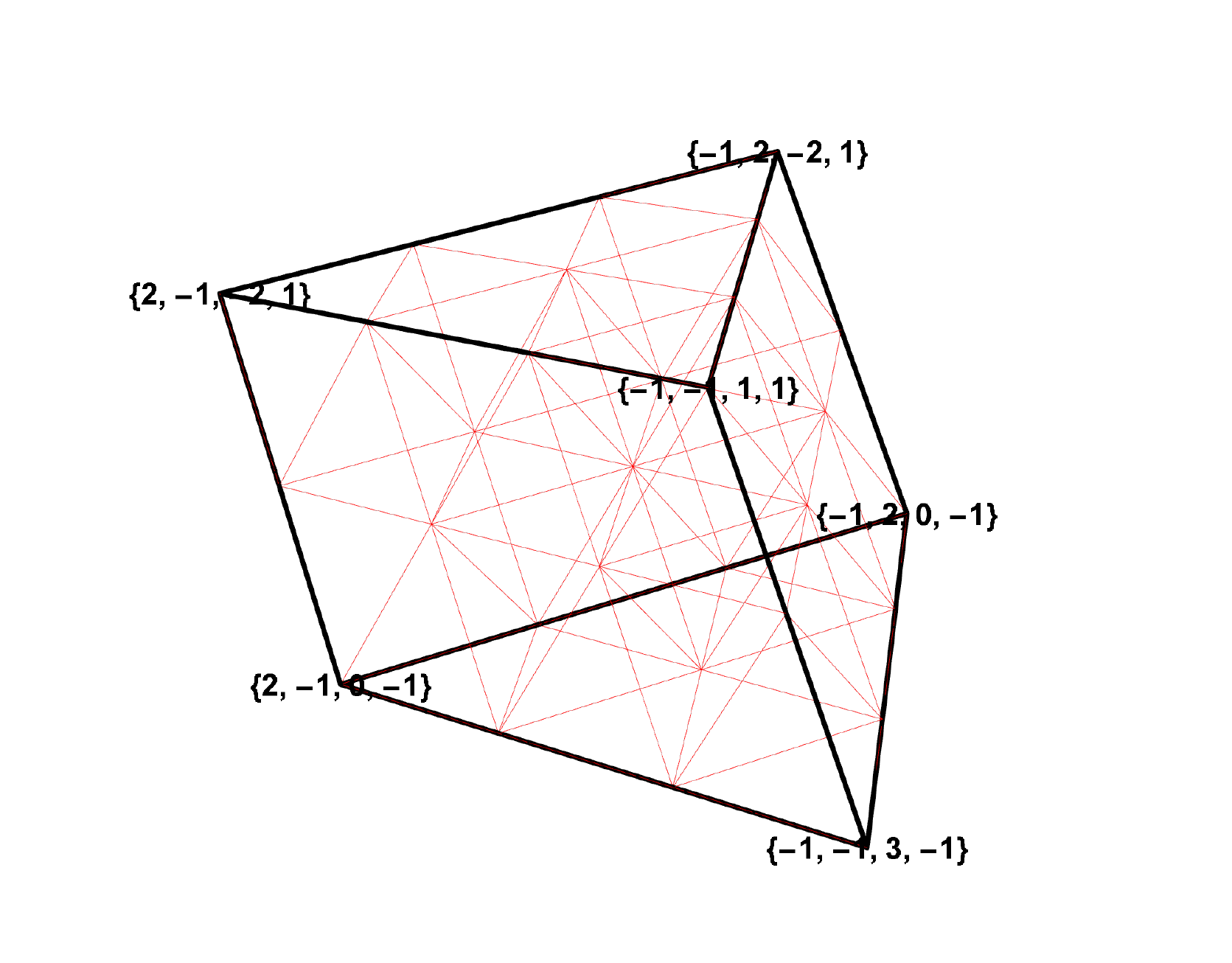}
	\caption{Example \ref{example: degenerate26 amplitude intro}: associahedron for the degenerate $(2,6)$ biadjoint scalar amplitude $m^{(2)}(\mathbb{I}_6,\mathbb{I}_6)$ corresponding to the kinematics in Equation \eqref{eq: kinematics example 26 intro}.  Feynman diagrams are identified with labeled (boundary) vertices.  Red lines are added to indicate compatibility with the root lattice, with directions $e_i-e_j$.  Also the Newton polytope for argument of the log in the potential function at the point $(\alpha)=(0,0,0,0)$ in Equation \eqref{eq: kinematics example 26 intro},
		$$\frac{\left(x_{1,1}+x_{1,2}+x_{1,3}\right)^3 \left(x_{1,3}+x_{1,4}\right)^2}{x_{1,1} x_{1,2} x_{1,3}^2 x_{1,4}}.$$}
	\label{fig:ampdeg26example intro}
\end{figure}

\begin{example}\label{example: degenerate26 amplitude intro}
	Consider the kinematic point $(s)$ specified\footnote{Here we are not giving directly the values of the $s_{ij}$'s, but they are easy to derive, by solving the $9$ equations here, together with 6 equations $\eta_{i\ i+1}=0$, which are automatically satisfied on the kinematic (sub)space $\mathcal{K}_{2,n}$ of $\mathbb{R}^{\binom{n}{2}}$.} by the values of the planar kinematic invariants $\eta_{ij}$,
	\begin{eqnarray}\label{eq: kinematics example 26 intro}
	\begin{array}{ccccccccc}
		\eta_{13} & \eta_{14} & \eta_{15} & \eta_{24} & \eta_{25} & \eta_{26} & \eta_{35} & \eta_{36} & \eta_{46} \\
		\alpha_1+1 & \alpha_{12} + 2 & \alpha_{123} + 1 & \alpha_{2} + 1 & \alpha_{23} + 3 & \alpha_{234} + 2 & \alpha_{3} + 2 & \alpha_{34} + 1 & \alpha_4+1
	\end{array}
	\end{eqnarray}
	where we denote $\alpha_J = \sum_{j\in J} \alpha_j$ for a subset $J$ of $\{1,2,3,4\}$, so for instance $\alpha_{23} = \alpha_2+\alpha_3$.  Here $\alpha_j$ are coordinate functions on $\mathbb{R}^4$.
	
	For the resulting potential function 
	$$\mathcal{S}_{2,6} = \log\left( \frac{x_{1,1}^{\alpha_1}x_{1,2}^{\alpha_2}x_{1,3}^{\alpha_3} x_{1,4}^{\alpha_4}}{(x_{1,1}+x_{1,2}+x_{1,3}+x_{1,4})^{\alpha_{1234}}}\right) + \log\left(\frac{x_{1,1} x_{1,2} x_{1,3}^2 x_{1,4}}{\left(x_{1,1}+x_{1,2}+x_{1,3}\right)^3 \left(x_{1,3}+x_{1,4}\right)^2}\right),$$
	applying for instance the CHY scattering equations formalism can be shown to give, when $\alpha_1+\alpha_2+\alpha_3+\alpha_4=0$, for the biadjoint scalar amplitude\footnote{Abbreviating $m^{(2)}_6 = m^{(2)}(\mathbb{I}_6,\mathbb{I}_6)$, where we abbreviate $\mathbb{I}_6 = (123456)$.  For our purposes, this equation may be taken as a definition.}
	\begin{eqnarray}
	m^{(2)}_6 &= & \frac{1}{\eta _{14} \eta _{15} \eta _{24}}+\frac{1}{\eta _{15} \eta _{24} \eta _{25}}+\frac{1}{\eta _{24} \eta _{25} \eta _{26}}+\frac{1}{\eta _{13} \eta _{15} \eta _{35}}+\frac{1}{\eta _{15} \eta _{25} \eta _{35}}+\frac{1}{\eta _{25} \eta _{26} \eta _{35}}\nonumber\\
	& + & \frac{1}{\eta _{13} \eta _{14} \eta _{15}}+\frac{1}{\eta _{26} \eta _{35} \eta _{36}}+\frac{1}{\eta _{13} \eta _{14} \eta _{46}}+\frac{1}{\eta _{14} \eta _{24} \eta _{46}}+\frac{1}{\eta _{24} \eta _{26} \eta _{46}}+\frac{1}{\eta _{13} \eta _{36} \eta _{46}}\\
	& + & \frac{1}{\eta _{26} \eta _{36} \eta _{46}}+\frac{1}{\eta _{13} \eta _{35} \eta _{36}}, \nonumber
\end{eqnarray}
	the following reduced Feynman diagram expansion around maximal collections of compatible poles,
	\begin{eqnarray}\label{eq: degenerate26amplitude intro}
		m^{(2)}_6& = & \frac{1}{\left(\alpha _1+1\right) \left(\alpha _4+1\right) \left(\alpha _{34}+1\right)}+\frac{1}{\left(\alpha _2+1\right) \left(\alpha _4+1\right) \left(\alpha _{34}+1\right)}+\frac{1}{\left(\alpha _1+1\right) \left(\alpha _2+1\right) \left(\alpha _4+1\right)}\nonumber\\
		& + & \frac{1}{\left(\alpha _1+1\right) \left(\alpha _{34}+1\right) \left(\alpha _{123}+1\right)}+\frac{1}{\left(\alpha _2+1\right) \left(\alpha _{34}+1\right) \left(\alpha _{123}+1\right)}\\
		& + & \frac{1}{\left(\alpha _1+1\right) \left(\alpha _2+1\right) \left(\alpha _{123}+1\right)}.\nonumber
	\end{eqnarray}
	See Figure \ref{fig:ampdeg26example intro} for the polytope cut out in the $\alpha$-space by the requirement that all poles $\eta_{ij}$ must be nonnegative.  Then the six terms correspond directly to the six vertices of the polytope; the function itself is positive on the interior of the polytope, with poles on the facets.
	
\end{example}

In the next section, we state and prove our results for the lattice of points in a subspace $\mathcal{H}_{k,n}$ (see Equation \eqref{eq:Hkn}) rather than the lattice of linear functions in the dual space, keeping in mind that the two are naturally (if not canonically) isomorphic.

\section{Kinematic Space and Planar Basis}
Fix integers $(k,n)$ such that $1\le k\le n-1$.

Recall the notation $\binom{\lbrack n\rbrack}{k}$ for the set of $k$-element subsets of the set $\lbrack n\rbrack = \{1,\ldots, n\}$, and denote by 
$$\binom{\lbrack n\rbrack}{k}^{nf} = \binom{\lbrack n\rbrack}{k} \setminus \left\{\{j,j+1,\ldots ,j+k-1\}: j=1,\ldots, n \right\}$$
the \textit{nonfrozen} $k$-element subsets.  Let $\{e^J: J\in \binom{\lbrack n\rbrack}{k}\}$ be the standard basis for $\mathbb{R}^{\binom{n}{k}}$.

The $k^\text{th}$ hypersimplex in $n$ variables is the $k^\text{th}$ integer cross-section of the unit cube $\lbrack 0,1\rbrack^n$,
$$\Delta_{k,n} = \left\{x\in \lbrack0,1 \rbrack^n: \sum x_j=k \right\}.$$
Henceforth we shall assume that $2\le k\le n-2$.

Recall that the \textit{lineality} space is the n-dimensional subspace
$$\left\{\sum_{J}x_J e^J: x\in\mathbb{R}^n \right\},$$
of $\mathbb{R}^{\binom{n}{k}}$, where we use the notation $x_J = \sum_{j\in J} x_j$.

Then the \textit{kinematic space} is the dimension $\binom{n}{k}-n$ subspace of $\mathbb{R}^{\binom{n}{k}}$, 
\begin{eqnarray}\label{eq: kinematic space}
	\mathcal{K}(k,n) = \left\{(s) \in \mathbb{R}^{\binom{n}{k}}: \sum_{J:\ J\ni j}s_J=0,\ j=1,\ldots, n \right\}.
\end{eqnarray}

Now for any $J\in \binom{\lbrack n\rbrack}{k}^{nf}$, define a linear functional on the kinematic space, or in more physical terminology, a planar kinematic invariant, $\eta_J:\mathcal{K}(k,n) \rightarrow \mathbb{R}$, by
\begin{eqnarray}\label{eq:planar basis element}
	\eta_J(s) & = & -\frac{1}{n}\sum_{I\in \binom{\lbrack n\rbrack}{k}}\min\{L_1(e_I-e_J),\ldots, L_n(e_I-e_J)\}s_I,
\end{eqnarray}
where 
$$L_j(x) = x_{j+1}+2x_{j+2} + \cdots +(n-1)x_{j-1}$$
for $j=1,\ldots, n$ are linear functions on $\mathbb{R}^n$.

Usually instead of $\eta_J(s)$ we write just $\eta_J$ with the understanding that $\eta_J$ is to be evaluated on points $(s) \in \mathcal{K}(k,n)$.

Then we have the property that if $J = \{i,i+1,\ldots, i+k-1\}$ is frozen, then since the graph of $\rho_J:\Delta_{k,n}\rightarrow \mathbb{R}$ does not bend over $\Delta_{k,n}$, it follows that $\eta_J$ is identically zero on $\mathcal{K}(k,n)$.  See \cite{Early2020WeightedBladeArrangements} for details.

A further computation proves linear independence for the set of $\eta_J$ where $J$ is nonfrozen, and we obtain Proposition \ref{prop:planar basis}. 

\begin{prop}[\cite{Early2019PlanarBasis,Early2020WeightedBladeArrangements}]\label{prop:planar basis}
	The set of linear functions
	$$\left\{\eta_J:\mathcal{K}(k,n) \rightarrow \mathbb{R}: J\in \binom{\lbrack n\rbrack}{k}^{nf} \right\}$$
	is a basis of the space of the dual kinematic space $\left(\mathcal{K}(k,n)\right)^\ast$, that is to say, it is a basis of the space of linear functions on $\mathcal{K}(k,n)$.
\end{prop}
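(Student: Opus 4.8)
The plan is to split the statement into a dimension count and a linear--independence claim, the latter to be obtained from an essentially triangular expansion of the $\eta_J$ in terms of the coordinates $s_I$. First I would record the enumeration: a $k$-subset is frozen precisely when it is one of the $n$ cyclic intervals $\{j,j+1,\ldots,j+k-1\}$, so $\bigl|\binom{[n]}{k}^{nf}\bigr|=\binom{n}{k}-n$. On the other hand $\mathcal{K}(k,n)$ is cut out of $\mathbb{R}^{\binom{n}{k}}$ by the $n$ independent lineality relations $\sum_{J\ni j}s_J=0$, so $\dim\mathcal{K}(k,n)=\binom{n}{k}-n=\dim\mathcal{K}(k,n)^{\ast}$. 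Hence the proposed spanning set has the correct cardinality, and it is enough to prove that the $\{\eta_J:J\in\binom{[n]}{k}^{nf}\}$ are linearly independent on $\mathcal{K}(k,n)$; equivalently, the evaluation map $s\mapsto(\eta_J(s))_{J\ \text{nonfrozen}}$ should have trivial kernel on $\mathcal{K}(k,n)$. Since $\eta_J\equiv0$ on $\mathcal{K}(k,n)$ whenever $J$ is frozen, as recalled above, an $s$ in this kernel would in fact satisfy $\eta_J(s)=0$ for \emph{every} $J\in\binom{[n]}{k}$.

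The crux is a computation of the entries $M_{J,I}=\min_t L_t(e_I-e_J)$ appearing in \eqref{eq:planar basis element}. Up to the overall factor $-\frac1n$, $M_{J,I}$ is the value at the lattice point $e_I$ of the support function of the blade $((1,2,\ldots,n))$ pinned at $e_J$; that support function is piecewise linear with pieces indexed by the $n$ chambers $C_1,\ldots,C_n$, so each $M_{J,I}$ is an explicit integer determined by which chamber contains $e_I-e_J$, a datum governed entirely by the cyclic order. Using this I would exhibit a total order $\prec$ on $\binom{[n]}{k}^{nf}$ (the lexicographic order already fixed above is a natural candidate) together with a choice of $\binom{n}{k}-n$ representatives of the coordinates $s_I$ modulo the lineality space for which, for each nonfrozen $J$,
$$\eta_J\big|_{\mathcal{K}(k,n)}\;=\;\pm\,s_J\;+\;\sum_{I\prec J}c^{J}_{I}\,s_I,\qquad c^{J}_{I}\in\mathbb{Z}.$$
For $k=2$ this is exactly the known identity $\eta_{ij}=\sum_{i+1\le a<b\le j}s_{ab}$, whose triangularity with respect to $\prec$ is transparent; for general $k$ it is the ``further computation'' alluded to in the text, the blade/M\"obius structure being what forces the triangular shape. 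A triangular change of basis with $\pm1$ on the diagonal is invertible, which gives both linear independence (hence the proposition) and, by inversion, an explicit integral expansion $s_I=\sum_J d^{I}_{J}\eta_J$, of which the Planar Kinematics point (all $\eta_J=1$) is one distinguished value.

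The main obstacle is precisely this middle step: carrying out the support-function computation cleanly enough to pin down workable representatives modulo the lineality space and to verify, for every nonfrozen $J$, that the $\prec$-largest $s_I$ occurring has coefficient $\pm1$. Should a uniformly triangular expansion prove awkward, a fallback is induction on $(k,n)$ through the boundary of $\Delta_{k,n}$, whose facets are the hypersimplices $\Delta_{k-1,n-1}$ and $\Delta_{k,n-1}$ on which the $\eta_J$ restrict to the corresponding planar bases; there the delicate point is to rule out a global linear dependence among the $\eta_J$ that restricts to zero on every facet at once.
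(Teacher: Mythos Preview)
Your overall plan---dimension count plus linear independence---is exactly what the paper does. Note, however, that the paper does not actually prove this proposition in the body: it records that $\eta_J\equiv 0$ for frozen $J$, then writes ``A further computation proves linear independence for the set of $\eta_J$ where $J$ is nonfrozen, and we obtain Proposition~\ref{prop:planar basis}'', deferring that computation to the cited references. So there is no detailed argument in the paper to compare your proposal against; your strategy is consistent with, and fleshes out, the one-line sketch the paper gives.

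That said, what you have written is self-consciously a strategy, not a proof, and the step you flag as the ``main obstacle'' is the entire content of the result. The triangularity heuristic is the right intuition, but be careful: the expansion of $\eta_J$ on $\mathcal{K}(k,n)$ is only defined modulo the $n$ lineality relations, so ``the coefficient of $s_J$ in $\eta_J$'' is not well-defined until you fix a transversal to the lineality space, and a poor choice can destroy triangularity. For $k=2$ the identity $\eta_{ij}=\sum_{i+1\le a<b\le j}s_{ab}$ already uses the lineality relations to eliminate the frozen $s_{i,i+1}$ and is triangular for the order by interval length (not lexicographic); for $k\ge 3$ the analogous reduction is more delicate, and the references establish it via the explicit formula for the min in \eqref{eq:planar basis element} together with the blade/height-function interpretation. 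Your fallback induction through facets is also viable, but the point you identify---ruling out a relation that vanishes on every facet simultaneously---is again exactly where the work lies, since the restriction maps $\partial_j$ have nontrivial joint kernel on $\mathfrak{B}_{k,n}$ in general. Either route can be completed, but neither is a one-line remark; to turn this into a proof you would need to carry out the computation in the cited papers rather than gesture at it.
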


\section{Root Polytopes and Triangulations}\label{sec: root polytope triangulation}
In this section, we introduce the generalized \textit{root polytope} $\mathcal{R}^{(k)}_{n-k}$, which is most conveniently defined in the space of linear functions on $\mathcal{H}_{k,n}$.  

Put $\alpha_{i,\lbrack a,b\rbrack} = \sum_{j=a}^b\alpha_{i,j}$.  Let $J = \{j_1,\ldots, j_k\}$, with order $1\le j_1<\cdots <j_k\le n$.  The generalized positive root $\gamma_J$ is the following sum of simple roots $\alpha_{i,j}$:
$$\gamma_J(\alpha) = \sum_{i=1}^{k-1} \alpha_{i,\lbrack j_i-(i-1),j_{i+1}-i-1\rbrack}.$$

\begin{figure}[h!]
	\centering
	\includegraphics[width=0.7\linewidth]{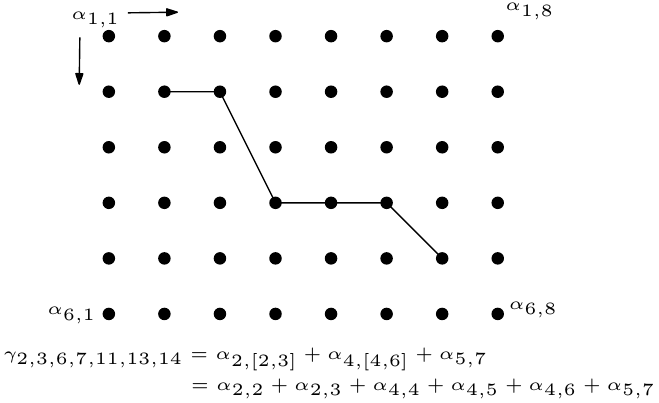}
	\caption{A generalized positive root.}
	\label{fig:higher-root-path}
\end{figure}

\begin{defn}
	The \textit{root polytope} $\mathcal{R}^{(k)}_{n-k}$ is the convex hull
$$\mathcal{R}^{(k)}_{n-k} = \text{convex hull}\left(\left\{\gamma_J: \mathcal{H}_{k,n} \rightarrow \mathbb{R}:  J \in \binom{\lbrack n\rbrack}{k}^{nf}\right\}\right).$$
We also denote by $\hat{\mathcal{R}}^{(k)}_{n-k}$ the convex hull
$$\hat{\mathcal{R}}^{(k)}_{n-k} = \text{convex hull}\left(\{0\}, \left\{\gamma_J: \mathbb{R}^{(k-1)\times (n-k)} \rightarrow \mathbb{R}:  J \in \binom{\lbrack n\rbrack}{k}^{nf}\right\}\right).$$
\end{defn}
Here the polytope $\mathcal{R}^{(k)}_{n-k}$ has dimension $(k-1)(n-k-1)$, while the polytope $\hat{\mathcal{R}}^{(k)}_{n-k}$ has dimension $(k-1)(n-k)$.  These polytopes may be of independent interest; however, in what follows we concentrate on the smaller-dimensional polytope $\mathcal{R}^{(k)}_{n-k}$.

We may decide to embed $\mathcal{R}^{(k)}_{n-k}$ into $\mathcal{H}_{k,n}$ by identifying the space of linear functions on $\mathcal{H}_{k,n}$ with $\mathcal{H}_{k,n}$ itself, with respect to the dual pairing $\alpha_{i,j}(e_{i',j'}) = \delta_{i,i'}\delta_{j,j'}$; then we may take $\mathcal{R}^{(k)}_{n-k}$ to be the projection of the polytope $\hat{\mathcal{R}}^{(k)}_{n-k}$.  To connect with the type $A$ root polytope, see \cite{GelfanGraevPostnikov1997,Postnikov2006,Meszaros2011}, the convex hull of the positive roots $e_{i}-e_j$ for $1\le i<j \le n$, together with the origin, define a projection $p_{k,n}: \mathbb{R}^{(k-1)\times (n-k)} \rightarrow \mathcal{H}_{k,n}$ by $p_{k,n}(e_{i,j}) = f_{i,j}$, where 
$$f_{i,j} = \begin{cases}
	e_{i,j} - e_{i,j+1}, & \ 1\le j\le n-k-1\\
	e_{i,n-k} - e_{i,1}, & \ j=n-k.
\end{cases}$$
Now for each $J \in \binom{\lbrack n\rbrack}{k}$, define
	\begin{eqnarray}\label{eqn: roots}
	\hat{v}_J & = & \sum_{\ell=1}^{k-1}e_{\ell,\lbrack j_\ell-(\ell-1),j_{\ell+1}-\ell-1\rbrack}\\
	v_J & = & \sum_{\ell=1}^{k-1}f_{\ell,\lbrack j_\ell-(\ell-1),j_{\ell+1}-\ell-1\rbrack}\nonumber
\end{eqnarray}
Evidently $v_J$ is zero exactly when $J$ is a single cyclic interval, and $\hat{v}_J$ is zero exactly when $J$ has is an interval of the form $\lbrack i,i+k-1\rbrack$ for $1\le i$ and $i+k-1 \le n$.

Generalized roots satisfy certain \textit{cubical} relations, of which the usual square move, or flip move, is a special case.  We formulate them on the hypersimplex $\Delta_{k,n}$.

For any integers $2\le m\le k$, then any intersection of affine hyperplanes in $\mathbb{R}^n$, say 
$$x_{i_1}+x_{j_1} = x_{i_2}+x_{j_2} = \cdots = x_{i_m}+x_{j_m} = 1,$$
intersects the hypersimplex $\Delta_{k,n}$; we specialize to the case when $1\le i_1<j_1<i_2<j_2<\cdots <i_m<j_m\le n$.  Taking any subset $L = \{\ell_1,\ldots, \ell_{k-m}\} \in \binom{\lbrack n\rbrack \setminus (I\cup J)}{k-m}$, then we define an $m$-dimensional cube in $\Delta_{k,n}$,
$$\mathcal{C}^{(L)}_{I,J} = \left\{x\in \Delta_{k,n}: x_{i_a}+x_{j_a} =1\text{ and } \ x_{\ell_b} = 1\text{ for } a=1,\ldots, m;\ b=1,\ldots, k-m\right\}.$$
\begin{prop}\label{prop:cubical relations 1}
	Given any pair of long diagonals of $\mathcal{C}^{(L)}_{I,J}$ specified by the pairs of vertices of $\Delta_{k,n}$, respectively $(e_{M_1\cup L},\ e_{M_2\cup L})$ and $(e_{M'_1\cup L},\ e_{M'_2\cup L})$, then we have that
	\begin{eqnarray}\label{eqn: noncrossing relation intro}
		v_{M_1 \cup L} + v_{M_2 \cup L} & = & v_{M'_1 \cup L} + v_{M'_2 \cup L}
	\end{eqnarray}
	and correspondingly 
	\begin{eqnarray*}
		\gamma_{M_1 \cup L} + \gamma_{M_2 \cup L} & = & \gamma_{M'_1 \cup L} + \gamma_{M'_2 \cup L}.
	\end{eqnarray*}
	Moreover, for each such cubical cell, there exists a unique pair of antipodal vertices of $\mathcal{C}^{(L)}_{I,J} $ that is noncrossing, namely $(e_{M''_1\cup L}, e_{M''_2 \cup L})$, where 
	\begin{eqnarray*}
		M''_1 & = & \{i_1,j_2,i_3,j_4,\ldots,i_{m-1} ,j_m\}\\
		M''_2 & = & \{j_1,i_2,j_3,i_4,\ldots,j_{m-1} ,i_m\}.
	\end{eqnarray*}
\end{prop}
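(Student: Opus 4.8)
The plan is to reduce the statement to a pair of purely combinatorial facts about $k$-element subsets and then read off the two displayed identities directly from the definition \eqref{eqn: roots} of $v_J$ and from the formula $\gamma_J(\alpha) = \sum_{i=1}^{k-1}\alpha_{i,[j_i-(i-1),j_{i+1}-i-1]}$. First I would fix the cube $\mathcal{C}^{(L)}_{I,J}$ and describe its $2^m$ vertices explicitly: a vertex is $e_{M\cup L}$ where $M$ picks exactly one of $\{i_a,j_a\}$ from each antipodal pair $a=1,\dots,m$, so $M$ is encoded by a binary string $\varepsilon\in\{0,1\}^m$ with $\varepsilon_a=0$ meaning ``take $i_a$'' and $\varepsilon_a=1$ meaning ``take $j_a$''. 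Antipodal vertices correspond to complementary strings $\varepsilon$ and $\bar\varepsilon$, and a long diagonal of the cube is exactly such an antipodal pair; so ``$(e_{M_1\cup L},e_{M_2\cup L})$ and $(e_{M'_1\cup L},e_{M'_2\cup L})$ are two long diagonals'' means $M_1\leftrightarrow\varepsilon$, $M_2\leftrightarrow\bar\varepsilon$, $M'_1\leftrightarrow\varepsilon'$, $M'_2\leftrightarrow\bar\varepsilon'$.

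For the linear relation \eqref{eqn: noncrossing relation intro}, the key observation is that $v_{M\cup L}$ (resp.\ $\gamma_{M\cup L}$) is an affine-linear function of the sorted entries of $M\cup L$, and in fact the sum $v_{M_1\cup L}+v_{M_2\cup L}$ only depends on the \emph{multiset} of sorted entries appearing across the two subsets. Concretely, for any $\varepsilon$ the two subsets $M\cup L$ (for $\varepsilon$) and $M'\cup L$ (for $\bar\varepsilon$) together use, in each of the $m$ antipodal slots, once $i_a$ and once $j_a$; together with the two copies of $L$ this multiset is independent of $\varepsilon$. Then I would prove the elementary lemma that, after sorting, the summand contributed by the $\ell$-th telescoping block $f_{\ell,[\,j_\ell-(\ell-1),\,j_{\ell+1}-\ell-1\,]}$ — equivalently the block $[j_\ell-(\ell-1),j_{\ell+1}-\ell-1]$ of simple roots $\alpha_{\ell,\cdot}$ — when summed over the pair, is a function of the two adjacent sorted values only, so the whole pairwise sum telescopes into a quantity determined by the combined sorted multiset. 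Since for a fixed-size interval $[n]\setminus(I\cup J)$ the set $L$ is literally the same on both sides, and the $m$ interleaving pairs contribute the same multiset $\{i_1,j_1,\dots,i_m,j_m\}$ regardless of which diagonal we pick, both sides of \eqref{eqn: noncrossing relation intro} equal this canonical value; the same argument run on $\gamma$ instead of $v$ gives the companion identity. The main obstacle here is bookkeeping the index shifts: $v_J$ uses $f_{\ell,[j_\ell-(\ell-1),j_{\ell+1}-\ell-1]}$, so one must carefully check that the shifts $-(\ell-1)$ and $-\ell-1$ interact correctly with the relabeling caused by re-sorting $M\cup L$ when a $j_a$ is swapped for an $i_a$; I expect this to need a short but careful case analysis on whether any element of $L$ falls between $i_a$ and $j_a$, together with the fact that $L$ is disjoint from $I\cup J$ so the relative order of the $L$-elements is untouched.

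For the uniqueness of the noncrossing antipodal pair, I would first recall the weak-separation/noncrossing criterion from Definition~\ref{defn: noncrossing}: a crossing in the pair $(M\cup L,M'\cup L)$ is a pattern $e_a-e_b+e_c-e_d$ with $a<b<c<d$ (up to cyclic rotation) in $e_{M\cup L}-e_{M'\cup L}$; since $L$ and the frozen structure are common, all sign changes in this difference occur precisely at the interleaved positions $i_1<j_1<i_2<\cdots<i_m<j_m$, and the $+1$'s sit at the slots where $\varepsilon$ picks the smaller index while the $-1$'s sit where it picks the larger (or vice versa). I would then observe that since the $m$ blocks strictly interleave as $i_1<j_1<i_2<j_2<\cdots$, the difference vector restricted to these $2m$ coordinates, read in the linear order, avoids the forbidden alternating pattern $+,-,+,-$ \emph{only} when the signs themselves alternate maximally, i.e.\ $+,-,+,-,\dots$ or $-,+,-,+,\dots$ in slot order — and translating ``slot $a$ gets $+$ iff we took $i_a$'' back through the interleaving $i_1<j_1<i_2<\cdots$ forces exactly $M''_1=\{i_1,j_2,i_3,j_4,\dots\}$ and $M''_2=\{j_1,i_2,j_3,i_4,\dots\}$, with the complementary choice giving the same unordered pair. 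Every other $\varepsilon$ has two consecutive equal signs, producing a genuine crossing, so the noncrossing diagonal is unique. The delicate point is handling the ``up to cyclic rotation'' clause and the second alternative in Definition~\ref{defn: noncrossing} (interiors of intervals not coinciding): because the two subsets differ in \emph{every} one of the $m$ interleaved slots, their restrictions to any window $\{i_a,\dots,i_b\}$ have non-coinciding interiors only in degenerate small cases, so the weak-separation clause is the operative one — I would isolate and dispatch those degenerate cases ($m=2$, adjacent slots) by hand. This last step, pinning down exactly which $\varepsilon$ survive the pattern-avoidance after accounting for cyclic rotations, is where I expect the real work to lie.
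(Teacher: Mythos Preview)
The paper does not prove this proposition; it states it, gives several worked examples, and records the $m=2$ case as Corollary~\ref{cor: four term identity gammaJ}. So there is nothing to compare against, and your proposal stands or falls on its own.

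Your plan for the linear identity~\eqref{eqn: noncrossing relation intro} is correct. The lemma you need is that for each sorted position $p$ the multiset $\{(M_1\cup L)_{(p)},(M_2\cup L)_{(p)}\}$ is independent of the diagonal; this is invariant under flipping a single bit of $\varepsilon$ (even when $L$ meets $(i_a,j_a)$), and since $\alpha_{i,[a,b]}$ telescopes as $F_i(b)-F_i(a-1)$, the sum $\gamma_{M_1\cup L}+\gamma_{M_2\cup L}$ depends only on those position-wise multisets. Flipping one bit at a time and citing the octahedral identity at each step is an even shorter route.

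Your plan for the uniqueness of the noncrossing diagonal has a genuine gap. You argue via weak separation of the \emph{full} sets, claiming that clause~(2) of Definition~\ref{defn: noncrossing} (non-coinciding interiors) is inoperative outside degenerate cases, and that the $2m$-position sign pattern of $e_{M_1\cup L}-e_{M_2\cup L}$ avoids the forbidden $+,-,+,-$ exactly when the slot-signs $\sigma_a$ alternate. Both claims fail for $m\ge 3$: with $m=3$ and $L=\emptyset$ the alternating choice $M''_1=\{i_1,j_2,i_3\}$, $M''_2=\{j_1,i_2,j_3\}$ gives the sign pattern $+,-,-,+,+,-$, which contains $+,-,+,-$ at positions $1,2,4,6$. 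The paper itself notes just after the proposition that $\{145,236\}$ is ``noncrossing but not weakly separated''. So for $m\ge 3$ \emph{no} diagonal is weakly separated, and your full-pattern test cannot single out $M''$. The correct mechanism uses clause~(2) essentially: a window $[a,b]$ has coinciding interiors only when every sorted position strictly between $a$ and $b$ carries an element of $L$ lying outside all intervals $(i_c,j_c)$; the only such windows that can fail weak separation are the ``bridges'' from the last sorted position associated to pair $c$ to the first associated to pair $c{+}1$, and on those weak separation holds iff $\varepsilon_c\neq\varepsilon_{c+1}$. This forces $\varepsilon$ to alternate, and conversely for alternating $\varepsilon$ every window either has at most two nonzero signs or has non-coinciding interiors, so $(M''_1\cup L,M''_2\cup L)$ is indeed noncrossing.
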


For instance, recalling that $\gamma_{145} + \gamma_{236} = \alpha_{1,12} + \alpha_{2,23}$, it follows easily that
$$\gamma_{145} + \gamma_{236} = \gamma_{245} + \gamma_{136} = \gamma_{135} + \gamma_{246} = \gamma_{146} + \gamma_{235}.$$
Here $\{145,236\}$ is the only noncrossing (but not weakly separated) pair among these.

For instance, letting $I = \{1,3,5,7,9,11\}$ and $J = \{2,4,6,8,10,12\}$, then the pair 
$$(\{1,4,5,8,9,12\},\{2,3,6,7,10,11\})$$
for $n\ge 12$, is noncrossing and we have the corresponding four-term identity:
$$v_{1,3,5,7,9,11} + v_{2,4,6,8,10,12} = v_{1,4,5,8,9,12} + v_{2,3,6,7,10,11}.$$

Let us conclude with two instructive examples.  With $L = \{6,10\}$, then
$$\gamma_{1,3,5,6,10} + \gamma_{2,4,6,8,10} = \gamma_{1,4,5,6,10} + \gamma_{2,3,6,8,10}.$$
where the pair $\{\{1,4,5,6,10\},\{2,3,6,8,10\}\}$ is noncrossing.  This shows, in particular, that the set $L$ can be interlaced with the indices defining the cube $C^{(L)}_{I,J}$.  This works from a combinatorial point of view due to the requirement that weak separation can be violated when the respective interior intervals are not the same, but what if we didn't know this rule and had only the formal definition of the linear functions $\gamma_J$?  The explicit identities are:
\begin{eqnarray}
	\gamma_{1,3,5,6,10} + \gamma_{2,4,6,8,10} & = & (\alpha _{1,1}+\alpha _{2,2}+\alpha _{4,3}) + (\alpha _{1,2}+\alpha _{2,3}+\alpha _{3,4})\\
	\gamma_{1,4,5,6,10} + \gamma_{2,3,6,8,10} & = & (\alpha _{1,1}+\alpha _{1,2}+\alpha _{4,3}) + (\alpha _{2,2}+\alpha _{2,3}+\alpha _{3,4})
\end{eqnarray}
Finally, letting $I = \{1,3,5,7,9,11\}$ and $J = \{2,4,6,8,10,12\}$, then for the pair 
$$(\{1,4,5,8,9,12\},\{2,3,6,7,10,11\}) \in \mathbf{NC}_{6,12}\setminus \mathbf{WS}_{6,12}$$
we have the following four-term identity:
$$\gamma_{1,3,5,7,9,11} + \gamma_{2,4,6,8,10,12} = \gamma_{1,4,5,8,9,12} + \gamma_{2,3,6,7,10,11}.$$

As a special case of the cubical relations, we recover the usual flip move.

\begin{cor}\label{cor: four term identity gammaJ}
	The linear functions $\gamma_J$ (as well as the vectors $v_J$) satisfy the following additive relation: supposing that $I\cup \{a,b,c,d\} \in \binom{\lbrack n\rbrack}{k+2}$ with $a<b<c<d$, then we have
	\begin{eqnarray}\label{eq: 4-term octahedral}
		\gamma_{Iac} + \gamma_{Ibd} & = & \gamma_{Iad} + \gamma_{Ibc}.
	\end{eqnarray}
	
	Given $a<b<c<d$ such that $\{b\}\cup I\cup \{c\} \in \binom{\lbrack n\rbrack}{k}$ is a single subinterval of $\{1,2,\ldots, n\}$, then as a specialization of Equation \eqref{eq: 4-term octahedral} we obtain Equation \eqref{eq: Jacobi},
	\begin{eqnarray}\label{eq: Jacobi}
		\gamma_{Iac} + \gamma_{Ibd} = \gamma_{Iad}.
	\end{eqnarray}
\end{cor}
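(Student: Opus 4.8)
The plan is to read both identities off the $m=2$ instance of Proposition~\ref{prop:cubical relations 1}. First I would set up the relevant square: given $I\cup\{a,b,c,d\}\in\binom{\lbrack n\rbrack}{k+2}$ with $a<b<c<d$, apply Proposition~\ref{prop:cubical relations 1} with $m=2$, taking the two interlacing pairs to be $(i_1,j_1)=(a,b)$ and $(i_2,j_2)=(c,d)$ (so that the required $i_1<j_1<i_2<j_2$ holds), and taking the $(k-2)$-element spectator set $L$ of the proposition to be the set $I$ of the corollary, which is disjoint from $\{a,b,c,d\}$ exactly as needed. The four vertices of the resulting square $\mathcal{C}^{(I)}_{\{a,c\},\{b,d\}}$ are $e_{M\cup I}$ for $M\in\{\{a,c\},\{a,d\},\{b,c\},\{b,d\}\}$, and its two long diagonals join $e_{\{a,c\}\cup I}$ to $e_{\{b,d\}\cup I}$, respectively $e_{\{a,d\}\cup I}$ to $e_{\{b,c\}\cup I}$.

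Applying Proposition~\ref{prop:cubical relations 1} to this pair of diagonals yields $\gamma_{Iac}+\gamma_{Ibd}=\gamma_{Iad}+\gamma_{Ibc}$, which is exactly Equation~\eqref{eq: 4-term octahedral}; no nonfrozen hypothesis on the four $k$-subsets is needed, since each $\gamma_J$ is a well-defined linear functional on $\mathbb{R}^{(k-1)\times(n-k)}$ for every $k$-subset $J$, identically zero precisely when $J$ is a subinterval $\lbrack i,i+k-1\rbrack$ (the remark following Equation~\eqref{eq:gamma intro}), so the identity holds as stated even if some of the four subsets happen to be frozen. For Equation~\eqref{eq: Jacobi} I would then impose the extra hypothesis that $\{b\}\cup I\cup\{c\}$ is a subinterval of $\{1,\ldots,n\}$; cardinality forces $I=\{b+1,\ldots,c-1\}$ and $c=b+k-1$, so $\{b\}\cup I\cup\{c\}$ is a frozen $k$-subset and therefore $\gamma_{Ibc}\equiv 0$. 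Substituting into Equation~\eqref{eq: 4-term octahedral} and deleting the vanishing term gives $\gamma_{Iac}+\gamma_{Ibd}=\gamma_{Iad}$, which is Equation~\eqref{eq: Jacobi}.

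The main (and essentially only) point requiring care is the correct matching of the two long diagonals of $\mathcal{C}^{(I)}_{\{a,c\},\{b,d\}}$ with the pairs $\{\gamma_{Iac},\gamma_{Ibd}\}$ and $\{\gamma_{Iad},\gamma_{Ibc}\}$, i.e.\ keeping track of which index is the ``$0$'' choice and which is the ``$1$'' choice along each of the two axes of the cube. If one prefers an argument not routed through the geometry, Equation~\eqref{eq: 4-term octahedral} can also be verified directly: after the substitution $\alpha_{i,j}=\alpha'_{i,j}-\alpha'_{i,j+1}$ one has $\gamma_J=\sum_{i=1}^{k-1}\bigl(\alpha'_{i,\,j_i-(i-1)}-\alpha'_{i,\,j_{i+1}-i}\bigr)$ for $J=\{j_1<\cdots<j_k\}$, and a short case analysis on how $a,b,c,d$ interleave with the elements of $I$ in sorted order shows that the sums $\gamma_{Iac}+\gamma_{Ibd}$ and $\gamma_{Iad}+\gamma_{Ibc}$ agree term by term. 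I expect this interleaving bookkeeping to be the only nontrivial part of either route; the appeal to Proposition~\ref{prop:cubical relations 1} is attractive precisely because it packages it once and for all.
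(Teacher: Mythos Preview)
Your proposal is correct and matches the paper's intended approach: the corollary is presented immediately after Proposition~\ref{prop:cubical relations 1} with the sentence ``As a special case of the cubical relations, we recover the usual flip move,'' and your $m=2$ specialization with $L=I$ is exactly that special case. One small inaccuracy: your claim that ``cardinality forces $I=\{b+1,\ldots,c-1\}$ and $c=b+k-1$'' assumes $b$ and $c$ are the endpoints of the subinterval $\{b\}\cup I\cup\{c\}$, which is not part of the hypothesis (e.g.\ $k=4$, $I=\{3,4\}$, $b=5$, $c=6$); but this does not affect the argument, since all that is needed is that $Ibc$ is a linear subinterval and hence $\gamma_{Ibc}=0$.
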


When $k=2$ (and $I = \emptyset$), then Equations \eqref{eq: 4-term octahedral} and  \eqref{eq: Jacobi} are equivalent (pulling back via the projection $p_{2,n}$) to the following somewhat more familiar relations among roots, respectively
$$(\alpha_{1,a}-\alpha_{1,c-1}) + (\alpha_{1,b}-\alpha_{1,d-1}) = (\alpha_{1,a}-\alpha_{1,d-1}) + (\alpha_{1,b}-\alpha_{1,c-1}).$$
and
$$(\alpha_{1,a}-\alpha_{1,c-1}) + (\alpha_{1,c-1}-\alpha_{1,d}) = \alpha_{1,a}-\alpha_{1,d}.$$

\section{Triangulation}\label{sec: triangulation}
Denote by $\mathcal{L}^{(k)}_{n-k} \subset \mathcal{H}_{k,n}$ the $(k-1)(n-k-1)$-dimensional integer lattice generated by the elements $f_{i,j} = p_{k,n}(e_{i,j})$, for $(i,j)\in \lbrack 1,k-1\rbrack \times \lbrack 1,n-k-1\rbrack$.  In particular, for our choice of projection $p_{k,n}$ then $\mathcal{L}^{(k)}_{n-k}$ is a Cartesian product of $k-1$ root lattices of type $SL_{n-k}$.

In what follows, we identify the $k-1$ lattices as $v^{(i)}_{j_1j_2} = v_{\lbrack j_1,j_1+i-1\rbrack \cup \lbrack j_2+k-1-i, j_2+k-2\rbrack}$.  For instance, with $(k,n) = (4,9)$, say, then
$$v^{(1)}_{36} = v_{3678} = f_{1,3} + f_{1,4},\ \ v^{(2)}_{36} = v_{3478} = f_{2,3} + f_{2,4},\ \ v^{(3)}_{36} = v_{3458} = f_{3,3} + f_{3,4}.$$
A key property of the elements $f_{i,j} \in \mathcal{H}_{k,n}$ is that for each $i=1,\ldots, k-1$, then $f_{i,1},\ldots, f_{i,n-k}$ span a subspace of dimension $n-k-1$, satisfying the single relation 
$$f_{i,1} + \cdots  + f_{i,n-k} = 0.$$
\begin{lem}
	For each given $i=1,\ldots, k-1$, then the embedding $\{j_1,j_2\} \mapsto \lbrack j_1,j_1+(i-1)\rbrack \cup \lbrack j_2,j_2+(k-(i-1))\rbrack$ has the following property:
	the pair $\{\{j_1,j_2\},\{j_3,j_4\}\}$ is noncrossing (and thus is an edge in $\mathbf{NC}_{2,n}$) if and only if the image
	$$\{\lbrack j_1,j_1+(i-1)\rbrack \cup \lbrack j_2,j_2+(k-(i-1))\rbrack,\lbrack j_3,j_3+(i-1)\rbrack \cup \lbrack j_4,j_4+(k-(i-1))\rbrack\}$$
	forms a noncrossing pair (and thus is an edge in $\mathbf{NC}_{k,n+(k-2)})$.
\end{lem}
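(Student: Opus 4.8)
The claim is a purely combinatorial statement about a one-parameter family of ``interval-thickening'' embeddings from pairs $\{j_1,j_2\}$ (i.e.\ $2$-element subsets, hence elements of $\mathbf{NC}_{2,n}$ after padding) into $k$-element subsets, and it asserts that this embedding preserves and reflects the noncrossing relation of Definition~\ref{defn: noncrossing}. My plan is to unwind both sides of the equivalence directly in terms of the sign-pattern (weak separation) criterion, keeping careful track of how the fixed interval blocks $[j_1,j_1+(i-1)]$ and $[j_2,j_2+k-1-i]$ contribute. The essential point to isolate is that thickening an endpoint by a \emph{fixed} cyclic interval does not create or destroy any occurrence of the forbidden alternating pattern $e_a-e_b+e_c-e_d$, because the thickened block is common across the two subsets being compared (up to the shift) and only the two ``free'' endpoints $j_1,j_2$ versus $j_3,j_4$ can interleave.

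First I would set $A = [j_1,j_1+(i-1)]\cup[j_2,j_2+k-1-i]$ and $A' = [j_3,j_3+(i-1)]\cup[j_4,j_4+k-1-i]$ and compute the difference vector $e_A - e_{A'}$ coordinatewise. Because the two blocks in each of $A,A'$ have the same fixed lengths $i$ and $k-i$, the support of $e_A - e_{A'}$ decomposes into the symmetric difference of the first blocks (controlled by $j_1$ vs.\ $j_3$) and the symmetric difference of the second blocks (controlled by $j_2$ vs.\ $j_4$), and the sign pattern appearing in $e_A-e_{A'}$ is, cyclically, a ``thickened'' copy of the sign pattern appearing in $e_{\{j_1,j_2\}} - e_{\{j_3,j_4\}}$ (viewed in $\Delta_{2,n}$, or in $\Delta_{2,n+(k-2)}$ after accounting for the extra coordinates the thickening occupies). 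The key lemma to prove is: a run of $+$'s of length $\ell$ followed by a run of $-$'s (or vice versa) in a difference of vertices contains the forbidden $+{-}+{-}$ pattern if and only if the ``collapsed'' pattern — where each maximal run is replaced by a single sign — does; this is clear since the forbidden pattern only counts sign \emph{alternations}, not multiplicities. Hence $A,A'$ is weakly separated iff $\{j_1,j_2\},\{j_3,j_4\}$ is.

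Next I would handle the second clause of Definition~\ref{defn: noncrossing}, the ``interior intervals do not coincide'' escape hatch, for all the intermediate sizes $a<b$ in the range $1\le a<b\le k$. For the thickened subsets $A,A'$, a sub-pair $\{\{i_a,\ldots,i_b\},\{j_a,\ldots,j_b\}\}$ either lies entirely inside one of the two fixed blocks (in which case, since those blocks are literally intervals of the same length offset by $j_1-j_3$ or $j_2-j_4$, weak separation of that sub-pair reduces to a trivial interval comparison and always holds unless the blocks coincide, and if they coincide the interiors coincide too), or it straddles from one block into the other, in which case its endpoints $i_a,i_b$ are determined by the free coordinates $j_1,j_2$ and $j_3,j_4$ and the comparison is exactly the corresponding size-$2$ comparison in $\mathbf{NC}_{2,n}$. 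I would organize this into a short case analysis on where $a$ and $b$ fall relative to the block boundaries, showing in each case that the relevant condition in the definition for $\{A,A'\}$ holds iff the single condition ($k=2$, i.e.\ $a=1,b=2$) holds for $\{\{j_1,j_2\},\{j_3,j_4\}\}$.

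\textbf{Main obstacle.} The routine part is the coordinatewise bookkeeping; the genuinely delicate point is the interaction between the two escape hatches in Definition~\ref{defn: noncrossing} when the fixed blocks of $A$ and $A'$ partially overlap or are adjacent (e.g.\ when $j_1+i-1$ is close to $j_2$, so that the two blocks of $A$ nearly merge, or when $j_3 \le j_1 \le j_3+i-1$). In those configurations one must check that a spurious violation of weak separation at some intermediate size $b$ is always rescued by non-coincidence of interiors, and conversely that a genuine crossing at size $2$ in the source is not accidentally ``hidden'' by the thickening. I expect this boundary-overlap analysis — verifying that the thickening is injective on the relevant data and that no new coincidences of interior intervals are introduced — to be where the real work lies, and I would dispatch it by noting that the map $\{j_1,j_2\}\mapsto A$ is injective (the free endpoints are recoverable as the minima of the two blocks) and that interior-interval coincidence for $A,A'$ forces $j_1=j_3$ and $j_2=j_4$, i.e.\ $A=A'$, reducing everything to the clean non-degenerate case handled in the first two paragraphs.
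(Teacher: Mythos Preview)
The paper states this lemma without proof; it is treated as an elementary verification and the text moves directly to Proposition~\ref{prop: unimodular simplices noncrossing}. Your direct approach via Definition~\ref{defn: noncrossing} is the natural one and is correct in outline: the observation that thickening each endpoint by a fixed-length interval replaces each nonzero entry of $e_{\{j_1,j_2\}}-e_{\{j_3,j_4\}}$ by a run of the same sign, and that weak separation depends only on the cyclic pattern of sign changes, is exactly right and handles the pair $(a,b)=(1,k)$. Your case split on whether the sub-pair indexed by $(a,b)$ lies inside one block or straddles the boundary is also the right organization; in particular the single pair $(a,b)=(i,i+1)$ has empty interior and is literally a shift of $\{j_1,j_2\}$ versus $\{j_3,j_4\}$, which gives the ``only if'' direction in one line.

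One small correction: your closing claim that interior-interval coincidence for the straddling sub-pairs forces $j_1=j_3$ \emph{and} $j_2=j_4$ is too strong. If $a<i$ and $b=i+1$, the interior lies entirely in the first block and coincidence only forces $j_1=j_3$; symmetrically for $a=i$, $b>i+1$. This does not damage the argument, however: if, say, $j_1=j_3$ then the first blocks of $A$ and $A'$ coincide, the difference $e_A-e_{A'}$ is supported on the second blocks alone and hence has a single $+$-run and a single $-$-run, so every sub-pair is weakly separated outright and the escape hatch is never needed. The analogous remark disposes of $j_2=j_4$. With that adjustment your case analysis closes.
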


According to Proposition \ref{prop: unimodular simplices noncrossing}, given any maximal noncrossing collection 
$$\mathcal{J}= \left\{J_1,\ldots, J_{(k-1)(n-k-1)}\right\}\in\mathbf{NC}_{k,n}$$
then the cone $\langle \mathcal{J}\rangle_+$, defined by 
$$\langle \mathcal{J}\rangle_+ = \left\{\sum_{j=1}^{(k-1)(n-k-1)}t_j v_{J_j}: t_j \ge0 \right\},$$
is simplicial and unimodular (so, the primitive integer generators of its rays, together with the origin, form the vertices of a simplex of volume $1/((k-1)(n-k-1))!$ where $(k-1)(n-k-1)$ is the dimension of the ambient space).

\begin{prop}\label{prop: unimodular simplices noncrossing}
	Given any maximal non-crossing collection $\mathcal{J} = \left\{J_1,\ldots, J_{(k-1)(n-k-1)}\right\} \in\mathbf{NC}_{k,n}$ of nonfrozen subsets $J_i \in \binom{\lbrack n\rbrack}{k}^{nf}$, then the polyhedral cone $\langle \mathcal{J}\rangle_+$ is simplicial of dimension $(k-1)(n-k-1)$.  Moreover, any pair of simplices of the form
	$$\text{Convex hull}\left(\{0\}\cup \left\{ v_J: J\in \mathcal{J}_1 \right\}\right),\ \text{Convex hull}\left(\{0\}\cup \left\{ v_J: J\in \mathcal{J}_2 \right\}\right)$$
	for $\mathcal{J}_1,\mathcal{J}_2 \in \mathbf{NC}_{k,n}$ maximal noncrossing collections, are related by a unique volume-preserving integer linear transformation.

\end{prop}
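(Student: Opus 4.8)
The plan is to prove both assertions of Proposition~\ref{prop: unimodular simplices noncrossing} simultaneously, by induction on $k$ (with $k=2$ as the base case), reducing the higher-rank statement to the classical type $A$ situation via the product structure of the lattice $\mathcal{L}^{(k)}_{n-k}$ and the cubical relations of Proposition~\ref{prop:cubical relations 1}. The key structural input is that, by construction, each $v_J$ with $J=\{j_1<\cdots<j_k\}$ decomposes as $v_J=\sum_{i=1}^{k-1}f_{i,\lbrack j_i-(i-1),\,j_{i+1}-i-1\rbrack}$, i.e.\ it is a sum of one ``interval root'' from each of the $k-1$ copies of the type $A_{n-k-1}$ root lattice. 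So I would first record the elementary fact that the interval root $f_{i,\lbrack a,b\rbrack}=f_{i,a}+\cdots+f_{i,b}$ in the $i$-th factor is, under the change of variables $\alpha_{i,j}=\alpha'_{i,t}-\alpha'_{i,t+1}$ described after Equation~\eqref{eq:gamma intro}, exactly the positive root $\alpha'_{i,a-1}-\alpha'_{i,b+1}$ of $SL_{n-k}$ (indices read cyclically), and that the classical fact (Gelfand--Graev--Postnikov, Postnikov, M\'esz\'aros) is that any collection of pairwise noncrossing such roots in a single $A$-lattice spans a simplicial unimodular cone, with all maximal such cones related by volume-preserving integer transformations.

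Next I would establish the \emph{reduction to factors}: given a maximal noncrossing collection $\mathcal{J}\in\mathbf{NC}_{k,n}$ of size $(k-1)(n-k-1)$, I want to show the generators $\{v_J:J\in\mathcal{J}\}$ are linearly independent and unimodular. The idea is to use the cubical/flip relations (Corollary~\ref{cor: four term identity gammaJ} and Proposition~\ref{prop:cubical relations 1}) together with the Lemma immediately preceding the proposition — which says that for each fixed $i$ the embedding $\{j_1,j_2\}\mapsto \lbrack j_1,j_1+(i-1)\rbrack\cup\lbrack j_2,j_2+(k-1-i)\rbrack$ carries noncrossing pairs to noncrossing pairs — to ``project'' the collection $\mathcal{J}$ onto each of the $k-1$ coordinate factors. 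Concretely, I expect that there is a bijection (already implicit in the identification $v^{(i)}_{j_1 j_2}$ and the isomorphism $\mathbf{NC}_{k,n}\cong \mathbf{NC}_{k,n}$ with the reduced noncrossing complex studied in \cite{PKPS,GrassmannAssociahedron}) between maximal noncrossing collections in $\mathbf{NC}_{k,n}$ and $(k-1)$-tuples of ``compatibly stacked'' maximal noncrossing collections in $\mathbf{NC}_{2,n}$, one per factor; since the matrix of coordinates of $\{v_J\}$ in the basis $\{f_{i,j}\}$ is block-triangular with respect to this factor decomposition up to the flip relations, its determinant is the product of the determinants of the blocks, each of which is $\pm1$ by the $k=2$ case. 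This simultaneously gives simpliciality (independence) and unimodularity.

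Finally, for the ``related by a unique volume-preserving integer linear transformation'' claim: any two maximal unimodular simplices with a vertex at the origin inside the same lattice $\mathcal{L}^{(k)}_{n-k}$ of full rank are automatically related by \emph{some} such transformation — send the primitive ray generators of one to those of the other, which is a lattice automorphism precisely because each is a lattice basis; uniqueness follows from the fact that a unimodular simplex spanning the lattice determines its ray generators, hence determines the map. The only genuine content is that each maximal noncrossing collection really does give a lattice \emph{basis}, which is exactly unimodularity.

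\textbf{Main obstacle.} I expect the hard part to be making the ``projection onto factors'' precise: showing that the block-triangular/flip structure genuinely computes the determinant of $\{v_J\}$ as a product over the $k-1$ type $A$ factors. The subtlety is that a single maximal noncrossing collection in $\mathbf{NC}_{k,n}$ does \emph{not} literally split as a disjoint union of $k-1$ independent noncrossing collections — the factors interact through the stacking of intervals (as the examples with interlaced sets $L$ in Proposition~\ref{prop:cubical relations 1} show) — so one must carefully set up a total order on $\mathcal{J}$ (e.g.\ refining the lexicographic order $\prec$) with respect to which the coordinate matrix is unitriangular within each factor, using the cubical relations to rewrite any $v_J$ that mixes factors non-triangularly. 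Handling this ordering argument cleanly, and verifying it is compatible with the inductive passage $n\mapsto n+(k-2)$ in the Lemma, is where the real work lies.
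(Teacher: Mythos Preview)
Your high-level strategy matches the paper's: argue by induction on $k$, invoke the classical type $A$ root polytope triangulation for the base case $k=2$, and prove both simpliciality and unimodularity simultaneously by showing that $\{v_J:J\in\mathcal{J}\}$ is a lattice basis for $\mathcal{L}^{(k)}_{n-k}$ (from which the existence and uniqueness of the volume-preserving integer transformation follow exactly as you say). That backbone is correct.

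Where you diverge is in the inductive step. You attempt to decompose into all $k-1$ type $A$ factors at once, hoping for a correspondence between maximal elements of $\mathbf{NC}_{k,n}$ and ``$(k-1)$-tuples of compatibly stacked'' maximal elements of $\mathbf{NC}_{2,\cdot}$, with a block-triangular matrix over those factors. You correctly identify this as the hard part, and indeed it is: a naive product cannot work (for $(k,n)=(3,6)$ one has $C^{(3)}_{3}=42$ maximal noncrossing collections, while $(C^{(2)}_{3})^{2}=25$), and making ``compatibly stacked'' precise enough to run a determinant argument is exactly the obstacle you flag without resolving.

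The paper sidesteps this entirely by using a \emph{two}-block recursive split rather than a $(k-1)$-block simultaneous one. Concretely, it introduces the partition of unity $\varphi_{k-2}+\varphi_{-1}=\mathbf{id}$ on $\mathcal{H}_{k,n}$, where $\varphi_{k-2}$ projects onto the first $k-2$ rows and $\varphi_{-1}$ onto the last row. The key computation is that for $J=\{j_1<\cdots<j_k\}$ one has $\varphi_{k-2}(v_J)=v_{\{j_1,\ldots,j_{k-1},\,j_{k-1}+1\}}$ and $\varphi_{-1}(v_J)=v_{[j_{k-1}-(k-2),\,j_{k-1}]\cup\{j_k\}}$. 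The sets $J\in\mathcal{J}$ with $\varphi_{k-2}(v_J)=0$ are precisely those with $j_1,\ldots,j_{k-1}$ consecutive; these form a noncrossing subcollection that lands in a rank-$2$ problem and is handled by the base case. The complementary subcollection projects under $\varphi_{k-2}$ to a noncrossing collection in (a copy of) $\mathbf{NC}_{k-1,\cdot}$, handled by the inductive hypothesis. This gives a genuine $2\times 2$ block-triangular structure without needing to untangle all $k-1$ factors simultaneously. In effect the paper peels off one row at a time, which is exactly what your total-order idea would converge to if iterated, but it gets there directly and avoids the bookkeeping you anticipate.
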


\begin{proof}
	We shall argue by induction on $k$.
	
	For the base step: both statements for $\mathcal{R}^{(2)}_{n-2}$ (with $n\ge 4$) translate directly to standard properties for the non-crossing triangulation \cite{GelfanGraevPostnikov1997,Postnikov2006} of the type $A_{n-2}$ root polytope into simplices which are put in bijection with certain alternating trees.  See also \cite{Meszaros2011} and the references therein.  The identification with positive roots is as follows: for any $1\le i<j \le n-1$ we have $e_i-e_j = v_{i,j+1}$.
	
	Now let $\mathcal{J} = \left\{J_1,\ldots, J_{(k-1)(n-k-1)}\right\} \in \mathbf{NC}_{k,n}$ be any maximal non-crossing collection.
	
	We will show that the set $\left\{v_J: J\in\mathcal{J} \right\}$
	is a \textit{lattice} basis, that is, any element in the ambient lattice $\mathcal{L}^{(k)}_{n-k}$
	is a linear combination of the given vertices $v_J \in \mathcal{R}^{(k)}_{n-k}$ for $J\in \mathcal{J}$ with integer coefficients.  Indeed, in this case, any pair of maximal simplices labeled by maximal non-crossing collections are related by a volume-preserving (integer-valued) linear transformation.

	We induct on $k$ using a partition of unity $\varphi_{k-2}+\varphi_{-1} = \mathbf{id}$.  
	
	Let $\varphi_{k-2}:\mathcal{H}_{k,n} \rightarrow \mathcal{H}_{k-1,n}$ be the projection onto the first $k-2$ rows, defined by $\varphi_{k-2}: e_{i,j} \mapsto e_{i,j}$ for $1\le i\le k-2$ and $e_{k-1,j} \mapsto 0$, for all $j = 1,\ldots, (n-k)$.  Define a second projection $\varphi_{-1}: \mathcal{H}^{(0)}_{k,n} \rightarrow \mathcal{H}^{(0)}_{2,n}$ onto the last row, by
	$\varphi_{-1}: e_{i,j} \mapsto 0$ for $1\le i\le k-2$ and $e_{k-1,j} \mapsto e_{k-1,j}$, for all $j = 1,\ldots, (k-1)(n-k)$. 
	Define 
	$$\mathcal{J}_{k-2} = \left\{J \in\mathcal{J}: \varphi_{k-2}(v_J) \not=0 \right\} \text{ and } \mathcal{J}_{-1} = \left\{J \in\mathcal{J}: \varphi_{-1}(v_J) \not=0 \right\}.$$
	
	Let $v \in \mathcal{L}^{(k)}_{n-k}$ be an arbitrary lattice point.  We claim that there exists a unique set of integers 
	$$\{c_J\in\mathbb{Z}: J \in \mathcal{J}\}$$
	such that 
	$$v = \sum_{J \in \mathcal{J}} c_J v_J.$$
	Now $\mathcal{J}_{k-2}$ is a collection of $k$-element subsets $J = \{j_1,\ldots, j_k\}$ wherein the largest two indices satisfy $j_{k-1} + 1 = j_k$.  Similarly, $(\mathcal{J}_{-1}\setminus \mathcal{J}_{k-2})$ is a collection of $k$-element subsets of the form $J = \lbrack a,b\rbrack \cup \{j\}$ wherein only the largest two indices $b$ and $j$ are not adjacent.
	
	Then, inducting on $k$, it follows that the (noncrossing) collections $\mathcal{J}_{k-2}$ and respectively $(\mathcal{J}_{-1}\setminus \mathcal{J}_{k-2})$, the sets
	$$\{v_J: J\in\mathcal{J}_{k-2} \},\ \ \{v_J: J\in(\mathcal{J}_{-1}\setminus \mathcal{J}_{k-2})\},$$
	are lattice bases for their respective sublattices, 
	$$\mathbb{Z}\left\{v_J: J \in \mathcal{J}_{k-2}\right\}\simeq \mathcal{L}^{(k-1)}_{n-k} \text{ and } \mathbb{Z}\left\{v_J: J \in (\mathcal{J}_{-1}\setminus \mathcal{J}_{k-2})\right\} \simeq \mathcal{L}^{(2)}_{n-k}.$$
	Consequently there exist unique integers $a_J,b_J$ such that 
	$$\varphi_{k-2}(v) = \sum_{J\in \mathcal{J}_{k-2}} a_J v_J,\ \ \varphi_{-1}(w) = \sum_{J\in \mathcal{J}_{-1}\setminus \mathcal{J}_{k-2}} b_J v_J,$$
	hence
	\begin{eqnarray}
		v & = & \varphi_{k-2}(w) + \varphi_{-1}(w)\\
		& = &  \sum_{J\in \mathcal{J}_{k-2}} a_J v_J + \sum_{J\in \mathcal{J}_{-1}\setminus \mathcal{J}_{k-2}} b_J v_J,\nonumber\\
		& = &  \sum_{J \in \mathcal{J}} c_J v_J,
	\end{eqnarray}
	where $c_J = a_J$ if $J \in \mathcal{J}_{k-2}$ and $c_J = b_J$ if $J \in \mathcal{J}_{-1}\setminus \mathcal{J}_{k-2}$.
\end{proof}

\begin{thm}\label{thm: noncrossing subdivision Wkn}
	Given any $v\in \mathcal{H}_{k,n}$, then there exists a unique noncrossing collection $\mathcal{J} \in \mathbf{NC}_{k,n}$ such that $v$ is in the relative interior of $\langle \mathcal{J}\rangle_+$.  In particular, the set of cones
	$$\left\{\langle \mathcal{J}\rangle_+: \mathcal{J} \in \mathbf{NC}_{k,n}\right\}$$
	defines a complete simplicial fan.	
\end{thm}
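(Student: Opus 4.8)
The plan is to deduce the theorem from Proposition~\ref{prop: unimodular simplices noncrossing}, which already gives that each maximal cone $\langle\mathcal{J}\rangle_+$ is a full-dimensional unimodular simplicial cone, together with the octahedral (cubical) relations of Proposition~\ref{prop:cubical relations 1} and Corollary~\ref{cor: four term identity gammaJ} and the known combinatorics of the noncrossing complex: with $d=(k-1)(n-k-1)$, the complex $\mathbf{NC}_{k,n}$ is pure of dimension $d-1$, it is vertex-decomposable and hence a shellable PL sphere, and every one of its ridges --- that is, every noncrossing collection $\mathcal{W}$ of size $d-1$, which I will call a \emph{wall} --- lies in exactly two maximal collections \cite{PKPS,GrassmannAssociahedron}. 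Since ``noncrossing'' is a pairwise condition, sub-collections of noncrossing collections are noncrossing, so the faces of $\langle\mathcal{J}\rangle_+$ are precisely the cones $\langle\mathcal{J}'\rangle_+$ with $\mathcal{J}'\subseteq\mathcal{J}$; hence the theorem reduces to showing (i) any two of the cones $\langle\mathcal{J}\rangle_+$ meet in a common face, and (ii) their union is all of $\mathcal{H}_{k,n}$. I would establish both by a local analysis around each face, run by induction on the codimension (equivalently, after the base cases, on $d$); granting (i) and (ii), ``complete simplicial fan'' and uniqueness of the cell whose relative interior contains a given $v$ are immediate.

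First I would treat the walls. Given a wall $\mathcal{W}$ and the two maximal collections $\mathcal{W}\cup\{J\}$, $\mathcal{W}\cup\{J'\}$ containing it, the sets $J$ and $J'$ must be a crossing pair (else $\mathcal{W}\cup\{J,J'\}$ would violate purity), so the flip between the two maximal cones is governed by a quadrilateral and, reading off the appropriate instance of Corollary~\ref{cor: four term identity gammaJ} (the square case of Proposition~\ref{prop:cubical relations 1}), one gets a relation $v_J+v_{J'}=v_M+v_{M'}$ in which $M,M'$ are either members of $\mathcal{W}$ or frozen. Since $v_L\in H_{\mathcal{W}}:=\operatorname{span}\{v_L:L\in\mathcal{W}\}$ for $L\in\mathcal{W}$ and $v_L=0$ for $L$ frozen, this forces $v_J+v_{J'}\in H_{\mathcal{W}}$; and $v_J\notin H_{\mathcal{W}}$ because $\mathcal{W}\cup\{J\}$ spans $\mathcal{H}_{k,n}$, so $v_J$ and $v_{J'}$ lie on strictly opposite sides of the hyperplane $H_{\mathcal{W}}$. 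Therefore, in a neighbourhood of a relative-interior point of $\langle\mathcal{W}\rangle_+$, the two cones $\langle\mathcal{W}\cup\{J\}\rangle_+$ and $\langle\mathcal{W}\cup\{J'\}\rangle_+$ meet exactly along $\langle\mathcal{W}\rangle_+$ and together fill that neighbourhood.

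Then, for a face $\langle\mathcal{T}\rangle_+$ of codimension $j\ge 2$, I would pass to the quotient $\mathcal{H}_{k,n}/H_{\mathcal{T}}\cong\mathbb{R}^j$, where $H_{\mathcal{T}}=\operatorname{span}\{v_L:L\in\mathcal{T}\}$. The link of $\mathcal{T}$ in $\mathbf{NC}_{k,n}$ is again a PL $(j-1)$-sphere, the images of its vertices form unimodular bases on the maximal faces (Proposition~\ref{prop: unimodular simplices noncrossing}, since $\{v_L:L\in\mathcal{T}\}$ spans a saturated sublattice), and adjacent facets of the link still map to opposite sides of the separating hyperplane, because the wall relation $v_J+v_{J'}\in H_{\mathcal{W}}$ descends modulo $H_{\mathcal{T}}\subseteq H_{\mathcal{W}}$. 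So the inductive hypothesis applies in $\mathbb{R}^j$ and the cones $\langle\mathcal{J}\rangle_+/H_{\mathcal{T}}$, $\mathcal{J}\supseteq\mathcal{T}$ maximal, form a complete simplicial fan; equivalently, around every face of our collection the cones fill a neighbourhood and overlap only in common faces. It follows at once that $X:=\bigcup_{\mathcal{J}\in\mathbf{NC}_{k,n}}\langle\mathcal{J}\rangle_+$ is open, hence equal to $\mathcal{H}_{k,n}$ (it is closed and $\mathcal{H}_{k,n}$ is connected), giving (ii); and the same local statements give (i). The low-dimensional cases $d\le 2$, where the topological reasoning degenerates, reduce to the classical noncrossing triangulation of the type-$A$ root polytope \cite{GelfanGraevPostnikov1997,Postnikov2006} and can be checked by hand.

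The hardest part, I expect, will be the inductive local analysis at faces of codimension $\ge 2$: one must verify that the link of a face of $\mathbf{NC}_{k,n}$ is a PL sphere carrying a geometric realization of the same type --- in particular that the opposite-sides property at its ridges persists --- which rests on the vertex-decomposability of $\mathbf{NC}_{k,n}$ \cite{GrassmannAssociahedron} (passing to links essentially by definition) and on re-deriving each flip relation from the octahedral identities of Corollary~\ref{cor: four term identity gammaJ}, including the point that all terms other than $v_J,v_{J'}$ land in $H_{\mathcal{W}}$. A secondary nuisance is that passing to a link does not preserve $(k,n)$ in a transparent way, so the induction is cleanest when organized purely by the integer $d=(k-1)(n-k-1)$, with base case $d=1$ (a segment spanned by a positive root and its negative). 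It is worth noting that the more familiar route --- realizing the triangulation as a \emph{regular} one via a height function such as the $\eta_J$ --- does not work for $k\ge 3$ (cf.\ Section~\ref{sec: regular subdivisions root polytope}), which is precisely why this more geometric argument is needed; an alternative would be to identify the noncrossing triangulation with a placing (pushing) triangulation for a suitable ordering of the vertices $v_J$, but that again rests on the same combinatorial input.
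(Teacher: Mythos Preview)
Your approach is genuinely different from the paper's and is worth comparing. The paper argues constructively by induction on $k$: it splits $\mathcal{H}_{k,n}$ via the row projections $\varphi_{k-2}$ (first $k-2$ rows) and $\varphi_{-1}$ (last row), applies the inductive hypothesis to each piece to obtain two separate noncrossing expansions $\varphi_{k-2}(v)=\sum_{I\in\mathcal{I}_{k-2}} t'_I v_I$ and $\varphi_{-1}(v)=\sum_{I\in\mathcal{I}_{-1}} t''_I v_I$, and then runs an explicit algorithm that resolves the crossings between the two families one pair at a time using three-term identities of the form $v_{J_\ell}+v_{I_j}=v_K$. This produces, for each $v$, an explicit noncrossing collection with strictly positive coefficients, and it never invokes the fact that $\mathbf{NC}_{k,n}$ is a sphere. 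Your route --- import the PL-sphere/pseudomanifold structure of $\mathbf{NC}_{k,n}$ from \cite{PKPS,GrassmannAssociahedron}, verify a local opposite-sides condition at every wall, and conclude by a covering/degree argument --- is the standard technique used e.g.\ for $g$-vector fans of cluster algebras, and when the wall condition is in hand it gives a conceptually clean proof; you are also right that the height-function route via the $\eta_J$ does not work for $k\ge 3$.

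The gap is in the wall step. You assert that for every wall $\mathcal{W}$ with adjacent maximal collections $\mathcal{W}\cup\{J\}$ and $\mathcal{W}\cup\{J'\}$ one has $v_J+v_{J'}=v_M+v_{M'}$ with $M,M'\in\mathcal{W}\cup\{\text{frozen}\}$, citing Corollary~\ref{cor: four term identity gammaJ}. But that corollary only treats flips of the shape $Iac\leftrightarrow Ibd$ with a common $(k-2)$-set $I$, and flips in $\mathbf{NC}_{k,n}$ are not all of this shape: already for $(k,n)=(3,6)$ the flip $\{1,3,6\}\leftrightarrow\{2,4,5\}$ (Example~\ref{example: octahedral remark}) has disjoint index sets, so one must fall back on the general cubical relations of Proposition~\ref{prop:cubical relations 1}. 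Even granting an appropriate cube, you have not argued why its unique noncrossing diagonal $\{M,M'\}$ must lie in the \emph{specific} wall $\mathcal{W}$ at hand; unimodularity of both bases only tells you that the coefficient of $v_J$ in the expansion of $v_{J'}$ over $\mathcal{W}\cup\{J\}$ is $\pm 1$, and ruling out $+1$ is exactly the missing content. Closing this requires either a complete combinatorial description of the flips in $\mathbf{NC}_{k,n}$ together with a verification that the exchange partners always sit in the wall, or an independent sign argument --- neither is a one-line consequence of what is proved here. The paper's inductive algorithm sidesteps this issue entirely.
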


\begin{proof}
	We show that any point in $\mathcal{H}_{k,n}$ lies in the relative interior of a unique simplicial cone $\langle \mathcal{J}\rangle_+$ where $\mathcal{J} \in \mathbf{NC}_{k,n}$ is a noncrossing collection. 
	
	We induct on $k$, using the partition of unity $\varphi_{k-2}+\varphi_{-1} = \mathbf{id}$ to prepare the induction step as in the proof of Proposition \ref{prop: unimodular simplices noncrossing}.  Let us repeat the construction for sake of proximity.

	Denote by $\varphi_{k-2}:\mathcal{H}_{k,n} \rightarrow \mathcal{H}_{k,n}$ the projection $\varphi_{k-2}: e_{i,j} \mapsto e_{i,j}$ for $1\le i\le k-2$ and $e_{k-1,j} \mapsto 0$, for all $j = 1,\ldots, (n-k)$.	Define a second projection $\varphi_{-1}: \mathcal{H}_{k,n} \rightarrow \mathcal{H}_{k,n}$  by
$\varphi_{-1}: e_{i,j} \mapsto 0$ for $1\le i\le k-2$ and $e_{k-1,j} \mapsto e_{k-1,j}$, for all $j = 1,\ldots, (n-k)$.  Evidently we have $\varphi_{k-2}(\mathcal{H}_{k,n}) \simeq \mathcal{H}_{k-1,n-1}$ and $\varphi_{-1}(\mathcal{H}_{k,n})\simeq \mathcal{H}_{2,n-(k-2)}$.  Finally, again set 
$$\mathcal{J}_{k-2} = \left\{J \in\mathcal{J}: \varphi_{k-2}(v_J) \not=0 \right\} \text{ and } \mathcal{J}_{-1} = \left\{J \in\mathcal{J}: \varphi_{-1}(v_J) \not=0 \right\}.$$

	
	Given $J = \{j_1,\ldots, j_k\} \in \binom{\lbrack n\rbrack}{k}^{nf}$, it follows readily that  
	\begin{eqnarray}\label{eq: decomposed wj noncrossing}
		\varphi_{k-2}(v_J) & = & v_{j_1,\ldots, j_{k-1},j_{k-1}+1}\\
		\varphi_{-1}(v_J) & = & v_{j_{k-1}-(k-2),j_{k-1} - (k-3),\ldots, j_{k-1},j_{k}},\nonumber
	\end{eqnarray}
	noting that if $J$ is a single cyclic interval than $v_{J}=0$.

	Here, in $v_{j_1,\ldots, j_{k-1},j_{k-1}+1}$ at least the last two labels are consecutive, while in the element $$v_{j_{k-1}-(k-2),j_{k-1} - (k-3),\ldots, j_{k-1},j_{k}}$$ the first $k-1$ labels are consecutive.  Finally, note that $v_J$ can be recovered as
	\begin{eqnarray*}
		\varphi_{k-2}(v_J) +\varphi_{-1}(v_J)  & = &  v_{j_1,\ldots, j_{k-1},j_{k-1}+1}+v_{j_{k-1}-(k-2),j_{k-1} - (k-3),\ldots, j_{k-1},j_{k}}\\
		& = & v_J.
	\end{eqnarray*}	
	compatibly with the relation $\varphi_{k-2} + \varphi_{-1} = \mathbf{id}$.
	
	Now choose an arbitrary point $v \in \mathcal{H}_{k,n}$.  We claim that there exists a unique noncrossing collection $\mathcal{J}\in \mathbf{NC}_{k,n}$, possibly not maximal, such that 
	$$v = \sum_{J \in \mathcal{J}} t_J v_J.$$
	with all parameters $t_J\in \mathbb{R}_{>0}$ positive. 

	Let us assume for our inductive hypothesis that, for any pair of integers $(\ell,m)$ with $2\le \ell\le m-2$ with $\ell <k$ and $m<n$, then the set of cones
	$$\left\{\langle \mathcal{J}\rangle_+: \mathcal{J} \in \mathbf{NC}_{\ell,m}\right\}$$
	gives rise to a complete simplicial fan.  
	
	Then there exist unique noncrossing collections $\mathcal{I}_{k-2},\mathcal{I}_{-1}\in \mathbf{NC}_{k,n}$ such that 
	\begin{eqnarray*}
		\varphi_{k-2}(v) & = & \sum_{I \in \mathcal{I}_{k-2}}t'_I v_I
	\end{eqnarray*}
	and
	\begin{eqnarray*}
		\varphi_{-1}(v) & = & \sum_{I \in \mathcal{I}_{-1}}t''_I v_I,
	\end{eqnarray*}
	characterized by the requirement that all coefficients $t'_{I},t''_{I}>0$ must be positive.  Letting $\mathcal{I} = \mathcal{I}_{k-2} \cup \mathcal{I}_{-1}$, then
	\begin{eqnarray*}
		v & = & \varphi_{k-2}(v) + \varphi_{-1}(v)\\
		& = &  \sum_{I \in \mathcal{I}} t_I v_I,
	\end{eqnarray*}
	where all coefficients $t_I$ are strictly positive; here even though $\mathcal{I}_{k-2}$ and $\mathcal{I}_{-1}$ are separately noncrossing, in general their union $\mathcal{I}_{k-2} \cup \mathcal{I}_{-1}$ is not.  
	
	Therefore our remaining task is to take advantage of the particular formula for the elements in Equation \eqref{eq: decomposed wj noncrossing} to express $v$ as a linear combination indexed by a noncrossing collection.
	
	Let us now consider any pair $\{I_j,J_\ell\}$ for $I_j \in \mathcal{I}_{k-2}$ and $J_\ell\in \mathcal{I}_{-1}$; we have three cases.  We show that there is only one situation in which $\{I_j,J_\ell\}$ can be crossing, and then we show how to remove that crossing; iterating this process will ultimately identify which face of the triangulation contains the point $v$.  Here, each $I_j \in \binom{\lbrack n\rbrack}{k}^{{nf}}$ is by assumption an interval of length $k-1$ together with a single label, of the form
	$$I_j = \lbrack a_j,a_j+k-2\rbrack \cup \{b_j\}$$
	with $b_j \in \lbrack a_j+k,n\rbrack$, so that 
	$$v_{I_j\cup \{b_j\}} = f_{k-1,\lbrack a_j, b_j-k \rbrack}.$$
	As for the sets $J_\ell$, their largest two labels are adjacent:
	$$J_\ell = \{j_1,\ldots, j_{k-2},j_{k-1},j_{k-1}+1\}.$$

	We give an algorithm which locates the point $v$ in a simplex with vertices labeled by a noncrossing collection.
	
	We proceed by cases.
	
	\begin{enumerate}
		\item If $j_{k-1} < a_j+k-2$, then we immediately conclude that $\{I_j,J_\ell\}$ is noncrossing, since any pair of $k-1$ element sets, one of which is an interval, is automatically noncrossing.  For instance,
		$$(I_j,J_\ell) = (\{6,7,8,15\},\{1,3,5,6\})$$
		$$(I_j,J_\ell) = (\{6,7,8,15\},\{1,4,6,7\})$$
		$$(I_j,J_\ell) = (\{6,7,8,15\},\{2,4,7,8\})$$
		are all noncrossing.
		\item If $j_{k-1} = a_j+k-2$, then $\{I_j,J_\ell\}$ is crossing and we have that
		\begin{eqnarray}\label{eq: induction noncrossing}
			v_{J_\ell} + v_{I_j}  & = & v_{j_1,\ldots, j_{k-2},j_{k-1},b_j},
		\end{eqnarray}
		noting that both pairs 
		$$\{J_\ell,\{j_1,\ldots, j_{k-2},j_{k-1},b_j\}\}\text{ and } \{I_j,\{j_1,\ldots, j_{k-2},j_{k-1},b_j\}\}$$ 
		are noncrossing.
		
		We now consider three subcases based on the relative values of the parameters $t'_{I_j}$ and $t''_{J_\ell}$:
		\begin{enumerate}
			\item If we have $t:=t'_{I_j} = t''_{J_\ell}$ then the crossing disappears immediately as
			\begin{eqnarray}\label{eq: induction noncrossing2}
				\cdots + t(v_{J_\ell} + v_{I_j}) + \cdots   & = & \cdots + tv_{j_1,\ldots, j_{k-2},j_{k-1},b_j} + \cdots,
			\end{eqnarray}
			and we have new noncrossing collections
			$$\mathcal{I}^{(2)} = \mathcal{I}\setminus I_j,$$
			$$\mathcal{J}^{(2)} = \mathcal{J}\setminus J_\ell,$$
			and
			$$\mathcal{Q}^{(2)} = \{Q_1\},$$
			where $Q_1 = \{j_1,\ldots, j_{k-2},j_{k-1},b_j\}$.
			\item If $t'_{I_j} > t''_{J_\ell}$ then we write
			\begin{eqnarray}\label{eq: induction noncrossing3}
				\cdots +t'_{I_j}v_{I_j} + t''_{J_\ell}v_{J_\ell} + \cdots  & = & \cdots + t'_{I_j}v_{I_j} + t''_{J_\ell} (v_{j_1,\ldots, j_{k-2},j_{k-1},b_j} - v_{I_j}) + \cdots \\
				& = & \cdots + (t'_{I_j} - t''_{J_\ell})v_{I_j} + t''_{J_\ell} (v_{j_1,\ldots, j_{k-2},j_{k-1},b_j}) + \cdots \nonumber
			\end{eqnarray}
			and all coefficients are again positive.  Then we have new noncrossing collections
			$$\mathcal{I}^{(2)} = \mathcal{I},$$
			$$\mathcal{J}^{(2)} = \mathcal{J}\setminus J_\ell,$$
			and
			$$\mathcal{Q}^{(2)} = \{Q_1\},$$
			where $Q=\{j_1,\ldots, j_{k-2},j_{k-1},b_j\}$.
			\item The case when $t'_{I_j} < t''_{J_\ell}$ is similar.  We then have new noncrossing collections
			$$\mathcal{I}^{(2)} = \mathcal{I}\setminus I_j,$$
			$$\mathcal{J}^{(2)} = \mathcal{J},$$
			and
			$$\mathcal{Q}^{(2)} = \{Q_1\},$$
			where again $Q=\{j_1,\ldots, j_{k-2},j_{k-1},b_j\}$.
		\end{enumerate}
		For example,
		$$(I_j,J_\ell) = (\{6,7,8,15\},\{1,3,8,9\})$$
		is crossing on the 4-tuple $3<7<9<15$, since the middle interval is the same, $8$, in both $I_j$ and $J_\ell$, and we have
		$$v_{6,7,8,15} + v_{1,3,8,9} = v_{1,3,8,15}.$$
		\item If $j_{k-1} > a_j+k-2$, then $(I_j,J_\ell)$ must be noncrossing: it could be that $\{I_j,J_\ell\}$ is not weakly separated for some 4-tuple $a<b<c<d$, but the second condition in Definition \ref{defn: noncrossing} always fails to hold and the pair is indeed noncrossing.
		
		For example, while 
		$$(I_j,J_\ell) = (\{6,7,8,15\}, \{1,6,9,10\})$$
		is not weakly separated still the pair is noncrossing since in every instance of non-weak separation the respective interior intervals do not coincide.  For instance, it fails weak separation on the 4-tuple $1<6<10<15$, but the interior intervals do not coincide: $\{6,9\} \not= \{7,8\}$.
	\end{enumerate}
	
	We outline the completion of the algorithm: order the collection $\mathcal{I}_{-1}$ lexicographically, as $\mathcal{I}_{-1} = \{J_1\prec \cdots \prec J_r\}$, say, where $1\le r\le n-k-1$.  Similarly order $\mathcal{I}_{k-2}$ lexicographically as $\mathcal{I}_{k-2} = \{I_1\prec \cdots \prec  I_{s}\}$ with $1\le s\le (k-2)(n-k-1)$.  Then the Cartesian product $\mathcal{I}_{k-2}\times \mathcal{I}_{-1}$ inherits a lexicographic order in the standard way.  Each iteration of steps (1 - 4) either leaves the collection unchanged or it removes a crossing.  Once the iteration terminates we are left with a collection with no crossings, that is to say a noncrossing collection
	$$\mathcal{I}'_{k-2} \cup \mathcal{I}'_{-1} \cup \mathcal{Q}',$$
	where the three sets $\mathcal{I}'_{k-2}, \mathcal{I}'_{-1}, \mathcal{Q}'$ are disjoint, which completes the proof.
	
\end{proof}

The generalized root polytope $\mathcal{R}^{(k)}_{n-k}$ admits a triangulation which is isomorphic as a poset to the noncrossing complex $\mathbf{NC}_{k,n}$.

For any collection $\mathcal{J}  = \{J_1,\ldots, J_{m}\}\in \mathbf{NC}_{k,n}$ of nonfrozen subsets $J_i$, define 
\begin{eqnarray*}
	\lbrack \mathcal{J}\rbrack  = \lbrack \{v_J: J \in \mathcal{J}\}\rbrack & = & \text{Convex hull}\left\{0,p_{k,n}(v_{J_1}),\ldots, p_{k,n}(v_{J_{m}})\right\}\\
\end{eqnarray*}

\begin{figure}[h!]
	\centering
	\includegraphics[width=0.35\linewidth]{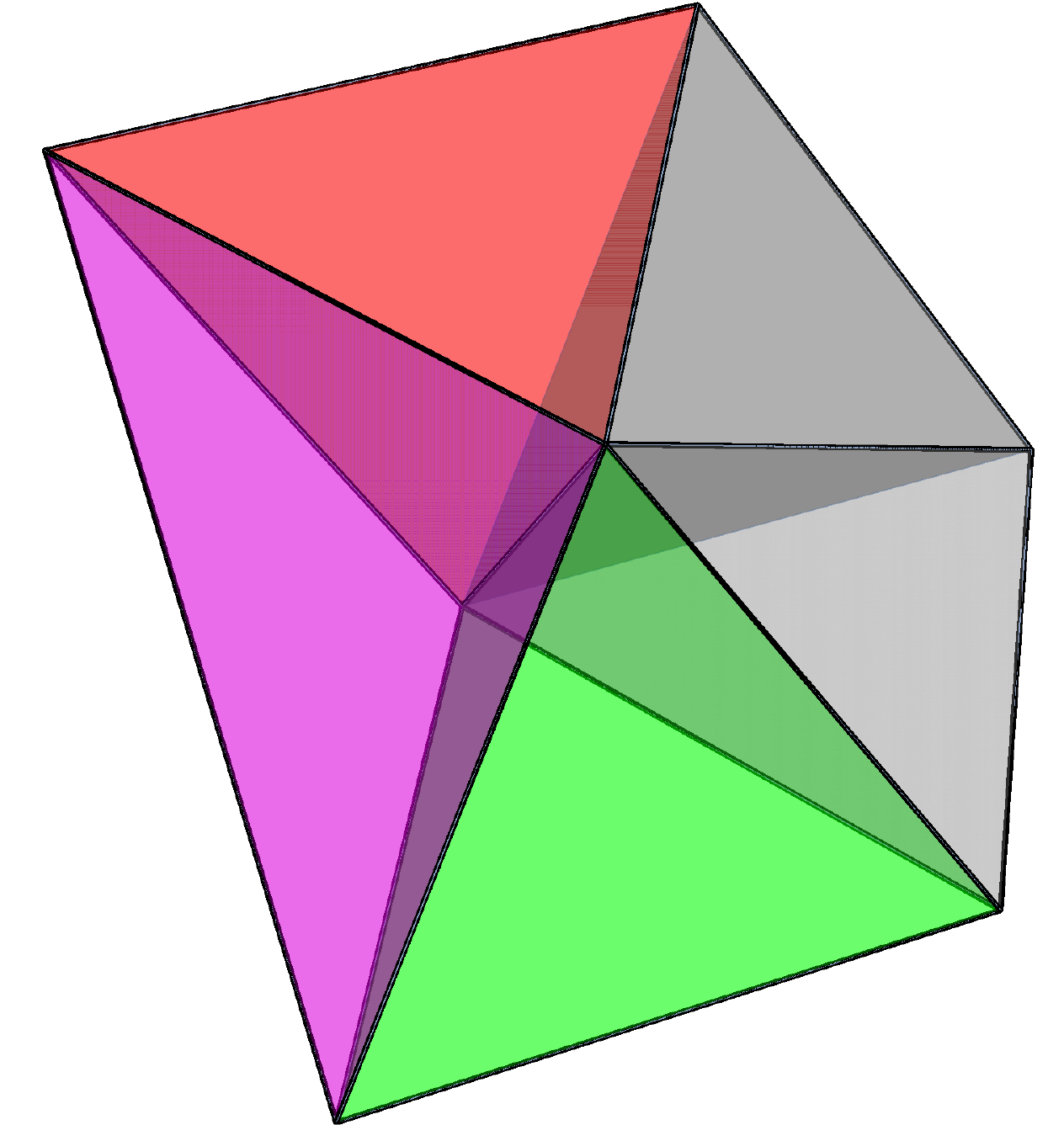}
	\caption{Illustration of Corollary \ref{cor: triangulation root polytope}: triangulation of $\hat{\mathcal{R}}^{(2)}_{3}$, see also \cite{EarlyPermutohedraKinematicSpace} for a detailed study of the $k=2$ associahedron and root polytope using the biadjoint scalar theory.}
	\label{fig:triangulationstandardplate}
\end{figure}

\begin{cor}\label{cor: triangulation root polytope}
	The set of simplices $\left\{\lbrack \mathcal{J}\rbrack: \mathcal{J} \in \mathbf{NC}_{k,n} \right\}$ defines a flag, unimodular triangulation\footnote{A triangulation is flag if it is the clique complex of its 1-skeleton: every complete subgraph of the 1-skeleton is the 1-skeleton of a simplex in the triangulation.  A triangulation (of a lattice polytope) is unimodular if every simplex has the same volume $\frac{1}{d!}$, where $d$ is the dimension.} of $\mathcal{R}^{(k)}_{n-k}$.  The relative volume of $\mathcal{R}^{(k)}_{n-k}$ is $C^{(k)}_{n-k}$.
	
\end{cor}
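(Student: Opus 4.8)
The plan is to deduce the triangulation of the polytope from the complete simplicial fan already produced by Theorem~\ref{thm: noncrossing subdivision Wkn}, and then to dispatch the three qualifiers (flag, unimodular, Catalan volume) by bookkeeping.

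First I would record two consequences of the preceding results. Because the cones $\langle\mathcal{J}\rangle_+$, $\mathcal{J}\in\mathbf{NC}_{k,n}$, form a \emph{complete} fan in $\mathcal{H}_{k,n}$, their rays --- which are exactly the half-lines $\mathbb{R}_{\ge 0}v_J$ for nonfrozen $J$ --- positively span $\mathcal{H}_{k,n}$; hence $0$ lies in the interior of $\operatorname{conv}\{v_J\}$, so adjoining $0$ changes nothing and $\mathcal{R}^{(k)}_{n-k}=\operatorname{conv}\bigl(\{0\}\cup\{v_J\}\bigr)$ with $0$ an interior point. Next, for maximal noncrossing collections $\mathcal{J}$ the simplices $[\mathcal{J}]$ have pairwise disjoint interiors: an interior point of $[\mathcal{J}]$ is a strictly positive combination of the $v_J$, $J\in\mathcal{J}$, hence lies in the relative interior of $\langle\mathcal{J}\rangle_+$, and by the uniqueness clause of Theorem~\ref{thm: noncrossing subdivision Wkn} no other maximal cone contains it. That distinct $[\mathcal{J}]$ meet along common faces follows the same way from the fan axioms together with the lattice-basis property in Proposition~\ref{prop: unimodular simplices noncrossing}.

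The substantive point --- and the main obstacle --- is to show that the $[\mathcal{J}]$ actually cover $\mathcal{R}^{(k)}_{n-k}$, equivalently that for each maximal noncrossing $\mathcal{J}$ the simplex $F_\mathcal{J}:=\operatorname{conv}\{v_J:J\in\mathcal{J}\}$ --- which is genuinely a $(d-1)$-simplex, $d=(k-1)(n-k-1)$, since $\{v_J:J\in\mathcal{J}\}$ is a basis by Proposition~\ref{prop: unimodular simplices noncrossing} --- is a facet of $\mathcal{R}^{(k)}_{n-k}$. Let $\phi_\mathcal{J}$ be the unique linear functional on $\mathcal{H}_{k,n}$ with $\phi_\mathcal{J}(v_J)=1$ for all $J\in\mathcal{J}$; the claim is that $\phi_\mathcal{J}(v_{J'})\le 1$ for \emph{every} nonfrozen $J'$. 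Granting this, any $x\in\mathcal{R}^{(k)}_{n-k}$ lies in some maximal $\langle\mathcal{J}\rangle_+$ by Theorem~\ref{thm: noncrossing subdivision Wkn}; writing $x=\sum_{J\in\mathcal{J}}t_Jv_J$ with $t_J\ge 0$ and $x=\sum_{J'}\mu_{J'}v_{J'}$ with $\mu_{J'}\ge 0$, $\sum_{J'}\mu_{J'}\le 1$, we get $\sum_J t_J=\phi_\mathcal{J}(x)=\sum_{J'}\mu_{J'}\,\phi_\mathcal{J}(v_{J'})\le 1$, so $x\in[\mathcal{J}]$, giving the covering. I expect to prove $\phi_\mathcal{J}(v_{J'})\le 1$ by the same induction on $k$ that powers Theorem~\ref{thm: noncrossing subdivision Wkn} --- using the partition of unity $\varphi_{k-2}+\varphi_{-1}=\mathbf{id}$ and the crossing-removal procedure to split $\phi_\mathcal{J}$ into contributions pulled back along $\varphi_{k-2}$ and $\varphi_{-1}$ --- or, alternatively, by identifying $\phi_\mathcal{J}$ with a vertex of the PK polytope and reading the inequality off its facet description $\gamma_{J'}(\alpha)+1\ge 0$ from Section~\ref{sec:facet PK polytope} via polar duality.

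Once the covering is in place, $\{[\mathcal{J}]:\mathcal{J}\in\mathbf{NC}_{k,n}\}$ is a triangulation of $\mathcal{R}^{(k)}_{n-k}$ that is combinatorially isomorphic to $\mathbf{NC}_{k,n}$ (using that $J\mapsto v_J$ is injective on nonfrozen subsets, so the $1$-skeleton of the triangulation records exactly the noncrossing edges). It is flag because membership in $\mathbf{NC}_{k,n}$ is a purely pairwise condition: a set of vertices joined pairwise by edges of the triangulation is by definition a noncrossing collection, hence a face. It is unimodular by Proposition~\ref{prop: unimodular simplices noncrossing}, every maximal simplex having Euclidean volume $\tfrac{1}{d!}$. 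Finally, since there are $C^{(k)}_{n-k}$ maximal noncrossing collections of $k$-subsets of $\{1,\ldots,n\}$ --- for instance by the bijection with standard Young tableaux of rectangular shape $k\times(n-k)$, see \cite{PKPS,GrassmannAssociahedron} --- the relative volume of $\mathcal{R}^{(k)}_{n-k}$, which equals the number of maximal simplices of the triangulation, is $C^{(k)}_{n-k}$.
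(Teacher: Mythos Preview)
Your plan follows the same skeleton as the paper --- deduce the triangulation from the complete fan of Theorem~\ref{thm: noncrossing subdivision Wkn} by passing to the polytope, then read off flag, unimodular, and the Catalan count --- but you are considerably more careful at the one genuinely nontrivial point. The paper's proof is a single sentence (``intersecting the simplicial cones with the polytope $\mathcal{R}^{(k)}_{n-k}$'') and treats the equality $\langle\mathcal{J}\rangle_+\cap\mathcal{R}^{(k)}_{n-k}=[\mathcal{J}]$ as automatic; you correctly isolate this as the covering obstacle and reduce it to the inequality $\phi_\mathcal{J}(v_{J'})\le 1$. That inequality is \emph{not} a formal consequence of having a complete simplicial fan whose ray generators are the vertices of a polytope containing the origin --- one can cook up counterexamples in dimension three --- so something specific to the noncrossing setup is needed.

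Of your two proposed routes, (b) is the one that goes through cleanly. Lemma~\ref{lem: minimized simultaneously gamma} establishes that the normal fan of $\Pi^{(k)}_{n-k}$ coarsens the noncrossing fan, so for each maximal $\mathcal{J}$ there is a point $\alpha^*\in\Pi^{(k)}_{n-k}$ with $\gamma_J(\alpha^*)=-1$ for all $J\in\mathcal{J}$; then $\gamma_{J'}(\alpha^*)\ge -1$ from Corollary~\ref{cor: facets Pikn} is exactly your inequality under the natural pairing. Those results use only Theorem~\ref{thm: noncrossing subdivision Wkn}, not the present corollary, so the forward reference is logically harmless even though the exposition order is reversed. (A small correction: you need only that $\operatorname{conv}\{v_J:J\in\mathcal{J}\}$ lies on $\partial\mathcal{R}^{(k)}_{n-k}$, not that it is itself a facet; if the normal fan of $\Pi^{(k)}_{n-k}$ is a proper coarsening then several such simplices tile a single facet, but the covering argument is unaffected.) Route (a) is plausible but less routine than you suggest, since $\phi_\mathcal{J}$ does not decompose along $\varphi_{k-2}\oplus\varphi_{-1}$ as neatly as the $v_J$ do. Your treatment of flag, unimodular, and the Catalan volume matches the paper's intent exactly.
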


\begin{proof}
	This follows immediately from Theorem \ref{thm: noncrossing subdivision Wkn} by intersecting the simplicial cones with the polytope $\mathcal{R}^{(k)}_{n-k}$ and using that $\mathbf{NC}_{k,n}$ is a clique complex consisting of all complete subgraphs of the graph with vertices $\{j_1,\ldots, j_k\} \in \binom{\lbrack n\rbrack}{k}^{nf}$.
\end{proof}

\begin{defn}
	Call the triangulation from Corollary \ref{cor: triangulation root polytope} the \textit{noncrossing} triangulation of $\mathcal{R}^{(k)}_{n-k}$.
\end{defn}

\begin{example}\label{ex: triangulating}
	To see the prototypical situation for Case (2) in Theorem \ref{thm: noncrossing subdivision Wkn} we work in the triangulation of $\mathcal{R}^{(3)}_{3}$: we have the identity
	$$v_{134} + v_{235}=v_{135}$$
	for the (crossing) pair of vertices $v_{134}, v_{235} \in \mathcal{R}^{(3)}_{3}$.  

	Let us now look at what happens on the level of rational functions.  Then, using $\gamma_{135} = \alpha_{1,1} + \alpha_{2,2} = \gamma_{134} + \gamma_{235}$, we find 
	$$\frac{1}{\gamma_{134}\gamma_{235}} =  \frac{\gamma_{134} + \gamma_{235}}{\gamma_{135}\gamma_{134}\gamma_{235}} = \frac{1}{\gamma_{135}}\left(\frac{1}{\gamma_{134}} + \frac{1}{\gamma_{235}}\right).$$
	Here both $\{135,134\}$ and $\{135,235\}$ are noncrossing.
	
	However, more complex relations are possible: of course it could be that a simplex has to split into three or more simplices before every collection is noncrossing:
	\begin{eqnarray*}
		\frac{1}{\gamma_{134}\gamma_{235}\gamma_{236}} & = & \frac{1}{\gamma_{135}}\left(\frac{1}{\gamma_{134}} + \frac{1}{\gamma_{235}}\right)\frac{1}{\gamma_{236}}\\
		& = & \frac{1}{\gamma_{135}}\left(\frac{1}{\gamma_{136}}\left(\frac{1}{\gamma_{134}} + \frac{1}{\gamma_{236}}\right) + \frac{1}{\gamma_{235}}\frac{1}{\gamma_{236}}\right).
	\end{eqnarray*}
	We finally illustrate the expansion of one of the terms in the inductive expansion from Theorem \ref{thm: noncrossing subdivision Wkn}.  Here $\mathcal{I}_{k-2} = \{134,145\}$ and $\mathcal{I}_{-1} = \{236,346\}$.  We give the expansion of 
	\begin{eqnarray*}
		\left(\frac{1}{\gamma_{134}\gamma_{145}}\right)\left(\frac{1}{\gamma_{236}\gamma_{346}}\right).
	\end{eqnarray*}
	The pairs, in lexicographic order, are $(134,236) \prec (134,346) \prec (145,236) \prec (145,346)$.  Amongst these, the crossing pairs are 	
	$$(134,236) \prec (145,346).$$
	We then have $\gamma_{134} + \gamma_{236} = \gamma_{136}$ and $\gamma_{145} + \gamma_{346} = \gamma_{146}$, hence in two consecutive steps,
	\begin{eqnarray*}
		\left(\frac{1}{\gamma_{134}\gamma_{145}}\right)\left(\frac{1}{\gamma_{236}\gamma_{346}}\right) & = & \left(\frac{1}{\gamma_{136} \gamma_{134}} + \frac{1}{\gamma_{136} \gamma_{236}}\right)\frac{1}{\gamma_{145}\gamma_{346}}\\
		& = & \left(\frac{1}{\gamma_{136} \gamma_{134}} + \frac{1}{\gamma_{136} \gamma_{236}}\right)\left(\frac{1}{\gamma_{146}\gamma_{145}} + \frac{1}{\gamma_{146}\gamma_{346}}\right).
	\end{eqnarray*}
	The interpretation here is that we have subdivided the 4-dimensional simplex
	$$\text{convex hull} \left\{0,v_{134},v_{145},v_{236},v_{346} \right\}$$
	into four simplices,
	\begin{eqnarray*}
		& \text{convex hull} \left\{0,v_{134},v_{136},v_{145},v_{146} \right\},\ \text{convex hull}\left\{0,v_{134},v_{136},v_{146},v_{346} \right\} & \\
		& \text{convex hull} \left\{0,v_{136},v_{236},v_{145},v_{146} \right\},\ \text{convex hull}\left\{0,v_{136},v_{236},v_{146},v_{346} \right\}. & 
	\end{eqnarray*}
	\begin{figure}[h!]
		\centering
		\includegraphics[width=0.7\linewidth]{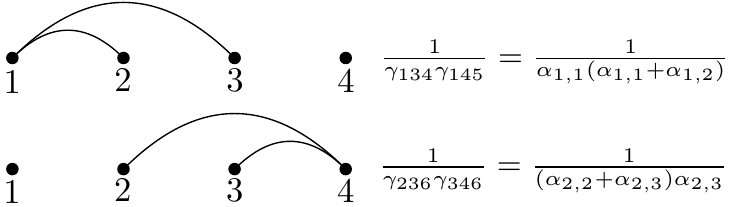}
		\caption{Alternating tree and rational function representation of the setup for step (2) in Theorem \ref{thm: noncrossing subdivision Wkn}, as in Example \ref{ex: triangulating}.  See \cite{Postnikov2006,Meszaros2011}.}
		\label{fig:treespair}
	\end{figure}

\end{example}

\section{Planar Kinematics Height Functions on Generalized Root Polytopes}\label{sec: regular subdivisions root polytope}

In this section, using the planar basis of kinematic invariants as the height function, we induce regular triangulations on $\hat{\mathcal{R}}^{(k)}_{n-k}$; we then project this to induce a triangulation of $\mathcal{R}^{(k)}_{n-k}$.

For each kinematic point $(s_0)\in \mathcal{K}_D(k,n)$, define a height function 
$$\mathfrak{t}_{s_0}:\text{verts}(\hat{\mathcal{R}}^{(k)}_{n-k})\rightarrow \mathbb{R}_{\ge 0}$$ 
by
$$\mathfrak{t}_{s_0}(v_J) = \eta_J(s_0),$$
where we lift the origin $0\in\mathbb{R}^{(k-1)\times (n-k)}$ to height $\mathfrak{t}_{s_0}(0) = 0$.  Notice then that at least $k$ vertices of $\hat{\mathcal{R}}^{(k)}_{n-k}$ have height $\eta_{j,j+1,\ldots, j+k-1}(s_0) = 0$, where the interval is cyclic.  These include: $0$ and the $k-1$ frozen vertices $\hat{v}_{\lbrack 1,i\rbrack \cup \lbrack n-k+i,n\rbrack} = \sum_{j=1}^{n-k}e_{i,j}$ for $i=1,\ldots, k-1$.  Now recall that under the projection $p_{k,n}$, these $k$ vertices are identified and are all mapped to the origin $0 \in \mathcal{H}_{k,n}$; therefore we may project the regular triangulation of $\hat{\mathcal{R}}^{(k)}_{n-k}$ to a regular triangulation of $\mathcal{R}^{(k)}_{n-k}$.

Letting the kinematic point vary in $(s_0)\in \mathcal{K}_D(k,n)$, we get a family of continuous piecewise linear surfaces over $\hat{\mathcal{R}}^{(k)}_{n-k}$, respectively $\mathcal{R}^{(k)}_{n-k}$, with heights constructed from the planar kinematic invariants $\eta_J(s)$, such that each given vertex $v_J$ is lifted to a height $\eta_{J}(s_0)$, where $v_J \in \mathcal{R}^{(k)}_{n-k}$ are vertices of the generalized root polytope.

Now let us suppose that $(s_0) \in \mathcal{K}_D(k,n)$ is interior, so that we have the strict inequality $(s_0)_J<0$ for each nonfrozen $J \in \binom{\lbrack n\rbrack}{k}^{nf}$.

Given an affine hyperplane of the form $x_{a}+x_b+x_c+x_d = 2$ in $\Delta_{k,n}$ and a vertex $e_J \in \Delta_{k,n}$ such that
$$e_J,e_J + (e_{a}-e_b) + (e_c-e_d),e_J + (e_{a}-e_b),e_J + (e_c-e_d) \in \Delta_{k,n},$$
define the octahedral curvature
$$\mathcal{O}_{(a,b,c,d)}(\rho_{e_J})(e_I) = (\rho_{e_J}(e_I)  + \rho_{e_J + (e_{a}-e_b) + (e_c-e_d)}(e_I)) - \left(\rho_{e_J + (e_{a}-e_b)}(e_I)  +\rho_{e_J + (e_c-e_d)}(e_I) \right).$$

\begin{lem}
	Given a nonfrozen vertex $u = e_J\in \Delta_{k,n}$ and a cyclic order $a<b<c<d$ such that all four of the following are in $\Delta_{k,n}$:
	$$e_J,e_J + (e_{a}-e_b) + (e_c-e_d),e_J + (e_{a}-e_b),e_J + (e_c-e_d) \in \Delta_{k,n},$$
	then there exists a vertex $e_I\in \Delta_{k,n}$ such that the value at $e_I$ of $\mathcal{O}_{(a,b,c,d)}(\rho_{e_J})(e_I)$ is strictly negative:
	$$\mathcal{O}_{(a,b,c,d)}(\rho_{e_J})(e_I) <0.$$
	In fact, for all vertices $e_I\in \Delta_{k,n}$, then $\mathcal{O}_{(a,b,c,d)}(\rho_{e_J})(e_I) \le 0$, and 
	$$\mathcal{O}_{(a,b,c,d)}(\rho_{e_J})(e_{j,j+1,\ldots, j+(k-1)})=0$$
	for all $n$ (frozen) vertices $e_{j,j+1,\ldots, j+(k-1)}$.
\end{lem}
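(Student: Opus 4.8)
The plan is to reduce the whole statement to a combinatorial fact about a single lattice path. First I would use that $\rho_{e_J}(x)=-\frac{1}{n}\min_{t=1}^{n}L_t(x-e_J)$, so $\rho$ is translation-equivariant; hence $\mathcal{O}_{(a,b,c,d)}(\rho_{e_J})(e_I)$ depends only on $z:=e_I-e_J$ and equals $-\frac{1}{n}\big(g(z)+g(z-u-w)-g(z-u)-g(z-w)\big)$ with $g:=\min_t L_t$, $u:=e_a-e_b$, $w:=e_c-e_d$. Next, the identity $L_t(x)-L_{t-1}(x)=n\,x_{t-1}$ (valid whenever $\sum_m x_m=0$) gives the closed form $g(x)=\sum_m m\,x_m+n\min_{0\le j\le n}P_j(x)$, where $P_j(x)=x_1+\cdots+x_j$. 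The linear part cancels in the mixed second difference, and since $P_j(e_a-e_b)=\mathbf{1}[a\le j\le b-1]$ and $P_j(e_c-e_d)=\mathbf{1}[c\le j\le d-1]$ have disjoint support (as $a<b<c<d$), a short computation with minima of shifted copies of the path $q_j:=P_j(e_I-e_J)=|I\cap[1,j]|-|J\cap[1,j]|$ collapses the expression to
\[
\mathcal{O}_{(a,b,c,d)}(\rho_{e_J})(e_I)\;=\;-\,\mathbf{1}\!\left[\min_{a\le j\le b-1}q_j\;=\;\min_{c\le j\le d-1}q_j\;=\;\min_{0\le j\le n}q_j\right].
\]
Here $q$ is a lattice path with steps $q_j-q_{j-1}=\mathbf{1}[j\in I]-\mathbf{1}[j\in J]\in\{-1,0,1\}$ and $q_0=q_n=0$: in words, $\mathcal{O}$ is $0$ or $-1$, and it is $-1$ exactly when the global minimum of $q$ is attained both inside the window $[a,b-1]$ and inside the window $[c,d-1]$.

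The two easy assertions follow at once. The closed form gives $\mathcal{O}_{(a,b,c,d)}(\rho_{e_J})(e_I)\le 0$ for every vertex $e_I$. For strict negativity, take $e_I=e_J$, which is a vertex of $\Delta_{k,n}$ by hypothesis: then $q\equiv 0$, its global minimum $0$ lies in every window, so $\mathcal{O}_{(a,b,c,d)}(\rho_{e_J})(e_J)=-1<0$.

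The substantive case is the frozen vertices $e_I=e_{[p,p+k-1]}$, where I must show the indicator above vanishes. The assumption that the four octahedral vertices lie in $\Delta_{k,n}$ forces $J\cap\{a,b,c,d\}=\{b,d\}$. Since the steps of $q$ are $+1$ on $I\setminus J$, $-1$ on $J\setminus I$, and $0$ elsewhere, the path $q$ is bitonic: for $I$ a non-wrapping cyclic interval it is non-increasing on $[0,p-1]$, non-decreasing on $[p-1,p+k-1]$, and non-increasing on $[p+k-1,n]$, so its global minimum equals $q_{p-1}\le 0$ and its minimizer set is one block of consecutive indices contained in $[0,p+k-1]$ (with an additional terminal block $[\max J,n]$ in the degenerate case $q_{p-1}=0$); for a wrapping interval it is the mirror image. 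If this minimizer set met both windows then, since $[a,b-1]$ lies entirely to the left of $[c,d-1]$, it would contain the interval $[b-1,c]$, so $q_{b-1}=q_b=q_{c-1}=q_c$ would equal the minimum; the vanishing step at $b$ together with $b\in J$ forces $b\in I$, the vanishing step at $c$ together with $c\notin J$ forces $c\notin I$, and combining this with $b<c$ and the interval structure of $I$ gives $c\ge p+k$, contradicting $c\le p+k-1$ (which holds because the minimizer block sits inside $[0,p+k-1]$ and contains $c$). The degenerate branch $q_{p-1}=0$, and the wrapping intervals, are ruled out by the same bookkeeping: the two window constraints together with $a,c\notin J$ and $b,d\in J$ are incompatible with the boundary blocks of $\{q=0\}$. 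Hence $\mathcal{O}_{(a,b,c,d)}(\rho_{e_J})(e_{j,\ldots,j+(k-1)})=0$ for all frozen vertices, which finishes the argument. The main obstacle is precisely this frozen-vertex analysis --- the finite but somewhat delicate case check of the bitonic path against the interleaving hypothesis; a tidy way to avoid the split between wrapping and non-wrapping $I$ is to set up the partial sums $P_j$ cyclically from the start, using the cyclic symmetry of the blade $((1,2,\ldots,n))$.
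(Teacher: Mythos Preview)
Your approach is correct and genuinely different from the paper's. The paper obtains $\mathcal{O}_{(a,b,c,d)}(\rho_{e_J})(e_I)\le 0$ conceptually, by invoking the positive tropical Pl\"ucker relations (positroidality of the subdivision induced by $\beta_J$), then computes the special case $I=J$ by hand to get $-1$, and asserts the frozen case without details. You instead reduce everything to an elementary lattice-path identity and arrive at the sharper closed form
\[
\mathcal{O}_{(a,b,c,d)}(\rho_{e_J})(e_I)=-\mathbf{1}\Bigl[\min_{a\le j\le b-1}q_j=\min_{c\le j\le d-1}q_j=\min_{0\le j\le n}q_j\Bigr],
\]
which shows $\mathcal{O}\in\{0,-1\}$, strictly more than the paper states. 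Your route is self-contained and avoids the appeal to positroidal theory; the paper's route is shorter for the inequality but imports that structural fact. Two small points on your frozen-vertex analysis: your passage to the linear order $a<b<c<d$ tacitly uses the cyclic symmetry of the blade, which is fine but should be said; and the degenerate branch $q_{p-1}=0$ deserves one more line---there the minimizer set is $[0,s]\cup[t,n]$ with $p-1\le s<p+k-1<t\le n$, and reading off the zero steps gives $[p,s]\subset J$ and $J\cap[t+1,n]=\emptyset$, after which each of the four ways the two windows can meet the two components contradicts one of $b,d\in J$, $c\notin J$, or $b<c$. With that written out (and the wrapping case handled by the same bookkeeping, as you say), the argument is complete.
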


\begin{proof}
	As the subdivision induced by the blade $\beta_J$ of $\Delta_{k,n}$ is positroidal, the height function $\rho_J$ satisfies the positive tropical Plucker relations
	$$\rho_{u}(v) + \rho_{u + (e_{a}-e_b) + (e_c-e_d)}(v) = \min\{\rho_{u + (e_{a}-e_b)}(v) + \rho_{u + (e_c-e_d)}(v), \rho_{u-(e_b-e_c)}(v) + \rho_{u - (e_d-e_a)}(v) \}.$$
	After subtracting $\rho_{e_J}(e_I) + \rho_{e_J + (e_{a}-e_b) + (e_c-e_d)}(e_I)$ from both sides it follows immediately that $$\mathcal{O}_{(a,b,c,d)}(\rho_{e_J})(e_I) \le 0$$ for all vertices $e_I\in \Delta_{k,n}$.  It is easy to check that 
	$$\mathcal{O}_{(a,b,c,d)}(\rho_{e_J})(e_{j,j+1,\ldots, j+(k-1)})=0$$
	for $j=1,\ldots, n$.

	Let us now be completely explicit in the core case $I=J$.
	
	We now compute $\mathcal{O}_{abcd}(\rho_u)(u)$ term by term.  First, note that $\rho_u(u) = \rho_0(0)$ for any $u\in \mathbb{R}^n$.  Then
	\begin{eqnarray*}
		\rho_0(0) & = & 0 \nonumber\\
		\rho_{e_a-e_b+e_c-e_d}(0) & = & -\frac{1}{n}\min\{-(n+(a-b)+(c-d)), -((a-b)+(c-d))\}\\
		& = & \frac{1}{n}(n+(b-a)+(d-c))\\
		\rho_{e_a-e_b}(0) & = & \frac{1}{n}(n+b-a)\\
		\rho_{e_c-e_d}(0) & = & \frac{1}{n}(n+d-c),
	\end{eqnarray*}
	hence after combining  and canceling,
	$$\mathcal{O}_{abcd}(\rho_{e_J})(e_J) = -1.$$
\end{proof}

\begin{cor}\label{cor: octahedral commutator positive}
	Let $(s_0)\in \mathcal{K}_D(\Delta_{k,n})$ be given.  Given a nonfrozen vertex $e_J \in \Delta_{k,n}$ with $J \in \binom{\lbrack n\rbrack}{k}^{nf}$ and distinct integers $\{a,b,c,d\}$ such that $a<b<c<d$ cyclically, and such that all four of the following are in $\Delta_{k,n}$:
	$$e_J,e_J + (e_{a}-e_b) + (e_c-e_d),e_J + (e_{a}-e_b),e_J + (e_c-e_d) \in \Delta_{k,n},$$
	then
	$$\eta_{e_J}+ \eta_{e_J+e_a-e_b + e_c-e_d} - \eta_{e_J+e_a-e_b}- \eta_{e_J+e_c-e_d} \ge 0,$$
	and the inequality is strict for all such $(a,b,c,d)$ if $(s_0)$ is interior, so  $s_J <0$ for all $J \in \binom{\lbrack n\rbrack}{k}^{nf}$.
\end{cor}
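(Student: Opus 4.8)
The plan is to reduce the statement, by linearity, to the preceding Lemma on the octahedral curvature $\mathcal{O}_{(a,b,c,d)}$. The key starting observation is that, directly from the definition in Equation \eqref{eq:planar basis element}, the planar invariant $\eta_K$ evaluated at $(s_0)$ is exactly the pairing $\sum_{I \in \binom{\lbrack n\rbrack}{k}} \rho_{e_K}(e_I)\,(s_0)_I$ of the kinematic point with the height function $\rho_{e_K}$ of the blade arrangement $\beta_K$ on $\Delta_{k,n}$. This identity holds verbatim for every $k$-subset $K$, frozen or not, which is precisely what lets us handle the shifted subscripts $e_J + (e_a - e_b)$, $e_J + (e_c - e_d)$ and $e_J + (e_a - e_b) + (e_c - e_d)$: each is a genuine vertex of $\Delta_{k,n}$ by hypothesis, but it may happen to be frozen, and that causes no difficulty.

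First I would substitute this expansion into the four-term combination on the left-hand side and collect the coefficient of each $(s_0)_I$. Since $\mathcal{O}_{(a,b,c,d)}(\rho_{e_J})(e_I)$ is by definition exactly that combination of the values of $\rho$ at $e_I$, this yields
\begin{eqnarray*}
\eta_{e_J} + \eta_{e_J + e_a - e_b + e_c - e_d} - \eta_{e_J + e_a - e_b} - \eta_{e_J + e_c - e_d} & = & \sum_{I \in \binom{\lbrack n\rbrack}{k}} \mathcal{O}_{(a,b,c,d)}(\rho_{e_J})(e_I)\,(s_0)_I.
\end{eqnarray*}
Next I would read off the signs from the preceding Lemma: $\mathcal{O}_{(a,b,c,d)}(\rho_{e_J})(e_I) = 0$ whenever $e_I$ is a frozen vertex, and $\mathcal{O}_{(a,b,c,d)}(\rho_{e_J})(e_I) \le 0$ for every vertex $e_I$. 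Splitting the sum into frozen and nonfrozen $I$, the frozen terms vanish; for nonfrozen $I$ we have $(s_0)_I \le 0$ by the defining inequalities of $\mathcal{K}_D(k,n)$, so each surviving summand is a product of two non-positive numbers and hence $\ge 0$. Summing gives the asserted inequality.

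For the strict version, I would invoke the explicit ``core case'' computation from the Lemma, $\mathcal{O}_{(a,b,c,d)}(\rho_{e_J})(e_J) = -1$, which is valid precisely because the four-point octahedral hypothesis is assumed here. Since $e_J$ is a nonfrozen vertex, interiority of $(s_0)$ forces $(s_0)_J < 0$, so the $I = J$ term equals $-(s_0)_J > 0$; as all other summands are $\ge 0$, the total is strictly positive. I do not expect a genuine obstacle: the corollary is essentially bookkeeping on top of the Lemma, and the only points requiring care are that the pairing expansion $\eta_K = \sum_I \rho_{e_K}(e_I)\,(s_0)_I$ is the correct one and remains valid for frozen $K$, and that the Lemma's $I=J$ computation transfers verbatim so that nonfrozenness of $J$ is exactly what produces strictness.
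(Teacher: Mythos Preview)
Your proposal is correct and follows essentially the same route as the paper: expand each $\eta_{e_K}$ as the pairing $\sum_I \rho_{e_K}(e_I)\,(s_0)_I$, identify the four-term combination with $\sum_I \mathcal{O}_{(a,b,c,d)}(\rho_{e_J})(e_I)\,(s_0)_I$, and then read off the sign from the preceding Lemma together with the sign constraints defining $\mathcal{K}_D(k,n)$. Your treatment of strictness via the explicit value $\mathcal{O}_{(a,b,c,d)}(\rho_{e_J})(e_J)=-1$ and the nonfrozenness of $J$ is in fact slightly more explicit than the paper's, which leaves that step implicit.
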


\begin{proof}
	This follows immediately from the construction of $\eta_J$ in terms of $\rho_J$: we have
	$$\eta_J = -\frac{1}{n}\sum_{I\in \binom{\lbrack n\rbrack}{k}}\min\{L_1(e_I-e_J),\ldots, L_n(e_I-e_J)\}s_I,$$
	so the identity is negated and becomes
	\begin{eqnarray*}
		& & \eta_{e_J}+ \eta_{e_J+e_a-e_b + e_c-e_d} - \eta_{e_J+e_a-e_b}- \eta_{e_J+e_c-e_d} \\
		& = & \left(\mathfrak{h}_{e_J}+ \mathfrak{h}_{e_J+e_a-e_b + e_c-e_d} - \mathfrak{h}_{e_J+e_a-e_b}- \mathfrak{h}_{e_J+e_c-e_d}\right) \cdot \left(\sum_I s_I e^I\right)\\
		& = & \sum_{I}\mathcal{O}_{abcd}(\rho_{e_J})(e_I)s_I\\
		& \ge & 0,
	\end{eqnarray*}
	since $\mathcal{O}_{abcd}(\rho_{e_J})(e_I) \le 0$ for all $I, J \in \binom{\lbrack n\rbrack}{k}^{nf}$ and $s_I\le 0$ for all $I\in \binom{\lbrack n\rbrack}{k}^{nf}$, noting that in the only case when $s_I$ is potentially positive, that is when $I$ is frozen, then $\mathcal{O}_{abcd}(\rho_{e_J})(e_I) = 0$.  The result follows.
\end{proof}
\begin{example}\label{example: noncrossing inequalities}
	On the kinematic space $\mathcal{K}(2,9)$ we have the relations
	\begin{eqnarray*}
		\eta_{14} + \eta_{25} - \eta_{15} - \eta_{24} & = &  -s_{25}\\
		\eta_{15} + \eta_{37} - \eta_{17} - \eta_{35} & = & -s_{26}-s_{27} -s_{36} - s_{37},
	\end{eqnarray*}
	while on the kinematic space $\mathcal{K}(3,9)$ we have 
	\begin{eqnarray*}
		\eta_{136} + \eta_{147} - \eta_{137} - \eta_{146} & = & -s_{147}-s_{478}-s_{479}\\
		\eta_{157} +\eta_{358} - \eta_{158} - \eta_{357} & = & -s_{238}-s_{248}-s_{258}-s_{348}-s_{358}.
	\end{eqnarray*}
	Thus, in all of the above situations, restricting the planar kinematic invariants $\eta_J$ to the cones respectively $\mathcal{K}_D(2,9)$ and $\mathcal{K}_D(3,9)$ makes all four of the above linear combinations nonnegative.
	
	However, in the noncrossing complex there is an important caveat.  That is, one also needs the following quantities to be positive in order to induce the noncrossing triangulation of $\mathcal{R}^{(3)}_{3}$:
	\begin{eqnarray*}
		\eta_{136} + \eta_{245} - \eta_{145} - \eta_{236} & = &  s_{236} - s_{245}\\
		\eta_{146} + \eta_{235} - \eta_{145} - \eta_{236} & = & s_{236} - s_{146}\\
		\eta_{135} + \eta_{246} - \eta_{145} - \eta_{236} & = &  -s_{146} - s_{245} - s_{246},
	\end{eqnarray*}
	and the signs of the first two quantities may change depending on where $(s_0)$ resides in $\mathcal{K}_D(3,6)$.
\end{example}

\begin{cor}\label{cor: extension octahedral commutator inequality}
	Fixing $(s_0) \in \mathcal{K}_D(k,n)$, then for a vertex $e_J\in \Delta_{k,n}$ and $a<b<c<d$ as in Corollary \ref{cor: octahedral commutator positive}, we have that 
	$$\eta_{e_J}(s_0) - \eta_{e_J+e_a-e_b + e_c-e_d}(s_0) - \eta_{e_J+e_a-e_b}(s_0)- \eta_{e_J+e_c-e_d}(s_0) \ge 0$$
	and the inequality is strict when $(s_0)$ is interior.
\end{cor}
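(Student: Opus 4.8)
The plan is to recognize the left-hand side as the pairing of the kinematic point $(s_0)$ with the octahedral lift-commutator $\mathcal{O}_{(a,b,c,d)}(\rho_{e_J})$, exactly the quantity analyzed in the proof of Corollary \ref{cor: octahedral commutator positive}, and then to read off the strict refinement from the single diagonal term $I=J$. Concretely, I would first expand every $\eta$ appearing on the left via the defining formula
$$\eta_K(s_0) \;=\; -\frac1n\sum_{I\in\binom{\lbrack n\rbrack}{k}}\min\{L_1(e_I-e_K),\ldots,L_n(e_I-e_K)\}\,(s_0)_I \;=\; \sum_{I}\rho_{e_K}(e_I)\,(s_0)_I ,$$
and regroup the four terms into the octahedral pattern with corners $e_J$, $e_J+(e_a-e_b)+(e_c-e_d)$, $e_J+(e_a-e_b)$, $e_J+(e_c-e_d)$. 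The hypothesis ``$a<b<c<d$ as in Corollary \ref{cor: octahedral commutator positive}'' is precisely what guarantees that all four corners are honest vertices of $\Delta_{k,n}$, so this regrouping identifies the coefficient of $(s_0)_I$ in the left-hand side with $\mathcal{O}_{(a,b,c,d)}(\rho_{e_J})(e_I)$ (keeping careful track of the overall $-\tfrac1n$ normalization), and hence expresses the left-hand side as $\sum_{I}\mathcal{O}_{(a,b,c,d)}(\rho_{e_J})(e_I)\,(s_0)_I$.

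Next I would invoke the Lemma preceding Corollary \ref{cor: octahedral commutator positive}: $\mathcal{O}_{(a,b,c,d)}(\rho_{e_J})(e_I)\le 0$ for every vertex $e_I\in\Delta_{k,n}$, with $\mathcal{O}_{(a,b,c,d)}(\rho_{e_J})(e_{j,j+1,\ldots,j+k-1})=0$ for every frozen vertex and $\mathcal{O}_{(a,b,c,d)}(\rho_{e_J})(e_J)=-1$. Pairing this against the defining inequalities of $\mathcal{K}_D(k,n)$, namely $(s_0)_I\le 0$ for nonfrozen $I$ and $(s_0)_I\ge 0$ for frozen $I$, each summand is nonnegative: for nonfrozen $I$ it is a product of two nonpositive numbers, while for frozen $I$ the curvature factor vanishes and kills the only term whose $(s_0)_I$ could be positive. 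Summing over all $I$ gives the asserted inequality $\ge 0$.

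For strictness when $(s_0)$ is interior I would isolate the term $I=J$ (present since $e_J$ is nonfrozen by hypothesis): it contributes $\mathcal{O}_{(a,b,c,d)}(\rho_{e_J})(e_J)\cdot(s_0)_J=-(s_0)_J>0$ because $(s_0)_J<0$ strictly for interior points, while every other summand is still $\ge 0$, so the total is strictly positive. I do not expect a genuine obstacle here: the substantive input — the pointwise bound $\mathcal{O}_{(a,b,c,d)}(\rho_{e_J})\le 0$ with equality on frozen vertices and value $-1$ on $e_J$ — is already available, resting on the positroidality of the blade subdivision of $\Delta_{k,n}$ via the positive tropical Plücker relations. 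The only things demanding care are confirming that the octahedral hypothesis genuinely places all four corners in $\Delta_{k,n}$ (so that $\mathcal{O}_{(a,b,c,d)}$ is literally applicable to $\rho_{e_J}$) and keeping the sign conventions consistent when passing among $\eta$, $\rho$, and $\mathcal{O}_{(a,b,c,d)}$; this ``extension'' over Corollary \ref{cor: octahedral commutator positive} amounts to recording the inequality for a fixed interior $(s_0)$ together with the strict refinement from the $I=J$ contribution.
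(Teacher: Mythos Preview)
Your proposal is correct and follows exactly the approach the paper takes: the paper gives no separate proof for this corollary, but its proof of the preceding Corollary~\ref{cor: octahedral commutator positive} already rewrites the combination as $\sum_{I}\mathcal{O}_{abcd}(\rho_{e_J})(e_I)\,(s_0)_I$ and deduces nonnegativity from the same sign analysis you describe (nonpositive $\mathcal{O}$ against nonpositive $(s_0)_I$ for nonfrozen $I$, and vanishing $\mathcal{O}$ on frozen $I$). Your additional observation that the $I=J$ summand equals $-(s_0)_J>0$ for interior $(s_0)$, using $\mathcal{O}_{abcd}(\rho_{e_J})(e_J)=-1$ from the Lemma, is exactly the extra ingredient supplying the strictness.
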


Corollary \ref{cor: octahedral cell noncrossing subdivision} shows that every octahedral cell in $\mathcal{R}^{(k)}_{n-k}$ has positive curvature with respect to the height function $\mathfrak{t}_{s_0}$ for interior points $(s_0) \in \mathcal{K}_D(k,n)$.  This is a key step used in order to show that there exists a (regular) triangulation of $\mathcal{R}^{(k)}_{n-k}$ whose faces are in bijection with noncrossing collections in $\mathbf{NC}_{k,n}$.  It is not enough, however, because some noncrossing pairs are not determined by octahedral commutators, see Example \ref{example: octahedral remark} for further discussion and an example.

\begin{cor}\label{cor: octahedral cell noncrossing subdivision}
	Any half-octahedral cell
	$$\text{convex hull}\left\{0,v_{Iac},v_{Iad},v_{Ibc},v_{Ibd} \right\}$$
	in $\mathcal{R}^{(k)}_{n-k}$ has a canonical extension to an octahedron which we denote by
	$$\Delta_{2,4}(Iabcd) = \text{convex hull}\left\{0,v_{Iac},v_{Iad},v_{Ibc},v_{Ibd},(v_{Iac} + v_{Ibd} = v_{Iad}+v_{Ibc}) \right\}.$$
	Moreover, if $(s_0) \in \mathcal{K}_D(k,n)$ is interior, then $\mathfrak{t}_{s_0}$ induces a subdivision of $\Delta_{2,4}(Iabcd)$ whose internal face is the square
	$$\text{conv}\left\{0,v_{Iad},v_{Ibc},(v_{Iac} + v_{Ibd} = v_{Iad}+v_{Ibc}) \right\}.$$
\end{cor}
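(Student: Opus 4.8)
The plan is to deduce the first assertion directly from the four-term identity \eqref{eq: 4-term octahedral} of Corollary \ref{cor: four term identity gammaJ}, and the second from the octahedral-curvature inequality of Corollary \ref{cor: octahedral commutator positive}.

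For the first assertion, set $w := v_{Iac}+v_{Ibd}$. By Corollary \ref{cor: four term identity gammaJ} one has $w = v_{Iac}+v_{Ibd} = v_{Iad}+v_{Ibc}$, so the three segments $[0,w]$, $[v_{Iac},v_{Ibd}]$ and $[v_{Iad},v_{Ibc}]$ all have midpoint $\tfrac12 w$. Hence the six points $0,v_{Iac},v_{Iad},v_{Ibc},v_{Ibd},w$ are the image of the six vertices of the hypersimplex $\Delta_{2,4}$ under an affine map carrying the three pairs of antipodal vertices of $\Delta_{2,4}$ onto those three segments; their convex hull $\Delta_{2,4}(Iabcd)$ is therefore an octahedron with these as its long diagonals, and it is canonical because the two expressions for $w$ coincide. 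The half-octahedron $\mathrm{conv}\{0,v_{Iac},v_{Iad},v_{Ibc},v_{Ibd}\}$ is then exactly the pyramid with apex $0$ over the equatorial square $\mathrm{conv}\{v_{Iac},v_{Iad},v_{Ibd},v_{Ibc}\}$ of $\Delta_{2,4}(Iabcd)$, that is, one of the two halves into which $\Delta_{2,4}(Iabcd)$ is cut by the equatorial square opposite the long diagonal $[v_{Iac},v_{Ibd}]$.

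For the second assertion I would invoke Corollary \ref{cor: octahedral commutator positive} with the nonfrozen vertex $e_J := e_{I\cup\{b,d\}}$ (in a genuine octahedral cell all four of $Iac,Ibd,Iad,Ibc$ are nonfrozen, so this is legitimate) and the cyclic four-tuple $a<b<c<d$. Then $e_J+(e_a-e_b)=e_{Iad}$, $e_J+(e_c-e_d)=e_{Ibc}$ and $e_J+(e_a-e_b)+(e_c-e_d)=e_{Iac}$, and the corollary gives
$$\eta_{Iac}(s_0)+\eta_{Ibd}(s_0)\ \ge\ \eta_{Iad}(s_0)+\eta_{Ibc}(s_0),$$
with strict inequality whenever $(s_0)$ is interior. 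Now lift the vertices of $\Delta_{2,4}(Iabcd)$ by $\mathfrak{t}_{s_0}(v_K)=\eta_K(s_0)$ and $\mathfrak{t}_{s_0}(0)=0$, and extend consistently to $w$ by $\mathfrak{t}_{s_0}(w):=\eta_{Iad}(s_0)+\eta_{Ibc}(s_0)$, the unique value for which the four lifts above $\mathrm{conv}\{0,v_{Iad},w,v_{Ibc}\}$ are coplanar (using $0+w=v_{Iad}+v_{Ibc}$). The displayed strict inequality says exactly that the lift of the centre $\tfrac12 w$ along the long diagonal $[v_{Iac},v_{Ibd}]$ lies strictly above that plane, while $v_{Iac}$ and $v_{Ibd}$ lie on opposite sides of the equatorial square $\mathrm{conv}\{0,v_{Iad},w,v_{Ibc}\}$ inside $\Delta_{2,4}(Iabcd)$; hence that square spans a ridge of the lower envelope separating its two facets, and the regular subdivision of $\Delta_{2,4}(Iabcd)$ induced by $\mathfrak{t}_{s_0}$ consists of the two pyramids with apexes $v_{Iac}$ and $v_{Ibd}$ over it, whose unique internal face is the square $\mathrm{conv}\{0,v_{Iad},v_{Ibc},w\}$. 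Forgetting the point $w$, which in general is not itself a vertex of $\hat{\mathcal{R}}^{(k)}_{n-k}$, the same computation exhibits the induced subdivision of the genuine half-octahedron as the two simplices $\mathrm{conv}\{0,v_{Iac},v_{Iad},v_{Ibc}\}$ and $\mathrm{conv}\{0,v_{Ibd},v_{Iad},v_{Ibc}\}$, sharing the internal triangle $\mathrm{conv}\{0,v_{Iad},v_{Ibc}\}$, which is one half of that square.

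The main obstacle is bookkeeping rather than anything substantive: one must check that $e_J=e_{I\cup\{b,d\}}$, and not some cyclic variant of the four-tuple, is the choice that makes Corollary \ref{cor: octahedral commutator positive} produce an inequality involving precisely the four subsets $Iac,Ibd,Iad,Ibc$ of the given cell, and that the sign is such that the \emph{noncrossing} diagonal $\{v_{Iad},v_{Ibc}\}$ — which is the unique noncrossing antipodal pair of the cell by the $m=2$ case of Proposition \ref{prop:cubical relations 1} — gets selected, rather than its complement $\{v_{Iac},v_{Ibd}\}$; it is worth cross-checking this against that proposition. All of the analytic input, namely the nonpositivity of the octahedral curvature and its strict negativity for interior kinematics, is already furnished by Corollary \ref{cor: octahedral commutator positive}; what remains are the elementary facts about regular subdivisions and lower envelopes used above.
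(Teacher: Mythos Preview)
Your proposal is correct and follows essentially the same route as the paper. The paper's proof cites Corollary \ref{cor: extension octahedral commutator inequality} rather than Corollary \ref{cor: octahedral commutator positive}, but these are the same inequality (the former merely restates the latter with the argument $(s_0)$ made explicit), and the paper's argument is the terse one-line version of yours: the strict inequality $\eta_{Iac}(s_0)+\eta_{Ibd}(s_0)>\eta_{Iad}(s_0)+\eta_{Ibc}(s_0)$ forces the octahedron to fold across the square $\mathrm{conv}\{0,v_{Iad},v_{Ibc},w\}$. Your extra care in assigning a height to the auxiliary vertex $w$ and spelling out the lower-envelope geometry fills in detail the paper leaves implicit; the choice $e_J=e_{I\cup\{b,d\}}$ is exactly right and your cross-check against the $m=2$ case of Proposition \ref{prop:cubical relations 1} is a sound sanity check, though not needed for the argument.
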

\begin{proof}
	The first claim is immediate; as for the second, it follows by translating Corollary \ref{cor: extension octahedral commutator inequality} that $\mathfrak{t}_{s_0}$ the average height of the lifts via $\mathfrak{t}_{s_0}$ of the vertices $v_{Iac}$ and $v_{Ibd}$ is larger than that for the pair $v_{Iad},v_{Ibc}$, and consequently the octahedron folds across the square
	$$\text{conv}\left\{0,v_{Iad},v_{Ibc},(v_{Iac} + v_{Ibd} = v_{Iad}+v_{Ibc}) \right\},$$
	as desired.
\end{proof}

Let us now be completely explicit what Corollaries \ref{cor: extension octahedral commutator inequality} and \ref{cor: octahedral cell noncrossing subdivision} look like on the level of rational functions.

Supposing that $I\cup \{a,b,c,d\} \in \binom{\lbrack n\rbrack}{k+2}$ with $a<b<c<d$, then from Corollary \ref{cor: four term identity gammaJ} we have the octahedral flip
\begin{eqnarray}\label{eq: octahedral flip gamma}
	\gamma_{Iac} + \gamma_{Ibd} = \gamma_{Iad} + \gamma_{Ibc}.
\end{eqnarray}
When all four linear functions $\gamma_{Iij}$ are nonzero we obtain, generically,
\begin{eqnarray}\label{eq:cubic 4-term relationA}
	\frac{1}{\gamma_{Ibd}\gamma_{Iad}\gamma_{Ibc}} + \frac{1}{\gamma_{Iac}\gamma_{Iad}\gamma_{Ibc}} & = & \frac{1}{\gamma_{Iac}\gamma_{Ibd}\gamma_{Ibc}} + \frac{1}{\gamma_{Iac}\gamma_{Ibd}\gamma_{Iad}}
\end{eqnarray}
that is,
\begin{eqnarray}\label{eq:cubic 4-term relation}
	\frac{1}{\gamma_{Iad}\gamma_{Ibc}} \left(\frac{1}{\gamma_{Iac}}+ \frac{1}{\gamma_{Ibd}} \right) & = & \frac{1}{\gamma_{Iac}\gamma_{Ibd}} \left(\frac{1}{\gamma_{Iad}}+ \frac{1}{\gamma_{Ibc}} \right).
\end{eqnarray}
Notice that the index sets of the Laurent monomials on each side of Equation \eqref{eq:cubic 4-term relationA} satisfy the following property: on the left-hand side we have the pairwise noncrossing sets
$$\{Iad,Ibd,Ibd\}\ \text{ and } \{Iac,Iad,Ibc\},$$
while on the right we have the two collections
$$\{Iac,Ibd,Ibc\}\ \text{ and } \{Iac,Ibd,Iad\},$$
both of which have the crossing pair $\{ac,bd\}$.  The identity in Equations \eqref{eq:cubic 4-term relationA} and \eqref{eq:cubic 4-term relation} amount to flipping the subdivision of the octahedron $\Delta_{2,4}(Iabcd)$.

Fix a nonzero point $(s_0) \in \mathcal{K}_D(k,n)$ and define 
$$\eta_J(\alpha) = \gamma_J(\alpha) + \eta_J(s_0).$$
Let us now investigate inhomogeneous analogs of Equations \eqref{eq: octahedral flip gamma}, \eqref{eq:cubic 4-term relationA} and \eqref{eq:cubic 4-term relation}.

We see immediately that
\begin{eqnarray*}
	(\eta_{Iac}(\alpha) + \eta_{Ibd}(\alpha)) - (\eta_{Iad}(\alpha) + \eta_{Ibd}(\alpha)) & = & (\eta_{Iac}(s_0) + \eta_{Ibd}(s_0)) - (\eta_{Iad}(s_0) + \eta_{Ibd}(s_0)),
\end{eqnarray*}
and assuming again that all $\eta_{Iij}(\alpha) \not=0$, then after rearrangement we arrive at the inhomogeneous relation
\begin{eqnarray*}
	\frac{1}{\eta_{Iad}(\alpha)\eta_{Ibc}(\alpha)} \left(\frac{1}{\eta_{Iac}(\alpha)}+ \frac{1}{\eta_{Ibd}(\alpha)} \right) & = & \frac{1}{\eta_{Iac}(\alpha)\eta_{Ibd}(\alpha)} \left(\frac{1}{\eta_{Iad}(\alpha)}+ \frac{1}{\eta_{Ibc}(\alpha)} \right)\\
	&  + & \frac{c_{Iabcd}}{\eta_{Iac}(\alpha)\eta_{Iad}(\alpha)\eta_{Ibc}(\alpha)\eta_{Ibd}(\alpha)},
\end{eqnarray*}
where the numerator
$$c_{Iabcd} := \left(\eta_{Iac}(s_0) + \eta_{Ibd}(s_0) - \eta_{Iad}(s_0) - \eta_{Ibc}(s_0)\right)$$
in the last line is by Corollary \ref{cor: extension octahedral commutator inequality} nonnegative.

\begin{example}\label{example: octahedral remark}
	The following flip characterizes the intersection of two adjacent simplices in the noncrossing triangulation of $\mathcal{R}^{(3)}_{3}$:
	$$v_{136} \Leftrightarrow v_{245}.$$
	There are two pairs of simplices in the triangulation of $\mathcal{R}^{(3)}_{3}$ which have facets characterized by this flip, namely
	$$\text{conv}\left(\left\{0,v_{145},v_{146},v_{245},v_{236}\right\}\right)\Leftrightarrow \text{conv}\left(\left\{0,v_{145},v_{146},v_{136},v_{236}\right\}\right)$$
	and
	$$\text{conv}\left(\left\{0,v_{145},v_{245},v_{235},v_{236}\right\}\right)\Leftrightarrow \text{conv}\left(\left\{0,v_{145},v_{235},v_{136},v_{236}\right\}\right) .$$
	Note that there is an intriguing sign dependence on the choice $(s_0) \in \mathcal{K}_D(3,6)$:
	$$\eta_{136} + \eta_{245} - \eta_{145} - \eta_{236} = s_{236} - s_{245}.$$
	
	One would like a set of linear functions $\hat{\eta}_J$ where all such ``flip moves'' are unambiguously nonnegative whenever $(s_0) \in \mathcal{K}_D(k,n)$, but a priori it is not obvious how to arrange for this!  However, Appendix \ref{sec:kinematic shift Appendix} contains a possible resolution to this sign ambiguity, at least in the case $(3,n)$ for $n\le 9$, via a certain kinematic shift $\eta_{a,b,c} \mapsto \hat{\eta}_{a,b,c}$.
\end{example}

\section{Fibered Simplices and Planar Faces: Building the PK associahedron $\mathbb{K}^{(k)}_{n-k}$}\label{sec: noncrossing complex}
In this section, we compute the Newton polytopes of the polynomials $\tau_J$.  	We give a conjectural construction of the PK associahedron $\mathbb{K}^{(k)}_{n-k}$ as the Minkowski sum of certain \textit{planar faces} $\mathcal{F}^{(i)}_{J}$ of the \textit{fibered simplex} $\Omega^{(k)}_{n-k}$.

The polynomials $\tau_J$ are constructed as follows.  For each $i=2,3,\ldots, n$, define $\tau_{1,i} = 1.$
With integers $s\ge 0$ and $m\ge 2$, and an $m$-element subset $J = \{j_1,\ldots, j_m\}$ of $\{2,3,\ldots, n\}$, set
$$\tau^{(s)}_{J} = \sum_{\{A \in \mathcal{I}_J: a_1\le a_2\le \cdots \le a_m\}}x_{1+s,a_1}x_{2+s,a_2}\cdots x_{m+s,a_m},$$
where $A = \{a_1,\ldots, a_m\}$, and 
$$\mathcal{I}_J = \lbrack j_1-1,j_2-1\rbrack \times \lbrack j_2-2,j_3-2\rbrack \times \cdots \lbrack j_{m-1}-(m-1),j_m-m\rbrack.$$
We are now ready to define the face polynomials $\tau_J$.
\begin{defn}
	For any $I \in \binom{\lbrack n\rbrack}{k}$, let $s\ge 0$ be such that $I=\lbrack 1,s\rbrack \cup J$, where $J = I\setminus \lbrack 1,s\rbrack$.  Now define 
	$$\tau_{I} = \tau^{(s)}_{j_1-s,j_2-s,\ldots, j_m-s}.$$
	Here $\lbrack 1,0\rbrack = \emptyset$ is the empty set.
\end{defn}

\begin{figure}[h!]
	\centering
	\includegraphics[width=0.7\linewidth]{"planarfacekngrid314A"}
	\caption{The Staircase: Planar Face polynomial $\tau_{2,6,11,12,14}$}
	\label{fig:planar-facekn-grid-3-14}
\end{figure}

The main question which we pose is the following: do there exist integers $c_J \ge 0$ such that for $J \in \binom{\lbrack n\rbrack}{k}$ the Newton polytope for the product of all polynomials $\tau_J$ is cut out by the set of facet inequalities $\gamma_J(\alpha) \ge c_J$?

In that case we would have an equality
$$\text{Newt}\left(\prod_{J\in \binom{\lbrack n\rbrack}{k}} \tau_J\right) = \left\{(\alpha)\in \mathcal{H}^{(\lambda_1,\ldots, \lambda_{k-1})}_{k,n}: \gamma_J \ge c_J \text{ for each } J \in \binom{\lbrack n\rbrack}{k}^{nf}\right\},$$
where 
$$\lambda_i = c_{\lbrack 1,i\rbrack \cup \lbrack n-i,n\rbrack},$$
all inequalities being facet-defining.

In Section \ref{sec:facet PK polytope} we exhibit such integers for one of the most degenerate (but very important!) subcases, the Planar Kinematics (PK) polytope $\Pi^{(k)}_{n-k}$.

Given polytopes $P,Q \subset \mathbb{R}^m$, define their Minkowski sum 
$$P\boxplus Q = \left\{p+q: (p,q)\in P\times Q \right\}.$$

For any subinterval $\lbrack j_1,j_2\rbrack = \{j_1,j_1+1,\ldots, j_2\}$ of $\{1,\ldots, n-k\}$ and any $1\le i\le k-1$, define a simplex
$$\Delta^{(i)}_{\lbrack j_1,j_2\rbrack} = \left\{\sum_{\ell\in \lbrack j_1,j_2\rbrack}x_{i,\ell} e_{i,\ell}: \sum_{\ell=1}^{n-k} x_{i,\ell} = 1 \right\}.$$

With $2\le k\le n-2$ as usual, define 
\begin{eqnarray}\label{eq: fibered simplex}
	\Omega^{(k)}_{n-k} = \left\{(x_{i,j})\in \lbrack 0,1\rbrack^{(k-1)\times(n-k)}: \ \ x_{i,\lbrack 1,j\rbrack} \ge x_{i+1,\lbrack 1,j\rbrack},\ \ \sum_{j=1}^{n-k} x_{i,j} = 1\right\}.
\end{eqnarray}
Note that the fibered simplices $\Omega^{(k)}_{n-k}$ for $k\ge3$ have new edges which are not edges of the ambient hypersimplex $\Delta_{(k-1),(k-1)(n-k)}$: the middle inequalities $ x_{a,\lbrack 1,b\rbrack} \ge x_{a+1,\lbrack 1,b\rbrack}$ prevent $\Omega^{(k)}_{n-k}$ from being a generalized permutohedron.  

Here for instance
$$\Omega^{(2)}_{n-2} = \Delta_{1,n-2} = \left\{x\in \lbrack 0,1\rbrack^n: \sum_{j=1}^{n-2}x_j=1  \right\}$$
is the usual $(n-3)$-dimensional simplex.  

Notice that in fact $\Omega^{(k)}_{n-k}$ is a subpolytope of the Cartesian product of simplices $\Delta^{(1)}_{\lbrack 1,n-k\rbrack} \times \cdots \times \Delta^{(k-1)}_{\lbrack 1,n-k\rbrack} $.  More interestingly, the vertex set of $\Omega^{(k)}_{n-k}$ is a \textit{section} of the projection $e_{i,j} \mapsto e_j$.  By this we mean that the projection sends vertices of $\Omega^{(k)}_{n-k}$ bijectively onto the vertices of a dilated simplex of dimension $n-k-1$, that is $(k-1)\Delta_{\lbrack 1,n-k\rbrack}$.

As usual assume that $2\le k\le n-2$.  Fixing $m\in \{2,\ldots, k\}$, given any 
$$(i,J)\in \{1,2,\ldots, k-(m-1)\} \times \binom{\lbrack (n-2) - (k-m)\rbrack}{m},$$
we define the \textit{planar face}
\begin{eqnarray}\label{eq: planar face}
	& & \mathcal{F}^{(i)}_J\\
	& = &\left\{(\alpha) \in \bigboxplus_{\ell=i}^{i+m-1}(\Delta^{(i+(\ell-1))}_{\lbrack j_\ell-(\ell-1),j_{\ell+1}-(\ell-1)\rbrack}):\  \alpha_{a,\lbrack 1,b\rbrack} \ge \alpha_{a+1,\lbrack 1,b\rbrack},\ (a,b)\in \lbrack 1,k-2\rbrack \times \lbrack 1,n-k-1\rbrack\right\}.\nonumber
\end{eqnarray}
For convenience let us introduce an intermediate notation for the planar face $\mathcal{F}^{(i)}_J$ in Equation \eqref{eq: planar face},
$$\mathcal{F}^{\lbrack i,i+m-1\rbrack}_{\lbrack j_1,j_2\rbrack,\lbrack j_2-1,j_3\rbrack,\ldots, \lbrack j_{m-1}-(m-1), j_m - m\rbrack}.$$

Remark that since the Minkowski summands live in mutually orthogonal subspaces, $\mathcal{F}^{(i)}_J$ is in fact at top level a Cartesian product of simplices, then cut by some additional inequalities.

\begin{prop}
	Given 
	$$(i,J) \in  \lbrack k-(m-1)\rbrack \times \binom{\lbrack (n-2) - (k-m)\rbrack}{m},$$	
	then there exists an integer vector $v$ such that the translation $v+\mathcal{F}^{(i)}_J$ of $\mathcal{F}^{(i)}_J$ is a face of $\Omega^{(k)}_{n-k}$.
\end{prop}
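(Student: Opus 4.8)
The plan is to exhibit $v+\mathcal{F}^{(i)}_J$ directly as the face of $\Omega^{(k)}_{n-k}$ obtained by forcing a well-chosen family of coordinates to vanish. Recall that $\Omega^{(k)}_{n-k}$ sits in $[0,1]^{(k-1)\times(n-k)}$ and is defined by the $k-1$ equalities $\sum_j x_{i,j}=1$ together with the inequalities $x_{i,j}\ge 0$ and the staircase inequalities $x_{i,[1,j]}\ge x_{i+1,[1,j]}$; forcing any subfamily of the last two families to equality and intersecting with $\Omega^{(k)}_{n-k}$ produces a face, and such a face is visibly the one minimizing the linear functional equal to the sum of the coordinates being set to zero (which is nonnegative on $\Omega^{(k)}_{n-k}$). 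The polytope $\mathcal{F}^{(i)}_J$ of Equation~\eqref{eq: planar face} is supported on a contiguous block of rows — the \emph{active} rows, the ones carrying a Minkowski summand — and, within each active row $r$, on a contiguous interval $I_r$ of columns read off from the factors of $\mathcal{I}_J$ shifted by the row index. The first step is to record this support precisely and to observe that the hypothesis $J\in\binom{[(n-2)-(k-m)]}{m}$ forces every $I_r\subseteq[1,n-k]$, while the hypothesis $i\le k-(m-1)$ forces the block of active rows to lie inside $[1,k-1]$; so the support is a genuine subset of the coordinate grid.

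Next I would define the integer translation vector $v$: on every active $(r,j)$ put $v_{r,j}=0$; for an inactive row $r$ lying \emph{above} the active block set $v_{r,1}=1$ and $v_{r,j}=0$ otherwise; for an inactive row $r$ lying \emph{below} the block set $v_{r,n-k}=1$ and $v_{r,j}=0$ otherwise. The two opposite extreme choices are dictated by the orientation of the staircase inequalities: pinning the upper inactive rows all the way to the left makes each of their cumulative sums $x_{r,[1,j]}$ identically $1$, so such a row dominates everything below it; pinning the lower inactive rows all the way to the right makes each of their cumulative sums vanish until the last column, so such a row is dominated by everything above it.

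Now set $G:=v+\mathcal{F}^{(i)}_J$. I would prove that $G$ equals the face of $\Omega^{(k)}_{n-k}$ cut out by the hyperplanes $x_{r,j}=0$ ranging over all $(r,j)$ outside the support of $G$ (equivalently, the face minimizing the sum of those coordinates). For the inclusion $\subseteq$: a point of that face has each inactive row pinned at its prescribed vertex — forced by the vanishing coordinates together with the row-sum equality — and each active row a point of the simplex $\Delta^{(r)}_{I_r}$; the remaining staircase inequalities of $\Omega^{(k)}_{n-k}$ between consecutive active rows then reduce to precisely the relations $\alpha_{a,[1,b]}\ge\alpha_{a+1,[1,b]}$ in the definition of $\mathcal{F}^{(i)}_J$, so subtracting $v$ lands the point in $\mathcal{F}^{(i)}_J$. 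For the inclusion $\supseteq$: given $\alpha\in\mathcal{F}^{(i)}_J$, the point $v+\alpha$ manifestly satisfies all the forced equalities, lies in the cube, and has the correct row sums; it remains to check it satisfies \emph{every} staircase inequality of $\Omega^{(k)}_{n-k}$, and those among active rows hold because $\alpha\in\mathcal{F}^{(i)}_J$ while those involving at least one inactive row hold automatically by the extreme pinning (an upper inactive row contributes cumulative value $1$ to the left side, a lower inactive row contributes cumulative value $0$ to the right side). Finally $G\neq\varnothing$ because it contains the image of any vertex of $\mathcal{F}^{(i)}_J$, and $\mathcal{F}^{(i)}_J$ is nonempty since the componentwise minimum of its defining intervals is an increasing tuple (this uses $j_1<j_2<\cdots<j_m$); hence $G$ is a genuine nonempty face and we are done.

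The step I expect to be the main obstacle is the bookkeeping in this $\subseteq/\supseteq$ matching: one must verify that, after restriction to the affine subspace where the inactive rows are pinned, the ambient staircase relations $x_{r,[1,j]}\ge x_{r+1,[1,j]}$ of $\Omega^{(k)}_{n-k}$ collapse to exactly the list of relations $\alpha_{a,[1,b]}\ge\alpha_{a+1,[1,b]}$ that define $\mathcal{F}^{(i)}_J$, with no relation lost and no extra relation introduced at the two interfaces where the active block meets the pinned rows. Concretely this comes down to checking a handful of inequalities between the endpoints of the intervals $I_r$ and the pin columns $1$ and $n-k$, using that the $I_r$ are the shifted factors of $\mathcal{I}_J$ and hence nest in the manner forced by $j_1<j_2<\cdots<j_m$; the verification is routine but must be carried out carefully because of the shifts by the row index built into the definition of $\mathcal{F}^{(i)}_J$.
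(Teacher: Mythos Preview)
The paper states this proposition without proof; it is asserted and then immediately used (``In a slight abuse of terminology, we shall say simply that $\mathcal{F}^{(i)}_J$ \emph{is} a planar face of $\Omega^{(k)}_{n-k}$\ldots''). Your argument supplies exactly the natural direct verification and is correct: pin the rows outside the active block to the extreme vertices $e_{r,1}$ (above) and $e_{r,n-k}$ (below), so that the staircase inequalities at the two interfaces become $1\ge(\text{anything})$ and $(\text{anything})\ge 0$, and then observe that among the active rows the staircase inequalities of $\Omega^{(k)}_{n-k}$ are literally those cutting out $\mathcal{F}^{(i)}_J$ from the product of simplices. The range hypotheses on $i$ and on $J$ are used precisely where you say, to keep the active block inside the $(k-1)\times(n-k)$ grid.

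One small point worth making explicit when you write it up: the displayed definition of $\mathcal{F}^{(i)}_J$ in Equation~\eqref{eq: planar face} imposes the staircase inequalities $\alpha_{a,[1,b]}\ge\alpha_{a+1,[1,b]}$ for \emph{all} $(a,b)\in[1,k-2]\times[1,n-k-1]$, not only among active rows; taken literally with $i>1$ this would force the first active row to vanish. The examples immediately following (e.g.\ $\mathcal{F}^{(2)}_{12}=\Delta^{(2)}_{[1,2]}$) make clear that only the inequalities between consecutive \emph{active} rows are intended, and your proof uses that intended reading. This is the same reading needed to make the Minkowski-sum index range and the intermediate notation $\mathcal{F}^{[i,i+m-1]}_{\cdots}$ consistent with the worked examples (the active block has $m-1$ rows, not $m$). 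None of this affects the validity of your argument; it just means you should state once which reading you adopt.
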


In a slight abuse of terminology, we shall say simply that $\mathcal{F}^{(i)}_J$ \textit{is} a planar face of $\Omega^{(k)}_{n-k}$, though in actuality this is true only up to translation.

\begin{defn}\label{defn: PK associahedron}
	Denote by $\mathbb{K}^{(k)}_{n-k}$ the Planar Kinematics (PK) associahedron: it is the Minkowski sum of all planar faces  $\mathcal{F}^{(i)}_J$ of $\Omega^{(k)}_{n-k}$,
	$$\mathbb{K}^{(k)}_{n-k} = \bigboxplus_{(i,J)}\mathcal{F}^{(i)}_J,$$
	where $(i,J)$ ranges over the set 
	$$(i,J)\in \bigsqcup_{m=2}^k\left(\{1,2,\ldots, k-(m-1)\} \times \binom{\lbrack (n-2) - (k-m)\rbrack}{m}\right).$$
\end{defn}
Clearly, when $k=2$ then $m=2$ and $\mathbb{K}^{(2)}_{n-2}$ is the usual associahedron, in the metric realization due originally to Loday \cite{Loday2003}, see also \cite{Postnikov2006}: the indexing set for the Minkowski summands is then $\{1\}\times \binom{\lbrack n-2\rbrack}{2}$.

The planar faces $\mathcal{F}^{(i)}_J$ (noting that by convention we are \textit{not} including vertices) have a particularly nice enumeration which helped establish the connection to binary geometries, see \cite{AHL2019Stringy,AHLT2019}.

\begin{prop}\label{prop: number Minkowski summands}
	The number of  Minkowski summands in $\mathbb{K}^{(k)}_{n-k}$ is 
	$$\sum_{m=2}^{k}(k-(m-1))\binom{(n-2)-(k-m)}{m} = \binom{n}{k} - k(n-k) -1.$$
\end{prop}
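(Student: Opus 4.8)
The statement has essentially no geometric content beyond Definition~\ref{defn: PK associahedron}: the pairs $(i,J)$ listed there are by construction exactly the Minkowski summands of $\mathbb{K}^{(k)}_{n-k}$, so what remains is the binomial identity
\[
\sum_{m=2}^{k}(k-m+1)\binom{(n-2)-(k-m)}{m}=\binom{n}{k}-k(n-k)-1 .
\]
The plan is to evaluate the left-hand side in closed form. First I would rewrite $(n-2)-(k-m)=n-k-2+m$, so the sum becomes $S_{k,n}=\sum_{m=2}^{k}(k-m+1)\binom{n-k-2+m}{m}$; since $2\le k\le n-2$ we have $n-k-2\ge 0$, so every binomial coefficient appearing below has nonnegative upper index and the parallel-summation (``hockey-stick'') identity $\sum_{i=0}^{j}\binom{r+i}{i}=\binom{r+j+1}{j}$ applies with no degenerate cases.

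Next I would write the linear coefficient as $k-m+1=\sum_{j=m}^{k}1$ and interchange the order of summation:
\[
S_{k,n}=\sum_{m=2}^{k}\sum_{j=m}^{k}\binom{n-k-2+m}{m}=\sum_{j=2}^{k}\sum_{m=2}^{j}\binom{n-k-2+m}{m}.
\]
Applying parallel summation with $r=n-k-2$ gives $\sum_{m=0}^{j}\binom{n-k-2+m}{m}=\binom{n-k-1+j}{j}$, and subtracting the $m=0$ and $m=1$ terms (which are $1$ and $n-k-1$) yields $\sum_{m=2}^{j}\binom{n-k-2+m}{m}=\binom{n-k-1+j}{j}-(n-k)$. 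Summing over the $k-1$ values $j=2,\dots,k$,
\[
S_{k,n}=\Bigl(\sum_{j=2}^{k}\binom{n-k-1+j}{j}\Bigr)-(k-1)(n-k).
\]
Applying parallel summation a second time, now with $r=n-k-1$, gives $\sum_{j=0}^{k}\binom{n-k-1+j}{j}=\binom{n}{k}$, and removing the $j=0$ and $j=1$ terms (namely $1$ and $n-k$) gives $\sum_{j=2}^{k}\binom{n-k-1+j}{j}=\binom{n}{k}-1-(n-k)$. Substituting,
\[
S_{k,n}=\binom{n}{k}-1-(n-k)-(k-1)(n-k)=\binom{n}{k}-k(n-k)-1 ,
\]
which is the claimed value.

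There is no real obstacle in this argument; the only point requiring care is the bookkeeping of the low-order terms ($m=0,1$ and $j=0,1$) when the two hockey-stick sums are truncated to start at $2$, and the hypotheses $2\le k\le n-2$ are precisely what make these truncations legitimate. As a reassurance that the answer is the expected one, note it can be rewritten as $\binom{n}{k}-k(n-k)-1=\bigl(\binom{n}{k}-n\bigr)-(k-1)(n-k-1)$ --- the number of Minkowski summands is the number of nonfrozen $k$-element subsets minus $(k-1)(n-k-1)$ --- which for $k=2$ specializes to $\binom{n-2}{2}$, the familiar count of summands for the Loday realization of the associahedron $\mathbb{K}^{(2)}_{n-2}$.
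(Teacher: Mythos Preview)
Your proof is correct. The paper itself states this proposition without proof, so there is nothing to compare your argument against; your two applications of the hockey-stick identity, together with the careful accounting of the $m=0,1$ and $j=0,1$ terms, cleanly establish the claimed binomial identity and thereby supply the missing verification.
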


\begin{defn}\label{defn:deltaiJ}
	Given integers $k,m,n$ as usual with $2\le k\le n-2$ and $2\le m\le k$, for any $(i,J) \in \lbrack k-(m-1)\rbrack \times \binom{\lbrack (n-2) - (m-1)\rbrack}{m} $, define the \textit{face polynomial} $\delta^{(i)}_J \in \mathbb{C}\lbrack \mathbf{x}\rbrack$ to be the following sum over the vertices of the face $\mathcal{F}^{(i)}_J$:
	\begin{eqnarray}\label{eq: delta polynomials}
		\delta^{(i)}_J = \sum_{v\in \text{verts}\left(\mathcal{F}^{(i)}_J\right)} \mathbf{x}^v.
	\end{eqnarray}
\end{defn}

\begin{example}
	For $(k,n) = (3,6)$, then for $\Omega^{(3)}_3$ there are two families of $\binom{3}{2}$ planar faces $$\{\mathcal{F}^{(i)}_{ab}: i=1,2;\ \{a,b\}\in \{12,13,23\}\},$$ which are simplices, and four faces $\mathcal{F}^{(1)}_{abc}$, two of which are simplices.  The first two families are 
	$$\mathcal{F}^{(1)}_{12} = \Delta^{(1)}_{\lbrack 1,2\rbrack},\ \mathcal{F}^{(1)}_{13} = \Delta^{(1)}_{\lbrack 1,3\rbrack},\ \mathcal{F}^{(1)}_{13} = \Delta^{(1)}_{\lbrack 2,3\rbrack}$$
	and
	$$\mathcal{F}^{(2)}_{12} = \Delta^{(2)}_{\lbrack 1,2\rbrack},\ \mathcal{F}^{(2)}_{13} = \Delta^{(2)}_{\lbrack 1,3\rbrack},\ \mathcal{F}^{(2)}_{13} = \Delta^{(2)}_{\lbrack 2,3\rbrack}$$
	with face polynomials respectively
	$$\delta^{(1)}_{12} = x_{1,12},\ \ \delta^{(1)}_{13} = x_{1,123},\ \ \delta^{(1)}_{23} = x_{1,23}$$
	and
	$$\delta^{(2)}_{12} = x_{2,12},\ \ \delta^{(2)}_{13} = x_{2,123},\ \ \delta^{(2)}_{23} = x_{2,23}.$$
	Here our notation means that for instance $\delta^{(2)}_{23} =x_{2,23} = x_{2,2}+x_{2,3}$.
	
	The remaining $\binom{4}{3}$ planar faces are, keeping in parallel both the shorthand notation $\mathcal{F}^{(i)}_J$ and the more explicit notation,
	\begin{eqnarray*}
		\mathcal{F}^{(1)}_{123} = \mathcal{F}^{\lbrack 1,2\rbrack}_{\lbrack 1,2\rbrack,\lbrack 1,2\rbrack},& & \mathcal{F}^{\lbrack 1,2\rbrack}_{124} = \mathcal{F}^{\lbrack 1,2\rbrack}_{\lbrack 1,2\rbrack,\lbrack 1,3\rbrack},\\
		\mathcal{F}^{(1)}_{134} = \mathcal{F}^{\lbrack 1,2\rbrack}_{\lbrack 1,3\rbrack,\lbrack 2,3\rbrack},& & \mathcal{F}^{\lbrack 1,2\rbrack}_{234} = \mathcal{F}^{\lbrack 1,2\rbrack}_{\lbrack 2,3\rbrack,\lbrack 2,3\rbrack},
	\end{eqnarray*}
	with face polynomials respectively	
	\begin{eqnarray*}
		\delta^{(1)}_{123} & = & x_{1,1}x_{2,12}+x_{1,2}x_{2,2},\\
		\delta^{(1)}_{124} & = & x_{1,1}x_{2,123}+x_{1,2}x_{2,23},\\
		\delta^{(1)}_{134} & = & x_{1,12}x_{2,2}+x_{1,3}x_{2,3},\\
		\delta^{(1)}_{234} & = & x_{1,2}x_{2,23}+x_{1,3}x_{2,3}.
	\end{eqnarray*}
	For $(k,n) = (4,8)$, say, consider the planar face $\mathcal{F}^{(2)}_{124}$ of $\Omega^{(4)}_{4}$:
	$$\mathcal{F}^{(2)}_{124} = \mathcal{F}^{\lbrack 2,3\rbrack}_{\lbrack 1,2\rbrack,\lbrack 1,3\rbrack}.$$
	The corresponding face polynomial is given by
	$$\delta^{(2)}_{124} = x_{2,1}x_{3,123} + x_{2,2}x_{3,23} = x_{2,1}x_{3,1}+x_{2,1}x_{3,2} + x_{2,1}x_{3,3} + x_{2,2}x_{3,2}+x_{2,2}x_{3,3}.$$
	Finally, for $(k,n) = (4,n)$ with $n\ge 8$ say,
	$$\mathcal{F}^{(1)}_{1346} = \mathcal{F}^{\lbrack 1,3\rbrack}_{\lbrack 1,3\rbrack,\lbrack 2,3\rbrack,\lbrack 2,4\rbrack}$$
	and 
	$$\delta^{(1)}_{1346} = x_{1,12}x_{2,2}x_{3,234} + x_{1,3}x_{2,3}x_{3,34}.$$
\end{example}
\begin{rem}
	Note that in the construction of the u-variables for $k=3,4$ in Appendix \ref{sec: u-variables} we do allow the first two indices in $J$ to coincide; this is done intentionally, in order to simplify the definition by avoiding a plethora of subcases, but the side-effect is the introduction of more common (monomial) factors in the numerator and denominator of the $u_J$'s for certain subsets $J$.  To illustrate the cancellation, consider $(k,n) = (3,6)$, where we have
	\begin{eqnarray*}
		u_{236} & = & \frac{\tau_{336}\tau_{245}}{\tau_{236} \tau_{345}}\\
		& = & \frac{\left(x_{1,1} x_{2,2}+x_{1,2} x_{2,2}\right) \left(x_{1,2} x_{2,2}+x_{1,2} x_{2,3}\right)}{x_{1,2} x_{2,2} \left(x_{1,1} x_{2,1}+x_{1,1} x_{2,2}+x_{1,2} x_{2,2}+x_{1,1} x_{2,3}+x_{1,2} x_{2,3}\right)}\\
		& = & \frac{\left(x_{1,1}+x_{1,2}\right) \left(x_{2,2}+x_{2,3}\right)}{x_{1,1} x_{2,1}+x_{1,1} x_{2,2}+x_{1,2} x_{2,2}+x_{1,1} x_{2,3}+x_{1,2} x_{2,3}}\\
		& = & \frac{\delta^{(1)}_{12}\delta^{(2)}_{23}}{\delta^{(1)}_{124}}.
	\end{eqnarray*}
	
	To find a better (non-redundant!) scheme to define $u$-variables directly in terms of the face variables $\delta^{(i)}_J$ introduced in Definition \ref{defn:deltaiJ} would require some extra analysis that is not needed for this paper; the problem is left to future work.
\end{rem}

\section{Positive Parameterization From Fibered Simplices: Parameterizing $X(k,n)$}\label{sec: positive parameterization}

Let us reinterpret the matrix entries of the so-called positive parameterization\footnote{We thank Freddy Cachazo for explanations and for sharing Mathematica code \cite{CachazoDiscussions}.}, as sums of monomials over vertex sets of fiber simplices $\Omega^{(k)}_{n-k}$, constructing in this way an embedding 
$$(\mathbb{CP}^{n-k-1})^{\times (k-1)} \hookrightarrow X(k,n)$$
of a Cartesian product of projective spaces into $X(k,n)$.

We first define a $(k-1)\times (n-k-1)$ polynomial-valued matrix $M_{k,n}$ with entries $m_{i,j}(x)$, with $(i,j) \in \lbrack 1,k-1\rbrack \times \lbrack 1,n-k\rbrack$, defined by 
\begin{eqnarray*}
	m_{i,j}(\{x_{a,b}: (a,b) \in \lbrack i,k-1\rbrack \times \lbrack 1,j\rbrack\}) & = & \sum_{\mathbf{v}\in \text{verts}\left(\Omega^{(i+1)}_{j}\right)} \mathbf{x}^{\mathbf{v}'},
\end{eqnarray*}
where we define
$$x^{\mathbf{v}'} = \prod_{(a,b) }x^{v_{a+(i-1),b}}_{a,b}.$$
For instance, fixing $(k,n) = (4,n)$ then 
\begin{eqnarray*}
	m_{1,2} & = & x_{1,1} x_{2,1} x_{3,1}+x_{1,1} x_{2,1} x_{3,2}+x_{1,1} x_{2,2} x_{3,2}+x_{1,2} x_{2,2} x_{3,2}\\
	m_{2,3} & = & x_{2,1} x_{3,1}+x_{2,1} x_{3,2}+x_{2,2} x_{3,2}+x_{2,1} x_{3,3}+x_{2,2} x_{3,3}+x_{2,3} x_{3,3}\\
	m_{3,4} & = & x_{3,1}+x_{3,2}+x_{3,3}+x_{3,4}.
\end{eqnarray*}
We emphasize that the polynomials $m_{i,j}$ for $k\ge 3$ do not in general give rise to planar faces of the form $\mathcal{F}^{(i)}_J$.  The first such instance is for $(3,6)$, specifically the matrix entry $m_{1,3}$.

For the embedding $(\mathbb{CP}^{n-k-1})^{\times k-1} \hookrightarrow X(k,n)$, we construct a $(k-1)\times (n-k)$ matrix with $M_{k,n}$ as its upper right block:
$$\begin{bmatrix}
	1 &  &  & 0 & m_{1,1}& \cdots  & m_{1,n-k} \\
	& \ddots &  &  &\vdots & \ddots & \vdots \\
	&  & 1 &  & m_{k-1,1} & & m_{k-1,n-k} \\
	0&  &  & 1 & 1 & \cdots & 1 \\
\end{bmatrix}.
$$

For instance, for rank $k=3$ we have
\begin{eqnarray}
	M_{3,6} & = & \begin{bmatrix}
		m_{1,1} & m_{1,2} & m_{1,3}\\
		m_{2,1} & m_{2,2} & m_{2,3}
	\end{bmatrix}\\	
	& = & \begin{bmatrix}
		x_{1,1}x_{2,1} & x_{1,1}x_{2,12}+x_{1,2}x_{2,2}  & x_{1,1}x_{2,123} + x_{1,2}x_{2,23} + x_{1,3}x_{2,3} \\
		x_{2,1} & x_{2,12} & x_{2,123}\nonumber
	\end{bmatrix}
\end{eqnarray}
and for the embedding we have
$$\begin{bmatrix}
	1 & 0 & 0 & x_{1,1}x_{2,1} & x_{1,1}x_{2,12}+x_{1,2}x_{2,2} & x_{1,1}x_{2,123} + x_{1,2}x_{2,23} + x_{1,3}x_{2,3} \\
	0& 1 & 0 & x_{2,1} & x_{2,12} & x_{2,123}\\
	0 & 0 & 1 & 1 & 1 & 1 
	\nonumber
\end{bmatrix}.$$

\section{Root Kinematics Potential Function and a Noncrossing Degree for the Positive Tropical Grassmannian}\label{sec: noncrossing lattice rays}

In this section, we use Theorem \ref{thm: noncrossing subdivision Wkn} to construct an injection from the set of rays of the positive tropical Grassmannian into the noncrossing triangulation of the root polytope $\mathcal{R}^{(k)}_{n-k}$.  As an immediate consequence we obtain a noncrossing \textit{degree} on rays. 

We also obtain a bijection between the set of pairs in $\mathbf{NC}_{3,n}$ that are \textit{not} weakly separated, and the certain matroidal weighted blade arrangements (see \cite{Early2020WeightedBladeArrangements}), called \textit{tripods} $\tau_{(u,v,w),(u',v',w')}$, that is 
$$\tau_{(u,v,w),(u',v',w')}= -\beta_{uvw}+\beta_{u'vw} + \beta_{uv'w} + \beta_{uvw'}.$$
The notation will be explained below.  Briefly, each such tripod induces a regular positroidal subdivision of $\Delta_{3,n}$.  It is expected to be coarsest (and to therefore define a ray of the positive tropical Grassmannian, and a pole of $m^{(3)}_n$), but to prove this in complete generality, which would be essential for possible physical applications to achieve a complete description of the poles, may require additional insights.

Here is the main construction of this section.

For an overview of the blade model, see Appendix \ref{sec: blades}, and in particular Theorem \ref{Early2020WeightedBladeArrangements}.  

\begin{defn}\label{defn:projection blade to weight}
	Define a projection $\varphi_{\mathcal{R}}:\mathcal{B}(k,n) \rightarrow \mathbb{R}^{(k-1)\times (n-k)}$ by 
	$$\beta_J \mapsto v_J.$$
\end{defn}
Lest this projection seem unmotivated, let us point out its origin: namely the \textit{root} kinematics potential function and scattering equations, which we now introduce.  Given $J\in \binom{\lbrack n\rbrack}{k}^{nf}$, define a cube
$$\mathcal{U}(J) = \left\{e_J\in \Delta_{k,n}: \text{for each $j\in\{1,\ldots, n\}$ such that }  (j,j+1) \in J\times J^c\right\}$$
where addition is cyclic modulo $n$.

Then the planar cross-ratio $w_J$ is given by 
$$w_J = \prod_{M\in \mathcal{U}(J)} p_M^{k-\#(M\cap J)+1},$$
where $p_J$ is the minor with column set $J$ of a given $k\times n$ matrix.

\begin{defn}\label{def: Root Kinematics}
	For any $(\alpha) \in \mathbb{R}^{(k-1)\times (n-k)}$, let $(s(\alpha)) \in \mathcal{K}(k,n)$ be the point characterized as follows.  If $J \in \binom{\lbrack n\rbrack}{k}^{nf}$ then put $\eta_J(s(\alpha))=0$.  Otherwise, put
	$$\eta_J(s(\alpha)) = \gamma_J(\alpha).$$
	The \textit{Root Kinematics subspace} $\mathcal{K}^{(Rt)}(k,n)$ of $\mathcal{K}(k,n)$ is given by 
	$$\mathcal{K}^{(Rt)}(k,n)=\left\{(s(\alpha))\in \mathcal{K}(k,n): (\alpha) \in \mathbb{R}^{(k-1)\times (n-k)}\right\}.$$
\end{defn}

We define the \textit{Root Kinematics potential function} $\mathcal{S}^{(Rt)}_{k,n}$, by 
\begin{eqnarray}
	\mathcal{S}^{(Rt)}_{k,n} & = & \sum_{J \in \binom{\lbrack n\rbrack}{k}^{nf}}\log(w_J)\gamma_J(\alpha).
\end{eqnarray}

We give two examples.

\begin{example}
	The potential functions $\mathcal{S}^{(Rt)}_{3,6}$ and $\mathcal{S}^{(Rt)}_{4,8}$ have the following simple expressions, and it is easy to see that both simplify to the form in Equation \eqref{eq: root potential} when evaluated on the positive parameterization:
	\begin{eqnarray*}
		\mathcal{S}^{(Rt)}_{3,6} & = & \alpha_{1,1} \log \left(\frac{p_{156} p_{234}}{p_{134} p_{256}}\right)+\alpha_{1,2} \log \left(\frac{p_{124} p_{156} p_{345}}{p_{134} p_{145} p_{256}}\right)+\alpha_{1,3} \log \left(\frac{p_{125} p_{456}}{p_{145} p_{256}}\right)\\
		& + & \alpha_{2,1} \log \left(\frac{p_{126} p_{134}}{p_{124} p_{136}}\right)+\alpha_{2,2} \log \left(\frac{p_{123} p_{126} p_{145}}{p_{124} p_{125} p_{136}}\right)+\alpha_{2,3} \log \left(\frac{p_{123} p_{156}}{p_{125} p_{136}}\right)
	\end{eqnarray*}
	and 
	\begin{eqnarray*}
		\mathcal{S}^{(Rt)}_{4,8} & = & \alpha_{1,1} \log \left(\frac{p_{1678} p_{2345}}{p_{1345} p_{2678}}\right)+\alpha_{1,2} \log \left(\frac{p_{1245} p_{1678} p_{3456}}{p_{1345} p_{1456} p_{2678}}\right)+\alpha_{1,3} \log \left(\frac{p_{1256} p_{1678} p_{4567}}{p_{1456} p_{1567} p_{2678}}\right)\\
		& + & \alpha_{1,4} \log \left(\frac{p_{1267} p_{5678}}{p_{1567} p_{2678}}\right)+\alpha_{2,1} \log \left(\frac{p_{1278} p_{1345}}{p_{1245} p_{1378}}\right)+\alpha_{2,2} \log \left(\frac{p_{1235} p_{1278} p_{1456}}{p_{1245} p_{1256} p_{1378}}\right)\\
		& + & \alpha_{2,3} \log \left(\frac{p_{1236} p_{1278} p_{1567}}{p_{1256} p_{1267} p_{1378}}\right)+\alpha_{2,4} \log \left(\frac{p_{1237} p_{1678}}{p_{1267} p_{1378}}\right)+\alpha_{3,1} \log \left(\frac{p_{1238} p_{1245}}{p_{1235} p_{1248}}\right)\\
		& + & \alpha_{3,2} \log \left(\frac{p_{1234} p_{1238} p_{1256}}{p_{1235} p_{1236} p_{1248}}\right)+\alpha_{3,3} \log \left(\frac{p_{1234} p_{1238} p_{1267}}{p_{1236} p_{1237} p_{1248}}\right)+\alpha_{3,4} \log \left(\frac{p_{1234} p_{1278}}{p_{1237} p_{1248}}\right)
	\end{eqnarray*}
\end{example}

\begin{claim}\label{claim: Root Kinematics potential function embedding}
	For each $(\alpha) \in \mathbb{R}^{(k-1)\times (n-k)}$, the function $\mathcal{S}^{(Rt)}_{k,n}$ has a unique critical point in the torus quotient $G(k,n) \slash (\mathbb{C}^\ast)^{\times n}$.  Allowing $(\alpha)$ to vary over $(\mathbb{CP}^{(n-k-1)})^{\times (k-1)}$ gives an embedding $(\mathbb{CP}^{(n-k-1)})^{\times (k-1)} \hookrightarrow G(k,n) \slash (\mathbb{C}^\ast)^{\times n}$, and we obtain a manifestly $PGL(k)$ invariant characterization of the positive parameterization, as the solution to a differential equation.
\end{claim}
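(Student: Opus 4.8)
The plan is to pass to the chart of $G(k,n)\slash(\mathbb{C}^\ast)^{\times n}$ cut out by the positive parameterization matrix $M_{k,n}$ of Section~\ref{sec: positive parameterization}, where $\mathcal{S}^{(Rt)}_{k,n}$ collapses to the completely explicit master function
$$
\mathcal{S}^{(Rt)}_{k,n}\big\vert_{BCFW} \;=\; \sum_{(i,j)\in \lbrack 1,k-1\rbrack \times \lbrack 1,n-k\rbrack} \log\!\left(\frac{x_{i,j}}{\sum_{\ell=1}^{n-k}x_{i,\ell}}\right)\alpha_{i,j},
$$
and then to read the critical point off directly. First I would reorganize the defining sum: since $\gamma_J(\alpha)=\sum_{i=1}^{k-1}\alpha_{i,\lbrack j_i-(i-1),j_{i+1}-i-1\rbrack}$ has all of its $\alpha_{i,j}$-coefficients equal to $0$ or $1$, and each $\log w_J$ is a $\mathbb{Z}$-linear combination $\sum_M c_{J,M}\log p_M$ of logarithms of Plucker coordinates, exchanging the order of summation in $\mathcal{S}^{(Rt)}_{k,n}=\sum_{J\in\binom{\lbrack n\rbrack}{k}^{nf}}\log(w_J)\gamma_J(\alpha)$ gives
$$
\mathcal{S}^{(Rt)}_{k,n}\;=\;\sum_{(i,j)}\alpha_{i,j}\,\log R_{i,j},\qquad R_{i,j}\;=\;\prod_{J} w_J^{\,c^{(i,j)}_J},
$$
where $c^{(i,j)}_J\in\{0,1\}$ is the coefficient of $\alpha_{i,j}$ in $\gamma_J$, so that $R_{i,j}$ is a Laurent monomial in the $p_M$. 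The explicit formulas for $\mathcal{S}^{(Rt)}_{3,6}$ and $\mathcal{S}^{(Rt)}_{4,8}$ above are precisely this reorganization, read off from the coefficient of each $\alpha_{i,j}$.

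The main computational step is the evaluation of $R_{i,j}$ on $M_{k,n}$. The key lemma to establish is that each planar cross-ratio $w_J$, with the $p_M$ specialized to the minors of $M_{k,n}$, collapses to a Laurent monomial in the $x_{i,j}$, and that after all cancellation $R_{i,j}\big\vert_{M_{k,n}}=x_{i,j}\big/\sum_{\ell}x_{i,\ell}$, which gives the displayed identity for $\mathcal{S}^{(Rt)}_{k,n}\big\vert_{BCFW}$. The underlying reason is the cluster nature of the positive parameterization: the minors indexed by the cube $\mathcal{U}(J)$ are near-consecutive Plucker coordinates of $M_{k,n}$ whose products in the numerator and in the denominator of $w_J$ telescope --- this is verified in the case $R_{1,1}\vert_{M_{3,6}}$, where $p_{156}p_{234}/(p_{134}p_{256})$ collapses to $x_{1,1}/(x_{1,1}+x_{1,2}+x_{1,3})$, and one expects the same phenomenon throughout. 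I would prove the lemma by induction on $k$, reusing the partition of unity $\varphi_{k-2}+\varphi_{-1}=\mathbf{id}$ from the proof of Theorem~\ref{thm: noncrossing subdivision Wkn}: projecting $M_{k,n}$ onto its first $k-2$ rows and onto its last row recovers the rank-$(k-1)$ and rank-$2$ parameterization matrices, and the (near-)frozen minors of $M_{k,n}$ factor compatibly, so that tracking exponents reduces the identity to rank $2$ (the classical $M_{0,n}$ potential, cf.\ \cite{Rietsch,RietschWilliams}) and rank $3$. An alternative route recognizes $M_{k,n}$ as the boundary measurement matrix of an explicit plabic graph and invokes monomiality of its face cross-ratios, cf.\ \cite{PostnikovGrassmannian,TalaskaWilliams}.

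Granting the lemma, $\partial/\partial x_{i,j}$ of $\mathcal{S}^{(Rt)}_{k,n}\big\vert_{BCFW}$ vanishes iff $\alpha_{i,j}/x_{i,j}=\big(\sum_{j'}\alpha_{i,j'}\big)\big/\sum_{\ell}x_{i,\ell}$, whose unique solution in $(\mathbb{CP}^{n-k-1})^{\times(k-1)}$ is $\lbrack x_{i,1}:\cdots:x_{i,n-k}\rbrack=\lbrack \alpha_{i,1}:\cdots:\alpha_{i,n-k}\rbrack$ for each $i$. Since the positive parameterization is injective and dominant, hence birational, onto $X(k,n)$ (for $k=2$ this is the Gelfand--MacPherson isomorphism $G(2,n)\slash(\mathbb{C}^\ast)^n\cong M_{0,n}$; in general cf.\ \cite{PostnikovGrassmannian,TalaskaWilliams,PositiveGrassmannian}), and since $\mathcal{S}^{(Rt)}_{k,n}=\sum_M\eta_M(s(\alpha))\log p_M$ is defined only on $X(k,n)$, the critical points of $\mathcal{S}^{(Rt)}_{k,n}$ are the images under the chart of the critical points of $\mathcal{S}^{(Rt)}_{k,n}\big\vert_{BCFW}$; for generic $\alpha$ there is exactly one, the image of $x=\alpha$. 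Critical points potentially escaping to the boundary are excluded by the standard argument: on $\{p_M=0\}$ with $M$ nonfrozen, the coefficient $\gamma_M(\alpha)$ of $\log p_M$ is nonzero for $\alpha$ outside a finite union of hyperplanes (by Proposition~\ref{prop:planar basis} together with $\gamma_M\neq 0$ for nonfrozen $M$), so the corresponding logarithmic one-form has nonzero residue and no critical point can sit there, while the frozen boundary components pull back to coordinate hyperplanes $\{x_{i,j}=0\}$ of the chart, along which $\alpha_{i,j}\log\big(x_{i,j}/\sum_\ell x_{i,\ell}\big)$ blows up. The assignment $\alpha\mapsto M_{k,n}(\alpha)\bmod(\mathbb{C}^\ast)^n$ is then the claimed embedding $(\mathbb{CP}^{n-k-1})^{\times(k-1)}\hookrightarrow G(k,n)\slash(\mathbb{C}^\ast)^n$ --- its injectivity and immersion property being those of the positive parameterization --- and its image is cut out by the equations $d_x\big(\sum_{(i,j)}\log(x_{i,j}/\sum_\ell x_{i,\ell})\alpha_{i,j}\big)=0$, a characterization manifestly invariant under $PGL(k)$ since it uses only $(\mathbb{C}^\ast)^n$-invariant Plucker ratios.

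The main obstacle is the monomiality lemma of the second paragraph: establishing, uniformly in $(k,n)$, that each $w_J$ specializes on $M_{k,n}$ to a Laurent monomial with exactly the exponents required to reassemble $\sum_{(i,j)}\log(x_{i,j}/\sum_\ell x_{i,\ell})\alpha_{i,j}$. Everything else is either classical --- injectivity and birationality of the positive parameterization, and boundary avoidance for master functions --- or a short computation once that identity is in hand; the examples $\mathcal{S}^{(Rt)}_{3,6}$ and $\mathcal{S}^{(Rt)}_{4,8}$ already display the lemma in the first nontrivial cases.
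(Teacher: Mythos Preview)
Your proposal takes essentially the same route as the paper's sketch: pass to the positive parameterization chart, where $\mathcal{S}^{(Rt)}_{k,n}$ collapses to $\sum_{(i,j)}\alpha_{i,j}\log\bigl(x_{i,j}/\sum_\ell x_{i,\ell}\bigr)$, and then argue uniqueness of the critical point. The only substantive difference is in how that last step is handled: the paper observes that the right-hand side is a sum of $k-1$ independent log-likelihood functions for the probability simplex and invokes the maximum-likelihood degree~$1$ result of \cite{Huh2013,HuhSturmfels}, whereas you solve $\partial/\partial x_{i,j}=0$ directly to get $[x_{i,1}:\cdots:x_{i,n-k}]=[\alpha_{i,1}:\cdots:\alpha_{i,n-k}]$. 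Your direct computation is more self-contained; the paper's citation places the calculation in a known framework and implicitly covers the boundary analysis you spell out by hand. You also outline a strategy (induction via $\varphi_{k-2}+\varphi_{-1}$, or plabic-graph monomiality) for the key evaluation identity that the paper simply asserts as the outcome of a computation, so in that respect your write-up is more detailed than the paper's own sketch.
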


\begin{proof}[Sketch of proof]
	By evaluating $\mathcal{S}^{(Rt)}_{k,n}$ on the image of the positive parameterization one finds that 
	\begin{eqnarray}\label{eq: root potential}
		\mathcal{S}^{(Rt)}_{k,n}\big\vert_{BCFW} & = & \sum_{(i,j) \in \lbrack 1,k-1\rbrack \times \lbrack 1,n-k\rbrack} \log\left(\frac{x_{i,j}}{\sum_{\ell=1}^{n-k}x_{i,\ell}}\right)\alpha_{i,j},
	\end{eqnarray}
	which is familiar from algebraic statistics.  Indeed, the right hand side decomposes as a sum of $k-1$ log-likelihood functions, each of which is known to have maximum likelihood (ML) degree 1 \cite{Huh2013,HuhSturmfels}, with unique critical point of the form given above.
\end{proof}

In what follows, to connect with the positive tropical Grassmannian $\text{Trop}^+G(k,n)$ we rely on Theorem \ref{Early2020WeightedBladeArrangements}, recalled in Appendix \ref{sec: blades} from \cite{Early2020WeightedBladeArrangements}, which constructs an embedding into the set of weighted blade arrangements $\mathfrak{B}_{k,n}$ with image the \textit{matroidal} weighted blade arrangements, $$\text{Trop}^+G(k,n) \simeq \mathcal{Z}(k,n).$$  In Proposition \ref{prop: map Trop to NC} we take into account Theorem \ref{thm: noncrossing subdivision Wkn}, which uses the noncrossing complex $\mathbf{NC}_{k,n}$ in the construction of a complete simplicial fan in $\mathcal{H}_{k,n}$.

\begin{prop}\label{prop: map Trop to NC}
	Suppose that $\pi \in \mathbb{Z}^{\binom{n}{k}}$ is a vector in the direction of a ray of the positive tropical Grassmannian.  Let 
	$$\beta_{\mathbf{c}} = \sum_{J\in \binom{\lbrack n\rbrack}{k}^{nf}}c_J\beta_J$$ be the image of $\pi$ in $\mathcal{Z}(k,n)$, where $\mathbf{c} \in \mathbb{Z}^{\binom{n}{k}-n}$.
	
	Then there exists a unique noncrossing collection $\{J_1,\ldots, J_m\} \in \mathbf{NC}_{k,n}$ and positive integers $a_1,\ldots, a_m \in \mathbb{Z}_{>0}$ such that 
	$$\varphi_\mathcal{R}(\beta_{\mathbf{c}}) =  \sum_{j=1}^m a_j v_{J_j}.$$
\end{prop}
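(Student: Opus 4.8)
The plan is to deduce this directly from Theorem~\ref{thm: noncrossing subdivision Wkn} together with the lattice-basis property of Proposition~\ref{prop: unimodular simplices noncrossing}. Note first that the hypothesis that $\pi$ spans a ray plays no role in the argument: everything below works verbatim for an arbitrary integer vector $\mathbf{c}\in\mathbb{Z}^{\binom{n}{k}-n}$, the point being only that the coefficients $c_J$ are integers.

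First I would record that $\varphi_{\mathcal{R}}$ is linear, so $\varphi_{\mathcal{R}}(\beta_{\mathbf{c}})=\sum_{J\in\binom{\lbrack n\rbrack}{k}^{nf}}c_J v_J=:v$. Each $v_J$ is by construction an integer combination of the generators $f_{i,j}$ of $\mathcal{L}^{(k)}_{n-k}$ (and $v_J=0$ precisely when $J$ is a single cyclic interval), so since all $c_J\in\mathbb{Z}$ the point $v$ lies in the lattice $\mathcal{L}^{(k)}_{n-k}\subset\mathcal{H}_{k,n}$. This reduces the statement to the following: every lattice point of $\mathcal{L}^{(k)}_{n-k}$ is a nonnegative-integer combination $\sum a_j v_{J_j}$, where $\{J_1,\dots,J_m\}\in\mathbf{NC}_{k,n}$ is uniquely determined.

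Now apply Theorem~\ref{thm: noncrossing subdivision Wkn} to $v$: there is a unique noncrossing collection $\mathcal{J}=\{J_1,\dots,J_m\}\in\mathbf{NC}_{k,n}$ with $v$ in the relative interior of $\langle\mathcal{J}\rangle_+$, so $v=\sum_{j=1}^m a_j v_{J_j}$ with all $a_j\in\mathbb{R}_{>0}$ (if $v=0$, take $m=0$). These coefficients are unique: $\langle\mathcal{J}\rangle_+$ is a face of some maximal cone $\langle\mathcal{J}'\rangle_+$ and hence simplicial, so $\{v_{J_1},\dots,v_{J_m}\}$ is linearly independent. To upgrade the $a_j$ to positive integers, extend $\mathcal{J}$ to a maximal noncrossing collection $\mathcal{J}'\supseteq\mathcal{J}$ in $\mathbf{NC}_{k,n}$ (possible since $\mathbf{NC}_{k,n}$ is a clique complex whose facets all have cardinality $(k-1)(n-k-1)$). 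By Proposition~\ref{prop: unimodular simplices noncrossing}, $\{v_J:J\in\mathcal{J}'\}$ is a $\mathbb{Z}$-basis of $\mathcal{L}^{(k)}_{n-k}$; since $v\in\mathcal{L}^{(k)}_{n-k}$ its coordinates in this basis are integers, and comparing with $v=\sum_{j=1}^m a_j v_{J_j}+\sum_{J\in\mathcal{J}'\setminus\mathcal{J}}0\cdot v_J$ and using uniqueness of coordinates in a basis forces each $a_j\in\mathbb{Z}$, hence $a_j\in\mathbb{Z}_{>0}$. Uniqueness of the collection is the uniqueness clause of Theorem~\ref{thm: noncrossing subdivision Wkn}, and uniqueness of the $a_j$ follows from the linear independence just noted.

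The only genuinely delicate point is this last reconciliation of the two ``uniqueness'' statements: the strictly positive \emph{real} coordinates produced by the complete simplicial fan of Theorem~\ref{thm: noncrossing subdivision Wkn}, and the \emph{integer} coordinates produced by the unimodularity (lattice-basis) property of Proposition~\ref{prop: unimodular simplices noncrossing}. Passing to a maximal noncrossing collection $\mathcal{J}'\supseteq\mathcal{J}$ makes both sets of coordinates refer to a single basis $\{v_J:J\in\mathcal{J}'\}$ of $\mathcal{L}^{(k)}_{n-k}$, at which point they must coincide; I expect this to be the main — and only nontrivial — step.
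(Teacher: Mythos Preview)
Your proposal is correct and follows essentially the same route as the paper: project $\beta_{\mathbf c}$ to a lattice point $v\in\mathcal{H}_{k,n}$, invoke Theorem~\ref{thm: noncrossing subdivision Wkn} to place $v$ in the relative interior of a unique simplicial cone $\langle\mathcal{J}\rangle_+$, and appeal to Proposition~\ref{prop: unimodular simplices noncrossing} for integrality of the coefficients. Your argument is in fact more explicit than the paper's (which simply says ``integrality comes from Proposition~\ref{prop: unimodular simplices noncrossing}''): the step of extending $\mathcal{J}$ to a maximal $\mathcal{J}'$ so that the real and integer coordinate systems coincide is exactly the right way to flesh out that sentence, and your observation that the ray hypothesis is not actually used is also accurate.
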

\begin{proof}
	We restrict the linear projection $\mathcal{B}(k,n) \rightarrow \mathcal{H}_{k,n}$, characterized by $\beta_J \mapsto v_J$, to  $\mathcal{Z}(k,n)$.  We study the image of an arbitrary matroidal weighted blade arrangement $\beta_{\mathbf{c}} \in \mathcal{Z}(k,n)$; it maps under $\varphi_\mathcal{R}$ to an integer point $v_{\mathbf{c}} \in \mathcal{H}_{k,n}$.  Now this is in one of the (simplicial) cones in the complete simplicial fan from Theorem \ref{thm: noncrossing subdivision Wkn}.  Consequently, $v_\mathbf{c}$ lies in (the relative interior of) a cone
	$$\left\{ \sum_{j=1}^m t_j v_{J_j} \in \mathcal{H}_{k,n}: t_j\ge0 \right\}$$
	generated by some roots $v_{J_1},\ldots, v_{J_m}$, where $\{J_1,\ldots, J_m\} \in \mathbf{NC}_{k,n}$ is a noncrossing collection that can be calculated explicitly from $v_{\mathbf{c}}$; here existence of the linear combination follows from completeness of the fan, that is to say, every point in the ambient space lies in a unique cone.  Moreover, integrality comes from Proposition \ref{prop: unimodular simplices noncrossing}.  Uniqueness follows because every cone in the fan is simplicial, so the rays of any given cone form a basis for the ambient subspace.
\end{proof}

\begin{defn}
	Let $\beta_{\mathbf{c}} \in \mathcal{Z}(k,n)$ be a matroidal weighted blade arrangement.  Then, the noncrossing degree of $\beta_{\mathbf{c}}$ is the size $d$ of the noncrossing collection $\{J_1,\ldots, J_d\}$ that indexes the expansion of its image in a noncrossing collection of $v_J$'s under the map $\varphi_\mathcal{R}$, as in
	$$\varphi_\mathcal{R}(\beta_{\mathbf{c}}) = \sum_{j=1}^d a_jv_{J_j},$$
	where $a_j>0$ are strictly positive.
\end{defn}

\begin{rem}
	We urge caution when working with the noncrossing degree: while every ray of $\text{Trop}^+G(k,n)$ is assigned a (weighted) noncrossing collection, the same is not true for higher dimensional cones, and in particular for maximal cones -- and thus finest positroidal subdivisions of the hypersimplex $\Delta_{k,n}$.  Indeed, we shall see in what follows that the noncrossing expansion is not constant on the four bipyramids in $\text{Trop}^+G(3,6)$.  This is demonstrated on the level of the amplitude itself in Example \ref{example: am36 free triangulation computation}.
	
	There are four special rays in $\text{Trop}^+G(3,6)$ which induce four positroidal 3-splits of the hypersimplex $\Delta_{3,6}$; these four are permuted transitively by the group generated by the cycle $(123456)$ and the reflection $j\mapsto 7-j$, say.  These 3-splits are induced by the following hypersimplicial blades (see Definition \ref{defn:blade}):
	\begin{eqnarray}\label{example: 36A}
		((12_1 34_1 56_1)), & & ((16_1 23_1 45_1))\\
		((12_1 56_1 34_1)), & & ((16_1 45_1 23_1)).\nonumber
	\end{eqnarray}
	The corresponding matroidal weighted blade arrangements are respectively
	\begin{eqnarray}\label{example: 36B}
		\beta_{246}, & & \beta_{135}\\
		-\beta_{246} + \beta_{124} + \beta_{346} + \beta_{256},  & &  -\beta_{135} + \beta_{235} + \beta_{145} + \beta_{136}.\nonumber
	\end{eqnarray}
	Then, for instance, to see that in Equation  \eqref{example: 36A} and \eqref{example: 36B}, the blade $((12_1 56_1 34_1))$ intersects the hypersimplex $\Delta_{3,6}$ in the $(n-2)$ skeleton of the same positroidal subdivision that is induced by the weighted blade arrangement $-\beta_{246} + \beta_{124} + \beta_{346} + \beta_{256}$, one verifies that they induce the same subdivision on each of the six faces $\partial_j(\Delta_{3,6})\simeq \Delta_{2,5}$ for $j=1,\ldots, 6$.  Then,
	$$\partial\left(-\beta_{246} + \beta_{124} + \beta_{346} + \beta_{256}\right) = \beta^{(1)}_{2,4}+\beta^{(2)}_{1,4}+\beta^{(3)}_{4,6}+\beta^{(4)}_{3,6}+\beta^{(5)}_{2,6}+\beta^{(6)}_{2,5},$$
	see \cite{Early2020WeightedBladeArrangements} for details.
	
	Note that the reflection symmetry $j \mapsto n+1-j$ for positroidal subdivisions is already broken in Equation \eqref{example: 36B}; but we emphasize that a priori Equations \eqref{example: 36A} and \eqref{example: 36B} describe a priori very different objects that happen to behave the same on the faces $\partial_j(\Delta_{3,6})$.  For instance, $((12_1 34_1 56_1))$ is the $4$ skeleton of a polyhedral fan with three maximal cones.  Intersecting these cones with the hypersimplex $\Delta_{3,6}$ induces a 3-split.  On the other hand, the blade $\beta_{246} = ((1,2,3,4,5,6))_{246}$ is the 4-skeleton of a simplicial fan with 6 maximal cones, translated to the vertex $e_{246} \in \Delta_{3,6}$.  From the general results of \cite{Early19WeakSeparationMatroidSubdivision} it follows that they have the same intersection with $\Delta_{3,6}$:
	$$((1,2,3,4,5,6))_{246} \cap \Delta_{3,6} = ((12_1 34_1 56_1))\cap \Delta_{3,6}.$$
	The story for the matroidal \textit{weighted} blade arrangements is somewhat more subtle.  We refer the reader to \cite{Early2020WeightedBladeArrangements} for details.
	
	Now, in the embedding of Trop$^+G(3,6)$ into $\mathcal{Z}(3,6)$ afforded by Theorem \ref{Early2020WeightedBladeArrangements}, the rays that induce the four positroidal 3-splits of $\Delta_{3,6}$ have the following primitive generators:
	\begin{enumerate}
		\item Two of noncrossing degree 1:
		$$\beta_{135} \mapsto v_{135},$$
		$$\beta_{246} \mapsto v_{246}$$
		\item Two of noncrossing degree 2:
		\begin{eqnarray*}
			-\beta_{135} + \beta_{235} + \beta_{145} + \beta_{136} & \mapsto & e_{1,1}+e_{1,2}+e_{2,2}+e_{2,3}\\
			& = & v_{145} + v_{236},\\
			-\beta_{246} + \beta_{124} + \beta_{346} + \beta_{256} & \mapsto & e_{1,3}+e_{2,1}\\
			& = & v_{124} + v_{356}.
		\end{eqnarray*}
		These evidently correspond to the two pairs in $\mathbf{NC}_{3,6}$ which are not weakly separated:
		$$\{145,236\}\text{ and } \{124,356\}.$$
	\end{enumerate}
\end{rem}

\begin{defn}
	When $\{u,v,w\} \in \binom{n}{3}$ is a subset such that $u,v,w$ are not cyclically adjacent, given any 
	$$(u',v',w')\in \lbrack u+1,v-1\rbrack\times \lbrack v+1,w-1\rbrack \times \lbrack w+1,u-1\rbrack,$$ then we call the element $$\tau_{(u,v,w),(u',v',w')}= -\beta_{uvw}+\beta_{u'vw} + \beta_{uv'w} + \beta_{uvw'}\in\mathfrak{B}_{3,n}$$
	a \textit{tripod}.
\end{defn}
Note that $\tau_{I,J} \not= \tau_{J,I}$.

In our notation above the intervals are cyclic, so that for instance with $n=6$ we have $\lbrack 5,1\rbrack = \{5,6,1\}$.

Remark that in fact tripods are in fact matroidal: they are in $\mathcal{Z}(3,n)$, and they induce positroidal subdivisions that are (conjecturally\footnote{One could of course make computational proofs of coarseness for given $n$, but we would like to understand the general geometric and combinatorial structure and this seems possibly more subtle.}) coarsest.

\begin{prop}\label{prop:tripod noncrossing classification}
	Any tripod $\tau_{I,J} \in \mathcal{Z}(3,n)$ has noncrossing degree 2.  Moreover, there is a bijection between tripods $\tau_{I,J}$ and noncrossing pairs $\{I',J'\}$ in $\mathbf{NC}_{3,n}$ that are not weakly separated.
\end{prop}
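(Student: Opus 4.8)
The plan is to push the tripod through the projection $\varphi_{\mathcal{R}}\colon\beta_J\mapsto v_J$ of Definition \ref{defn:projection blade to weight} and then locate the resulting vector inside the complete simplicial fan of Theorem \ref{thm: noncrossing subdivision Wkn}. Writing $I=\{u,v,w\}$ and $(u',v',w')\in[u+1,v-1]\times[v+1,w-1]\times[w+1,u-1]$, one has
$$\varphi_{\mathcal{R}}(\tau_{I,J})\;=\;-v_{uvw}+v_{u'vw}+v_{uv'w}+v_{uvw'}.$$
The four vertices $e_{uvw},e_{u'vw},e_{uv'w},e_{uvw'}$ together span a combinatorial cube $\mathcal{C}$ in $\Delta_{3,n}$ cut out by $x_u+x_{u'}=x_v+x_{v'}=x_w+x_{w'}=1$. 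I claim $\varphi_{\mathcal{R}}(\tau_{I,J})=v_{I'}+v_{J'}$ for an explicit antipodal pair $\{I',J'\}$ of vertices of $\mathcal{C}$ (obtained from $\{u,v,w\}$ by replacing one distinguished element by its arm to get one of $I',J'$, and the other two by their arms to get the other). When the three index pairs $\{u,u'\},\{v,v'\},\{w,w'\}$ interleave in the linear order on $[n]$, $\mathcal{C}$ is exactly the cube of Proposition \ref{prop:cubical relations 1}, and a short computation using its $m=2$ (square) and $m=3$ (cube) relations collapses $-v_{uvw}+v_{u'vw}+v_{uv'w}+v_{uvw'}$ to the common antipodal value, which is realized by the unique noncrossing antipodal pair $\{I',J'\}=\{\{i_1,j_2,i_3\},\{j_1,i_2,j_3\}\}$. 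When instead one arm wraps past $n$, the three pairs are nested rather than interleaved and the cubical relations of Proposition \ref{prop:cubical relations 1} fail; here I would carry out the corresponding collapse by a direct telescoping computation with $v_J=\sum_\ell f_{\ell,[\cdots]}$, again arriving at $v_{I'}+v_{J'}$ for the pair one reads off from the hexagon $\{u,v,w,u',v',w'\}$, and then check by hand that this pair is noncrossing but not weakly separated.

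Granting the identity $\varphi_{\mathcal{R}}(\tau_{I,J})=v_{I'}+v_{J'}$ with $\{I',J'\}\in\mathbf{NC}_{3,n}$, the first assertion is immediate: by Proposition \ref{prop: unimodular simplices noncrossing} the vectors $v_{I'},v_{J'}$ belong to a lattice basis, hence are linearly independent, so $\langle\{I',J'\}\rangle_+$ is a genuine two-dimensional cone of the fan of Theorem \ref{thm: noncrossing subdivision Wkn}; since $v_{I'}+v_{J'}$ has strictly positive coordinates on its two generators it lies in the relative interior of that cone, and the uniqueness clause of Theorem \ref{thm: noncrossing subdivision Wkn} forces the noncrossing collection attached to $\varphi_{\mathcal{R}}(\tau_{I,J})$ to be exactly $\{I',J'\}$, of size $2$. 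That $\{I',J'\}$ is \emph{not} weakly separated follows from the interleaving: $I'\sqcup J'$ is the $6$-element set $\{u,v,w,u',v',w'\}$ whose points occur cyclically in the pattern forced by $u<u'<v<v'<w<w'$ (read cyclically), $I'$ is never three consecutive points of this hexagon, and so $e_{I'}-e_{J'}$ restricted to these six coordinates already displays a sign pattern $e_a-e_b+e_c-e_d$ up to cyclic rotation.

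Finally, for the bijection I would exhibit the inverse. Given a noncrossing, not-weakly-separated pair $\{I',J'\}\in\mathbf{NC}_{3,n}$, Definition \ref{defn: noncrossing} forces (for $k=3$) that the only way to be noncrossing yet fail weak separation is to fail it at $a=1,b=3$ with $i_2\ne j_2$, while the $a=1,b=2$ and $a=2,b=3$ restrictions are weakly separated; unwinding this shows $I',J'$ are disjoint, $e_{I'}-e_{J'}$ crosses on the $6$-element support $I'\sqcup J'$, and this support admits a unique hexagonal interleaving from which a center $\{u,v,w\}$ and arms $(u',v',w')$ are canonically recovered, giving the tripod $\tau_{\{u,v,w\},(u',v',w')}$ that maps back to $\{I',J'\}$. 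Matching the explicit formulas then shows both composites are the identity. The two genuine difficulties I expect are: (i) the wrapping/nested case of the main identity, where no cubical relation is available and one must compute with the definition of $v_J$ directly; and (ii) surjectivity of the tripod map, i.e.\ verifying that the disjointness and hexagonal-interleaving structure holds for \emph{every} noncrossing non-weakly-separated pair, which I would argue from Definition \ref{defn: noncrossing} rather than case by case.
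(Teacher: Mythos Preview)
Your approach is essentially correct and lands on the same identity the paper does, but it is more circuitous. The paper simply parametrizes both sides by $6$-element subsets $\{j_1<\cdots<j_6\}$: each such subset supports exactly two tripods, $\tau_{j_1j_3j_5,\,j_2j_4j_6}$ and $\tau_{j_2j_4j_6,\,j_3j_5j_1}$, and a direct computation with $v_J=f_{1,[\cdot]}+f_{2,[\cdot]}$ gives
\[
\varphi_{\mathcal{R}}(\tau_{j_1j_3j_5,\,j_2j_4j_6})=v_{j_1j_4j_5}+v_{j_2j_3j_6},\qquad
\varphi_{\mathcal{R}}(\tau_{j_2j_4j_6,\,j_3j_5j_1})=v_{j_1j_2j_4}+v_{j_3j_5j_6}.
\]
One then checks that the pairs $\{j_1j_4j_5,\,j_2j_3j_6\}$ and $\{j_1j_2j_4,\,j_3j_5j_6\}$ are precisely the two elements of $\mathbf{NC}_{3,n}\setminus\mathbf{WS}_{3,n}$ supported on $\{j_1,\ldots,j_6\}$, and the bijection is immediate by counting.

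Two remarks on your write-up. First, your case split on ``wrapping'' is unnecessary: even when $w'$ wraps past $n$, the square relation of Corollary~\ref{cor: four term identity gammaJ} applied to the four non-wrapping indices (e.g.\ $\{j_2,j_3,j_4,j_5\}$ with $I=\{j_6\}$) collapses the tripod in one step to $v_{j_3j_5j_6}+v_{j_1j_2j_4}$, which is already the noncrossing pair; no separate telescoping is needed. Second, your description of the inverse (``a unique hexagonal interleaving from which a center $\{u,v,w\}$ and arms are canonically recovered'') is not quite right as stated: a $6$-element set supports \emph{two} tripods, so the inverse must distinguish them. This is exactly what the explicit parametrization above accomplishes; your abstract recovery argument would need to say which of the two alternating triples is the center, given the pair $\{I',J'\}$.
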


\begin{proof}
	This is a straightforward computation; we state the results.
	
	The two tripods associated to a given 6-element set $J = \{j_1,\ldots,j_6\} \in \binom{\lbrack n\rbrack}{6}$ with $j_1<\cdots <j_6$, are $\tau_{j_1j_3j_5,j_2j_4j_6}$ and $\tau_{j_2j_4j_6,j_1j_3j_5}$.  They satisfy 
\begin{eqnarray*}
	\varphi_{\mathcal{R}}(\tau_{j_1j_3j_5,j_2j_4j_6}) & = & \varphi_{\mathcal{R}}(-\beta_{j_1j_3j_5} + \beta_{j_2j_3j_5} + \beta_{j_1j_4j_5} + \beta_{j_1j_3j_6}) \\
	& =&  v_{j_1j_4j_5} + v_{j_2j_3j_6}
\end{eqnarray*}
	and
	\begin{eqnarray*}
	\varphi_{\mathcal{R}}(\tau_{j_2j_4j_6,j_1j_3j_5}) & = & \varphi_{\mathcal{R}}(-\beta_{j_2j_4j_6} + \beta_{j_1j_2j_4} + \beta_{j_3j_4j_6} + \beta_{j_2j_5j_6}) \\
	& =&  v_{j_1j_2j_4} + v_{j_3j_5j_6}
	\end{eqnarray*}
	Now one can check that the edges in $\mathbf{NC}_{3,n}\setminus \mathbf{WS}_{3,n}$ are exactly the $2\binom{n}{6}$ pairs of the form 
	$$\{j_1j_2j_4,j_3j_5j_6\},\ \ \{j_1j_4j_5,j_2j_3j_6\}$$
	where $J$ ranges over all $J = \{j_1,\ldots, j_6\} \in \binom{\lbrack n\rbrack}{3}$.
\end{proof}
\begin{example}
	Joining the two tripods
	$$-\beta _{259}+\beta _{125}+\beta _{359}+\beta _{269}\  \text{ and }  -\beta_{268}+\beta _{269}+\beta _{278}+\beta _{568},$$
	together on their common blade $\beta_{269}$ gives 
	$$\left(-\beta _{259}+\beta _{125}+\beta _{359}+\beta _{269} -\beta _{268}+\beta _{278}+\beta _{568}\right),$$
	which projects to an element in the interior of a three-dimensional cone in the normal fan to $\Pi_{3,9}(s_0)$ for interior $(s_0)$:
	\begin{eqnarray*}
		\left(-\beta _{259}+\beta _{125}+\beta _{359}+\beta _{269} -\beta _{268}+\beta _{278}+\beta _{568}\right)& \mapsto & e_{1,3}+e_{1,4}+e_{1,5}+e_{2,1}+e_{2,2}+e_{2,5}+e_{2,6}\\
		& = & \left(v _{125}+v _{378}+v _{569}\right),
	\end{eqnarray*}
	and it is not difficult to verify that 
	$$\{\{1,2,5\},\{3,7,8\},\{5,6,9\}\} \in \mathbf{NC}_{3,9}\setminus \mathbf{WS}_{3,9},$$
	and we infer that the noncrossing degree of any embedding into $\mathcal{Z}(3,n)$ of such an element is 3.  Compare to Figure 1 in \cite{Early2020WeightedBladeArrangements}.
	
	For $\mathcal{Z}(3,10)$, we look toward a pole of the generalized biadjoint scalar amplitude $m^{(3)}(\mathbb{I}_{10},\mathbb{I}_{10})$ which expands in terms of kinematic invariants as $\eta_{\mathbf{c}}$, say, and correspondingly a matroidal weighted blade arrangement $\beta_{\mathbf{c}}$ where coefficients are not only units $c_J \in \{\pm1\}$, shown in what follows to define a lattice point in the interior of a five-dimensional cone in the normal fan to $\Pi_{3,10}(s_0)$, again, as usual, where $(s_0)$ is interior.  As a weighted blade arrangement it becomes
	\begin{eqnarray*}
		& &\left( 2 \beta _{124}-2 \beta _{2410}-\beta _{259}+\beta _{2510}-\beta _{268}+\beta _{2610}+\beta _{278}+\beta _{289}+2 \beta _{3410}-\beta _{358}+\beta _{359}+\beta _{368}+\beta _{458}\right)\\
		& \mapsto &3 f_{1,3}+3 f_{1,4}+2 f_{1,5}+f_{1,6}+2 f_{2,1}+f_{2,4}+2 f_{2,5}+2 f_{2,6}+2 f_{2,7}\\
		& = & \left(2v _{124}+v _{3610}+v _{378}+v _{389}+v _{4510}\right),
	\end{eqnarray*}
	where we note that the index sets for the monomials,
	$$\{\{1,2,4\},\{3,6,10\},\{3,7,8\},\{3,8,9\},\{4,5,10\}\} \in \mathbf{NC}_{3,10}\setminus \mathbf{WS}_{3,10},$$
	define a noncrossing collection.  Consequently this (coarsest) matroidal weighted blade arrangement (and thus the corresponding ray of $\text{Trop}^+G(3,10)$) has noncrossing degree 5.  See Figure \ref{fig:layered-example-bad-310} for the embedding of the matroidal weighted blade arrangement on the 1-skeleton of the hypersimplex $\Delta_{3,10}$.
	
\end{example}
\begin{figure}[h!]
	\centering
	\includegraphics[width=0.7\linewidth]{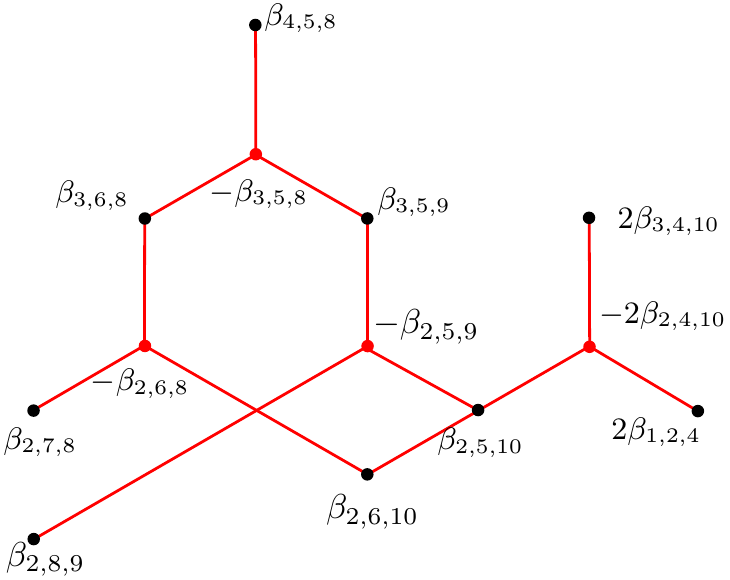}
	\caption{Pinning the matroidal weighted blade arrangement to the 1-skeleton of the hypersimplex $\Delta_{3,10}$.  Matroidal blade arrangements provide a novel representation of a tropical linear space, here in dimension nine.}
	\label{fig:layered-example-bad-310}
\end{figure}

\begin{example}\label{example: 48 large}
	Let us now study some of the building blocks of matroidal weighted blade arrangements introduced recently in \cite{Early2020WeightedBladeArrangements}, here the fan of matroidal weighted blade arrangements $\mathcal{Z}(4,n)$ on the hypersimplex $\Delta_{4,n}$; however let us change our calculation and use planar kinematic invariants by replacing the blade $\beta_J$ with the linear function $\eta_J$ on $\mathcal{K}(4,n)$.  In this example we put $n=8$ and so our kinematic space is $\mathcal{K}(4,8)$.  Then we have the relations
	\begin{eqnarray*}
		\left(-2 \eta _{1,3,5,7}+\eta _{1,3,5,8}+\eta _{1,3,6,7}+\eta _{1,4,5,7}+\eta _{2,3,5,7}\right)\big\vert_{Rt} & = & \left(\gamma _{1,4,5,8}+\gamma _{2,3,6,7}\right)\\
		\left(-2 \eta _{2,4,6,8}+\eta _{1,2,4,6}+\eta _{2,4,7,8}+\eta _{2,5,6,8}+\eta _{3,4,6,8}\right)\big\vert_{Rt} & = & \left(\gamma _{1,2,4,6}+\gamma _{3,5,7,8}\right),
	\end{eqnarray*}
	and it is easy to see that the pairs 
	$$\{\{1,4,5,8\},\{2,3,6,7\}\} \text{ and } \{\{1,2,4,6\},\{3,5,7,8\}\}$$
	on the right-hand sides of the two equations are each noncrossing (but not weakly separated).  
	
	For another example, consider the following pole of $m^{(4)}_8$, evaluated on root kinematics:
	\begin{eqnarray*}
	\eta_{\mathbf{c}}\vert_{Rt}& = & \left(\eta_{1357} - \eta_{2357} - \eta_{1457} - \eta_{1367} - \eta_{1358} +\eta_{3457} + \eta _{1235}+\eta _{1378}+\eta _{1458}+\eta _{1567}+\eta _{2367}\right)\vert_{Rt}\\
	& = & \gamma _{1235}+\gamma _{1567}+\gamma _{3478},
\end{eqnarray*}

	see Figure \ref{fig:pole48reverse}.	 Here again the collection 
	$$\{1235,1567,3478\}$$
	is noncrossing but not weakly separated, and $\eta_{\mathbf{c}}$ has noncrossing degree 3.
	\begin{figure}[h!]
		\centering
		\includegraphics[width=0.55\linewidth]{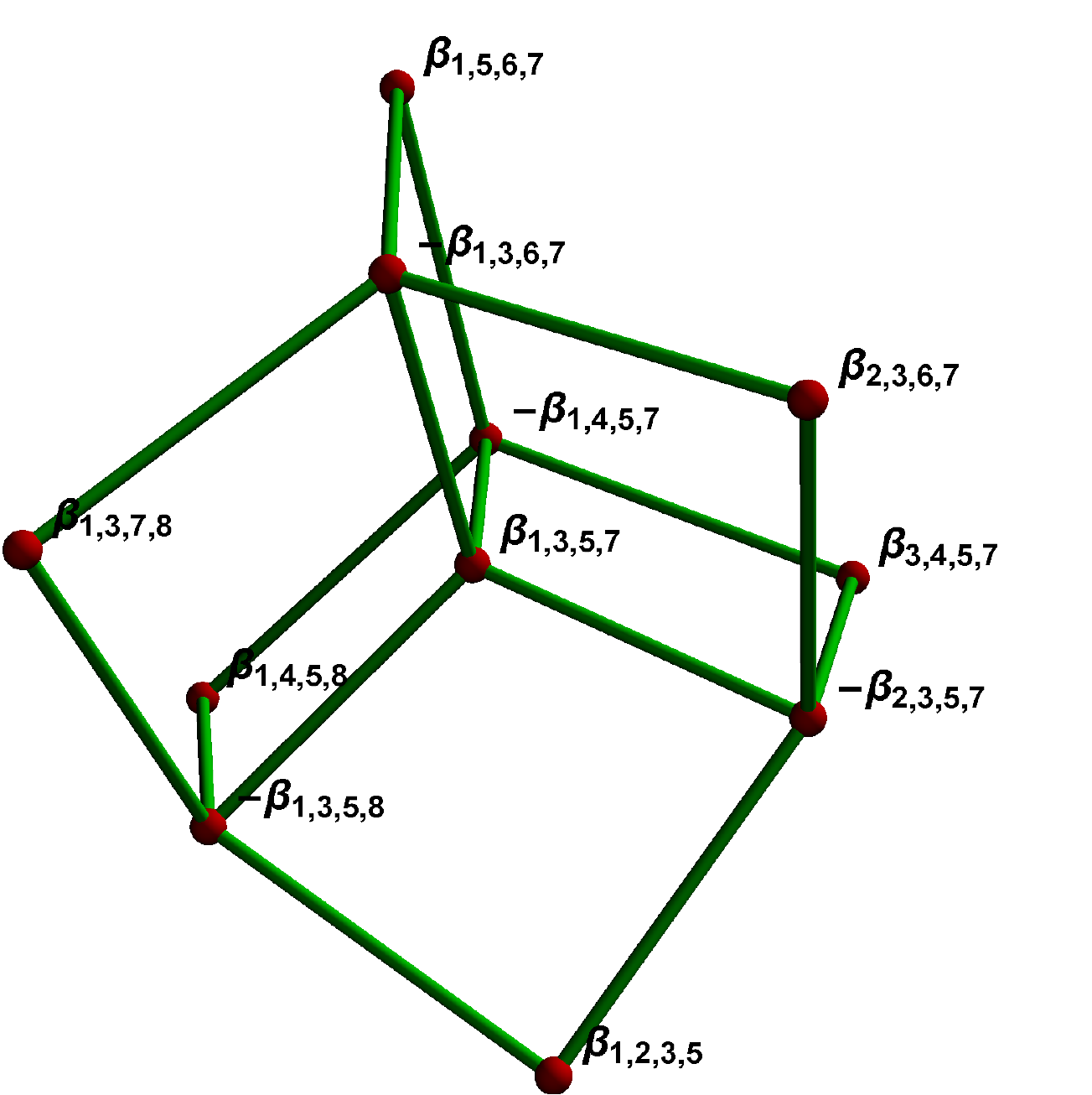}
		\caption{Matroidal weighted blade arrangement in Example \ref{example: 48 large} on $\Delta_{4,8}$; the corresponding pole of the generalized biadjoint scalar amplitude $m^{(4)}(\mathbb{I}_8,\mathbb{I}_8)$ reduces on the Root Kinematics subspace to the linear functional with noncrossing expansion $\gamma _{1235}+\gamma _{1567}+\gamma _{3478}$.}
		\label{fig:pole48reverse}
	\end{figure}
	
	Finally, for a quite nontrivial example see Figure \ref{fig:zonotopaltessellationpole48d} for the pinning onto the 1-skeleton of the hypersimplex $\Delta_{4,8}$ of the matroidal weighted blade arrangement that induces the pole (for a tabulation of poles, see for instance \cite{CGUZ2019,HeRenZhang2020,GuevaraZhang2020}; for generic kinematics, these determined matroidal weighted blade arrangements),
	\begin{eqnarray*}
	\beta_{\mathbf{c}} & = & \left(-\beta _{1247}+\beta _{1248}+\beta _{1257}+\beta _{2347}\right) +  \left(-\beta _{1346}+\beta _{1348}+\beta _{1356}+\beta _{2346}\right)\\
	& + & (-\beta _{3568}+\beta _{1356}+\beta _{3578}+\beta _{4568})  + (-\beta _{2578}+\beta _{1257}+\beta _{2678}+\beta _{3578})\\
	& - & (\beta _{1257}+\beta _{1356}+\beta _{3578}).
\end{eqnarray*}
	Replacing each $\beta_{abcd}$ with the planar kinematic invariant $\eta_{abcd}$ and then evaluating on root kinematics gives 
	$$\eta_{\mathbf{c}}\vert_{Rt} = \gamma _{1258}+\gamma _{1357}+\gamma _{2346}+\gamma _{2348}+\gamma _{4678},$$
	where the right hand side is uniquely characterized by the two requirements, that (1) all coefficients should be positive, and (2) the index set 
	$$\{1258,1357,2346,2346,4678\}$$
	should be noncrossing: it is in $\mathbf{NC}_{4,8}$.  Therefore the corresponding coarsest matroidal weighted blade arrangement has noncrossing degree $6$.
	\begin{figure}[h!]
		\centering
		\includegraphics[width=1\linewidth]{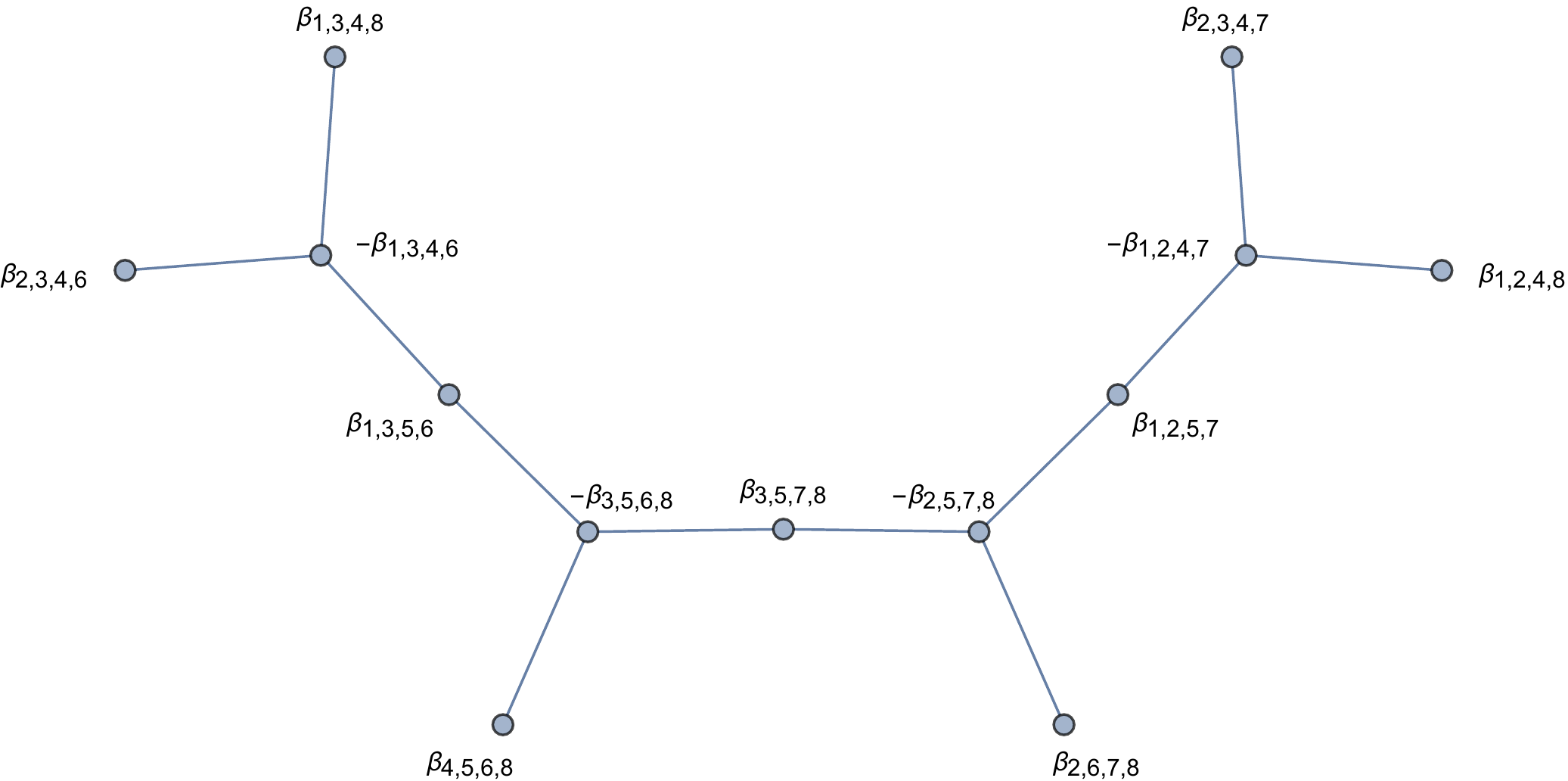}
			\caption{Pinning to the 1-skeleton of $\Delta_{4,8}$ the matroidal weighted blade arrangement from Example \ref{example: 48 large}; as a pole of $m^{(4)}_8$ this evaluates on Root Kinematics to  the noncrossing decomposition $\eta_{\mathbf{c}}\vert_{Rt} = \gamma _{1258}+\gamma _{1357}+\gamma _{2346}+\gamma _{2348}+\gamma _{4678}.$}
		\label{fig:zonotopaltessellationpole48d}
	\end{figure}

\end{example}
	\begin{figure}[h!]
		\centering
		\includegraphics[width=0.6\linewidth]{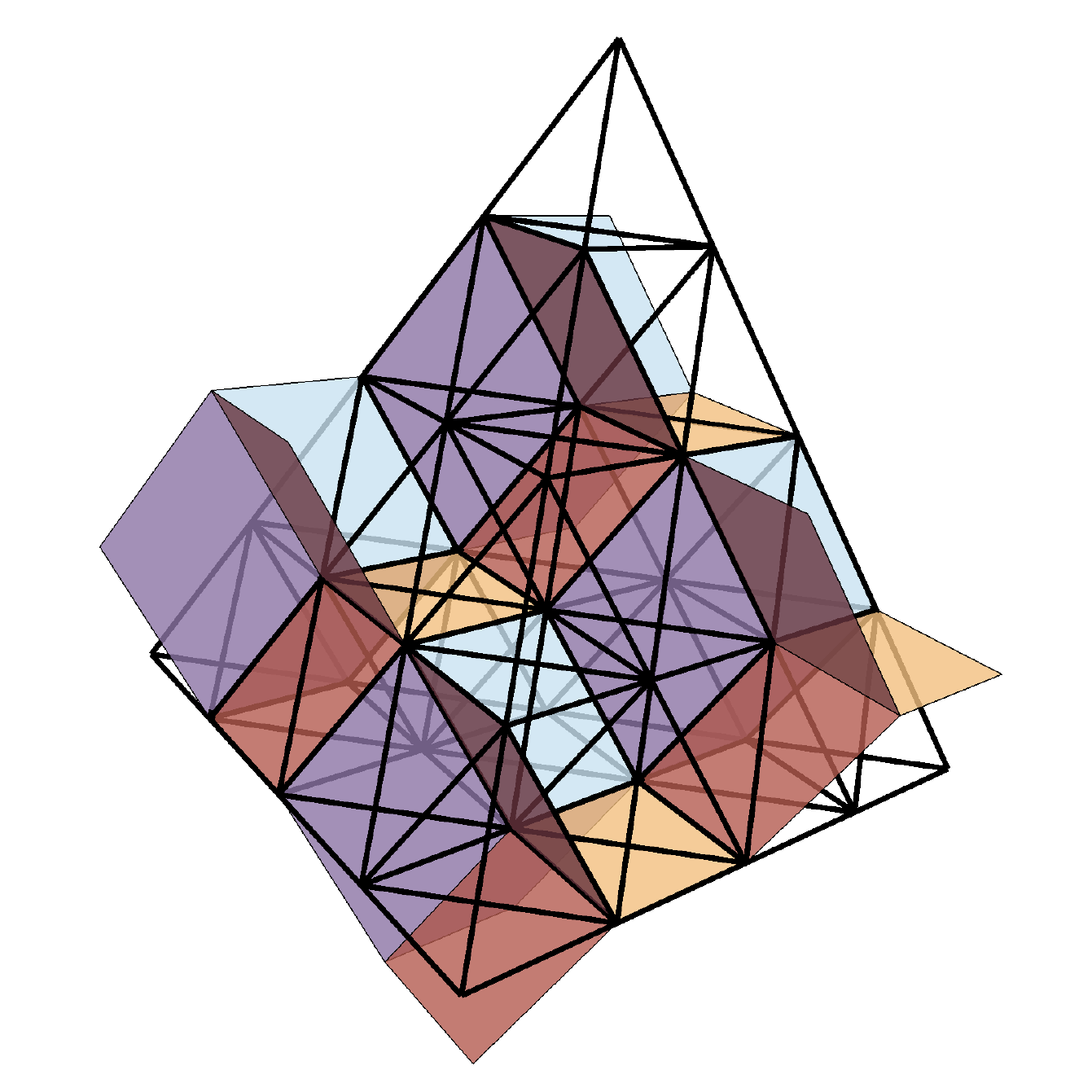}
		\caption{The propagator $\eta_\mathbf{c}$ in Example \ref{example: 48 large}, same as in Figure \ref{fig:zonotopaltessellationpole48d}, shown (schematically) as a blade arrangement in the fourth dilate of a tetrahedron, $4\Delta_{1,4}$.}
		\label{fig:ZonotopalTessellationPole48B}
	\end{figure}

\begin{example}
	Finally, on the Root Kinematics subspace of $\mathcal{K}(5,10)$ we have
	\begin{eqnarray*}
		& & \left(-3 \eta _{1,3,5,7,9}+\eta _{1,3,5,7,10}+\eta _{1,3,5,8,9}+\eta _{1,3,6,7,9}+\eta _{1,4,5,7,9}+\eta _{2,3,5,7,9}\right)\big\vert_{Rt} \\
		& = & \left(\alpha _{1,1}+\alpha _{1,2}+\alpha _{2,2}+\alpha _{2,3}+\alpha _{3,3}+\alpha _{3,4}+\alpha _{4,2}+2 \alpha _{4,3}+2 \alpha _{4,4}+\alpha _{4,5}\right)\\
		& = & \left(\gamma _{1,4,5,8,9}+\gamma _{2,3,6,7,10}\right),
	\end{eqnarray*}
	where $\{\{1,4,5,8,9\}, \{2,3,6,7,10\}\}$ is a noncrossing pair that is not weakly separated.
\end{example}

\begin{example}\label{example: am36 free triangulation computation}
	Let us investigate what happens to two of the GFD's for the generalized biadjoint scalar $m^{(3)}(\mathbb{I}_6,\mathbb{I}_6)$ as studied in \cite{CEGM2019}; one of these corresponds to one of the bipyramidal cones in $\text{Trop}^+G(3,6)$, and the other to a simplicial maximal cone.  We show, ``inside'' the amplitude, how the two cones join and the split apart into two simplicial cones when the kinematics is restricted to the Root Kinematics subspace of the kinematic space $\mathcal{K}(3,6)$.  Then we shall see that upon restriction to $\mathcal{H}_{3,6}$ the amplitude vanishes.  By a direct translation of the poles in \cite{CEGM2019} into the planar kinematic invariants $\eta_J$ we have
	\begin{eqnarray}\label{eq:36 amplitude Noncrossing Example GFDs}
		& & m^{(3)}(\mathbb{I}_6,\mathbb{I}_6)\\
		& = & \frac{\eta_{124} + \eta_{346} + \eta_{124}}{\eta_{124}\eta_{346}\eta_{246}(-\eta_{246} + \eta_{124} + \eta_{346} + \eta_{256})} + \frac{1}{\eta_{256}\eta_{346}\eta_{356}(-\eta_{246} + \eta_{124} + \eta_{346} + \eta_{256})}+ \cdots.\nonumber
	\end{eqnarray}
	For our present purposes this may be considered as definition; we shall study only the two visible terms at the beginning of Equation \eqref{eq:36 amplitude Noncrossing Example GFDs}.  The first corresponds to one of the bipyramids in the positive tropical Grassmannian $\text{Trop}^+G(3,6)$.
	
	Our plan is to reorganize these first two contributions to the amplitude using noncrossing collections.
	
	The first fraction splits apart as
	$$\frac{\eta_{124} + \eta_{346} + \eta_{124}}{\eta_{124}\eta_{346}\eta_{246}(-\eta_{246} + \eta_{124} + \eta_{346} + \eta_{256})}= \frac{1}{\eta_{124}\eta_{346}\eta_{256}\eta_{246}} +\frac{1}{\eta_{124}\eta_{346}\eta_{256}(-\eta_{246} + \eta_{124} + \eta_{346} + \eta_{256})}.$$
	Now
	\begin{eqnarray*}
		& & \frac{1}{\eta_{256}\eta_{346}\eta_{356}(-\eta_{246} + \eta_{124} + \eta_{346} + \eta_{256})} + \frac{1}{\eta_{124}\eta_{346}\eta_{256}(-\eta_{246} + \eta_{124} + \eta_{346} + \eta_{256})}\\
		& = & \frac{\eta_{124} + \eta_{356}}{\eta_{256}\eta_{346}\eta_{356}(-\eta_{246} + \eta_{124} + \eta_{346} + \eta_{256})},
	\end{eqnarray*}
	but evaluating on Root Kinematics, see Definition \ref{def: Root Kinematics}, where $\eta_J\vert_{Rt} = \gamma_J$, gives
	$$(-\eta_{246} + \eta_{124} + \eta_{346} + \eta_{256})\vert_{Rt} = (\eta_{124} + \eta_{356})\vert_{Rt},$$
	so that 
	\begin{eqnarray*}
		\frac{\eta_{124} + \eta_{356}}{\eta_{124}\eta_{256}\eta_{346}\eta_{356}(-\eta_{246} + \eta_{124} + \eta_{346} + \eta_{256})}\bigg\vert_{Rt} & = & \frac{1}{\eta_{124}\eta_{256}\eta_{346}\eta_{356}}\bigg\vert_{Rt}.
	\end{eqnarray*}
	Consequently, Equation \eqref{eq:36 amplitude Noncrossing Example GFDs}, restricted to the Root Kinematics subspace, becomes
	\begin{eqnarray}
		m^{(3)}_6(\mathbb{I}_6,\mathbb{I}_6)\bigg\vert_{Rt} & = & \left(\frac{1}{\eta_{124}\eta_{346}\eta_{256}\eta_{246}} + \frac{1}{\eta_{124}\eta_{256}\eta_{346}\eta_{356}} + \cdots \right) \bigg\vert_{Rt},
	\end{eqnarray}
	and we recognize that both 
	$$\{124,346,256,246\} \text{ and } \{124,256,346,356\}$$
	are maximal noncrossing collections in $\mathbf{NC}_{3,6}$, noting, however, that the second collection is not weakly separated.
	
	To summarize: if we repeat this process for all Generalized Feynman Diagrams, we see that on the Root Kinematics subspace, $m^{(3)}_6(\mathbb{I}_6,\mathbb{I}_6) $ is completely determined by the noncrossing complex $\mathbf{NC}_{3,6}$.  In actuality, due to the many linear relations among the restricted poles $\eta_J\vert_{Rt}$ we have the dramatic simplification
	\begin{eqnarray*}
		m^{(3)}_{6}(\mathbb{I}_6,\mathbb{I}_6)\vert_{Rt} & = & \frac{(\eta_{134} + \eta_{245} + \eta_{356})}{\eta_{134}\eta_{245} \eta_{356}}\frac{(\eta_{124} + \eta_{235} + \eta_{346})}{\eta_{124}\eta_{235} \eta_{346}}\bigg\vert_{Rt}\\
		& = & \frac{\alpha_{1,1}+ \alpha_{1,2}+\alpha_{1,3}}{\alpha_{1,1}\alpha_{1,2}\alpha_{1,3}}\frac{\alpha_{2,1}+ \alpha_{2,2}+\alpha_{2,3}}{\alpha_{2,1}\alpha_{2,2}\alpha_{2,3}}.
	\end{eqnarray*}
	Indeed, the amplitude vanishes identically when $\alpha$ is restricted to the subspace $\mathcal{H}_{3,6}$ of $\mathbb{R}^{(3-1)\times (6-3)}$, where 
	$$\alpha_{i,1} + \alpha_{i,2} + \alpha_{i,3} =0$$
	for $i=1,2$.  This can be seen for the potential function as well: when $\sum_{t=1}^3\alpha_{i,t}=0$, then one can check that the function has no critical points!

\end{example}

\section{Resolving Maximal Minors}\label{sec: minors resolved delta polynomials}

Here we study the evaluation of the minors $p_J$ on the image of the positive parameterization, and we introduce their resolutions using compound determinants.

For any triple of sets $(\{i_1,i_2\},\{i_3,i_4\},\{i_5,i_6\})$, where $i_1,\ldots, i_6\in \{1,\ldots, n\}$ are distinct, define a compound determinant
$$X_{(i_1,i_2),(i_3,i_4),(i_5,i_6)} = \det\left(u_{i_1}\times u_{i_2},u_{i_3}\times u_{i_4},u_{i_5}\times u_{i_6}\right).$$
where we are temporarily denoting by $u_j$ the $j$th column of a given $3\times n$ matrix.

For any $1\le i<j<k\le n$, as a generalization of the construction in \cite{ScottGrassmannian} for the ``A'' cluster variable, define a compound determinant
$$A_{ijk} =  \det\left((u_1\times u_2)\times (u_i\times u_{i+1}),u_j,(u_{j+1}\times u_{j+2})\times (u_{k}\times u_1)  \right).$$

\begin{defn}\label{defn: resolved Pluckers}
	For each $2\le j<k\le n$ define the \textit{resolved minor} 
	$$\hat{p}_{1jk} = p_{1jk},$$
	while for $2\le i<j<k\le n$ put 
	$$\hat{p}_{ijk} = \frac{A_{ijk}}{p_{1,2,i+1}p_{1,j+1,j+2}}.$$
\end{defn}
\begin{rem}
	There is an apparent problem in the formulation of $\hat{p}_{i,n-1,n}$, as the compound determinant $A_{i,n-1,n}$ involves the column $u_{n+1}$, but $u_{n+1}$ is not a column in $X(3,n)$; however, notice that 
	\begin{eqnarray*}
		A_{i,j,j+1} & = &  X_{(1,2),(i,i+1),(j,j+1)}p_{1,j+1,j+2},
	\end{eqnarray*}
	so that in particular 
	\begin{eqnarray*}
		\hat{p}_{i,n-1,n} & = & \frac{A_{i,n-1,n}}{p_{1,2,i+1}p_{1,n,n+1}}\\
		& = & \frac{X_{(1,2),(i,i+1),(n-1,n)}}{p_{1,2,i+1}}
	\end{eqnarray*}
	so the factor $p_{1,n,n+1}$ cancels and the column $u_{n+1}$ plays no role.
	
	Consequently, in a slight abuse of notation we still write $\hat{p}_{i,n-1,n}$ in terms of $A_{i,n-1,n}$.
\end{rem}

\begin{prop}\label{prop: (3,n) resolved minors}
	The resolved minors $\hat{p}_{ijk}$ can be simplified (and enumerated) as follows:
	\begin{enumerate}
		\item $\binom{n-1}{2}+(n-3) + (n-4) + (n-4) + (n-5) = \frac{1}{2} n (n+5)-15$ resolved Plucker coordinates satisfying $\hat{p}_{i,j,k} = p_{i,j,k}$.
		These are as follows:
		$$\left\{p_{1,i,j}: 2\le i<j\le n \right\},\ \left\{p_{2,i,i+1}: i=3,4,\ldots, n-1 \right\},\ \ \left\{p_{i,i+1,i+3}: i=2,\ldots, n-3 \right\}, $$
		$$\{p_{i,i+1,i+2}: i=3,\ldots, n-2\}, \{p_{2,i,i+2}: 4\le i<i+2\le n\}$$
		\item $\binom{n-4}{2}$ compound determinants of type $X$.   If $\{a,b,c\} = \{i,j,j+1\}$ for 
		$$3\le i<i+1<j<j+1\le n,$$
		then 
		\begin{eqnarray*}
			\hat{p}_{i,j,j+1}\big\vert_{BCFW} & = & \frac{X_{(1,2),(i,i+1),(j,j+1)}}{p_{1,2,i+1}}\big\vert_{BCFW}\\
			& = & \delta^{(1)}_{i-1,j-2}x_{2,i-2}x_{2,j-2}.
		\end{eqnarray*}
		\item $\binom{n-4}{2}$ compound determinants of type $X$.  If $\{a,b,c\} = \{i,i+1,j\}$ for 
		$$2\le i<i+3<j\le n,$$
		then  
		\begin{eqnarray*}
			\hat{p}_{i,i+1,j}\big\vert_{BCFW} & = & \frac{X_{(j,1),(i,i+1),(i+2,i+3)}}{p_{1,i+2,i+3}}\big\vert_{BCFW}\\
			& = &  \delta^{(1)}_{i-1,i,j-2}x_{2,i-2}.
		\end{eqnarray*}
		\item $\binom{n-5}{2}$ compound determinants of type $X$.  If $\{a,b,c\} = \{2,i,j\}$ for 
		$$\left\{(i,j): 4\le i< i+2 < j \le n\right\},$$
		then 
		\begin{eqnarray*}
			\hat{p}_{2,i,j}\big\vert_{BCFW} & = &  \frac{X_{(j,1),(2,i),(i+1,i+2)}}{p_{1,i+1,i+2}}\big\vert_{BCFW}\\
			& = & \delta^{(1)}_{1,i-1,j-2}.
		\end{eqnarray*}
		\item $\binom{n-4}{3}$ compound determinants of type $A_{i,j,k}$. For any 
		$$\{(i,j,k): 3\le i<i+1<j<j+2\le k\le n\},$$
		then
		\begin{eqnarray*}
			\hat{p}_{ijk}\big\vert_{BCFW} & = & \frac{A_{i,j,k}}{p_{1,2,i+1}p_{1,j+1,j+2}}\bigg\vert_{BCFW}\\
			& = & \delta^{(1)}_{i-1,j-1,k-2}x_{2,i-2}
		\end{eqnarray*}
	\end{enumerate}
	Tabulating the contributions gives all $\binom{n}{3}$ resolved minors, by inspection we have expressions for all $\binom{n}{3} - (3)(n-3)-1$ (as in the enumeration given in Proposition \ref{prop: number Minkowski summands}) face polynomials going into $\mathbb{K}^{(k)}_{n-k}$ in terms of minors or compound determinants.
\end{prop}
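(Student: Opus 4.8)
The plan is to split the argument into a combinatorial bookkeeping part, showing that the five families of triples in (1)--(5) partition $\binom{\lbrack n\rbrack}{3}$ with exactly the stated cardinalities, and an evaluation part, computing each resolved minor on the image of the positive parameterization. Throughout I would assume the description of the positive parameterization and of the polynomials $\delta^{(i)}_J$ from Section~\ref{sec: positive parameterization} and Definition~\ref{defn:deltaiJ}, and the count of Proposition~\ref{prop: number Minkowski summands}.

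For the partition, first peel off the $\binom{n-1}{2}$ subsets containing the label $1$: these are family (1a), and for them $\hat p_{1jk}=p_{1jk}$ holds by definition. Among the subsets not containing $1$ but containing $2$, stratify $\{2,a,b\}$ by the gap structure of $\{a,b\}$: $b=a+1$ gives (1b); $b=a+2$ with $a=3$ is the $i=2$ case of the ``$\{i,i+1,i+3\}$'' pattern (1c), while $b=a+2$ with $a\ge 4$ gives (1e); $a=3$, $b\ge 6$ is the $i=2$ slice of (3); and $a\ge 4$, $b\ge a+3$ is (4). Finally, for $\{i,j,k\}$ with $i\ge 3$, stratify by the pair of consecutive gaps $(j-i,\,k-j)$: $(1,1)$ gives the cyclic-interval-type triples $\{i,i+1,i+2\}$ of (1d), $(1,2)$ the members of (1c) with $i\ge 3$, $(1,\ge 3)$ the members of (3) with $i\ge 3$, $(\ge 2,1)$ family (2), and $(\ge 2,\ge 2)$ family (5). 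These strata are visibly pairwise disjoint and exhaustive, so the ``$=\binom{n}{3}$'' claim reduces to the polynomial identity
$$\binom{n-1}{2} + (n-3) + 2(n-4) + (n-5) + 2\binom{n-4}{2} + \binom{n-5}{2} + \binom{n-4}{3} = \binom{n}{3},$$
checked by clearing denominators. Discarding the $3(n-3)+1$ resolved minors that evaluate to monomials (among them the $n$ cyclic-interval ones), the remaining $\binom{n}{3}-3(n-3)-1$ agree in number, by Proposition~\ref{prop: number Minkowski summands}, with the planar faces $\mathcal{F}^{(i)}_J$ entering $\mathbb{K}^{(k)}_{n-k}$ for $k=3$; the explicit forms in (1)--(5) then pair the two indexings.

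For the evaluations, write $u_1,\dots,u_n$ for the columns of the positive parameterization, so $u_1=e_1$, $u_2=e_2$, $u_3=e_3$ and $u_{3+\ell}=(m_{1,\ell},m_{2,\ell},1)^\top$ with $m_{2,\ell}=\sum_{t=1}^{\ell}x_{2,t}$, $m_{1,\ell}=\sum_{t=1}^{\ell}x_{1,t}\sum_{r=t}^{\ell}x_{2,r}$, hence $m_{2,\ell}-m_{2,\ell-1}=x_{2,\ell}$ and $m_{1,\ell}-m_{1,\ell-1}=x_{2,\ell}\sum_{r=1}^{\ell}x_{1,r}$. Expanding the nested cross products by the Grassmann--Plücker identity, $(u_1\times u_2)\times(u_i\times u_{i+1})=p_{1,2,i+1}u_i-p_{1,2,i}u_{i+1}$ and $(u_{j+1}\times u_{j+2})\times(u_k\times u_1)=p_{1,j+1,j+2}u_k-p_{k,j+1,j+2}u_1$, one obtains
$$A_{ijk}=p_{1,2,i+1}p_{1,j+1,j+2}\,p_{ijk}\;\pm\;p_{1,2,i+1}p_{k,j+1,j+2}\,p_{1ij}\;\pm\;p_{1,2,i}p_{1,j+1,j+2}\,p_{i+1,jk}\;\pm\;p_{1,2,i}p_{k,j+1,j+2}\,p_{1,i+1,j},$$
a four-term expression in ordinary $3\times 3$ minors that collapses to the compound determinants $X$ of the Remark preceding the statement whenever $k=j+1$, $j=i+1$, or $i=2$. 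For the index patterns of family (1) a repeated column index (either $p_{1,2,2}$, or $p_{1,i+1,i+1}$, or a repeat inside $p_{k,j+1,j+2}$) kills each of the last three terms, so $A_{ijk}=p_{1,2,i+1}p_{1,j+1,j+2}\,p_{ijk}$ and $\hat p_{ijk}=p_{ijk}$; the BCFW value is then read off directly as a minor of the parameterization matrix. For families (2)--(5) one substitutes the closed forms for $m_{i,j}$ into the (two- or four-term) expansion, divides by $p_{1,2,i+1}p_{1,j+1,j+2}$ --- which on the parameterization are $1$ and $\pm x_{2,j-1}$, so this leaves no denominator beyond the advertised monomial --- and collects using the telescoping in the difference relations for $m_{1,\ell}$.

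The one step that is more than mechanical is the final identification: that after collection the monomial support of the result is \emph{exactly} the vertex set of the planar face $\mathcal{F}^{(1)}_J$ attached to the displayed index set, so that by Definition~\ref{defn:deltaiJ} the polynomial equals $\delta^{(1)}_J$. The plan is to check the two defining conditions of $\mathcal{F}^{(1)}_J$ in Equation~\eqref{eq: planar face} directly against the exponent vectors produced by the expansion: the per-block affine constraints $\sum_\ell x_{i,\ell}=1$ are automatic from the multidegree of $A_{ijk}$, while the genuinely restrictive staircase inequalities $\alpha_{a,\lbrack 1,b\rbrack}\ge\alpha_{a+1,\lbrack 1,b\rbrack}$ must be matched term-by-term against the product-of-intervals index range $\lbrack j_1-1,j_2-1\rbrack\times\lbrack j_2-2,j_3-2\rbrack\times\cdots$ that emerges once the sums in $m_{1,\ell}$ are telescoped. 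This is where planarity genuinely intervenes and where the writing-up effort lies; an alternative is an induction on $n$, adjoining one column to the parameterization, expressing the resolved minors involving the new label in terms of lower ones through the difference relations, and propagating the face-polynomial identities from $\mathbb{K}^{(k)}_{n-k-1}$. Granting this, the concluding assertion --- that tabulating (1)--(5) produces all $\binom{n}{3}$ resolved minors and in particular every face polynomial $\delta^{(i)}_J$ of $\mathbb{K}^{(k)}_{n-k}$ --- is immediate from the partition together with the matching of indices established above.
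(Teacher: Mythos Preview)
The paper does not supply a proof of this proposition; it is stated as an explicit tabulation and left to the reader as a direct computation, with the subsequent corollaries and examples (e.g.\ Examples~\ref{example: Plucker resolution explicit 36} and~\ref{example: Plucker resolution explicit 37}) serving as spot-checks. Your proposal is therefore not so much an alternative as a fleshing-out of what the paper implicitly asks the reader to do, and the two parts you identify --- the partition of $\binom{[n]}{3}$ and the BCFW evaluation via the cross-product expansion of $A_{ijk}$ --- are exactly the right ones.

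Your bookkeeping is correct: the case split by presence of $1$, then of $2$, then by the gap pattern $(j-i,k-j)$ for $i\ge 3$ cleanly reproduces the five families, and the cardinality identity checks. The four-term expansion of $A_{ijk}$ you write down is also correct (the signs you leave as $\pm$ are $+,-,-,+$), and your mechanism for family~(1) --- that one of $p_{1,2,i}$, $p_{i+1,j,k}$, $p_{k,j+1,j+2}$ acquires a repeated column index and vanishes --- is exactly what happens in each subcase.

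The one place where your write-up is honest about not being finished is the identification of the collected polynomial with $\delta^{(1)}_J$. Here there is no conceptual obstacle: on the BCFW parameterization the telescoping $m_{2,\ell}-m_{2,\ell-1}=x_{2,\ell}$ and $m_{1,\ell}-m_{1,\ell-1}=x_{2,\ell}\sum_{r\le \ell}x_{1,r}$ reduces every minor to a sum over lattice points of a staircase region, and matching that region to the vertex set of $\mathcal{F}^{(1)}_J$ is a finite check against Equation~\eqref{eq: planar face}. Your proposed induction on $n$ would work but is heavier than needed; the direct term-by-term match is shorter once one writes the minors $p_{1,a,b}$ and $p_{a,b,c}$ explicitly in the $m_{i,j}$ and unwinds. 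Since the paper itself does not carry this out, either route is acceptable.
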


There is an elegant purely combinatorial condition encoding when we have $\hat{p}_J \not= p_J$.
\begin{cor}\label{cor: nontrivially resolved Pluckers}
	Given $J \in \binom{\lbrack n\rbrack}{3}$ then we have $\hat{p}_J \not= p_J$ if and only if there exists $I \in \binom{\lbrack n\rbrack}{3}$ such that $I \prec J$ ($I$ is lexicographically smaller than $J$) and the pair $\{I,J\} \in \mathbf{NC}_{3,n}$ is noncrossing but not weakly separated.
\end{cor}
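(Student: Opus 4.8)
The plan is to combine the explicit classification of resolved minors in Proposition~\ref{prop: (3,n) resolved minors} with the combinatorial description of non-weakly-separated noncrossing pairs in Proposition~\ref{prop:tripod noncrossing classification}. From the former, $\hat p_J = p_J$ exactly when $J$ lies in one of the families listed in item~(1), while $\hat p_J$ is given by a compound determinant (of type $X$ or $A$) that does not reduce to a single Plucker coordinate whenever $J$ lies in families (2)--(5); since the five families partition $\binom{\lbrack n\rbrack}{3}$, this gives $\hat p_J \neq p_J$ if and only if $J=\{a,b,c\}$ with $a<b<c$ lies in one of (2)--(5). First I would show that this union is cut out by the single condition
$$(\ast)\qquad \bigl(a\ge 3 \ \text{and}\ b\ge a+2\bigr)\quad\text{or}\quad\bigl(a\ge 2\ \text{and}\ c\ge b+3\bigr).$$
Indeed family (2) ($c=b+1$, $b\ge a+2$, $a\ge 3$) and family (5) ($c\ge b+2$, $b\ge a+2$, $a\ge 3$) realize the first clause, while family (3) ($b=a+1$, $c\ge b+3$, $a\ge 2$) and family (4) ($a=2$, $b\ge 4$, $c\ge b+3$) realize the second; conversely, given $(\ast)$ one splits on whether $c=b+1$ or $c\ge b+2$ and on whether $b=a+1$ or $b\ge a+2$ (and, in the $a=2$ branch, on whether $b=3$ or $b\ge 4$) to land in one of (2)--(5). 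In particular $\{2,3,5\}=\{i,i+1,i+3\}$ stays in family~(1) with $\hat p_J=p_J$, consistent with $(\ast)$ failing there.

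Next I would rewrite the right-hand side of the corollary in the same terms. By Proposition~\ref{prop:tripod noncrossing classification} the edges of $\mathbf{NC}_{3,n}\setminus\mathbf{WS}_{3,n}$ are exactly the pairs $\{j_1j_2j_4,\ j_3j_5j_6\}$ and $\{j_1j_4j_5,\ j_2j_3j_6\}$ as $\{j_1<\cdots<j_6\}$ ranges over $\binom{\lbrack n\rbrack}{6}$; in each pair the set containing $j_1$ has the smallest first entry, hence is lexicographically smaller. Therefore $J$ admits some $I\prec J$ with $\{I,J\}\in\mathbf{NC}_{3,n}\setminus\mathbf{WS}_{3,n}$ precisely when $J$ occurs as the larger set, i.e.\ $J=\{j_3,j_5,j_6\}$ or $J=\{j_2,j_3,j_6\}$ for some such $6$-tuple. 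Writing $J=\{a<b<c\}$, the first case needs two indices below $a$ and one strictly between $a$ and $b$, i.e.\ $a\ge 3$ and $b\ge a+2$; the second needs one index below $a$ and two strictly between $b$ and $c$, i.e.\ $a\ge 2$ and $c\ge b+3$. So the right-hand side of the corollary is exactly $(\ast)$, and comparing with the previous paragraph completes the proof. (One also checks that these pairs genuinely live in the \emph{reduced} complex $\mathbf{NC}_{3,n}$: none of $\{j_1,j_2,j_4\}$, $\{j_3,j_5,j_6\}$, $\{j_1,j_4,j_5\}$, $\{j_2,j_3,j_6\}$ can be three cyclically consecutive integers, since in each there is an index of the $6$-tuple lying strictly between its last two entries.)

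The only nontrivial work is the bookkeeping that matches the four index families (2)--(5) of Proposition~\ref{prop: (3,n) resolved minors} against the two families of Proposition~\ref{prop:tripod noncrossing classification} through the intermediate condition $(\ast)$, and the main place to slip is the boundary $a=2$, where family~(4) together with the $i=2$ instances of family~(3) must be seen to account for all $J=\{2,b,c\}$ with $c\ge b+3$. No input beyond the two cited propositions is required.
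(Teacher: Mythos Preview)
Your proof is correct and follows essentially the same route as the paper's: after stating the corollary, the paper simply notes that the noncrossing-but-not-weakly-separated pairs are exactly $\{\{\ell_1,\ell_2,\ell_4\},\{\ell_3,\ell_5,\ell_6\}\}$ and $\{\{\ell_1,\ell_4,\ell_5\},\{\ell_2,\ell_3,\ell_6\}\}$ for $\ell_1<\cdots<\ell_6$, and asserts that the lex-larger elements in these pairs coincide with the families (2)--(5) of Proposition~\ref{prop: (3,n) resolved minors}. Your explicit intermediate condition $(\ast)$ makes this matching transparent where the paper leaves it to inspection, but the argument is the same.
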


Indeed, for the rank $k=3$ noncrossing complex $\mathbf{NC}_{3,n}$ it is not difficult to see that there are exactly $2\binom{n}{6}$ noncrossing pairs $(I,J)$ that are not weakly separated.  Namely, for each $L = \{\ell_1,\ldots, \ell_6\} \in \binom{\lbrack n\rbrack}{6}$ with $1\le \ell_1< \cdots <\ell_6\le n$ there are exactly two such pairs:
$$\left(\{\ell_1,\ell_2,\ell_4\}, \{\ell_3,\ell_5,\ell_6\}\right),\ \ \left(\{\ell_1,\ell_4,\ell_5\}, \{\ell_2,\ell_3,\ell_6\}\right).$$
Then one sees immediately that the set of all triples of the form either $\{\ell_2,\ell_3,\ell_6\}$ or $\{\ell_3,\ell_5,\ell_6\}$, for some $L\in \binom{\lbrack n\rbrack}{6}$, coincides with the elements $\hat{p}_{a,b,c}$ such that $\hat{p}_{abc} \not=p_{abc}$ in Proposition \ref{prop: (3,n) resolved minors}.

\begin{cor}
	For $(k,n) = (3,n)$ the number of nontrivially resolved minors $\hat{p}_{ijk}$ on $G(3,n)$ is given by 
	$$N_n = \binom{n-5}{3}+2(n-5)^2.$$
\end{cor}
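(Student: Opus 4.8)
The plan is to count directly the triples $J\in\binom{\lbrack n\rbrack}{3}$ with $\hat{p}_J\neq p_J$. By Corollary \ref{cor: nontrivially resolved Pluckers} together with the explicit list of the $2\binom{n}{6}$ noncrossing but non-weakly-separated pairs recorded just above it, a triple $J=\{a<b<c\}$ has $\hat{p}_J\neq p_J$ precisely when $J$ is of the form $\{\ell_3,\ell_5,\ell_6\}$ or $\{\ell_2,\ell_3,\ell_6\}$ for some $6$-subset $\{\ell_1<\cdots<\ell_6\}\subseteq\lbrack n\rbrack$. Write $\mathcal{A}$ for the set of triples of the first type and $\mathcal{B}$ for those of the second, so that $N_n=\lvert\mathcal{A}\cup\mathcal{B}\rvert$.

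First I would translate the two membership conditions into inequalities on the gaps of $\{a,b,c\}$. For $J=\{a<b<c\}$ to occur as $\{\ell_3,\ell_5,\ell_6\}$ one needs two indices $\ell_1<\ell_2$ below $a$ and one index $\ell_4$ strictly between $a$ and $b$, and conversely such $\ell$'s can always be chosen when there is room for them; thus $\mathcal{A}=\{a<b<c:\ a\ge 3,\ b\ge a+2\}$. Similarly $\mathcal{B}=\{a<b<c:\ a\ge 2,\ c\ge b+3\}$, the conditions now being one index below $a$ and two indices between $b$ and $c$. Encoding a triple by its four gaps $g_0=a-1$, $g_1=b-a-1$, $g_2=c-b-1$, $g_3=n-c$, which are nonnegative integers summing to $n-3$, these become lower bounds on some of the $g_i$, and stars and bars gives
\begin{align*}
\lvert\mathcal{A}\rvert &= \#\{g_0\ge 2,\ g_1\ge 1,\ g_2,g_3\ge 0:\ \textstyle\sum_i g_i=n-3\}=\binom{n-3}{3},\\
\lvert\mathcal{B}\rvert &= \#\{g_0\ge 1,\ g_2\ge 2,\ g_1,g_3\ge 0:\ \textstyle\sum_i g_i=n-3\}=\binom{n-3}{3},\\
\lvert\mathcal{A}\cap\mathcal{B}\rvert &= \#\{g_0\ge 2,\ g_1\ge 1,\ g_2\ge 2,\ g_3\ge 0:\ \textstyle\sum_i g_i=n-3\}=\binom{n-5}{3}.
\end{align*}

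By inclusion--exclusion $N_n=2\binom{n-3}{3}-\binom{n-5}{3}$, and the claimed closed form follows from two applications of Pascal's rule, $\binom{n-3}{3}-\binom{n-5}{3}=\binom{n-4}{2}+\binom{n-5}{2}=(n-5)^2$, which rearranges to $N_n=\binom{n-5}{3}+2(n-5)^2$. As a cross-check, the same total is obtained by adding the cardinalities $\binom{n-4}{2},\binom{n-4}{2},\binom{n-5}{2},\binom{n-4}{3}$ of families (2)--(5) of Proposition \ref{prop: (3,n) resolved minors}, which are exactly the nontrivially resolved minors. The only step needing genuine care is the equivalence between the $\ell$-position descriptions of $\mathcal{A}$, $\mathcal{B}$ and the gap inequalities, i.e.\ verifying that the missing entries of the $6$-subset can always be completed once the stated inequalities hold; the rest is routine binomial arithmetic.
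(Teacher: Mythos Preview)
Your proof is correct. The paper states this corollary without proof, the implicit argument being simply to add the sizes of families (2)--(5) in Proposition \ref{prop: (3,n) resolved minors}, namely
\[
2\binom{n-4}{2}+\binom{n-5}{2}+\binom{n-4}{3},
\]
and to simplify. Your route via Corollary \ref{cor: nontrivially resolved Pluckers} and inclusion--exclusion on the two sets $\mathcal{A},\mathcal{B}$ is a genuinely different (though closely related) packaging: rather than summing four disjoint pieces you cover the same set by two overlapping pieces of equal size $\binom{n-3}{3}$ and subtract the intersection $\binom{n-5}{3}$. This has the pleasant feature that the target expression $\binom{n-5}{3}+2(n-5)^2$ falls out directly from the Pascal identity you use, whereas the four-term sum requires a couple of extra regroupings to reach the same closed form. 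Your cross-check against the four families confirms that both decompositions describe the same set of triples; the verification that one can always complete the $6$-subset once the gap inequalities hold is indeed the only substantive point, and you have it right.
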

For example, in the case (3,n) as above, one finds the enumerations
$$N_n = 2, 8, 19, 36, 60, 92, 133,\ldots$$
of nontrivially resolved Plucker coordinates for $n\ge 6$.

\begin{example}\label{example: Plucker resolution explicit 36}
	Let us evaluate some of the resolved Plucker coordinates on the image of the positive parameterization of $X(3,6)$.  In $\mathbf{NC}_{3,6}$ there are two noncrossing pairs which are not weakly separated.  These are
	$$\{\{1,2,4\},\{3,5,6\}\}\text{ and } \{\{1,4,5\},\{2,3,6\}\},$$
	and the following Plucker coordinates require resolution:
	$$p_{356},p_{236}.$$
	For the minor $p_{236}$ we have
	$$\hat{p}_{236}\vert_{BCFW} = x_{1,1}x_{2,123}+x_{1,2}x_{2,23},$$
	while prior to resolution it is
	$$p_{236}\vert_{BCFW} = x_{1,1}x_{2,123} + x_{1,2}x_{2,23}+x_{1,3}x_{2,3}.$$
	Similarly,
	$$p_{356}\vert_{BCFW} = \left(x_{1,2} x_{2,1}+x_{1,3} x_{2,1}+x_{1,3} x_{2,2}\right) x_{2,3}$$
	is resolved to
	$$\hat{p}_{356}\vert_{BCFW} = (x_{1,2}+x_{2,3})x_{2,1}x_{2,3}.$$
	
	These can be expressed in terms of Plucker coordinates as
	$$\hat{p}_{236} = p_{236} - \frac{p_{123}p_{456}}{p_{145}}.$$
	and 
	$$\hat{p}_{356} = p_{356} - \frac{p_{123}p_{456}}{p_{124}}.$$
	None of the other Plucker coordinates here requires resolution; indeed, $\{\{1,2,4\},\{3,5,6\}\}$ and $\{\{1,4,5\},\{2,3,6\}\}$ are the only pairs in $\mathbf{NC}_{3,6}$ which are noncrossing but not weakly separated.
	
\end{example}

\begin{example}\label{example: Plucker resolution explicit 37}
	There are 14 noncrossing pairs $\{I,J\}$ in $\mathbf{NC}_{3,7}$ that are not weakly separated.  These are:
	$$
	\begin{array}{cc}
		\{1,2,4\} & \{3,5,6\} \\
		\{1,2,4\} & \{3,5,7\} \\
		\{1,2,4\} & \{3,6,7\} \\
		\{1,2,5\} & \{3,6,7\} \\
		\{1,2,5\} & \{4,6,7\} \\
		\{1,3,5\} & \{4,6,7\} \\
		\{1,4,5\} & \{2,3,6\} \\
		\{1,4,5\} & \{2,3,7\} \\
		\{1,4,6\} & \{2,3,7\} \\
		\{1,5,6\} & \{2,3,7\} \\
		\{1,5,6\} & \{2,4,7\} \\
		\{1,5,6\} & \{3,4,7\} \\
		\{2,3,5\} & \{4,6,7\} \\
		\{2,5,6\} & \{3,4,7\} \\
	\end{array}.$$
	From this we read off the eight Plucker coordinates on $X(3,7)$ which require resolution.  These are labeled by the sets
	$$\{2,3,6\},\{2,3,7\},\{2,4,7\},\{3,4,7\},\{3,5,6\},\{3,5,7\},\{3,6,7\},\{4,6,7\}.$$
	Then for instance we resolve 
	$$p_{357}\vert_{BCFW} = (x_{1,23}x_{2,1} + x_{1,3}x_{2,2})x_{2,3} + (x_{1,234}x_{2,1} + x_{1,34}x_{2,2})x_{2,4},$$
	using Definition  \ref{defn: resolved Pluckers}, to 
	\begin{eqnarray*}
		\hat{p}_{357}\vert_{BCFW} & = & \frac{A_{357}}{p_{1,2,4}p_{1,6,7}}\bigg\vert_{BCFW}\\
		& = & (x_{1,2} x_{2,3}+x_{1,3} x_{2,3}+x_{1,2} x_{2,4}+x_{1,3} x_{2,4}+x_{1,4} x_{2,4})x_{2,1}.
	\end{eqnarray*}
\end{example}

\section{Facet classification for the PK polytope}\label{sec:facet PK polytope}
	
	Lemma \ref{lem: minimum value} gives one direction of the proof that the facet inequalities that were claimed to define $\Pi^{(k)}_{n-k}$ in Equation \eqref{eqn: facet inequalities intro} are indeed correct.

	\begin{lem}\label{lem: minimum value}
		For any $J \in \binom{\lbrack n\rbrack}{k}^{nf}$ and any $j\in \{1,\ldots, n-k-1\}$, then the minimum value of $\gamma_J$ on any $\text{Newt}(P_i)$ is 0.  The minimum value of $\gamma_J(\alpha)$ on the polyhedron $\text{Newt}(Q_j)$ is either 0 or 1.  Specifically, the minimum value of $\gamma_J(\alpha)$ on $Q_t$ is one exactly when we have 
		$$j_1\le t< t+k+1\le j_k,$$
		and otherwise this minimum is zero.		
		The minimum value of $\gamma_J$ on $\Pi^{(k)}_{n-k}$ is $-1$.
		
		We have the inclusion of the Newton polytope
		$$\Pi^{(k)}_{n-k} = \text{Newt}\left(\frac{P_1\cdots P_{k-1}Q_1\cdots Q_{n-k-1}}{\prod_{(i,j)\in \lbrack 1,k-1\rbrack \times \lbrack 1,n-k\rbrack}x_{i,j}}\right),$$
		into the polyhedron
		$$\left\{(\alpha) \in \mathbb{R}^{(k-1)\times (n-k)}: \sum_{j=1}^{n-k}\alpha_{i,j}=0;\  \gamma_J(\alpha)+1\ge 0,\ J \in \binom{\lbrack n\rbrack}{k}^{nf}\right\},$$
		and in particular, every affine hyperplane $\gamma_J+1=0$ contains a facet of $\Pi^{(k)}_{n-k}$.

	\end{lem}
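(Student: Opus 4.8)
The plan is to exploit that $\Pi^{(k)}_{n-k}$ is, by its very definition, the Minkowski sum $\text{Newt}(P_1)\boxplus\cdots\boxplus\text{Newt}(P_{k-1})\boxplus\text{Newt}(Q_1)\boxplus\cdots\boxplus\text{Newt}(Q_{n-k-1})$ translated by $-\mathbf{1}$, where $\mathbf{1}=\sum_{(i,j)}e_{i,j}$ is the exponent vector of $\prod_{(i,j)}x_{i,j}$. Because $\gamma_J$ is a linear functional and the minimum of a linear functional over a Minkowski sum is the sum of the minima over the summands, the whole problem reduces to the ``local'' minima:
$$\min_{\Pi^{(k)}_{n-k}}\gamma_J=\sum_{i=1}^{k-1}\min_{\text{Newt}(P_i)}\gamma_J+\sum_{t=1}^{n-k-1}\min_{\text{Newt}(Q_t)}\gamma_J-\gamma_J(\mathbf{1}).$$
First I would record the bookkeeping term $\gamma_J(\mathbf{1})=\sum_{i=1}^{k-1}\bigl|[\,j_i-(i-1),\,j_{i+1}-i-1\,]\bigr|=\sum_{i=1}^{k-1}(j_{i+1}-j_i-1)=(j_k-j_1)-(k-1)$. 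Throughout write $I_i=[\,j_i-(i-1),\,j_{i+1}-i-1\,]\subseteq[1,n-k]$, so that $\gamma_J=\sum_i\alpha_{i,I_i}$.

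The simplices $\text{Newt}(P_i)$ are easy: $\gamma_J$ takes the value $1$ at the vertex $e_{i,\ell}$ when $\ell\in I_i$ and $0$ otherwise, so $\min_{\text{Newt}(P_i)}\gamma_J=0$ unless $I_i=[1,n-k]$; but $I_i=[1,n-k]$ forces $j_i=i$ and $j_{i+1}=n-k+i+1$, which (the $j$'s being increasing) makes $J=\{1,\dots,i\}\cup\{n-k+i+1,\dots,n\}$ a cyclic interval, i.e.\ frozen. Hence for nonfrozen $J$ all of these minima vanish.

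The heart of the matter is the $(k-1)$-simplex $\text{Newt}(Q_t)$, with vertices $v_a=\sum_{i\le a}e_{i,t}+\sum_{i>a}e_{i,t+1}$ $(a=0,\dots,k-1)$, on which $\gamma_J(v_a)=\#\{i\le a:t\in I_i\}+\#\{i>a:t+1\in I_i\}$. I would first note that $t\in I_i\iff j_i\le t+i-1$ and $j_{i+1}\ge t+i+1$, and $t+1\in I_i\iff j_i\le t+i$ and $j_{i+1}\ge t+i+2$; since $j_1<\cdots<j_k$, each one-sided condition holds on an interval of indices $i$, so both $\{i:t\in I_i\}$ and $\{i:t+1\in I_i\}$ are intervals of consecutive integers. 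From this the increment $\gamma_J(v_{a+1})-\gamma_J(v_a)$ (which is $+1$, $0$ or $-1$ according as $I_{a+1}$ contains $t$ but not $t+1$, both or neither, or $t+1$ but not $t$) is nonnegative for $a$ below a threshold and nonpositive above it, so $a\mapsto\gamma_J(v_a)$ is unimodal and $\min_{\text{Newt}(Q_t)}\gamma_J=\min\{\gamma_J(v_0),\gamma_J(v_{k-1})\}=\min\{\#\{i:t+1\in I_i\},\#\{i:t\in I_i\}\}$. Next I would check the three cases: if $j_1>t$ then $t\notin I_i$ for every $i$ (as $j_i\ge j_1+i-1>t+i-1$), so $\gamma_J(v_{k-1})=0$; if $j_k\le t+k$ then $t+1\notin I_i$ for every $i$ (as $j_{i+1}\le j_k-(k-1-i)\le t+i+1$), so $\gamma_J(v_0)=0$; and if $j_1\le t$ and $j_k\ge t+k+1$ then, letting $p$ be the largest $i$ with $j_i\le t+i-1$ (so $p\ge1$) and $q$ the smallest $i$ with $j_{i+1}\ge t+i+1$ (so $q\le k-1$), the pairs of incompatible inequalities $j_i\le t+i-1$ versus $j_i\ge t+i$, and $j_{i+1}\le t+i$ versus $j_{i+1}\ge t+i+1$, force $q=p$; hence $\{i:t\in I_i\}=\{p\}$, so $\gamma_J(v_{k-1})=1$ and, by the same argument applied to $t+1$, also $\gamma_J(v_0)=1$. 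This gives exactly $\min_{\text{Newt}(Q_t)}\gamma_J=1$ when $j_1\le t\le j_k-k-1$ and $0$ otherwise.

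Assembling, $\sum_{t=1}^{n-k-1}\min_{\text{Newt}(Q_t)}\gamma_J=\#\{t:j_1\le t\le j_k-k-1\}=j_k-j_1-k$, the last equality because a nonfrozen $J$ is not a linear interval and so $j_k-j_1\ge k$; substituting,
$$\min_{\Pi^{(k)}_{n-k}}\gamma_J=0+(j_k-j_1-k)-\bigl((j_k-j_1)-(k-1)\bigr)=-1.$$
Since every point of $\Pi^{(k)}_{n-k}$ has row sums $\sum_j\alpha_{i,j}=1+(n-k-1)-(n-k)=0$, this is precisely the claimed inclusion into $\{(\alpha):\sum_j\alpha_{i,j}=0,\ \gamma_J(\alpha)+1\ge0\}$, and the supporting set $F_J=\Pi^{(k)}_{n-k}\cap\{\gamma_J=-1\}$ is a nonempty face. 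To upgrade ``nonempty face'' to ``facet'', I would identify $F_J$ with the Minkowski sum of the $\gamma_J$-minimizing faces of the summands --- all explicit from the analysis above --- and verify by a dimension count that $\dim F_J=\dim\Pi^{(k)}_{n-k}-1$; this is most cleanly seen through Theorem \ref{thm: noncrossing subdivision Wkn}, whose complete simplicial (noncrossing) fan is the normal fan of the polytope $\{(\alpha)\in\mathcal{H}_{k,n}:\gamma_J(\alpha)\ge-1\ \text{for all nonfrozen }J\}$, so that the ray dual to each $\gamma_J$ contributes a facet. The steps I expect to be delicate are the unimodality and the boundary-case identity $q=p$ in the $\text{Newt}(Q_t)$ analysis, and the facet-dimension count, which is exactly where the noncrossing fan of Theorem \ref{thm: noncrossing subdivision Wkn} carries the weight.
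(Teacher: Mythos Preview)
Your overall strategy---Minkowski-sum additivity of the linear functional, separate analysis of each $\text{Newt}(P_i)$ and $\text{Newt}(Q_t)$, then the bookkeeping subtraction of $\gamma_J(\mathbf{1})$---is exactly the paper's approach. The one tactical difference is in how you handle $\text{Newt}(Q_t)$: you compute $\gamma_J$ on all $k$ vertices $v_a$, prove the sequence $a\mapsto\gamma_J(v_a)$ is unimodal, and read off the minimum at an endpoint; the paper instead classifies the four possible ways the two columns $t,t+1$ can appear in the expansion of $\gamma_J$ (Equations~\eqref{eq:line1}--\eqref{eq:line4}) and reads off the range directly. Both are short and correct; your vertex argument is perhaps more systematic, the paper's case split more visual.

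Where you diverge from the paper is in the facet (codimension-one) claim. The paper establishes this by a direct dimension count on the Minkowski sum of the $\gamma_J$-minimizing faces $P'_i,Q'_j$, illustrated on the example $\gamma_{1458}$ for $(k,n)=(4,9)$. You mention this dimension count, which is fine, but then propose as a ``cleaner'' alternative that the noncrossing fan of Theorem~\ref{thm: noncrossing subdivision Wkn} is the normal fan of $\{\alpha\in\mathcal{H}_{k,n}:\gamma_J(\alpha)\ge-1\}$, so each $\gamma_J$ gives a facet. That last sentence is about the \emph{polyhedron}, not about $\Pi^{(k)}_{n-k}$; knowing $\Pi^{(k)}_{n-k}$ sits inside the polyhedron and touches each bounding hyperplane does not by itself force the touching locus to be a facet of $\Pi^{(k)}_{n-k}$. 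The equality of $\Pi^{(k)}_{n-k}$ with the polyhedron is precisely Corollary~\ref{cor: facets Pikn}, which relies on Lemma~\ref{lem: minimized simultaneously gamma} in addition to the present lemma, so invoking it here would be circular. Stick with the Minkowski dimension count for this step.
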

	
\begin{proof}
	Fix $J \in \binom{\lbrack n\rbrack}{k}^{nf}$ and choose any $j_0\in \{1,\ldots, n-k-1\}$.
	
	Mindful of the interpretation of the variables $\alpha_{i,j}$ as coordinate functions on a matrix of size $(k-1)\times (n-k)$, we see that the Newton polytope of any fixed polynomial $Q_{j_0}$, for $j_0\in \{1,\ldots, n-k-1\}$, is a simplex of dimension $k-1$ which involves only the adjacent columns $j_0$ and $j_0+1$.  To be precise, we have that 
	$$\text{Newt}(Q_{j_0}) = \left\{\sum_{i=1}^{k-1}\alpha_{i,j_0}e_{i,j_0} + \alpha_{i,j_0+1} e_{i,j_0+1}: 1\ge \alpha_{1,j_0} \ge \alpha_{2,j_0} \ge \cdots \ge \alpha_{k-1,j_0} \ge 0,\ \alpha_{i,j_0} + \alpha_{i,j_0+1}=1\right\}.$$
	As for the linear function $\gamma_J$, notice that it involves at most one $\alpha$-coordinate from each of the columns $j_0$ and $j_0+1$.  Indeed, there are the following five possibilities.  If, in the expansion of $\gamma_J$ the columns $j_0$ and $j_0+1$ do not appear at all, then $\gamma_J$ takes the \textit{constant} value 0 on the whole simplex $\text{Newt}(Q_{j_0})$.  The remaining possibilities are as follows:
	\begin{eqnarray}
		\gamma_{J} & = &  \cdots  + \alpha_{i,j_0}\label{eq:line1},\\
		\gamma_{J} & = & \cdots  + \alpha_{i,j_0} + \alpha_{i,j_0+1} + \cdots\label{eq:line2}, \\
		\gamma_{J} & = & \cdots  + \alpha_{i,j_0} + \alpha_{i+t,j_0+1} + \cdots\label{eq:line3}, \\
		\gamma_{J} & = & \alpha_{i,j_0+1} + \cdots\label{eq:line4},
	\end{eqnarray}
	where $i+t\le k-1$.  Here the ellipses $\cdots$ are sums of $\alpha_{i,j}$'s (in the indicated intervals) that do not involve the columns $j_0,j_0+1$.  In the case of Equations \eqref{eq:line1} and \eqref{eq:line4}, $\gamma_J$ ranges between 0 and 1 on $\text{Newt}(Q_{j_0})$, and so the minimum value is zero.  In the case of Equation \eqref{eq:line2}, $\gamma_J$ is identically 1 on $\text{Newt}(Q_{j_0})$.  But in the case of Equation \eqref{eq:line3}, $\gamma_J$ ranges between 1 and 2 on $\text{Newt}(Q_{j_0})$.  In particular, the minimum value is 1.  This completes the proof that the minimum value of $\gamma_J$ on any $Q_j$ is either 0 or 1.
	
	It remains to observe that the minimum value of $\gamma_J$ on the polytope $\text{Newt}\left(\prod_{j=1}^{n-k-1}Q_j\right)$ is $m = j_k-j_1-k$; from this, dividing by the monomial factor $\prod_{(i,j) \in \lbrack 1,k-1\rbrack \times \lbrack 1,n-k\rbrack}x_{i,j}$ shifts the minimum value of $\gamma_J$ from $m$ to $-1$.  This also proves the claimed inclusion.  
	
	Now we already know that $\gamma_J+1=0$ defines a face of $\Pi^{(k)}_{n-k}$ of some positive codimension, and it is easy to see that this face will again be a Minkowski sum of simplices; a somewhat detailed -- but instructive -- dimension count can be used to show that the intersection of the affine hyperplane $\gamma_J+1=0$ with $\Pi^{(k)}_{n-k}$ has the desired codimension one.  For this, it is convenient to work with the $\prod_{j=1}^{n-k-1}Q_j$ itself, rather than the given Laurent polynomial.  
	
	There are two main steps.  First, we calculate the faces, respectively $P'_j$ and $Q'_i$, of each $P_j$ and each $Q_i$ that minimize $\gamma_J$; we know that on each, this minimum value is either zero or one.  In this way we write the facet as a Newton polytope
	$$\text{Newt}\left(P'_1\cdots P'_{k-1}Q'_1\cdots Q'_{n-k-1}\right).$$
	Second, by reorganizing the product $(P'_1\cdots P'_{k-1}Q'_1\cdots Q'_{n-k-1})$ and taking the Newton polytope, we find a subset of the Newton polytope which has the desired codimension one.  Let us illustrate the method by calculating the Newton polytope representation of the facet of $\Pi^{(4)}_5$ that minimizes $\gamma_{1458} = \alpha_{1,1 } + \alpha_{1,2} + \alpha_{3,3} + \alpha_{3,4}$.  Here our ambient space for the coordinate functions $\alpha_{i,j}$ for $(i,j) \in \lbrack 1,3\rbrack \times \lbrack 1,5\rbrack$ is $\mathcal{H}_{4,9}$. 
	
	We find
	\begin{eqnarray*}
	P'_1 & = & x_{1,3} + x_{1,4} + x_{1,5}\\
	P'_2 & = & x_{2,1} + x_{2,2} + x_{2,3} + x_{2,4} + x_{2,5}\\
	P'_3 & = & x_{3,1} + x_{3,2} + x_{3,5}\\
	Q'_1 & = & x_{1,1}x_{2,1}x_{3,1} + x_{1,1}x_{2,1}x_{3,2}+x_{1,1}x_{2,2}x_{3,2} + x_{1,2}x_{2,2}x_{3,2}\\
	Q'_2 & = & x_{1,2}x_{2,2}x_{3,2} + x_{1,3}x_{2,3}x_{3,3}\\
	Q'_3 & = & x_{1,3}x_{2,3}x_{3,3} + x_{1,3}x_{2,3}x_{3,4}+x_{1,3}x_{2,4}x_{3,4} + x_{1,4}x_{2,4}x_{3,4}\\
	Q'_4 & = & (x_{1,4} x_{2,4}+ x_{1,4}x_{2,5} + x_{1,5}x_{2,5})x_{3,5}.
\end{eqnarray*}
\begin{figure}[h!]
	\centering
	\includegraphics[width=0.85\linewidth]{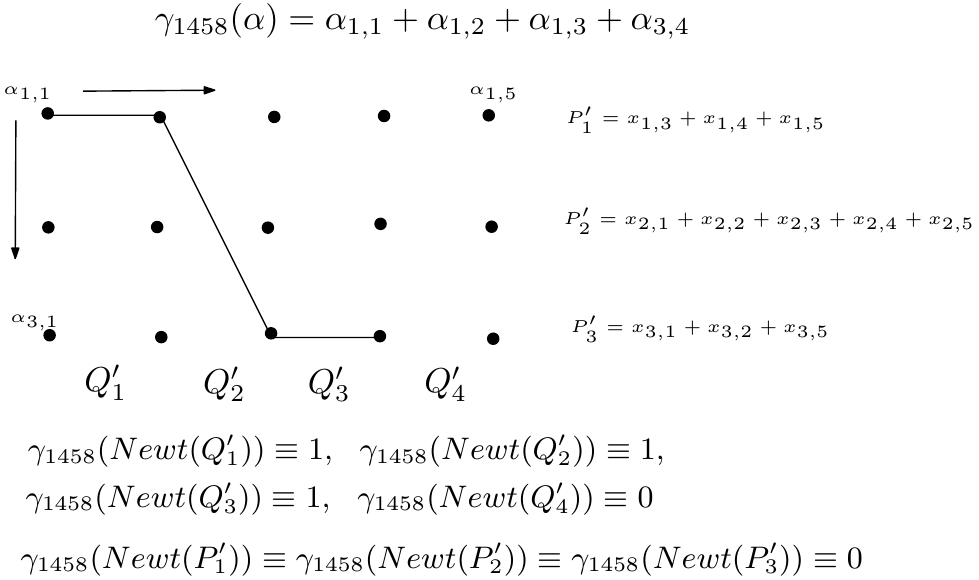}
	\caption{The function $\gamma_{1458}$ evaluates to the constant $3$ on the face $\text{Newt}(P'_1P'_2P'_3Q'_1Q'_2Q'_3Q'_4)$ of $\Pi^{(4)}_5$.  By a dimension count this face has codimension 1, and dimension $(4-1)(9-4-1)-1=11$.}
	\label{fig:facetgridexample}
\end{figure}
	Then for the facet, (before translating to the origin) we obtain
	\begin{eqnarray*}
		\text{Newt}\left(P'_1P'_2 P'_3 Q'_1 Q'_2 Q'_3 Q'_4\right)
		& = & \text{Newt} \left(Q'_1\right) \\
		& \boxplus & \text{Newt} \left((Q'_2)(x_{1,3} + \cdots )(\cdots +x_{2,2} + x_{2,3} + \cdots )(\cdots  + x_{3,2})\right)\\
		& \boxplus & \text{Newt}\left(Q'_3 \right)\\
		& \boxplus & \text{Newt}\left(Q'_4\right).
	\end{eqnarray*}
	Among these, by inspection of the ellipses $\cdots $ we can see that the four Newton polytopes are transverse, so that the dimension of $$\text{Newt}\left(P'_1P'_2 P'_3 Q'_1 Q'_2 Q'_3 Q'_4\right)$$ is the sum of the dimensions of the Minkowski summands; moreover, the Newton polytopes in the first three lines all have dimension three, while the dimension of the Newton polytope in the fourth line has been reduced from three (for $Q_4$) to two (for $Q'_4$).  Consequently, the whole facet has codimension one, as desired.
	
	Here we have used that all coefficients are nonnegative in our analysis of the Newton polytopes.
	
	Finally, by direct inspection (for instance, in the diagram in Figure \ref{fig:facetgridexample}, one has to count which pairs of adjacent columns are covered by the path connecting the variables $\alpha_{i,j}$ on the grid) one finds that when the integer $t$ satisfies $j_1\le t$ and $t+k+1\le j_k$, then the minimum value of $\gamma_J(\alpha)$ on $Q_t$ is equal to one, and otherwise the minimum is zero.
\end{proof}

	\begin{lem}\label{lem: minimized simultaneously gamma}
	Given any noncrossing pair $(I,J) \in \mathbf{NC}_{k,n}$, then $\gamma_I$ and $\gamma_J$ can be simultaneously minimized on any $\text{Newt}(Q_j)$.  That is to say, there exists a point $(\alpha_0) \in \text{Newt}(Q_j)$ such that $\gamma_I(\alpha) \ge \gamma_I (\alpha_0)$ and $\gamma_J(\alpha) \ge \gamma_J (\alpha_0)$ for all $\alpha \in \text{Newt}(Q_j)$.
\end{lem}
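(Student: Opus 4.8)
The plan is to reduce everything to the explicit description of $\gamma_J$ on a single simplex $\mathrm{Newt}(Q_j)$ already extracted in the proof of Lemma~\ref{lem: minimum value}. Parametrize $\mathrm{Newt}(Q_j)$ by $t_i:=\alpha_{i,j}$ subject to $1\ge t_1\ge \cdots \ge t_{k-1}\ge 0$ (so $\alpha_{i,j+1}=1-t_i$, and the vertices are the points $t_1=\cdots=t_\ell=1,\ t_{\ell+1}=\cdots=t_{k-1}=0$ for $\ell=0,\ldots,k-1$). As recalled there, the restriction of $\gamma_I$ to $\mathrm{Newt}(Q_j)$ agrees, up to an additive constant, with exactly one of: the zero function; $t_a$; $-t_a$; or $t_a-t_b$ with $a<b$ — where $a$ (resp.\ $b$) is the unique row of the defining sum of $\gamma_I$ whose interval contains the column index $j$ (resp.\ $j+1$); the mixed form $t_a-t_b$ forces $a<b$ because the row-intervals of $\gamma_I$ tile an integer interval from left to right, so consecutive columns lie in weakly increasing rows.

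First I would record, for each of these four affine functions, its set of minimizers in the simplex $\mathrm{Newt}(Q_j)$: the whole simplex in the constant case, the face $\{t_a=0\}$ for $t_a$, the face $\{t_a=1\}$ for $-t_a$, and the face $\{t_a=t_{a+1}=\cdots=t_b\}$ for $t_a-t_b$. A short inspection shows that, among all pairs of faces from this list, the only ones with empty intersection are $\{t_a=1\}$ and $\{t_b=0\}$ with $a\ge b$ (and symmetrically $\{t_a=0\}$ and $\{t_b=1\}$ with $a\le b$): every other pair contains the all-zero vertex, or the all-one vertex, or a common subface. Consequently, if $\gamma_I$ and $\gamma_J$ could \emph{not} be minimized simultaneously on $\mathrm{Newt}(Q_j)$, then after interchanging $I$ and $J$ the restriction of $\gamma_I$ is of type $-t_a$ and that of $\gamma_J$ is of type $t_b$ with $a\ge b$, and it suffices to show this forces $(I,J)$ to be crossing.

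The next step is to turn these two restriction types into normal forms for $I=\{i_1<\cdots<i_k\}$ and $J=\{j'_1<\cdots<j'_k\}$, taking care of empty row-intervals (several leading rows of $\gamma_I$, or several trailing rows of $\gamma_J$, may vanish). One finds: $\gamma_I|_{\mathrm{Newt}(Q_j)}$ is of type $-t_a$ exactly when $i_1=j+1,\,i_2=j+2,\ldots,i_a=j+a$ and $i_{a+1}\ge j+a+2$ (and necessarily $a\le k-1$, else $I=\{j+1,\ldots,j+k\}$ is frozen), while $\gamma_J|_{\mathrm{Newt}(Q_j)}$ is of type $t_b$ exactly when $j'_{b+1}=j+b+1,\ldots,j'_k=j+k$ and $j'_b\le j+b-1$ (with $b\ge 1$). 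Using $a\ge b$ I would then exhibit a violation of Definition~\ref{defn: noncrossing}. If $a=b$, test the index pair $(s,t)=(a,a+1)$: the two-element sets $\{i_a,i_{a+1}\}=\{\,j+a,\ i_{a+1}\,\}$ and $\{j'_a,j'_{a+1}\}=\{\,j'_a,\ j+a+1\,\}$ have empty (hence equal) interiors, while the four indices $j'_a<j+a<j+a+1<i_{a+1}$ carry the signs $-,+,-,+$ in $e_{\{i_a,i_{a+1}\}}-e_{\{j'_a,j'_{a+1}\}}$, i.e.\ the forbidden pattern after one cyclic rotation, so the pair is not weakly separated and $(I,J)$ is crossing. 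If $a>b$, test $(s,t)=(b,a+1)$: positions $b+1,\ldots,a$ of $I$ and of $J$ both equal $j+b+1,\ldots,j+a$, so the interiors $\{i_{b+1},\ldots,i_a\}$ and $\{j'_{b+1},\ldots,j'_a\}$ coincide, and after cancelling these common entries $e_{\{i_b,\ldots,i_{a+1}\}}-e_{\{j'_b,\ldots,j'_{a+1}\}}$ is supported at $j'_b<j+b<j+a+1<i_{a+1}$ with signs $-,+,-,+$, again the forbidden pattern up to cyclic rotation. In either case $(I,J)$ is crossing, contradicting the hypothesis; hence the minimizer faces meet and $\gamma_I,\gamma_J$ attain their minima at a common point of $\mathrm{Newt}(Q_j)$.

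The main obstacle is the normal-form step: pinning down precisely, in the presence of empty sub-intervals, which initial segments of $I$ (resp.\ terminal segments of $J$) correspond to the restriction types $-t_a$ (resp.\ $t_b$). Once these normal forms are fixed, producing the crossing witnesses and verifying the forbidden sign pattern is routine bookkeeping, and the face-intersection dichotomy of the second step reduces the whole lemma to exactly that single case.
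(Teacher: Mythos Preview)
Your argument is correct, and it is genuinely different from the paper's route. The paper does not analyze minimizer faces directly: instead it introduces the constants $c_L(Q_t)=-\min_{\mathrm{Newt}(Q_t)}\gamma_L$, observes via Lemma~\ref{lem: minimum value} that $c_L(Q_t)$ depends only on the extremal indices $\ell_1,\ell_k$ of $L$, and uses this to reduce the comparison inequality $c_{I'}(Q_t)+c_{J'}(Q_t)\ge c_I(Q_t)+c_J(Q_t)$ (for all $I',J'$ with $\gamma_{I'}+\gamma_{J'}=\gamma_I+\gamma_J$) to the $k=2$ case, which it then declares a straightforward check; from this it infers that the normal fan of the PK polytope coarsens the noncrossing fan, hence simultaneous minimization. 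By contrast, you parametrize $\mathrm{Newt}(Q_j)$ explicitly, classify the restriction of any $\gamma_I$ into the four affine types $0,\,t_a,\,-t_a,\,t_a-t_b$ with $a<b$, list the minimizer faces, and show that the \emph{only} disjoint configuration is $\{t_a=1\}$ versus $\{t_b=0\}$ with $a\ge b$; you then translate those two restriction types into the normal forms $i_1=j+1,\ldots,i_a=j+a,\ i_{a+1}\ge j+a+2$ and $j'_{b+1}=j+b+1,\ldots,j'_k=j+k,\ j'_b\le j+b-1$, and produce an explicit crossing witness at positions $(b,a{+}1)$. Your approach is more elementary and self-contained (no appeal to fan coarsening or to an implicit $k=2$ computation), and handles all $k$ uniformly. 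What the paper's approach buys is the immediate corollary about the normal fan of the whole PK polytope, which you would obtain only after passing through the Minkowski sum.
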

\begin{proof}
	Given any $I \in \binom{\lbrack n\rbrack}{k}^{nf}$, let $c_{I}(Q_j) = -\min\{\gamma_I(\alpha): (\alpha) \in \text{Newt}(Q_j) \}$.  Let us suppose that $(I,J) \in \mathbf{NC}_{k,n}$ and $I',J' \in \binom{\lbrack n\rbrack}{k}$ such that 
	$$\gamma_I + \gamma_J = \gamma_{I'} + \gamma_{J'}.$$
	First we claim that 
	$$c_{I'}(Q_t) + c_{J'}(Q_t) \ge c_{I}(Q_t) + c_{J}(Q_t)$$
	for each $t=1,\ldots, n-k-1$. 
	
	Indeed, by Lemma (11.1) the value of each constant $c_L(Q_j)$ with $L = \{\ell_1,\ldots, \ell_l\}$ is independent of the interior indices $\ell_{2},\ldots, \ell_{k-1}$. 
	
	This means, in particular, that we have reduced our calculation to the case $k=2$.  Indeed, one has the following straight forward computation: to verify that whenever $\{i_1i_2,j_1j_2\}$ is noncrossing and we have
	$$\gamma_{i_1'i_2'} + \gamma_{j'_1j'_2} = \gamma_{i_1i_2} + \gamma_{i_2j_2},$$
	then
	$$\left(c_{i'_1i'_2}(x_t+x_{t+1}) + c_{j'_1j'_2}(x_t+x_{t+1})\right) - \left(c_{i_1i_2}(x_t+x_{t+1}) + c_{j_1j_2}(x_t+x_{t+1})\right) \in \{0,1\}.$$
	It follows that the normal fan to the PK polytope is a coarsening of the noncrossing complete simplicial fan of Theorem \ref{thm: noncrossing subdivision Wkn}.  Consequently $\gamma_I$ and $\gamma_J$ can be simultaneously minimized on $\Pi^{(k)}_{n-k}$.
\end{proof}

\begin{rem}
	Repeating Lemma \ref{lem: minimized simultaneously gamma} when the PK polytope is replaced by the PK associahedron $\mathbb{K}^{(k)}_{n-k}$, and the polynomials $P_i,Q_j$ are completed to the full set of polynomials $\delta^{(i)}_{J}$ (or, equivalently, with the planar face polynomials $\tau_J$) could be an interesting question to pursue, but it is beyond the scope of the paper.  This would give a proof that the face poset of the $\mathbb{K}^{(k)}_{n-k}$ is (anti)-isomorphic to the noncrossing complex $\mathbf{NC}_{k,n}$, and in particular that $\mathbb{K}^{(k)}_{n-k}$ is an explicit metrization of the Grassmann associahedron of \cite{GrassmannAssociahedron}.  The problem is left to future work.
\end{rem}

\begin{conjecture}
	The face poset of $\mathbb{K}^{(k)}_{n-k}$ is anti-isomorphic to the noncrossing complex $\mathbf{NC}_{k,n}$.
\end{conjecture}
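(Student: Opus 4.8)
The plan is to identify the normal fan $\mathcal{N}(\mathbb{K}^{(k)}_{n-k})$ with the complete simplicial fan $\{\langle\mathcal{J}\rangle_+:\mathcal{J}\in\mathbf{NC}_{k,n}\}$ produced in Theorem \ref{thm: noncrossing subdivision Wkn}. Granting this, the conjecture is immediate: the face poset of a (full-dimensional) polytope is anti-isomorphic to the face poset of its normal fan, under which a $d$-dimensional face corresponds to a cone of codimension $d$; since the noncrossing fan is a clique fan with one cone $\langle\mathcal{J}\rangle_+$ per noncrossing collection $\mathcal{J}$, and $\langle\mathcal{J}'\rangle_+$ is a face of $\langle\mathcal{J}\rangle_+$ exactly when $\mathcal{J}'\subseteq\mathcal{J}$, the face lattice of $\mathbb{K}^{(k)}_{n-k}$ ordered by inclusion is anti-isomorphic to $\mathbf{NC}_{k,n}$ ordered by inclusion. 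In particular one recovers that the $\binom{n}{k}-n$ facets of $\mathbb{K}^{(k)}_{n-k}$ are indexed by nonfrozen $k$-subsets $J$, via the rays $v_J$, and the vertices by maximal noncrossing collections.

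By Definition \ref{defn: PK associahedron}, $\mathbb{K}^{(k)}_{n-k}$ is a Minkowski sum of the planar faces $\mathcal{F}^{(i)}_J$, so---translations being irrelevant for normal fans---$\mathcal{N}(\mathbb{K}^{(k)}_{n-k})$ is the common refinement of the normal fans $\mathcal{N}(\mathcal{F}^{(i)}_J)$, and I would prove the two fan inclusions separately. For the first, that the noncrossing fan refines each $\mathcal{N}(\mathcal{F}^{(i)}_J)$ (hence refines their common refinement), the point is to extend Lemma \ref{lem: minimized simultaneously gamma} from the simplices $\text{Newt}(Q_j)$ to every ``staircase'' Newton polytope $\text{Newt}(\delta^{(i)}_J)$: for each noncrossing pair $(I,J')\in\mathbf{NC}_{k,n}$, the roots $\gamma_I$ and $\gamma_{J'}$ attain their minimum over $\text{Newt}(\delta^{(i)}_J)$ at a common vertex. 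As in the proof of Lemmas \ref{lem: minimum value} and \ref{lem: minimized simultaneously gamma}, one shows that the shift constant $-\min\{\gamma_L(\alpha):\alpha\in\text{Newt}(\delta^{(i)}_J)\}$ depends only on ``boundary data'' of $L$ relative to the grid underlying $\mathcal{I}_J$; feeding this into the cubical relations of Proposition \ref{prop:cubical relations 1} and Corollary \ref{cor: four term identity gammaJ} reduces the verification, adjacent column-pair by adjacent column-pair, to the rank-two computation already carried out for $\Pi^{(k)}_{n-k}$.

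For the reverse inclusion, that $\mathcal{N}(\mathbb{K}^{(k)}_{n-k})$ refines the noncrossing fan, note that by the first part it is already a coarsening of it, so it suffices to check that no wall of the noncrossing fan is erased. Each such wall separates two adjacent maximal noncrossing collections related by a cubical flip $\gamma_{M_1\cup L}+\gamma_{M_2\cup L}=\gamma_{M'_1\cup L}+\gamma_{M'_2\cup L}$ coming from an $m$-dimensional cube $\mathcal{C}^{(L)}_{I,J}$ in $\Delta_{k,n}$, the surviving direction being the one through the unique noncrossing antipodal pair $\{M''_1\cup L,M''_2\cup L\}$ of Proposition \ref{prop:cubical relations 1}. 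For such a flip I would exhibit one $m$-dimensional planar face $\mathcal{F}^{(i)}_J$ whose grid path runs through exactly the adjacent columns recording the flip, and show that over $\text{Newt}(\delta^{(i)}_J)$ the noncrossing pair is simultaneously minimized while the crossing pair is not---the difference of shift constants being exactly $1$, in direct analogy with the $\{0,1\}$-dichotomy of Lemma \ref{lem: minimized simultaneously gamma} and the octahedral constant $c_{Iabcd}$ of Lemma \ref{lem: minimum value}. This is a finite case analysis organized by the cube size $m=2,\ldots,k$, the case $m=2$ being the octahedral flips already treated for the PK polytope. Combining the two inclusions gives $\mathcal{N}(\mathbb{K}^{(k)}_{n-k})=\{\langle\mathcal{J}\rangle_+:\mathcal{J}\in\mathbf{NC}_{k,n}\}$ and hence the conjecture; for $k=2$ everything specializes to the classical fact that Loday's realization of the associahedron has normal fan the noncrossing fan, and one could organize the general argument as an induction on $k$ along the partition of unity $\varphi_{k-2}+\varphi_{-1}=\mathbf{id}$ of Section \ref{sec: triangulation}.

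The main obstacle is the first inclusion. The Newton polytopes of the higher planar face polynomials (the ``Staircase'' of Figure \ref{fig:planar-facekn-grid-3-14}) are no longer simplices, so the clean column-by-column reduction of Lemma \ref{lem: minimized simultaneously gamma} must be replaced by a careful analysis of which faces of these staircases minimize a prescribed generalized root, together with a proof that the noncrossing condition is precisely what forces the minima of a noncrossing pair to be attained simultaneously; this is exactly the extension flagged as open in the remark following Lemma \ref{lem: minimized simultaneously gamma}.
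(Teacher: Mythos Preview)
The paper does not prove this statement: it is explicitly recorded as a \emph{conjecture}, and the remark immediately preceding it says that extending Lemma~\ref{lem: minimized simultaneously gamma} from the PK polytope to the full PK associahedron $\mathbb{K}^{(k)}_{n-k}$ ``could be an interesting question to pursue, but it is beyond the scope of the paper \ldots\ The problem is left to future work.'' So there is no proof in the paper to compare against; what you have written is a proof \emph{strategy} for an open problem, and to your credit you label it as such and correctly identify the main gap in your final paragraph.

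Your outline is in fact the same route the paper gestures at in that remark: show that the normal fan of $\mathbb{K}^{(k)}_{n-k}$ coincides with the noncrossing fan of Theorem~\ref{thm: noncrossing subdivision Wkn} by upgrading the simultaneous-minimization Lemma~\ref{lem: minimized simultaneously gamma} from the simplices $\text{Newt}(Q_j)$ to all staircase polytopes $\text{Newt}(\delta^{(i)}_J)$. That is precisely what the paper says would yield the conjecture. One point to be careful about in your ``reverse inclusion'' paragraph: you assert that adjacent maximal noncrossing collections are related by a cubical flip from Proposition~\ref{prop:cubical relations 1}, but the flips in $\mathbf{NC}_{k,n}$ are not all of this uniform shape. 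Already for $k=3$ the paper exhibits three-term Jacobi-type exchanges such as $v_{134}+v_{235}=v_{135}$ (Example~\ref{ex: triangulating}) and flips like $v_{136}\Leftrightarrow v_{245}$ (Example~\ref{example: octahedral remark}) that are not octahedral; your case analysis ``organized by the cube size $m$'' would need to accommodate these as well. This does not invalidate the strategy, but the bookkeeping of which planar face $\mathcal{F}^{(i)}_J$ witnesses a given noncrossing wall is more delicate than a single cube parameter suggests.
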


	Corollary \ref{cor: facets Pikn} completes our analysis of the facets of the PK polytope: it says that the facet hyperplanes of the PK polytope are given by exactly the equations $\gamma_J+1=0$, as $J$ runs over the set of nonfrozen subsets $\binom{\lbrack n\rbrack}{k}^{nf}$.  
	\begin{cor}\label{cor: facets Pikn}
		The PK polytope has exactly $\binom{n}{k}-n$ facets, one in each affine hyperplane $\gamma_J+1=0$, for $J \in \binom{\lbrack n\rbrack}{k}^{nf}$.  In particular, we have
\begin{eqnarray}\label{eq:facet inequalities Pikn}
	\Pi^{(k)}_{n-k} & = & \left\{(\alpha) \in \mathbb{R}^{(k-1)\times (n-k)}: \sum_{j=1}^{n-k}\alpha_{i,j}=0;\  \gamma_J(\alpha)+1\ge 0,\ J \in \binom{\lbrack n\rbrack}{k}^{nf}\right\},
\end{eqnarray}
		and every inequality is facet-defining.
	\end{cor}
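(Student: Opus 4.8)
The plan is to derive the complete facet list from the two preceding lemmas together with the combinatorics of the noncrossing fan. Write $Q$ for the polyhedron on the right-hand side of \eqref{eq:facet inequalities Pikn}. Lemma \ref{lem: minimum value} already supplies the inclusion $\Pi^{(k)}_{n-k}\subseteq Q$ and, for each $J\in\binom{\lbrack n\rbrack}{k}^{nf}$, a facet of $\Pi^{(k)}_{n-k}$ lying in the affine hyperplane $\gamma_J+1=0$. Since $\dim\Pi^{(k)}_{n-k}=(k-1)(n-k-1)=\dim\mathcal{H}_{k,n}$, the polytope is full-dimensional in its affine hull $\mathcal{H}_{k,n}$, hence equals the intersection with $\mathcal{H}_{k,n}$ of the closed halfspaces bounded by its facet hyperplanes. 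Thus the whole statement follows once I show that $\Pi^{(k)}_{n-k}$ has no facets beyond the $\binom{n}{k}-n$ already produced.

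To rule out further facets I would pass to normal fans. The normal fan $\mathcal{N}(\Pi^{(k)}_{n-k})$ is a complete fan in $\mathcal{H}_{k,n}$ (under the identification of $\mathcal{H}_{k,n}$ with its dual used throughout the paper), whose rays correspond bijectively to the facets of $\Pi^{(k)}_{n-k}$. By Lemma \ref{lem: minimized simultaneously gamma} — precisely, by the coarsening assertion established in its proof — $\mathcal{N}(\Pi^{(k)}_{n-k})$ is a coarsening of the complete simplicial noncrossing fan $\{\langle \mathcal{J}\rangle_+:\mathcal{J}\in\mathbf{NC}_{k,n}\}$ of Theorem \ref{thm: noncrossing subdivision Wkn}. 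Every ray of a coarsening of a fan is already a ray of that fan, so the rays of $\mathcal{N}(\Pi^{(k)}_{n-k})$ lie among the rays $\{v_J:J\in\binom{\lbrack n\rbrack}{k}^{nf}\}$ of the noncrossing fan (these being the pairwise distinct rays of the fan of Theorem \ref{thm: noncrossing subdivision Wkn}, indexed by the vertices of the flag complex $\mathbf{NC}_{k,n}$). Hence $\Pi^{(k)}_{n-k}$ has at most $\binom{n}{k}-n$ facets, each contained in some hyperplane $\gamma_J+1=0$.

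Combining the two bounds, the facets of $\Pi^{(k)}_{n-k}$ are in bijection with $\binom{\lbrack n\rbrack}{k}^{nf}$: there are exactly $\binom{n}{k}-n$ of them, one in each hyperplane $\gamma_J+1=0$, and each is facet-defining, so the presentation is irredundant. Intersecting the associated halfspaces with $\mathcal{H}_{k,n}$ recovers $Q$, which is exactly the polyhedron in \eqref{eq:facet inequalities Pikn}, completing the argument.

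The only genuinely delicate point — everything else being bookkeeping on top of Lemmas \ref{lem: minimum value} and \ref{lem: minimized simultaneously gamma} — is the ray-extraction step: making the passage from ``$\mathcal{N}(\Pi^{(k)}_{n-k})$ coarsens the noncrossing fan'' to ``the facet normals of $\Pi^{(k)}_{n-k}$ are among the $v_J$'' fully rigorous, in particular being careful with the identification of $\mathcal{H}_{k,n}$ with $\mathcal{H}_{k,n}^{\ast}$. Should that route prove awkward to write cleanly, a more hands-on alternative is to verify directly that $Q$ is bounded with vertex set $\{v_{\mathcal{J}}\}$ indexed by maximal noncrossing collections, where $v_{\mathcal{J}}$ is the unique point satisfying $\gamma_J=-1$ for all $J\in\mathcal{J}$ — unique because $\{\gamma_J:J\in\mathcal{J}\}$ is a basis of $\mathcal{H}_{k,n}^{\ast}$ by Proposition \ref{prop: unimodular simplices noncrossing}, and lying in $\Pi^{(k)}_{n-k}$ because the functionals $\gamma_J$, $J\in\mathcal{J}$, attain their common minimum value $-1$ simultaneously there (Lemma \ref{lem: minimized simultaneously gamma} together with the minimum value $-1$ from Lemma \ref{lem: minimum value}); this gives $Q\subseteq\operatorname{conv}\{v_{\mathcal{J}}\}\subseteq\Pi^{(k)}_{n-k}\subseteq Q$. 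I expect the normal-fan argument to be the shorter of the two to write in full.
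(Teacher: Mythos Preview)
Your proposal is correct and follows essentially the same approach as the paper: invoke Lemma~\ref{lem: minimized simultaneously gamma} to conclude that the normal fan of $\Pi^{(k)}_{n-k}$ coarsens the noncrossing fan of Theorem~\ref{thm: noncrossing subdivision Wkn}, so that every facet normal is some $v_J$, and then combine with Lemma~\ref{lem: minimum value} which exhibits a facet in each hyperplane $\gamma_J+1=0$. The paper's proof is terser but structurally identical; your version spells out the two-sided counting argument and the ray-extraction step more explicitly, and your alternative vertex-based route is a genuine addition not present in the paper.
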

\begin{proof}
	From Lemma \ref{lem: minimized simultaneously gamma} it follows that the normal fan to the PK polytope is a coarsening of the noncrossing complete simplicial fan of Theorem \ref{thm: noncrossing subdivision Wkn}.  In other words, each facet of $\Pi^{(k)}_{n-k}$ minimizes one of the generalized roots $\gamma_J$.  Taking into account Lemma \ref{lem: minimum value} completes the proof.
\end{proof}

By combining Corollary \ref{cor: facets Pikn} with Corollary \ref{cor: triangulation root polytope} and using that $\mathcal{R}^{(k)}_{n-k}$ and $\Pi^{(k)}_{n-k}$ are in duality, we use \cite[Claim 1]{AHL2019Stringy} to deduce Corollary \ref{cor: mkn value PK Catalan}.

\begin{cor}\label{cor: mkn value PK Catalan}
At the Planar Kinematics (PK) point $(s) \in \mathcal{K}(k,n)$, characterized by $\eta_J(s) = 1$ for all $J\in \binom{\lbrack n\rbrack}{k}^{nf}$, that is to say
\begin{eqnarray*}\label{planarK cor}
	s_{12\ldots k}= s_{23\ldots k+1}=\ldots =s_{n1\ldots k-1}& =& 1, \\
	s_{n1\ldots k-2,k} = s_{12\ldots k-1,k+1}=\ldots = s_{n-1,n\ldots k-3,k-1}& = & -1,\nonumber
\end{eqnarray*}
where all other $s_J$ are set to zero, then we have that 
$$m^{(k)}_n(\mathbb{I},\mathbb{I}) = C^{(k)}_{n-k}.$$
\end{cor}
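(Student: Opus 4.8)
The plan is to assemble the statement from three earlier results: Corollary \ref{cor: facets Pikn} (the PK polytope $\Pi^{(k)}_{n-k}$ has facets exactly on the hyperplanes $\gamma_J+1=0$, $J\in\binom{[n]}{k}^{nf}$), Corollary \ref{cor: triangulation root polytope} (the root polytope $\mathcal{R}^{(k)}_{n-k}$ admits a flag unimodular triangulation into $C^{(k)}_{n-k}$ maximal simplices, indexed by maximal noncrossing collections), and \cite[Claim 1]{AHL2019Stringy}, which evaluates the generalized biadjoint amplitude at a kinematic point as the normalized volume of the Newton polytope of the corresponding (deformed) potential, provided that polytope is \emph{simple} with the expected facet structure. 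The first step is to identify the geometric object: by the discussion preceding the definition of $\Pi^{(k)}_{n-k}$, the exponentiated PK potential $\exp(-\mathcal{S}^{(PK)}_{k,n})$, evaluated on the positive parameterization, is the Laurent polynomial $\frac{P_1\cdots P_{k-1}Q_1\cdots Q_{n-k-1}}{\prod x_{i,j}}$, whose Newton polytope is precisely $\Pi^{(k)}_{n-k}$. So the PK specialization of the generalized biadjoint amplitude is governed by $\Pi^{(k)}_{n-k}$.

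Next I would invoke duality. By Corollary \ref{cor: facets Pikn}, the PK polytope is the polytope $\{\alpha\in\mathcal{H}_{k,n}: \gamma_J(\alpha)+1\ge 0\}$, i.e. its facet normals are exactly the generalized roots $\gamma_J$, $J\in\binom{[n]}{k}^{nf}$; equivalently (identifying the space of linear functions on $\mathcal{H}_{k,n}$ with $\mathcal{H}_{k,n}$ via the dual pairing as in Section \ref{sec: root polytope triangulation}) the vertices of $\mathcal{R}^{(k)}_{n-k}$ are the $v_J$. Moreover, by the proof of Lemma \ref{lem: minimized simultaneously gamma}, the normal fan of $\Pi^{(k)}_{n-k}$ is a coarsening of the noncrossing complete simplicial fan $\{\langle\mathcal{J}\rangle_+:\mathcal{J}\in\mathbf{NC}_{k,n}\}$ of Theorem \ref{thm: noncrossing subdivision Wkn}; since each facet of $\Pi^{(k)}_{n-k}$ minimizes exactly one $\gamma_J$ and all $\binom{n}{k}-n$ inequalities are facet-defining, the normal fan is in fact the full noncrossing fan — so $\Pi^{(k)}_{n-k}$ is simple and combinatorially dual to the polytope whose face poset is $\mathbf{NC}_{k,n}$. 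This is exactly the input that \cite[Claim 1]{AHL2019Stringy} requires in order to conclude that the amplitude value equals the number of full-dimensional cones in the dual fan, counted with the unimodularity bookkeeping.

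Finally I would read off the count. The maximal cones of the noncrossing fan are $\langle\mathcal{J}\rangle_+$ for $\mathcal{J}$ a maximal noncrossing collection, and each such cone intersected with $\mathcal{R}^{(k)}_{n-k}$ gives a unimodular simplex (Proposition \ref{prop: unimodular simplices noncrossing}); by Corollary \ref{cor: triangulation root polytope} there are $C^{(k)}_{n-k}$ of them and the relative volume of $\mathcal{R}^{(k)}_{n-k}$ is $C^{(k)}_{n-k}$. Plugging this into \cite[Claim 1]{AHL2019Stringy} yields $m^{(k)}_n(\mathbb{I},\mathbb{I}) = C^{(k)}_{n-k}$, as claimed; in the $k=2$ case this recovers the classical $C^{(2)}_{n-2}$ (up to the sign convention of \cite{CHY2014C}, which is absorbed here into the choice of orientation). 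The step I expect to be the real obstacle — or at least the one needing the most care — is verifying that the hypotheses of \cite[Claim 1]{AHL2019Stringy} genuinely apply: one must check that at the PK point the generalized scattering equations for the deformed potential have the expected solution count and that the saddle-point/leading-order evaluation is exactly the normalized volume with no correction terms, i.e. that the degenerate PK kinematics does not introduce solutions at infinity or Hessian degeneracies that would spoil the volume interpretation. This is where the unimodularity and flagness of the triangulation (rather than just its existence) do the essential work, since they guarantee the cone-to-simplex correspondence is volume-preserving and the residue sum telescopes correctly.
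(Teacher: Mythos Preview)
Your overall strategy---combine Corollary \ref{cor: facets Pikn}, Corollary \ref{cor: triangulation root polytope}, and \cite[Claim 1]{AHL2019Stringy}---is exactly the paper's approach. However, there is a genuine logical gap in your middle step, and it occurs in a detour that the paper avoids entirely.

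You claim that because all $\binom{n}{k}-n$ inequalities $\gamma_J+1\ge 0$ are facet-defining, ``the normal fan is in fact the full noncrossing fan --- so $\Pi^{(k)}_{n-k}$ is simple.'' This does not follow. Lemma \ref{lem: minimized simultaneously gamma} only shows that the normal fan of $\Pi^{(k)}_{n-k}$ is a \emph{coarsening} of the noncrossing fan; knowing that the two fans share the same set of rays does not force the maximal cones to agree, since a coarsening can merge adjacent simplicial cones into larger non-simplicial ones while keeping every ray. The paper never asserts that $\Pi^{(k)}_{n-k}$ is simple, and indeed it introduces the separate PK associahedron $\mathbb{K}^{(k)}_{n-k}$ precisely as the (conjectural) object whose face poset matches $\mathbf{NC}_{k,n}$.

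The paper's route sidesteps this entirely. From Corollary \ref{cor: facets Pikn}, the description $\Pi^{(k)}_{n-k}=\{\alpha:\gamma_J(\alpha)+1\ge 0\}$ is literally the polar dual of the convex hull of the $\gamma_J$'s, i.e.\ $(\Pi^{(k)}_{n-k})^\ast=\mathcal{R}^{(k)}_{n-k}$. Then \cite[Claim 1]{AHL2019Stringy} gives $m^{(k)}_n(\mathbb{I},\mathbb{I})=\mathrm{Vol}\bigl((\Pi^{(k)}_{n-k})^\ast\bigr)=\mathrm{Vol}(\mathcal{R}^{(k)}_{n-k})$, and Corollary \ref{cor: triangulation root polytope} supplies the value $C^{(k)}_{n-k}$. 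No simplicity, no normal-fan identification, and no cone-counting is needed---only polar duality and the volume of $\mathcal{R}^{(k)}_{n-k}$. Your closing caveat about verifying the hypotheses of \cite[Claim 1]{AHL2019Stringy} at degenerate kinematics is reasonable caution, but the paper simply invokes that claim without further justification.
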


\begin{proof}
	Equation \eqref{eq:facet inequalities Pikn} in Corollary \ref{cor: facets Pikn} says that the facets of $\Pi^{(k)}_{n-k}$ are given by exactly the $\binom{n}{k}-n$ equations $\gamma_J + 1 \ge 0$.  On the other hand, the root polytope $\mathcal{R}^{(k)}_{n-k}$ is the convex hull of all roots $\gamma_J$.  Now both $\Pi^{(k)}_{n-k}$ and $\mathcal{R}^{(k)}_{n-k}$ are lattice polytopes; and indeed, the facet inequalities $\gamma_J + 1\ge0$ define the dual to $\mathcal{R}^{(k)}_{n-k}$.  In fact, here duality is an involution and so 
	$$(\Pi^{(k)}_{n-k})^\ast = \mathcal{R}^{(k)}_{n}.$$
	Now, from \cite[Claim 1]{AHL2019Stringy} we have an expression for $m^{(k)}_n$ in terms of the relative volume of the dual polytope, that is to say
	$$m^{(k)}_n(\mathbb{I},\mathbb{I}) = \text{Vol}\left((\Pi^{(k)}_{n-k})^\ast\right) = \text{Vol}\left(\mathcal{R}^{(k)}_{n-k}\right) = C^{(k)}_{n-k},$$
	where in the last equality we have invoked Corollary \ref{cor: triangulation root polytope}, and we are done.
\end{proof}

\section{Generalized Worldsheet associahedra}\label{sec: generalized worldsheet associahedron}

We define the generalized worldsheet associahedron using a set of binary equations and then propose a parameterization of the solution space, in the planar face ratios $u_{ijk}$.
\begin{defn}
	The generalized worldsheet associahedron $\mathcal{W}^+_{3,n}$ is the set of all points $(u_J) \in \lbrack 0,1\rbrack^{\binom{n}{3}-n}$ such that the following set of equations hold: for each $J\in \binom{\lbrack n\rbrack}{3}^{nf}$, one equation
	\begin{eqnarray}\label{eq: binary relations worldsheet associahedron defn}
		u_J & =& 1 - \prod_{\{I:\ (I,J) \not\in\mathbf{NC}_{3,n}\}}u^{c_{I,J}}_I,
	\end{eqnarray}
	where for any crossing pair $(i_1i_2i_3,j_1j_2j_3) \not\in \mathbf{NC}_{3,n}$, 
	\begin{eqnarray}\label{eq: binary equations exponents worldsheet associahedron defn}
		c_{(i_1,i_2,i_3),(j_1,j_2,j_3)} = \begin{cases}
			2 & \text{if } i_1<j_1<i_2<j_2<i_3<j_3\text{ or } j_1<i_1<j_2<i_2<j_3<i_3\\
			1 & \text{otherwise}.
		\end{cases}
	\end{eqnarray}
\end{defn}
Let us recall the construction of the planar face polynomials $\tau_{ijk}$ and the planar face ratios $u_{ijk}$. We have $\tau_{1,2,j}=1$ for all $j=3,\ldots, n$ while for $\{1,j,k\}$ such that $3\le j<k \le n$, then
\begin{eqnarray*}
	\tau_{1,j,k} & := & \sum_{a \in \lbrack j-1,k-2\rbrack} x_{2,a}.
\end{eqnarray*}
Whenever $\{i,j,k\}$ satisfies $2\le i\le j<k\le n$, then
\begin{eqnarray*}
	\tau_{i,j,k} & := & \sum_{\{(a,b)\in \lbrack i-1,j-1\rbrack \times \lbrack j-2,k-3\rbrack,\ a\le b\}} x_{1,a}x_{2,b}
\end{eqnarray*}
The planar face ratios $u_{ijk}$ are as follows.  For any $\{i,j,k\} \subset \{1,\ldots, n\}$ that is not one of the $n$ cyclic intervals $\{j,j+1,j+2\}$, define
\begin{eqnarray*}
	u_{ijk}(x) = \begin{cases}
		\frac{\tau_{i,n-1,n}}{\tau_{i-1,n-1,n}}, & (i,j,k) = (i,n-1,n)\\
		\frac{\tau_{i+1,j,k}\tau_{i,j+1,j+2}}{\tau_{i,j,k}\tau_{i+1,j+1,j+2}}, & j+1<k,\ \ k=n,\\
		\frac{\tau_{i+1,j,k}\tau_{i,j,k+1}}{\tau_{i,j,k}\tau_{i+1,j,k+1}}, & k <n.
	\end{cases}
\end{eqnarray*}
\begin{conjecture}\label{conjecture: worldsheet compactification}
	The statement has two components.
	\begin{enumerate}
		\item The planar face ratios $u_{ijk}(\tau)$ solve Equations \eqref{eq: binary relations worldsheet associahedron defn}.  More precisely, Equations \eqref{eq: binary relations worldsheet associahedron defn} generate the ideal of relations satisfied by $u_{ijk}(x)$.
		\item The strata in the generalized worldsheet associahedron $\mathcal{W}^+_{3,n}$ are in bijection with pairwise noncrossing collections $\mathcal{J} \in \mathbf{NC}_{3,n}$.  
	\end{enumerate}

\end{conjecture}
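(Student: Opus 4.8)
We outline a possible approach, addressing the two parts of the statement in turn.

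\emph{Part (1): the binary identities.} The plan is to reduce the identities to relations among the \emph{resolved} minors of $G(3,n)$. By Proposition \ref{prop: (3,n) resolved minors}, each resolved minor $\hat{p}_{ijk}\vert_{BCFW}$ equals a planar face polynomial times an explicit monomial; hence, after the monomial prefactors cancel in ratios, every $u_J(x)$ is literally a cross-ratio of resolved minors $\hat{p}_M$. Clearing denominators converts each binary equation $u_J = 1-\prod_{(I,J)\notin\mathbf{NC}_{3,n}}u_I^{c_{I,J}}$ into one polynomial identity among the $\hat{p}_M$, and it is enough to show that $1-u_J$ factors over the resolved minors precisely along the crossing pairs of $J$, each with multiplicity its compatibility degree $c_{I,J}$. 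I would prove this by induction on $n$, using the CEGM interpolation of $m^{(3)}_n$ between copies of $m^{(2)}_{n-1}$ and $m^{(3)}_{n-1}$: restricting a $u$-variable to a boundary face of the fibered simplex $\Omega^{(3)}_{n-3}$ sends it to a $u$-variable of type $(3,n-1)$, to one of type $(2,n-1)$, or to $1$, and the binary equations restrict compatibly. The delicate point is the fully interleaved pattern $i_1<j_1<i_2<j_2<i_3<j_3$, where the exponent is $2$; this multiplicity is the multiplicative shadow of the cubical relations of Proposition \ref{prop:cubical relations 1} and the iterated octahedral flips of Corollary \ref{cor: four term identity gammaJ} --- resolving such a crossing takes two successive flips, exactly as in Example \ref{ex: triangulating} --- so the ``flip to noncrossing'' bookkeeping from the proof of Theorem \ref{thm: noncrossing subdivision Wkn}, transported from the additive root setting to the multiplicative $u$-variable setting, should yield the numerators and denominators on the nose. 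Once the identities are established, the claim that they \emph{generate} the ideal of relations follows from a dimension count: the embedding $(\mathbb{CP}^{n-4})^{\times 2}\hookrightarrow X(3,n)$ of Claim \ref{claim: Root Kinematics potential function embedding} is generically finite onto an irreducible image of dimension $2(n-4)$ on which the $u_J(x)$ separate points, while the zero locus of the binary equations in $[0,1]^{\binom{n}{3}-n}$ is of the same dimension $2(n-4)$; irreducibility then forces equality of the ideals.

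\emph{Part (2): strata versus noncrossing collections.} Here there are two directions. The easy direction is the crossing obstruction: if $(u_J)\in\mathcal{W}^+_{3,n}$ satisfies $u_J=0$, then the corresponding binary equation forces $\prod_{(I,J)\notin\mathbf{NC}_{3,n}}u_I^{c_{I,J}}=1$ with every factor in $[0,1]$, whence $u_I=1$ for each $I$ crossing $J$; therefore $\{J: u_J=0\}$ is a face of the noncrossing complex $\mathbf{NC}_{3,n}$. For the converse, given any $\mathcal{J}\in\mathbf{NC}_{3,n}$ I would produce a boundary point of $\mathcal{W}^+_{3,n}$ with $u_J=0$ exactly for $J\in\mathcal{J}$ by degenerating the positive parameterization along the simplicial cone $\langle\mathcal{J}\rangle_+$ of the complete fan of Theorem \ref{thm: noncrossing subdivision Wkn}: pushing the root-kinematics data to infinity inside $\mathcal{H}_{3,n}$ in the direction $\sum_{J\in\mathcal{J}}t_J v_J$ (all $t_J>0$) drives exactly the cross-ratios indexed by $\mathcal{J}$ to $0$ and keeps the rest in the open interval, because this is the normal direction to the face of $\Pi^{(3)}_{n-3}$ on which the roots $\{\gamma_J: J\in\mathcal{J}\}$ simultaneously attain their minimum (Lemma \ref{lem: minimized simultaneously gamma}). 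Completeness of the fan guarantees that every boundary stratum arises in this way, the stratum of $\mathcal{J}$ has codimension $|\mathcal{J}|$, and the maximal noncrossing collections --- of size $2(n-4) = \dim\mathcal{W}^+_{3,n}$ --- index the $0$-dimensional strata, consistently with the flag unimodular triangulation of $\mathcal{R}^{(3)}_{n-3}$ and its duality with the facets of $\Pi^{(3)}_{n-3}$ (Corollaries \ref{cor: triangulation root polytope} and \ref{cor: facets Pikn}). Checking that these strata are nonempty and glue to a compact subset of the cube, with interior the positive part of $X(3,n)$, upgrades the stratification to the assertion that $\mathcal{W}^+_{3,n}$ is a positive geometry with face poset $\mathbf{NC}_{3,n}$.

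\emph{Main obstacle.} The genuinely hard step is the uniform polynomial identity in part (1): proving it for all $n$, rather than verifying it case-by-case (as done here through $n=15$), requires a conceptual explanation of the multiplicity-$2$ exponents and a structural understanding of how products of the staircase polynomials $\tau_J$ collapse. The cleanest route would be to write the conjectured parameterization of $\mathcal{W}^+_{3,n}$ by $(\mathbb{CP}^{n-4})^{\times 2}$ explicitly and verify the binary equations directly on it, settling the identities and the ideal claim at once; a proof of the companion conjecture that the face poset of $\mathbb{K}^{(3)}_{n-3}$ is anti-isomorphic to $\mathbf{NC}_{3,n}$ would then deliver part (2) via the metrization.
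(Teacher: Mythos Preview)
The statement you are attempting to prove is explicitly a \emph{conjecture} in the paper; the paper provides no proof. The only evidence offered is the sentence immediately following the statement: ``We have checked Conjecture \ref{conjecture: worldsheet compactification} algebraically for all $(3,n)$ with $n\le 15$.'' So there is no ``paper's own proof'' to compare against, and your proposal should be read as an outline toward a proof of an open conjecture, not as an alternative to an existing argument.

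Viewed in that light, your sketch identifies reasonable ingredients but does not close the main gaps, and you yourself flag the central one. A few specific points:

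\textbf{Part (1).} The reduction to resolved minors via Proposition \ref{prop: (3,n) resolved minors} is legitimate, but the inductive step is not fleshed out: the paper does not establish a factorization of the $(3,n)$ worldsheet into $(3,n-1)$ and $(2,n-1)$ pieces at the level of $u$-variables, so your appeal to ``the CEGM interpolation'' is a hope rather than a lemma you can cite. The heuristic that the exponent $2$ records ``two successive flips'' is suggestive but does not by itself produce the required polynomial identity among the $\tau_J$. The ideal-generation claim via dimension count is also incomplete: matching dimensions of the image of the parameterization and of the zero locus of the binary equations does not force equality of ideals unless you additionally show the scheme cut out by the binary equations is reduced and irreducible, which is not addressed.

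\textbf{Part (2).} Your ``easy direction'' is correct and is exactly the standard observation for binary geometries: $u_J=0$ forces $u_I=1$ for every $I$ crossing $J$. For the converse, the degeneration you propose --- sending root-kinematics data to infinity along a cone $\langle\mathcal{J}\rangle_+$ --- is not set up in the paper in a way that directly controls the limiting values of the $u_J(x)$. Lemma \ref{lem: minimized simultaneously gamma} and the fan of Theorem \ref{thm: noncrossing subdivision Wkn} concern the Newton-polytope/root-polytope side, and the bridge to the $u$-variables (that precisely the $u_J$ with $J\in\mathcal{J}$ go to $0$ and no others) is the content to be proved, not something available to invoke.

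In short: your outline is a plausible research plan, and its ``main obstacle'' paragraph is accurate. But none of it constitutes a proof, and the paper does not supply one either.
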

We have checked Conjecture \ref{conjecture: worldsheet compactification} algebraically for all $(3,n)$ with $n\le 15$.

Naturally one would hope that Conjecture \ref{conjecture: worldsheet compactification} will extend to $G(k,n)\slash (\mathbb{C}^\ast)^n$ for higher $k\ge 4$; but several obstacles remain to be conquered before that can be achieved.  For example, for each $k=3,4,\ldots, $ a direct line of attack would require an analog of the exponent rule given in Equation \eqref{eq: binary equations exponents intro}.  It does appear that this may be possible using a certain compatibility degree.  Indeed, we give the general construction below.

First let us introduce homogeneous polynomials $\tau_{i,j,k,\ell}$.

Put $\tau_{1,2,3,j} = 1$.  For $4\le j<k \le n$, define
\begin{eqnarray*}
	\tau_{1,2,j,k} & = & \sum_{a \in \lbrack j-1,k-2\rbrack} x_{3,a}.
\end{eqnarray*}
For $3 \le i\le j<k \le n$, set
\begin{eqnarray*}
	\tau_{1,i,j,k} &  = & \sum_{\{(a,b)\in \lbrack i-1,j-1\rbrack \times \lbrack j-2,k-3\rbrack,\ a\le b\}} x_{2,a}x_{3,b},
\end{eqnarray*}
and otherwise for $2\le i\le j<k<\ell$, put 
\begin{eqnarray*}
	\tau_{i,j,k,\ell} & = & \sum_{\left\{(a,b,c) \in \lbrack i-1,j-1\rbrack \times \lbrack j-2,k-2\rbrack \times \lbrack k-3,\ell-4\rbrack:\ a\le b\le c\right\}} x_{1,a}x_{2,b}x_{3,c}.
\end{eqnarray*}

Let us define, as above, face ratios $u_J$ on $(\mathbb{CP}^{n-4-1})^{\times (4-1)}$.  For each nonfrozen $\{i,j,k,\ell\} \in \binom{\lbrack n\rbrack}{4}^{nf}$, let
\begin{eqnarray}\label{eqn: u variables 4n}
	u_{i,j,k,\ell} = \begin{cases}
		\frac{\tau_{i+1,n-2,n-1,n}}{\tau_{i,n-2,n-1,n}}, & (i,j,k,\ell) = (i,n-2,n-1,n),\ \ i \le n-4\\
		\frac{\tau_{i+1,j,n-1,n}\tau_{i,j+1,j+2,j+3}}{\tau_{i,j,n-1,n}\tau_{i+1,j+1,j+2,j+3}}, & (i,j,k,\ell) = (i,j,n-1,n),\ \ j \le n-3\\
		\frac{\tau_{i+1,j,k,n}\tau_{i,j,k+1,k+2}}{\tau_{i,j,k,n}\tau_{i+1,j,k+1,k+2}}, & (i,j,k,\ell) = (i,j,k,n),\ \ k\le n-2\\
		\frac{\tau_{i+1,j,k,\ell}\tau_{i,j,k,\ell+1}}{\tau_{i,j,k,\ell}\tau_{i+1,j,k,\ell+1}}, & k <n.
	\end{cases}
\end{eqnarray}
	From Equation \eqref{eqn: u variables 4n} it is relatively straightforward (if somewhat tedious) to extrapolate an all (k,n) formula for the u-variables.  We omit the general expression.

Let us formulate the all-(k,n) definition of the generalized worldsheet associahedron $\mathcal{W}^+_{k,n}$.

For any $I,J\in \binom{n}{k}^{nf}$, the \textit{compatibility degree}\footnote{C.f. \cite{FZ2003Y-systems}.}$c_{I,J}$ is the number of violations of the noncrossing\footnote{We thank Hugh Thomas for sharing insights about the non-kissing complex, which led us to formulate the all-(k,n) compatibility degree for the noncrossing complex.} condition in the pair $(I,J)$.  Let us be more precise, by paraphrasing Definition \ref{defn: noncrossing}.
\begin{defn}\label{defn: compatibility degree}
	Given $I,J\in \binom{n}{k}^{nf}$, then the compatibility degree $c_{I,J}\ge 0$ is the number of distinct pairs $(\{i_a,i_b\},\{j_c,j_d\})$ with $\{i_a,i_b\} \subseteq I$ and $\{j_c,j_d\} \subseteq J$, 
	such that $\{(i_a,i_b),(j_c,j_d)\}$ is crossing: the pair is not weakly separated and we have that $i_\ell = j_\ell$ for each\footnote{Note that this condition holds trivially whenever $b=a+1$ and $d=c+1$.} integer $\ell = a+1,a+2,\ldots, b-1$.
\end{defn}


\begin{defn}\label{defn: generalized worldsheet conclusion}
	The generalized worldsheet associahedron $\mathcal{W}^+_{k,n}$ is the set of all points $(u_J) \in \lbrack 0,1\rbrack^{\binom{n}{k}-n}$ such that the following set of equations hold: for each $J\in \binom{\lbrack n\rbrack}{k}^{nf}$, one equation
	\begin{eqnarray}\label{eq: binary relations worldsheet associahedron defn All kn}
		u_J & =& 1 - \prod_{\{I:\ (I,J) \not\in\mathbf{NC}_{k,n}\}}u^{c_{I,J}}_I,
	\end{eqnarray}
where $c_{I,J}$ is the compatibility degree from Definition \ref{defn: compatibility degree}.
\end{defn}

\begin{conjecture}\label{conjecture: binary relations All kn}
	Equations \eqref{eq: binary relations worldsheet associahedron defn All kn} generate the ideal of relations among the planar face ratios $u_J(\tau)$.
\end{conjecture}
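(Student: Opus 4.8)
The plan is to prove the two inclusions between the ideal $\mathcal{B}\subset\mathbb{C}[u_J:J\in\binom{\lbrack n\rbrack}{k}^{nf}]$ generated by the binary relations \eqref{eq: binary relations worldsheet associahedron defn All kn} and the full relation ideal $\mathcal{I}:=\ker\big(\mathbb{C}[u_J]\to\mathbb{C}(x_{i,j})\big)$, where $u_J\mapsto u_J(\tau)$. The inclusion $\mathcal{B}\subseteq\mathcal{I}$ is the assertion that the planar face ratios actually satisfy the binary equations; this is the all-$k$ generalization of part (1) of Conjecture \ref{conjecture: worldsheet compactification} (verified there for $k=3$, $n\le 15$) and is classical for $k=2$. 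Concretely one must establish the rational-function identity $1-u_J(\tau)=\prod_{\{I:\ (I,J)\notin\mathbf{NC}_{k,n}\}}u_I(\tau)^{c_{I,J}}$. Since each $\tau_J$ is a subtraction-free sum over lattice paths in its ``staircase'' grid and each $u_J(\tau)$ is a Laurent monomial in the $\tau$'s, this amounts after clearing denominators to a factorization of the difference of two lattice-path polynomials. I expect this to reduce to a ``three-term''-type determinantal identity: under the positive parametrization of Section \ref{sec: positive parameterization} the $\tau_J$ specialize the resolved minors $\hat p_J$, the $u_J$ become generalized cross-ratios, and the binary relation should be the image of a cluster-exchange relation among the resolved minors. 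I would first pin down the mechanism in the $k=3,4$ cases using the explicit formulas \eqref{eqn: u variables 4n} and Appendix \ref{sec: u-variables}, then induct on $k$ using the projections $\varphi_{k-2}+\varphi_{-1}=\mathbf{id}$ from the proof of Theorem \ref{thm: noncrossing subdivision Wkn}, tracking how the compatibility degree $c_{I,J}$ of Definition \ref{defn: compatibility degree} splits under that decomposition of a $k$-subset into a ``nearly frozen'' part and a single extra index.

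For the reverse inclusion $\mathcal{I}\subseteq\mathcal{B}$ I would argue by dimension and irreducibility. Write $R:=\mathbb{C}[u_J]/\mathcal{B}$ and $S:=\mathbb{C}[u_J(\tau)]\subset\mathbb{C}(x_{i,j})$; the first paragraph gives a surjection $R\twoheadrightarrow S$ of finitely generated $\mathbb{C}$-algebras, and $S$ is an integral domain. A surjection of finitely generated domains over a field with equal Krull dimension is forced to be an isomorphism, so it suffices to prove (i) $\operatorname{trdeg}_{\mathbb{C}}S=(k-1)(n-k-1)$ and (ii) $R$ is an integral domain with $\dim R\le (k-1)(n-k-1)$. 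For (i) I would show the $u$-map from $(\mathbb{CP}^{n-k-1})^{\times(k-1)}$ determined by the $u_J(\tau)$ is generically finite onto its image, by a single Jacobian computation at a generic positive point, compatibly with Claim \ref{claim: Root Kinematics potential function embedding}. For (ii) the cleanest route is to prove that $\mathcal{B}$ is a prime ideal of the expected height $\binom{n}{k}-n-(k-1)(n-k-1)$; granting this, $V(\mathcal{B})$ is irreducible of dimension $(k-1)(n-k-1)$ and contains $\overline{\operatorname{im}(\text{$u$-map})}$, which by (i) is irreducible of the same dimension, whence they coincide and $\mathcal{B}=\mathcal{I}$.

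To prove primeness of $\mathcal{B}$ I would try induction on $n$: the recursive structure of $\mathbf{NC}_{k,n}$ underlying Theorem \ref{thm: noncrossing subdivision Wkn} (deletion of the index $n$, the decomposition $\varphi_{k-2}+\varphi_{-1}=\mathbf{id}$) should induce on the binary system a fibration of $\mathcal{W}^+_{k,n}$ over a boundary face that is a product of lower worldsheets $\mathcal{W}^+_{k,n-1}$ and $\mathcal{W}^+_{k-1,n-1}$, with fibers that are affine-space bundles; verifying irreducibility of the fibers (in line with part (2) of Conjecture \ref{conjecture: worldsheet compactification}) gives the inductive step, with base cases $k=2$ (the classical $u$-equations cutting out the open part of $\overline{M}_{0,n}$) and small $n$. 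Alternatively, one could seek a term order on $\mathbb{C}[u_J]$ under which the binary relations form a Gröbner basis with squarefree-monomial initial ideal of the right dimension, from which primeness and the Hilbert function follow; a natural candidate order is read off from a maximal weakly separated collection $\mathcal{C}\in\mathbf{WS}_{k,n}$ together with a mutation sequence reaching every other nonfrozen subset.

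The main obstacle is twofold. The harder half is the all-$k$ exchange identity of the first paragraph: a conceptual proof almost certainly requires realizing each $u_J$ as a genuine cross-ratio of resolved minors and deducing the identity from one generalized Plücker relation, rather than combinatorially resolving the lattice-path sums by hand — this is exactly the point the paper leaves to extrapolation. The second, more technical, obstacle is the acyclicity needed for the induction or the Gröbner argument: one must order the nonfrozen $k$-subsets so that each binary relation solves for a single new $u_J$ in terms of already-processed ones, and check that no circular dependency is created by the compatibility-degree-$2$ terms in the exponent rule of Definition \ref{defn: compatibility degree}.
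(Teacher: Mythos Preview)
The statement you are attempting to prove is labeled \emph{Conjecture} in the paper, and the paper does not supply a proof: immediately after stating it, the authors write only that they ``have checked nontrivial instances of Conjecture \ref{conjecture: binary relations All kn}, including $(k,n) = (5,14)$,'' together with the explicit $(4,8)$ example. There is no argument in the paper to compare your proposal against.

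Your proposal is therefore not a proof but a research outline, and you are candid about this in your final paragraph. Two concrete gaps deserve emphasis. First, the inclusion $\mathcal{B}\subseteq\mathcal{I}$ --- that the planar face ratios $u_J(\tau)$ actually satisfy the binary equations with the compatibility-degree exponents --- is precisely what the paper leaves open for general $(k,n)$; your sketch (``reduce to a three-term determinantal identity,'' ``induct via $\varphi_{k-2}+\varphi_{-1}$'') does not supply the missing identity, and the paper's own Example \ref{example: (4,8) binary identity} shows that even for $(4,8)$ the exponents had to be discovered by experimentation before the compatibility-degree rule was formulated. Second, for the reverse inclusion your plan hinges on proving that $\mathcal{B}$ is prime of the correct height. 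Neither the proposed induction (via a fibration over $\mathcal{W}^+_{k,n-1}\times\mathcal{W}^+_{k-1,n-1}$) nor the Gr\"obner alternative is carried out, and the fibration you invoke is itself part of Conjecture \ref{conjecture: worldsheet compactification}(2), so you would be assuming one conjecture to prove another. The acyclicity/ordering issue you flag at the end is real: the exponents $c_{I,J}\ge 2$ prevent the binary system from being triangular in any obvious way, which is exactly why the paper stops at computational verification.
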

We have checked nontrivial instances of Conjecture \ref{conjecture: binary relations All kn}, including $(k,n) = (5,14)$.  Let $J = \{2,4,6,8,12\}$.  Then one can check that $c_{J,I_i} >0$ for exactly $1293$ subsets $I_i \in \binom{\lbrack 14\rbrack}{5}$. 

In this case, we found that after a perfect cancellation of 1293 planar face ratios $u_{J}$ one has indeed 
$$u_{2,4,6,8,12} = 1 - \left(\prod_{j=1}^{1293} u_{J_j}^{c_{\{2,4,6,8,12\},J_j}}\right),$$
having substituted in (the $k=5$ analog of) Equation \eqref{eqn: u variables 4n}.

Let us illustrate with a more manageable (but still highly nontrivial) example.

\begin{example}
	One can check that for $(k,n) = (4,8)$ we have
	$$u_{2468} = 1 - \prod_{u_J^{c_{\{2,4,6,8\},J}}\in \mathcal{U}_{2468}}u_J^{c_{\{2,4,6,8\},J}},$$
	where 
	$$\mathcal{U}_{2468} = \left(
	\begin{array}{ccccccc}
		u_{1,2,3,7} & u_{1,2,4,7} & u_{1,2,5,6} & u_{1,2,5,7}^2 & u_{1,2,5,8} & u_{1,2,6,7} & u_{1,3,4,5} \\
		u_{1,3,4,6} & u_{1,3,4,7}^2 & u_{1,3,4,8} & u_{1,3,5,6}^2 & u_{1,3,5,7}^3 & u_{1,3,5,8}^2 & u_{1,3,6,7}^2 \\
		u_{1,3,6,8} & u_{1,3,7,8} & u_{1,4,5,6} & u_{1,4,5,7}^2 & u_{1,4,5,8} & u_{1,4,6,7} & u_{1,5,7,8} \\
		u_{2,3,4,7} & u_{2,3,5,6} & u_{2,3,5,7}^2 & u_{2,3,5,8} & u_{2,3,6,7} & u_{2,4,5,7} & u_{2,5,7,8} \\
		u_{3,4,5,7} & u_{3,4,7,8} & u_{3,5,6,7} & u_{3,5,6,8} & u_{3,5,7,8}^2 & u_{3,6,7,8} & u_{4,5,7,8} \\
	\end{array}
	\right).$$
	Here we are substituting in Equation \eqref{eqn: u variables 4n}.
\end{example}

	\section{Acknowledgements}

	This work has benefited from discussions with many people at various stages.  In particular, we thank Freddy Cachazo and Nima Arkani-Hamed for encouragement, helpful discussions and comments on a draft.  We are grateful to Benjamin Schroeter and Simon Telen for correspondence and assistance with numerical computations using the computer packages PolyMake and the Julia package, HomotopyContinuation.jl.  We also thank Chris Fraser, Thomas Lam, Sebastian Mizera, William Norledge, Bernd Sturmfels, Jenia Tevelev and Hugh Thomas for stimulating discussions at various stages of the work.
	
	We thank the Institute for Advanced study for excellent working conditions while this work was completed.
	
	This research was supported in part by a grant from the Gluskin Sheff/Onex Freeman Dyson Chair in Theoretical Physics and by Perimeter Institute. Research at Perimeter Institute is supported in part by the Government of Canada through the Department of Innovation, Science and Economic Development Canada and by the Province of Ontario through the Ministry of Colleges and Universities.
	
	\appendix
	
	\section{Blades}\label{sec: blades}
	In this Appendix, we review definitions and basic results from previous papers \cite{Early19WeakSeparationMatroidSubdivision,Early2019PlanarBasis,Early2020WeightedBladeArrangements}.
	
	If $J\in\binom{\lbrack n\rbrack}{k}$ is a $k$-element subset of $\{1,\ldots, n\}$, define $e_J = \sum_{j\in J}e_j$.

	The original definition of blades is due to A. Ocneanu; blades were first studied in \cite{EarlyBlades}, see also \cite{Early19WeakSeparationMatroidSubdivision,Early2019PlanarBasis,Early2020WeightedBladeArrangements,CE2020B}.
	
	\begin{defn}[\cite{OcneanuVideo}]\label{defn:blade}
		A decorated ordered set partition $((S_1)_{s_1},\ldots, (S_\ell)_{s_\ell})$ of $(\{1,\ldots, n\},k)$ is an ordered set partition $(S_1,\ldots, S_\ell)$ of $\{1,\ldots, n\}$ together with an ordered list of integers $(s_1,\ldots, s_\ell)$ with $\sum_{j=1}^\ell s_j=k$.  It is said to be of type $\Delta_{k,n}$ if we have additionally $1\le s_j\le\vert S_j\vert-1 $, for each $j=1,\ldots, \ell$.  In this case we write $((S_1)_{s_1},\ldots, (S_\ell)_{s_\ell}) \in \text{OSP}(\Delta_{k,n})$, and we denote by $\lbrack (S_1)_{s_1},\ldots, (S_\ell)_{s_\ell}\rbrack$ the convex polyhedral cone in the affine hyperplane in $\mathbb{R}^n$ where $\sum_{j=1}^n x_j=k$, that is cut out by the facet inequalities
		\begin{eqnarray}\label{eq:hypersimplexPlate}
			x_{S_1} & \ge & s_1 \nonumber\\
			x_{S_1\cup S_2} & \ge & s_1+s_2\nonumber\\
			& \vdots & \\
			x_{S_1\cup\cdots \cup S_{\ell-1}} & \ge & s_1+\cdots +s_{\ell-1}.\nonumber
		\end{eqnarray}
		These cones were called \textit{plates} by Ocneanu.
		Finally, the \textit{blade} $(((S_1)_{s_1},\ldots, (S_\ell)_{s_\ell}))$ is the union of the codimension one faces of the complete simplicial fan formed by the $\ell$ cyclic block rotations of $\lbrack (S_1)_{s_1},\ldots(S_\ell)_{s_\ell},\rbrack$, that is
		\begin{eqnarray}\label{eq: defn blade}
			(((S_1)_{s_1},\ldots, (S_\ell)_{s_\ell})) = \bigcup_{j=1}^\ell \partial\left(\lbrack (S_j)_{s_j},(S_{j+1})_{s_{j+1}},\ldots, (S_{j-1})_{s_{j-1}}\rbrack\right).
		\end{eqnarray}
	\end{defn}
	Let $\beta = ((1,2,\ldots, n))$ be the standard blade; as noted in \cite{Early19WeakSeparationMatroidSubdivision}, this is isomorphic to a tropical hyperplane.
	
	\begin{figure}[h!]
		\centering
		\includegraphics[width=0.7\linewidth]{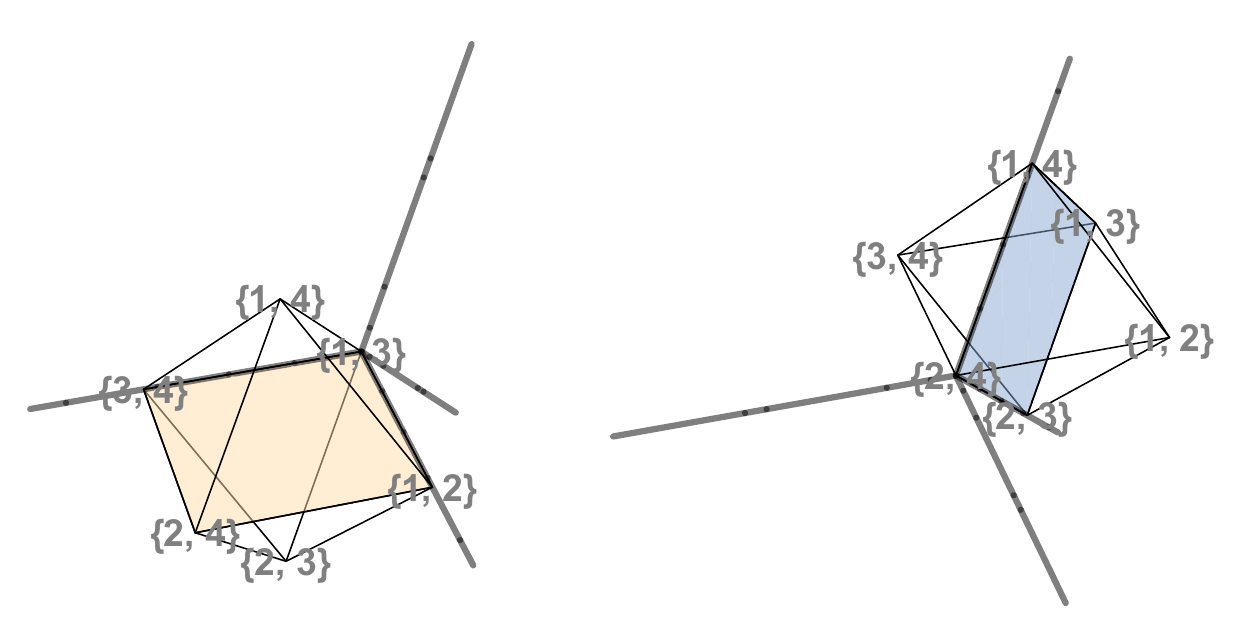}
		\caption{Arranging the blade $((1,2,3,4))$ on the octahedron induces the $t = \eta_{1,3} = s_{23}$ and $s = \eta_{2,4} = s_{12}$ channels (left and right, respectively)}
		\label{fig:12-7-2019bladearrangementoctahedron}
	\end{figure}
	\begin{figure}[h!]
		\centering
		\includegraphics[width=0.8\linewidth]{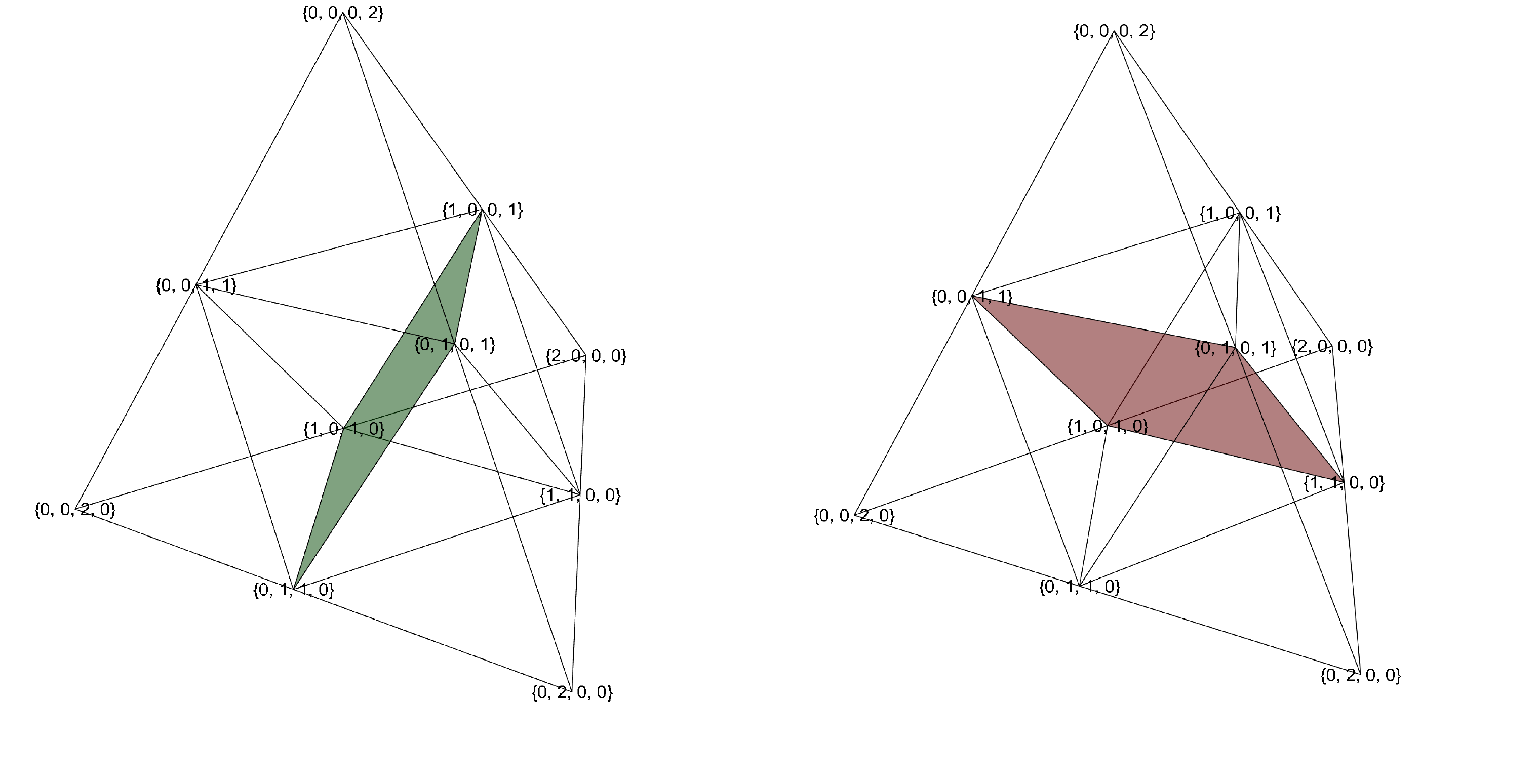}
		\caption{Second view: the two blade arrangements on the octahedron, embedded into a tetrahedron.}
		\label{fig:blades-tetrahedron-square-move}
	\end{figure}

	Any point $v\in \mathbb{R}^n$ gives rise to a translation $\beta_v$ of $\beta$ by the vector $v$.  When $v = e_J$ is a vertex of a hypersimplex $\Delta_{k,n}$, then we write simply $\beta_J$.
	In this paper we consider only translations of the single nondegenerate blade with labeled by the cyclic order $(1,2,\ldots, n)$, usually denoted $\beta := ((1,2,\ldots, n))$; however in \cite{Early19WeakSeparationMatroidSubdivision} it was shown that by pinning $\beta$ to a vertex $e_J$ of a hypersimplex $\Delta_{k,n}$, then that translated blade $\beta_{J}$ intersects the hypersimplex in a blade $(((S_1)_{s_1},\ldots, (S_\ell)_{s_\ell}))$ where now the pairs $(S_j,s_j)$ are uniquely determined and satisfy the condition from Definition \ref{defn:blade},
	$$1\le s_j\le \vert S_j\vert-1,$$
	or in short $((S_1)_{s_1},\ldots, (S_\ell)_{s_\ell}) \in \text{OSP}(\Delta_{k,n})$.  Additionally, we have that each $S_j = \{a,a+1,\ldots, b\}$ is cyclically contiguous.  We refer the reader to \cite{Early19WeakSeparationMatroidSubdivision} for a detailed explanation of the construction of the decorated ordered set partition.
	
	The number of blocks $\ell$ is equal to the number of cyclic intervals in the set $J$ and the contents of the blocks are determined by the set $J$ together with the cyclic order.  In particular, the number of blocks is equal to the number of maximal cells in the subdivision induced by the blade.
	
	It was further shown in \cite{Early19WeakSeparationMatroidSubdivision} that $(((S_1)_{s_1},\ldots, (S_\ell)_{s_\ell}))$ with $((S_1)_{s_1},\ldots, (S_\ell)_{s_\ell})\in \text{OSP}(\Delta_{k,n})$ induces a certain multi-split positroidal subdivision of $\Delta_{k,n}$ where the vertices of the maximal cells become bases of Schubert matroids, or nested matroids (for a recent review of multi-splits, we recommend \cite{SchroeterThesis} and references therein.).
	
	As in \cite{EarlyBlades}, blades can be studied using their indicator functions, in which case one starts to encounter certain linear relations that are encountered among rays defining the bipyramidal cone in the tropical Grassmannian $\text{Trop}G(3,6)$.  The basic linear relation among indicator functions of blades is exhibited in Figure \ref{fig:bladerelationdim3}.
	
	\begin{figure}[h!]
		\centering
		\includegraphics[width=1\linewidth]{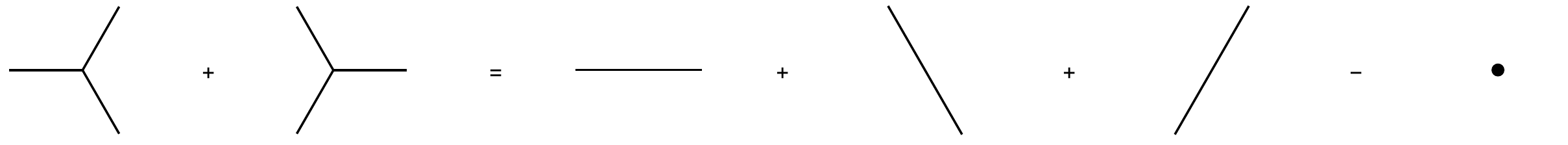}
		\caption{The indicator function relation for blades in $n=3$: $1_{((1,2,3))} + 1_{((1,3,2))} = 1_{((1,23))} + 1_{((13,2))} + 1_{((12,3))} - 1_{((1)(2)(3))}$.  Compare to the relation among primitive generators of the bipyramidal cone in $\text{Trop}G(3,6)$; see also \cite[Equation 2.29]{CEGM2019} for the identity among poles of $m^{(3)}(\mathbb{I}_6,\mathbb{I}_6)$.  In that case, only five terms appear: we are missing the ``dot.''}
		\label{fig:bladerelationdim3}
	\end{figure}

	We set $\beta^{(L)}_J=0$ when no subdivision is induced by $\beta_J$ on the face $\partial_L(\Delta_{k,n})$.  This is the case exactly when $J$ is frozen with respect to the gapped cyclic order on $\{1,\ldots, n\}\setminus L$ inherited from the standard cyclic order $(1,2,\ldots, n)$.  
	
	There is a natural action induced by restriction: define linear operators $\partial_j$ on the linear span of the symbols $\beta^{(L)}_J$ as follows.  
	
	\begin{itemize}
		\item If $j\in L$, then we set $\partial_j(\beta^{(L)}_J) = 0$.  
		\item If $j\not\in L$, set 
		$$\partial_j(\beta^{(L)}_J) = \beta^{(L\cup \{j\})}_{J\setminus\{\ell\}},$$
		where $\ell=j$ if $j\in J$, and otherwise $\ell$ is the cyclically next element of $\{1,\ldots, n\}$ that is in $J$.
	\end{itemize}
	
	Define 
	$$\partial = \partial_1+\cdots + \partial_n.$$
	
	Then we have the operator-theoretic identities for powers,
	$$\partial_j^2=0,\ \text{ and } \frac{1}{d!}\partial^{d} = \sum_{L\in\binom{\lbrack n\rbrack}{d}}\partial_{L},$$
	where we have defined $\partial_L = \partial_{\ell_1}\cdots \partial_{\ell_d}$ with $\ell_1<\cdots<\ell_d$, when $L=\{\ell_1,\ldots, \ell_d\}$.
	
	\begin{defn}\label{defn:Xkn YKn ZKn}

		Denote by 
		$$\mathcal{X}(k,n) = \left\{\beta_{\mathbf{c}} \in \mathfrak{B}_{k,n}: \text{supp }\partial_L(\beta_{\mathbf{c}})\text{ is a matroidal blade arrangement on } \partial_L(\Delta_{k,n}),L\in\binom{\lbrack n\rbrack}{k-2}\right\}$$
		the arrangement of (real) subspaces in $\mathfrak{B}_{k,n}$ consisting of linear combinations of blades $\beta_J$ whose support induces a (positroidal) subdivision on each second hypersimplicial face, and by 
		$$\mathcal{Y}(k,n)= \left\{\beta_{\mathbf{c}}  \in \mathfrak{B}_{k,n}: \text{coeff}\left(\partial_L(\beta_{\mathbf{c}}),\beta^L_{ij}\right)\ge0,L\in\binom{\lbrack n\rbrack}{k-2},\ \{i,j\}\in\binom{\lbrack n\rbrack\setminus L}{2}\text{ nonfrozen}\right\},$$
		the convex cone in $\mathfrak{B}_{k,n}$ with nonnegative curvature on every second hypersimplicial face $\partial_L(\Delta_{k,n})$.
		
		Let $\mathcal{Z}(k,n)$ denote their intersection:
		$$\mathcal{Z}(k,n)= \mathcal{X}(k,n) \cap \mathcal{Y}(k,n).$$
	\end{defn}

	\begin{thm}[\cite{Early2020WeightedBladeArrangements}]\label{Early2020WeightedBladeArrangements}
		There is a natural embedding $\text{Trop}^+ G(k,n) \hookrightarrow \mathcal{B}(k,n)$; its image is the set of matroidal weighted blade arrangements $\mathcal{Z}(k,n)$. 
	\end{thm}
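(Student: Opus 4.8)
The plan is to build the embedding out of the planar basis of kinematic invariants and then to identify its image by testing the defining three-term relations of $\text{Trop}^+G(k,n)$ one octahedral face at a time. A point of $\text{Trop}^+G(k,n)$ is a positive tropical Pl\"ucker vector $\pi=(\pi_J)_{J\in\binom{\lbrack n\rbrack}{k}}$, which we regard as a height function on the vertices $e_J$ of $\Delta_{k,n}$; modulo the lineality space this is a point of the kinematic space $\mathcal{K}(k,n)$. Since the blades $\{\beta_J:J\in\binom{\lbrack n\rbrack}{k}^{nf}\}$, equivalently the associated piecewise-linear functions $\rho_J$, form a basis of $\mathcal{K}(k,n)$ --- this is the statement dual to Proposition \ref{prop:planar basis}, and the coefficient of $\beta_J$ in the expansion of a height function $(s)$ is exactly $\eta_J(s)$ --- the vector $\pi$ is represented by a unique weighted blade arrangement $\beta_{\mathbf c}=\sum_J c_J\beta_J\in\mathcal{B}(k,n)$ with $c_J=\eta_J(\pi)$. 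First I would check that this assignment $\pi\mapsto\beta_{\mathbf c}$ is linear and injective, being a change of basis; combined with the fact that a single blade already induces a positroidal (multi-split) subdivision of $\Delta_{k,n}$ (see \cite{Early19WeakSeparationMatroidSubdivision}), this yields the embedding $\text{Trop}^+G(k,n)\hookrightarrow\mathcal{B}(k,n)$.

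Next I would identify the image. By the identification of the positive Dressian with the positive tropical Grassmannian (see \cite{LPW2020,arkani2020positive}), the image inside height-function space is cut out by the positive three-term tropical Pl\"ucker relations, one for each $(k-2)$-element set $L$ and each $a<b<c<d$ disjoint from $L$; each such relation is supported on the octahedral face of $\Delta_{k,n}$ on the vertices $e_{Lab},e_{Lac},e_{Lad},e_{Lbc},e_{Lbd},e_{Lcd}$, which lies inside the second hypersimplicial face $\partial_L(\Delta_{k,n})\simeq\Delta_{2,n-(k-2)}$. The crucial structural input is that the operator $\partial_L$ on $\mathcal{B}(k,n)$ computes precisely the restriction of a weighted blade arrangement to $\partial_L(\Delta_{k,n})$, by linear extension of the description in \cite{Early19WeakSeparationMatroidSubdivision} of how a single blade restricts to a hypersimplicial face. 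It then follows that $\beta_{\mathbf c}$ satisfies all three-term relations exactly when $\partial_L(\beta_{\mathbf c})$ induces a matroid subdivision of $\Delta_{2,n-(k-2)}$ for every $L$, that is, $\beta_{\mathbf c}\in\mathcal{X}(k,n)$; and the condition that the tropical minimum be attained on the prescribed branch translates precisely into the nonnegative-curvature inequalities on the octahedral sub-faces, that is, $\beta_{\mathbf c}\in\mathcal{Y}(k,n)$ --- this is the content of the computation behind $\mathcal{O}_{abcd}(\rho_{e_J})(e_I)\le 0$ recorded in Section \ref{sec: regular subdivisions root polytope}. Hence the image equals $\mathcal{X}(k,n)\cap\mathcal{Y}(k,n)=\mathcal{Z}(k,n)$, with base case $k=2$ the classical description of $\text{Trop}^+G(2,n)$ as the normal fan of the associahedron, where $\mathcal{X}(2,n)$ records membership in the Dressian $\text{Dr}(2,n)$ and $\mathcal{Y}(2,n)$ the positivity singling out the positroidal cells.

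The hard part will be the step from the mere validity of all positive three-term relations to the membership of $\beta_{\mathbf c}$ in the image of $\text{Trop}^+G(k,n)$: a priori those relations cut out only the positive Dressian, so one genuinely needs the theorem that the positive Dressian coincides with the positive tropical Grassmannian, together with the fact that positroidality of a regular matroid subdivision of $\Delta_{k,n}$ is detected on its codimension-$(k-2)$ hypersimplicial faces. The remaining work --- checking that $\partial_L(\beta_{\mathbf c})$ is faithfully the restriction of the height function attached to $\beta_{\mathbf c}$, and that the signs of its octahedral curvatures match the correct branch of the tropical minimum --- is routine combinatorial bookkeeping, after which the equality of the image with $\mathcal{Z}(k,n)$ is immediate.
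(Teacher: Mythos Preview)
The paper does not prove this theorem at all: it is stated in Appendix~\ref{sec: blades} purely as a citation of the author's earlier work \cite{Early2020WeightedBladeArrangements}, with no argument given here. There is therefore no ``paper's own proof'' to compare your proposal against.

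That said, your sketch is broadly in the spirit of the constructions that the appendix recalls --- the linear embedding via the planar basis $\eta_J$ (dual to the blades $\beta_J$), the boundary maps $\partial_L$ to second-hypersimplicial faces, and the detection of positroidality by octahedral curvatures --- and it correctly isolates as the nontrivial external input the equality of the positive Dressian with the positive tropical Grassmannian \cite{LPW2020,arkani2020positive,SpeyerWilliams2020}. One point to tighten: you identify $\text{Trop}^+G(k,n)$ modulo lineality with $\mathcal{K}(k,n)$ and then speak of $\beta_J$ as a basis of that space, but in this paper the $\beta_J$ are formal generators of $\mathcal{B}(k,n)$ while the $\eta_J$ are linear functionals on $\mathcal{K}(k,n)$; the claimed ``change of basis'' is really the isomorphism $\mathcal{K}(k,n)\simeq\mathcal{B}(k,n)$ sending a kinematic point $(s)$ to $\sum_J \eta_J(s)\,\beta_J$, and the compatibility of this map with the height-function picture (so that your $\pi$ really does land on the correct weighted blade arrangement) is exactly what must be checked in \cite{Early2020WeightedBladeArrangements}, not something immediate from Proposition~\ref{prop:planar basis} alone.
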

	
	\section{Numerical Amplitude Calculation: $\hat{m}^{(3),NC}(\mathbb{I}_6,\mathbb{I}_6)$}\label{sec: numerical amplitude evaluation}
	
	In what follows, we use the parameterization of $X(3,6)$ obtained from 
	\begin{eqnarray*}
		\begin{bmatrix}
			1 & 0 & 0 & x_{1,1} x_{2,1}& x_{1,1}(x_{2,1}+x_{2,2}) + x_{1,2} x_{2,2} &x_{1,1}(x_{2,1}+x_{2,2}+x_{2,3}) + x_{1,2}(x_{2,2}+x_{2,3})  + x_{1,3}x_{2,3} \\
			0 & 1 & 0 &  x_{2,1} 	&x_{2,1}+x_{2,2}	& x_{2,1}+x_{2,2}+x_{2,3} \\
			0 & 0 & 1 & 1 & 1 & 1
		\end{bmatrix}
	\end{eqnarray*}
	by restricting the variables to $x_{1,1} = x_{2,1} = 1$.  Note that often one adds signs to some of the rows for sake of positivity, but this is not necessary in our context due to torus invariance of the potential function.
	
	Let $p_{j_1j_2j_3}$ be the determinant of the $3\times 3$ submatrix formed from the columns with indices $j_1,j_2,j_3$, and let $\hat{p}_{j_1j_2j_3}$ as in Example \ref{example: Plucker resolution explicit 36}.  Specifically, put all $\hat{p}_{j_1j_2j_3} = p_{j_1j_2j_3}$ except for 
		$$\hat{p}_{236} = p_{236} - \frac{p_{123}p_{456}}{p_{145}}\ \ \text{ and }\ \  \hat{p}_{356} = p_{356} - \frac{p_{123}p_{456}}{p_{124}}.$$
	
	Now define a potential function
	$$\hat{\mathcal{S}}_{3,6}(s) = \sum_{J}\log(\hat{p}_J)s_J,$$
	which, after straightening to the planar basis (see \cite{Early2019PlanarBasis,Early2020WeightedBladeArrangements}), becomes
	\begin{eqnarray*}
		\hat{\mathcal{S}}_{3,6}(s)& = & \sum_{J\in \binom{\lbrack 6 \rbrack}{3}^{nf}}\eta_J\log(\hat{w}_J)\\
		& = & \eta _{124} \log \left(\frac{\hat{p}_{125} \hat{p}_{134}}{\hat{p}_{124} \hat{p}_{135}}\right)+\eta _{125} \log \left(\frac{\hat{p}_{126} \hat{p}_{135}}{\hat{p}_{125} \hat{p}_{136}}\right)+\eta _{134} \log \left(\frac{\hat{p}_{135} \hat{p}_{234}}{\hat{p}_{134} \hat{p}_{235}}\right)+ \eta _{146} \log \left(\frac{\hat{p}_{156} \hat{p}_{246}}{\hat{p}_{146} \hat{p}_{256}}\right)\nonumber\\
		& + &\eta _{136} \log \left(\frac{\hat{p}_{146} \hat{p}_{236}}{\hat{p}_{136} \hat{p}_{246}}\right)+\eta _{145} \log \left(\frac{\hat{p}_{146} \hat{p}_{245}}{\hat{p}_{145} \hat{p}_{246}}\right)+\eta _{235} \log \left(\frac{\hat{p}_{236} \hat{p}_{245}}{\hat{p}_{235} \hat{p}_{246}}\right)+\eta _{236} \log \left(\frac{\hat{p}_{123} \hat{p}_{246}}{\hat{p}_{124} \hat{p}_{236}}\right)\nonumber\\
		& + &\eta _{245} \log \left(\frac{\hat{p}_{246} \hat{p}_{345}}{\hat{p}_{245} \hat{p}_{346}}\right)+\eta _{256} \log \left(\frac{\hat{p}_{125} \hat{p}_{356}}{\hat{p}_{135} \hat{p}_{256}}\right)+\eta _{346} \log \left(\frac{\hat{p}_{134} \hat{p}_{356}}{\hat{p}_{135} \hat{p}_{346}}\right)+\eta _{356} \log \left(\frac{\hat{p}_{135} \hat{p}_{456}}{\hat{p}_{145} \hat{p}_{356}}\right)\\
		& + & \eta _{135} \log \left(\frac{\hat{p}_{136} \hat{p}_{145} \hat{p}_{235} \hat{p}_{246}}{\hat{p}_{135} \hat{p}_{146} \hat{p}_{236} \hat{p}_{245}}\right)+ \eta _{246} \log \left(\frac{\hat{p}_{124} \hat{p}_{135} \hat{p}_{256} \hat{p}_{346}}{\hat{p}_{125} \hat{p}_{134} \hat{p}_{246} \hat{p}_{356}}\right)\nonumber\\
		& = & \hat{\eta} _{124} \log \left(\frac{\hat{p}_{125} \hat{p}_{134}}{\hat{p}_{124} \hat{p}_{135}}\right)+\hat{\eta} _{125} \log \left(\frac{\hat{p}_{126} \hat{p}_{135}}{\hat{p}_{125} \hat{p}_{136}}\right)+\hat{\eta} _{134} \log \left(\frac{\hat{p}_{135} \hat{p}_{234}}{\hat{p}_{134} \hat{p}_{235}}\right)+\hat{\eta} _{135} \log \left(\frac{\hat{p}_{136} \hat{p}_{235}}{\hat{p}_{135} \hat{p}_{236}}\right)\\
		& + & \hat{\eta} _{136} \log \left(\frac{\hat{p}_{145} \hat{p}_{236}}{\hat{p}_{136} \hat{p}_{245}}\right)+\hat{\eta} _{145} \log \left(\frac{\hat{p}_{146} \hat{p}_{245}}{\hat{p}_{145} \hat{p}_{246}}\right)+\hat{\eta} _{146} \log \left(\frac{\hat{p}_{156} \hat{p}_{246}}{\hat{p}_{146} \hat{p}_{256}}\right)+\hat{\eta} _{235} \log \left(\frac{\hat{p}_{145} \hat{p}_{236}}{\hat{p}_{146} \hat{p}_{235}}\right)\\
		& + & \hat{\eta} _{346} \log \left(\frac{\hat{p}_{124} \hat{p}_{356}}{\hat{p}_{125} \hat{p}_{346}}\right)+\hat{\eta} _{245} \log \left(\frac{\hat{p}_{246} \hat{p}_{345}}{\hat{p}_{245} \hat{p}_{346}}\right)+\hat{\eta} _{246} \log \left(\frac{\hat{p}_{256} \hat{p}_{346}}{\hat{p}_{246} \hat{p}_{356}}\right)+\hat{\eta} _{256} \log \left(\frac{\hat{p}_{124} \hat{p}_{356}}{\hat{p}_{134} \hat{p}_{256}}\right)\\
		& + & \hat{\eta} _{236} \log \left(\frac{\hat{p}_{123} \hat{p}_{146} \hat{p}_{245}}{\hat{p}_{124} \hat{p}_{145} \hat{p}_{236}}\right)+\hat{\eta} _{356} \log \left(\frac{\hat{p}_{125} \hat{p}_{134} \hat{p}_{456}}{\hat{p}_{124} \hat{p}_{145} \hat{p}_{356}}\right),
	\end{eqnarray*}
	where in the second equality we have used the following linear change of variable.  Define $\hat{\eta}_{i_1i_2i_3} = \eta_{i_1i_2i_3}$ unless $\{i_1,i_2,i_3\} \in \{\{1,2,4\},\{1,4,5\}\}$, in which case we set
	\begin{eqnarray*}
		\hat{\eta}_{124} + \eta_{356} & = & -\eta_{246} + \eta_{124} + \eta_{346} + \eta_{256}\\
		\hat{\eta}_{145} + \eta_{236} & = & -\eta_{135} + \eta_{235} + \eta_{145} + \eta_{136},
	\end{eqnarray*}
	The claim, which we check numerically, is that the $\hat{\eta}_J$ are now the poles of the resolved amplitude $\hat{m}^{(3),NC}(\mathbb{I}_6,\mathbb{I}_6)$.

	Now, compatibly with Corollary \ref{cor: nontrivially resolved Pluckers}, $\{124,356\}$ and $\{145,236\}$ are the two noncrossing pairs in $\mathbf{NC}_{3,6}$ that are not weakly separated.
	
	In what follows, we show, for a choice of kinematics such that the planar kinematic invariants $\eta_J$ evaluate to given large prime numbers determining a point $(s_0) \in \mathcal{K}_D(3,6)$, that the noncrossing scattering equations formulation of the noncrossing amplitude agrees with the sum over noncrossing Generalized Feynman Diagrams modulo a shift in the kinematics:
	\begin{eqnarray}\label{CEGM biadjoint scalar defin critical point}
		\sum_{(x)\in \text{critical points}(\hat{\mathcal{S}}_{3,6}(s))}\frac{1}{\det'\Phi(x)} \prod_{j=1}^n\left(\frac{1}{\hat{p}_{j,j+1,j+2}(x)}\right)^2= \sum_{\{J_1,J_2,J_3,J_4\}\in \mathbf{NC}_{3,6}}\prod_{j=1}^4\frac{1}{\hat{\eta}_{J_j}}.
	\end{eqnarray}
	Here $\det'\Phi$ is the so-called reduced determinant of the Hessian $\Phi$; see \cite{CEGM2019} for details.

	Let us fix the kinematic point $(s)$ determined by $\eta_{i,i+1,i+2} = 0$, together with fixed large prime integer values for the remaining planar kinematic invariants: 
	$$
	\begin{array}{cccc}
		\eta _{124} & 8087 & \eta _{235} & 17599 \\
		\eta _{125} & 8537 & \eta _{236} & 20333 \\
		\eta _{134} & 9227 & \eta _{245} & 23321 \\
		\eta _{135} & 10247 & \eta _{246} & 26737 \\
		\eta _{136} & 11657 & \eta _{256} & 30637 \\
		\eta _{145} & 13259 & \eta _{346} & 34679 \\
		\eta _{146} & 15277 & \eta _{356} & 39293. \\
	\end{array}
	$$
	
	Then, summing over the 35 critical points using the lefthand side of Equation \eqref{CEGM biadjoint scalar defin critical point}, we obtain
	$$\frac{123056338102581409136850198886105885604358154}{117823347678612917535483161041113226062939619903306798191335}.$$
	The denominator has the large prime factors
	$$\{8537,9227,10247,11657,15277,17599,20333,23321,26737,30637,34679,39293\}$$
	and an additional factor 
	$$87996755.$$
	Consequently we have found that all but two of the original 14 planar kinematic invariants $\eta_J$ are obviously present; however $\eta_{124}$ and $\eta_{145}$ are not present, a priori.  At the very least, they have been shifted; we claim that their new values are
	$$\hat{\eta}_{124} = 7373,\ \ \hat{\eta}_{145} = 11935.$$
	Notice that the shifts are exactly 
	$$\hat{\eta}_{124} = \eta_{124} + s_{356} $$
	and
	$$\hat{\eta}_{145} = \eta_{145} + s_{236},$$
	where
	$$-s_{356} = (\eta_{356} - \eta_{256} - \eta_{346} + \eta_{246})=714 $$
	and
	$$-s_{236} = (\eta_{236} - \eta_{235} - \eta_{136} + \eta_{135}) = 1324.$$

	We give an informal derivation, first evaluating\footnote{However, we urge caution as in general there is no reason a priori to expect that the resolved amplitude $m^{(3),NC}(\alpha,\beta)$ would be particularly well-behaved for arbitrary pairs of cyclic orders $\alpha,\beta$.} (at the same kinematic point) 
	$$\hat{m}^{(3),NC}((123456),(126435))=\frac{1}{44972074438939577} = \frac{1}{7373 \cdot 8537\cdot  23321\cdot  30637}.$$
	But in \cite{CEGM2019} it was shown (translating Equation 2.19) that the usual generalized biadjoint scalar amplitude satisfies
	$$m^{(3)}((123456),(126435)) = \frac{1}{\eta_{125}\eta_{245}\eta_{256}\eta_{124}}.$$
	
	If we allow ourselves to assume that $\hat{\eta}_{a,b,c} =\eta_{a,b,c}$ except possibly when $\{a,b,c\}\in \{\{1,2,4\},\{1,4,5\}\}$ and that, correspondingly, we have
	$$\hat{m}^{(3),NC}((123456),(126435)) = \frac{1}{\hat{\eta}_{125}\hat{\eta}_{245}\hat{\eta}_{256}\hat{\eta}_{124}}= \frac{1}{\eta_{125}\eta_{245}\eta_{256}\hat{\eta}_{124}},$$
	we find that $\hat{\eta}_{124} = 7373$.  Comparing with the first we derive that $\hat{\eta}_{145} = 11935$, as claimed.

	Let us now compute the sum over all maximal noncrossing collections in $\mathbf{NC}_{3,6}$; taking into account the kinematic shift, then we evaluate the poles of the resolve amplitude take the values
	$$
	\begin{array}{cccc}
		\hat{\eta} _{1,2,4} & 7373 & \hat{\eta} _{2,3,5} & 17599 \\
		\hat{\eta} _{1,2,5} & 8537 & \hat{\eta} _{2,3,6} & 20333 \\
		\hat{\eta} _{1,3,4} & 9227 & \hat{\eta} _{2,4,5} & 23321 \\
		\hat{\eta} _{1,3,5} & 10247 & \hat{\eta} _{2,4,6} & 26737 \\
		\hat{\eta} _{1,3,6} & 11657 & \hat{\eta} _{2,5,6} & 30637 \\
		\hat{\eta} _{1,4,5} & 11935 & \hat{\eta} _{3,4,6} & 34679 \\
		\hat{\eta} _{1,4,6} & 15277 & \hat{\eta} _{3,5,6} & 39293, \\
	\end{array}
	$$
	and immediately we obtain
	\begin{eqnarray*}
		\sum_{\{J_1,J_2,J_3,J_4\}\in \mathbf{NC}_{3,6}}\prod_{j=1}^4\frac{1}{\hat{\eta}_{J_j}} & = & \frac{1}{\hat{\eta} _{125} \hat{\eta} _{134} \hat{\eta} _{135} \hat{\eta} _{145}}+\frac{1}{\hat{\eta} _{134} \hat{\eta} _{135} \hat{\eta} _{136} \hat{\eta} _{145}}+\frac{1}{\hat{\eta} _{124} \hat{\eta} _{125} \hat{\eta} _{134} \hat{\eta} _{145}} + \cdots \\
		& + & \frac{1}{\hat{\eta} _{236} \hat{\eta} _{256} \hat{\eta} _{346} \hat{\eta} _{356}}+\frac{1}{\hat{\eta} _{124} \hat{\eta} _{256} \hat{\eta} _{346} \hat{\eta} _{356}}\\
		& = & \frac{123056338102581409136850198886105885604358154}{117823347678612917535483161041113226062939619903306798191335}
	\end{eqnarray*}
	which, indeed, coincides with the value of the non-crossing scattering equations computation at the unshifted kinematic point, affirming in hindsight the efficacy of our informal derivation.
	
	\begin{rem}
		With the help of Simon Telen, using the Julia package HomotopyContinuation.jl \cite{Brieding1,Brieding2}, we can also report the results of a calculation of the number of critical points for the potential function 
		$$\hat{\mathcal{S}}_{3,7} = \sum_{\{i,j,k\} \in \binom{\lbrack 7\rbrack}{3}}\log(\hat{p}_{ijk})s_{ijk}.$$  The output of the calculation is as follows:
		\begin{itemize}
			\item Solutions found: 3127    Time: 0:01:10
			\item tracked loops (queued):            24984 (0)
			\item solutions in current (last) loop:  0 (0)
			\item generated loops (no change):       8 (5)
		\end{itemize}
	It would be very interesting to explore further the enumeration of the number of critical points (for $\hat{\mathcal{S}}_{3,7}$, 3127 critical points) and, if possible, to give a combinatorial interpretation as is possible in the case $k=2$.
		
	\end{rem}

		\section{Resolved Cross-Ratios and Binary Relations}\label{sec: u-variables}
	We define resolved cross-ratios on $X(3,n)$ and we verify certain combinatorial relations of binary type.  These provide a generalization to $\mathbb{CP}^2$ of the three-term relations holding among cross-ratio coordinates on the partial dihedral compactification of the configuration space $M_{0,n}$ of $n$ distinct points in $\mathbb{CP}^1$, seen in the Generalized Veneziano model in \cite{KobaNielsen} and \cite{RobertsPh.D.}, and more recently in for instance \cite{Brown2006} and \cite{AHLT2019}.  
	
	We conjecture that, for any $J\in \binom{\lbrack n\rbrack}{3}^{nf}$, we have
	\begin{eqnarray}\label{eq: binary relations}
		u_J & =& 1 - \prod_{\{I:\ (I,J) \not\in\mathbf{NC}_{3,n}\}}u^{c_{I,J}}_I,
	\end{eqnarray}
	where for any crossing pair $(i_1i_2i_3,j_1j_2j_3) \not\in \mathbf{NC}_{3,n}$ we put 
	\begin{eqnarray}\label{eq: binary equations exponents}
		c_{(i_1,i_2,i_3),(j_1,j_2,j_3)} = \begin{cases}
			2 & \text{if } i_1<j_1<i_2<j_2<i_3<j_3\text{ or } j_1<i_1<j_2<i_2<j_3<i_3\\
			1 & \text{otherwise}.
		\end{cases}
	\end{eqnarray}
	We have confirmed explicitly that Equations \eqref{eq: binary relations} and \eqref{eq: binary equations exponents} hold for u-variables of type $(3,n)$ for all $n\le 15$.  

	In what follows, we give the complete solution for resolved planar kinematic invariants and u-variables for $n\le 9$.  
	
	Let us proceed first with resolved cross-ratios on $X(3,6)$.  Defining $\hat{\eta}_{124}$ and $\hat{\eta}_{145}$ by 
	\begin{eqnarray*}
		\hat{\eta}_{124} + \eta_{356} & = & -\eta_{246} + \eta_{124} + \eta_{346} + \eta_{256}\\
		\hat{\eta}_{145} + \eta_{236} & = & -\eta_{135} + \eta_{235} + \eta_{145} + \eta_{136},
	\end{eqnarray*}
	then straightening the potential function $\hat{\mathcal{S}}_{3,6}$ we find the resolved cross-ratios
	$$\begin{array}{ccc}
		u_{124} & = &  \frac{\hat{p}_{125} \hat{p}_{134}}{\hat{p}_{124} \hat{p}_{135}} \\
		u_{125} & = &  \frac{\hat{p}_{126} \hat{p}_{135}}{\hat{p}_{125} \hat{p}_{136}} \\
		u_{134} & = &  \frac{\hat{p}_{135} \hat{p}_{234}}{\hat{p}_{134} \hat{p}_{235}} \\
		u_{135} & = &  \frac{\hat{p}_{136} \hat{p}_{235}}{\hat{p}_{135} \hat{p}_{236}} \\
		u_{136} & = &  \frac{\hat{p}_{145} \hat{p}_{236}}{\hat{p}_{136} \hat{p}_{245}} \\
		u_{145} & = &  \frac{\hat{p}_{146} \hat{p}_{245}}{\hat{p}_{145} \hat{p}_{246}} \\
		u_{146} & = &  \frac{\hat{p}_{156} \hat{p}_{246}}{\hat{p}_{146} \hat{p}_{256}} \\
		u_{235} & = &  \frac{\hat{p}_{145} \hat{p}_{236}}{\hat{p}_{146} \hat{p}_{235}} \\
		u_{236} & = &  \frac{\hat{p}_{123} \hat{p}_{146} \hat{p}_{245}}{\hat{p}_{124} \hat{p}_{145} \hat{p}_{236}} \\
		u_{245} & = &  \frac{\hat{p}_{246} \hat{p}_{345}}{\hat{p}_{245} \hat{p}_{346}} \\
		u_{246} & = &  \frac{\hat{p}_{256} \hat{p}_{346}}{\hat{p}_{246} \hat{p}_{356}} \\
		u_{256} & = &  \frac{\hat{p}_{124} \hat{p}_{356}}{\hat{p}_{134} \hat{p}_{256}} \\
		u_{346} & = &  \frac{\hat{p}_{124} \hat{p}_{356}}{\hat{p}_{125} \hat{p}_{346}} \\
		u_{356} & = &  \frac{\hat{p}_{125} \hat{p}_{134} \hat{p}_{456}}{\hat{p}_{124} \hat{p}_{145} \hat{p}_{356}}. \\
	\end{array}$$
	These are easily seen to satisfy the following set of binary equations in Equation \eqref{eq: binary relations}, in the sense of \cite{AHL2019Stringy}:
	\begin{eqnarray*}
		u_{124}& = & 1 -u_{135} u_{136} u_{235} u_{236} \\
		u_{125}& = & 1-u_{136} u_{146} u_{236} u_{246} u_{346}\\
		u_{134} & = & 1-u_{235} u_{236} u_{245} u_{246} u_{256}\\
		u_{135}& = & 1-u_{124} u_{146} u_{236} u_{245} u_{256} u_{346} u_{246}^2\\
		u_{136} & = & 1 - u_{124} u_{125} u_{245} u_{246} u_{256}\\
		u_{145} & = & 1-u_{246} u_{256} u_{346} u_{356} \\
		u_{146} & = & 1-u_{125} u_{135} u_{235} u_{256} u_{356}\\
		u_{235} & = & 1-u_{124} u_{134} u_{146} u_{246} u_{346}\\
		u_{236} & = & 1-u_{124} u_{125} u_{134} u_{135}\\
		u_{245} &=&1-u_{134} u_{135} u_{136} u_{346} u_{356}\\
		u_{246}& = & 1-u_{125} u_{134} u_{136} u_{145} u_{235} u_{356} u_{135}^2 \\
		u_{256}& = & 1-u_{134} u_{135} u_{136} u_{145} u_{146} \\
		u_{346}& = & 1-u_{125} u_{135} u_{145} u_{235} u_{245} \\
		u_{356}& = & 1-u_{145} u_{146} u_{245} u_{246}. \\
	\end{eqnarray*}
	In this case all (nonzero) $c_{I,J}$ are equal to one, except for $c_{135,246} = c_{246,135}=2$.
	
	Now for $(k,n) = (3,7)$ we have
	$$
	\begin{array}{cc}
		
		\begin{array}{cc}
			u_{124} & \frac{\hat{p}_{125} \hat{p}_{134}}{\hat{p}_{124} \hat{p}_{135}} \\
		\end{array}
		& 
		\begin{array}{cc}
			u_{125} & \frac{\hat{p}_{126} \hat{p}_{135}}{\hat{p}_{125} \hat{p}_{136}} \\
		\end{array}
		\\
		
		\begin{array}{cc}
			u_{126} & \frac{\hat{p}_{127} \hat{p}_{136}}{\hat{p}_{126} \hat{p}_{137}} \\
		\end{array}
		& 
		\begin{array}{cc}
			u_{134} & \frac{\hat{p}_{135} \hat{p}_{234}}{\hat{p}_{134} \hat{p}_{235}} \\
		\end{array}
		\\
		
		\begin{array}{cc}
			u_{135} & \frac{\hat{p}_{136} \hat{p}_{235}}{\hat{p}_{135} \hat{p}_{236}} \\
		\end{array}
		& 
		\begin{array}{cc}
			u_{136} & \frac{\hat{p}_{137} \hat{p}_{236}}{\hat{p}_{136} \hat{p}_{237}} \\
		\end{array}
		\\
		
		\begin{array}{cc}
			u_{137} & \frac{\hat{p}_{145} \hat{p}_{237}}{\hat{p}_{137} \hat{p}_{245}} \\
		\end{array}
		& 
		\begin{array}{cc}
			u_{145} & \frac{\hat{p}_{146} \hat{p}_{245}}{\hat{p}_{145} \hat{p}_{246}} \\
		\end{array}
		\\
		
		\begin{array}{cc}
			u_{146} & \frac{\hat{p}_{147} \hat{p}_{246}}{\hat{p}_{146} \hat{p}_{247}} \\
		\end{array}
		& 
		\begin{array}{cc}
			u_{147} & \frac{\hat{p}_{156} \hat{p}_{247}}{\hat{p}_{147} \hat{p}_{256}} \\
		\end{array}
		\\
		
		\begin{array}{cc}
			u_{156} & \frac{\hat{p}_{157} \hat{p}_{256}}{\hat{p}_{156} \hat{p}_{257}} \\
		\end{array}
		& 
		\begin{array}{cc}
			u_{157} & \frac{\hat{p}_{167} \hat{p}_{257}}{\hat{p}_{157} \hat{p}_{267}} \\
		\end{array}
		\\
		
		\begin{array}{cc}
			u_{235} & \frac{\hat{p}_{145} \hat{p}_{236}}{\hat{p}_{146} \hat{p}_{235}} \\
		\end{array}
		& 
		\begin{array}{cc}
			u_{236} & \frac{\hat{p}_{146} \hat{p}_{237}}{\hat{p}_{147} \hat{p}_{236}} \\
		\end{array}
		\\
		
		\begin{array}{cc}
			u_{237} & \frac{\hat{p}_{123} \hat{p}_{147} \hat{p}_{245}}{\hat{p}_{124} \hat{p}_{145} \hat{p}_{237}} \\
		\end{array}
		& 
		\begin{array}{cc}
			u_{245} & \frac{\hat{p}_{246} \hat{p}_{345}}{\hat{p}_{245} \hat{p}_{346}} \\
		\end{array}
		\\
		
		\begin{array}{cc}
			u_{246} & \frac{\hat{p}_{247} \hat{p}_{346}}{\hat{p}_{246} \hat{p}_{347}} \\
		\end{array}
		& 
		\begin{array}{cc}
			u_{247} & \frac{\hat{p}_{256} \hat{p}_{347}}{\hat{p}_{247} \hat{p}_{356}} \\
		\end{array}
		\\
		
		\begin{array}{cc}
			u_{256} & \frac{\hat{p}_{257} \hat{p}_{356}}{\hat{p}_{256} \hat{p}_{357}} \\
		\end{array}
		& 
		\begin{array}{cc}
			u_{257} & \frac{\hat{p}_{267} \hat{p}_{357}}{\hat{p}_{257} \hat{p}_{367}} \\
		\end{array}
		\\
		
		\begin{array}{cc}
			u_{267} & \frac{\hat{p}_{124} \hat{p}_{367}}{\hat{p}_{134} \hat{p}_{267}} \\
		\end{array}
		& 
		\begin{array}{cc}
			u_{346} & \frac{\hat{p}_{156} \hat{p}_{347}}{\hat{p}_{157} \hat{p}_{346}} \\
		\end{array}
		\\
		
		\begin{array}{cc}
			u_{347} & \frac{\hat{p}_{124} \hat{p}_{157} \hat{p}_{356}}{\hat{p}_{125} \hat{p}_{156} \hat{p}_{347}} \\
		\end{array}
		& 
		\begin{array}{cc}
			u_{356} & \frac{\hat{p}_{357} \hat{p}_{456}}{\hat{p}_{356} \hat{p}_{457}} \\
		\end{array}
		\\
		
		\begin{array}{cc}
			u_{357} & \frac{\hat{p}_{367} \hat{p}_{457}}{\hat{p}_{357} \hat{p}_{467}} \\
		\end{array}
		& 
		\begin{array}{cc}
			u_{367} & \frac{\hat{p}_{125} \hat{p}_{134} \hat{p}_{467}}{\hat{p}_{124} \hat{p}_{145} \hat{p}_{367}} \\
		\end{array}
		\\
		
		\begin{array}{cc}
			u_{457} & \frac{\hat{p}_{125} \hat{p}_{467}}{\hat{p}_{126} \hat{p}_{457}} \\
		\end{array}
		& 
		\begin{array}{cc}
			u_{467} & \frac{\hat{p}_{126} \hat{p}_{145} \hat{p}_{567}}{\hat{p}_{125} \hat{p}_{156} \hat{p}_{467}} \\
		\end{array}
		\\
	\end{array}$$
	Next let us give an example of the kind of calculation performed in order to arrive at the binary equations, as formulated in Conjecture \ref{conjecture: binary relations All kn}.

	First let us recall again the construction of the homogeneous polynomials $\tau_{i,j,k,\ell}$.
	
	Put $\tau_{1,2,3,j} = 1$.  For $4\le j<k \le n$, define
	\begin{eqnarray*}
		\tau_{1,2,j,k} & = & \sum_{a \in \lbrack j-1,k-2\rbrack} x_{3,a}.
	\end{eqnarray*}
	For $3 \le i\le j<k \le n$, set
	\begin{eqnarray*}
		\tau_{1,i,j,k} &  = & \sum_{\{(a,b)\in \lbrack i-1,j-1\rbrack \times \lbrack j-2,k-3\rbrack,\ a\le b\}} x_{2,a}x_{3,b},
	\end{eqnarray*}
	and otherwise for $2\le i\le j<k<\ell$, put 
	\begin{eqnarray*}
		\tau_{i,j,k,\ell} & = & \sum_{\left\{(a,b,c) \in \lbrack i-1,j-1\rbrack \times \lbrack j-2,k-2\rbrack \times \lbrack k-3,\ell-4\rbrack:\ a\le b\le c\right\}} x_{1,a}x_{2,b}x_{3,c}.
	\end{eqnarray*}
	
	Now recall the planar face ratios $u_J$ for $k=4$.  For each nonfrozen $\{i,j,k,\ell\} \in \binom{\lbrack n\rbrack}{4}^{nf}$, let
	\begin{eqnarray}\label{eqn: u variables 4n Appendix}
		u_{i,j,k,\ell} = \begin{cases}
			\frac{\tau_{i+1,n-2,n-1,n}}{\tau_{i,n-2,n-1,n}}, & (i,j,k,\ell) = (i,n-2,n-1,n),\ \ i \le n-4\\
			\frac{\tau_{i+1,j,n-1,n}\tau_{i,j+1,j+2,j+3}}{\tau_{i,j,n-1,n}\tau_{i+1,j+1,j+2,j+3}}, & (i,j,k,\ell) = (i,j,n-1,n),\ \ j \le n-3\\
			\frac{\tau_{i+1,j,k,n}\tau_{i,j,k+1,k+2}}{\tau_{i,j,k,n}\tau_{i+1,j,k+1,k+2}}, & (i,j,k,\ell) = (i,j,k,n),\ \ k\le n-2\\
			\frac{\tau_{i+1,j,k,\ell}\tau_{i,j,k,\ell+1}}{\tau_{i,j,k,\ell}\tau_{i+1,j,k,\ell+1}}, & k <n.
		\end{cases}
	\end{eqnarray}
	
	\begin{rem}
		After using techniques\footnote{We thank N. Arkani-Hamed for explanations.} developed in \cite{AHL2019Stringy,AHLT2019} to calculate explicitly u-variables in the case $(k,n) = (4,8)$, subject to the requirement that they should satisfy some binary-type equations which use the noncrossing rule, we were able to conjecture a formula for all $(k,n)$ and to test it in cases including $(k,n) \in \{(3,15),(4,15),(5,10)\}$.  For $k=3$ we found all binary equations.  For $k=4,5$, we did not immediately\footnote{However, thanks to an insight shared by Hugh Thomas for the so-called non-kissing complex, we are now able to formulate an all-(k,n) exponent rule, counting violations of the noncrossing conditions (See Definition \ref{defn: generalized worldsheet conclusion})!  We keep the example here to illustrate our original methods, for posterity.} achieve rules for the exponents analogous to those in Equation \eqref{eq: binary relations} which we confirmed for $k=3$ and $n\le 15$.
	\end{rem}

	To illustrate the nontriviality of our construction let us explicate in detail what happens for two example binary identities; one succeeds immediately, and one fails on the first attempt, but then succeeds after a suitable modification.
	
	\begin{example}\label{example: (4,8) binary identity}
		Let $J = \{2,3,6,7\}$.  Then, the set of 4-element subsets $I$ such that $(I,J)$ is \textit{not} noncrossing labels the set 
		\begin{eqnarray*}
			\mathcal{U}_J&=&\left\{u_I: I \in \binom{\lbrack 8\rbrack}{4}^{nf},\ \  (I,J) \not\in \mathbf{NC}_{4,8} \right\}\\
			& = & \left\{\begin{array}{ccccccc}
				u_{1245},& u_{1246},& u_{1247},& u_{1248},& u_{1256},& u_{1257},& u_{1258} \\
				u_{1345},& u_{1346},& u_{1347},& u_{1348},& u_{1356},& u_{1357},& u_{1358} \\
				u_{1468},& u_{1478},& u_{1568},& u_{1578},& u_{2468},& u_{2478},& u_{2568} \\
				u_{2578},& u_{3468},& u_{3478},& u_{3568},& u_{3578},& u_{4568},& u_{4578} \\
			\end{array}\right\}
		\end{eqnarray*}
		and one can easily check the following identity by substituting Equation \eqref{eqn: u variables 4n}:
		$$u_{2367} + \prod_{u_I\in \mathcal{U}_{\{2,3,6,7\}}}u_I =1.$$
		Here
		\begin{eqnarray*}
			& & u_{2367} =  \frac{\tau_{3367}\tau_{2368}}{\tau_{2367}\tau_{3368}}\\
			& &  =  \frac{(x_{2,2}+x_{2,3})P}{\left(x_{1,1} x_{2,1}+x_{1,1} x_{2,2}+x_{1,2} x_{2,2}+x_{1,1} x_{2,3}+x_{1,2} x_{2,3}\right) \left(x_{2,2} x_{3,3}+x_{2,3} x_{3,3}+x_{2,2} x_{3,4}+x_{2,3} x_{3,4}+x_{2,4} x_{3,4}\right)},
		\end{eqnarray*}
		where 
		\begin{eqnarray*}
			P & = & x_{1,1} x_{2,1} x_{3,3}+x_{1,1} x_{2,2} x_{3,3}+x_{1,2} x_{2,2} x_{3,3}+x_{1,1} x_{2,3} x_{3,3}+x_{1,2} x_{2,3} x_{3,3}+x_{1,1} x_{2,1} x_{3,4}+x_{1,1} x_{2,2} x_{3,4}\\
			& + & x_{1,2} x_{2,2} x_{3,4}+x_{1,1} x_{2,3} x_{3,4}+x_{1,2} x_{2,3} x_{3,4}+x_{1,1} x_{2,4} x_{3,4}+x_{1,2} x_{2,4} x_{3,4}
		\end{eqnarray*}
		and after much cancellation	
		\begin{eqnarray*}
			& & \prod_{u_I\in \mathcal{U}_{\{2,3,6,7\}}}u_I\\
			& = & \frac{x_{1,1} x_{2,1} x_{2,4} x_{3,4}}{\left(x_{1,1} x_{2,1}+x_{1,1} x_{2,2}+x_{1,2} x_{2,2}+x_{1,1} x_{2,3}+x_{1,2} x_{2,3}\right) \left(x_{2,2} x_{3,3}+x_{2,3} x_{3,3}+x_{2,2} x_{3,4}+x_{2,3} x_{3,4}+x_{2,4} x_{3,4}\right)},
		\end{eqnarray*}
		and the binary identity follows immediately.
		
		In contradistinction, using a subset with three cyclic consecutive intervals $J = \{2,3,6,8\}$ gives 
		\begin{eqnarray*}
			\mathcal{U}_J = \left\{
			\begin{array}{cccccc}
				u_{1237}, & u_{1245}, & u_{1246}, & u_{1247}, & u_{1248}, & u_{1256} \\
				u_{1257}, & u_{1258}, & u_{1267}, & u_{1345}, & u_{1346}, & u_{1347} \\
				u_{1348}, & u_{1356}, & u_{1357}, & u_{1358}, & u_{1367}, & u_{1457} \\
				u_{1478}, & u_{1578}, & u_{2347}, & u_{2357}, & u_{2457}, & u_{2478} \\
				u_{2578}, & u_{3457}, & u_{3478}, & u_{3578}, & u_{4578} &  \\
			\end{array}
			\right\}
		\end{eqnarray*}
		but the same recipe as above for $J = \{2,3,6,7\}$ does not quite work\footnote{However, Equation \eqref{eqn: example 48}, the set of nonzero monomials in the numerator coincides with the set of monomials in the expansion of the denominator.  As a general rule \cite{AHLT2019}, this suggests the possibility that the identity can be repaired with a suitable choice of exponents.}, in that
		\begin{eqnarray}\label{eqn: example 48}
			u_{2368} + \prod_{u_I\in \mathcal{U}_{\{2,3,6,8\}}}u_I \not=1,
		\end{eqnarray}
		but perhaps we would achieve an identity if exponents of some $u_I$ in the second term $\prod_{u_I\in \mathcal{U}_{\{2,3,6,8\}}}u_I$ were replaced with integers larger than 1. 
		
		This is indeed the case.  Here we have 
		\begin{eqnarray*}
			u_{2368} & = & \frac{\tau_{3368}\tau_{2378}}{\tau_{2368}\tau_{3378}},
		\end{eqnarray*}
		where
		\begin{eqnarray*}
			\tau_{3368} & = & x_{1,2} \left(x_{2,2} x_{3,3}+x_{2,3} x_{3,3}+x_{2,2} x_{3,4}+x_{2,3} x_{3,4}+x_{2,4} x_{3,4}\right)\\
			\tau_{2378} & = & \left(x_{1,1} x_{2,1}+x_{1,1} x_{2,2}+x_{1,2} x_{2,2}+x_{1,1} x_{2,3}+x_{1,2} x_{2,3}+x_{1,1} x_{2,4}+x_{1,2} x_{2,4}\right) x_{3,4}\\
			\tau_{2368} & = & x_{1,1} x_{2,1} x_{3,3}+x_{1,1} x_{2,2} x_{3,3}+x_{1,2} x_{2,2} x_{3,3}+x_{1,1} x_{2,3} x_{3,3}+x_{1,2} x_{2,3} x_{3,3}+x_{1,1} x_{2,1} x_{3,4}\\
			& +& x_{1,1} x_{2,2} x_{3,4}+x_{1,2} x_{2,2} x_{3,4}+x_{1,1} x_{2,3} x_{3,4}+x_{1,2} x_{2,3} x_{3,4}+x_{1,1} x_{2,4} x_{3,4}+x_{1,2} x_{2,4} x_{3,4}\\
			\tau_{3378} & = & x_{1,2} \left(x_{2,2}+x_{2,3}+x_{2,4}\right) x_{3,4}.
		\end{eqnarray*}
		
		Now, after much cancellation in Equation \eqref{eqn: example 48} we find that the second term simplifies to
		\begin{eqnarray}\label{eq: 2368 binary identity}
			\prod_{u_I\in \mathcal{U}_{\{2,3,6,8\}}}u_I  & =& \frac{x_{1,1} x_{2,1} x_{2,4} x_{3,3}}{D_1D_4}\frac{N_1N_2N_3}{D_2D_3},
		\end{eqnarray}
		where
		\begin{eqnarray*}
			N_1 & = & \left(x_{3,2}+x_{3,3}\right) \left(x_{3,1}+x_{3,2}+x_{3,3}\right)\\
			N_2 & = & x_{1,1} x_{2,1} x_{3,1}+x_{1,1} x_{2,1} x_{3,2}+x_{1,1} x_{2,2} x_{3,2}+x_{1,2} x_{2,2} x_{3,2}+x_{1,1} x_{2,1} x_{3,3}+x_{1,1} x_{2,2} x_{3,3}+x_{1,2} x_{2,2} x_{3,3}\\
			& + & x_{1,1} x_{2,1} x_{3,4}+x_{1,1} x_{2,2} x_{3,4}+x_{1,2} x_{2,2} x_{3,4}\\
			N_3 & = & x_{1,1} x_{2,1} x_{3,2}+x_{1,1} x_{2,2} x_{3,2}+x_{1,2} x_{2,2} x_{3,2}+x_{1,1} x_{2,1} x_{3,3}+x_{1,1} x_{2,2} x_{3,3}+x_{1,2} x_{2,2} x_{3,3}+x_{1,1} x_{2,3} x_{3,3}\\
			& +&  x_{1,2} x_{2,3} x_{3,3}+x_{1,1} x_{2,1} x_{3,4}+x_{1,1} x_{2,2} x_{3,4}+x_{1,2} x_{2,2} x_{3,4}+x_{1,1} x_{2,3} x_{3,4}+x_{1,2} x_{2,3} x_{3,4}\\
			D_1 & = & x_{2,2}+x_{2,3}+x_{2,4}\\
			D_2 & = & \left(x_{3,2}+x_{3,3}+x_{3,4}\right) \left(x_{3,1}+x_{3,2}+x_{3,3}+x_{3,4}\right)\\
			D_3 & = & x_{1,1} x_{2,1} x_{3,1}+x_{1,1} x_{2,1} x_{3,2}+x_{1,1} x_{2,2} x_{3,2}+x_{1,2} x_{2,2} x_{3,2}+x_{1,1} x_{2,1} x_{3,3}+x_{1,1} x_{2,2} x_{3,3}+x_{1,2} x_{2,2} x_{3,3}\\
			D_4 & = & x_{1,1} x_{2,1} x_{3,3}+x_{1,1} x_{2,2} x_{3,3}+x_{1,2} x_{2,2} x_{3,3}+x_{1,1} x_{2,3} x_{3,3}+x_{1,2} x_{2,3} x_{3,3}+x_{1,1} x_{2,1} x_{3,4}+x_{1,1} x_{2,2} x_{3,4}\\
			& + & x_{1,2} x_{2,2} x_{3,4}+x_{1,1} x_{2,3} x_{3,4}+x_{1,2} x_{2,3} x_{3,4}+x_{1,1} x_{2,4} x_{3,4}+x_{1,2} x_{2,4} x_{3,4}.
		\end{eqnarray*}
		Noticing now that 
		\begin{eqnarray*}
			1-u_{2368} & = & \frac{x_{1,1} x_{2,1} x_{2,4} x_{3,3}}{D_1D_4},
		\end{eqnarray*}
		it follows that the second factor
		$$\frac{N_1N_2N_3}{D_2D_3}$$
		in Equation \eqref{eq: 2368 binary identity} is suspect and we have not yet achieved the binary identity.  However, it is possible to modify Equation \eqref{eqn: example 48} to achieve the result.  After some experimentation we find that 	
		\begin{eqnarray}
			u_{2368} + (u_{1247} u_{1257} u_{1347} u_{1357})\prod_{u_I\in \mathcal{U}_{\{2,3,6,8\}}}u_I = 1,
		\end{eqnarray}
		noting that the front factor $(u_{1247} u_{1257} u_{1347} u_{1357})$ already appears in the product, so we have in effect simply changed some of the exponents to 2.
		
		In particular, this implies the following binary property: whenever $(I,\{2,3,6,8\}) \not\in \mathbf{NC}_{4,8}$ and $u_{I}=0$ then $u_{2368}=1$.	
	\end{example}

		\section{Newton Polytopes}\label{sec:Newton polytopes}
	In this Appendix, we summarize some computations: we study the Newton polytopes $\text{Newt}(\prod_J \tau_J)$ in several cases.

	We find the following f-vectors for respectively (3,6), ((3,7) and (4,7)), (3,8)
	\begin{eqnarray}\label{eq: f-vectors fibered associahedra}
		&&(1,42,84,56,14,1),\nonumber\\
		&&(1,462,1386,1596,882,238,28,1),\\
		&&(1, 6006, 24024, 39468, 34320, 16962, 4752, 708, 48, 1),\nonumber
	\end{eqnarray}
	and we confirmed in each case that, as expected, the facet inequalities for the Newton polytope are of the form $\gamma_J + c_J \ge 0$ for some integers $c_J$.
	
	Here the number of vertices is the multi-dimensional Catalan number $C^{(3)}_{n-3}$, that is the number of pairwise noncrossing collections of $(3-1)(n-3-1)$ \textit{nonfrozen} subsets\footnote{Recall that a subset is \textit{nonfrozen} if it consists of at least two cyclic intervals with respect to the standard cyclic order $\mathbb{I}_n = (1,2,\ldots, n)$.}, and the number of facets is $\binom{n}{3}-n$.  

	For $(k,n) = (3,6)$, only the two minors $p_{2,3,6}$ and $p_{3,5,6}$ are replaced by compound determinants.  All other Plucker coordinates are included without modification.  We exclude monomial factors as they only translate the Newton polytope.  
	
	Then define $g_{3,6}(x)$ to be the product
	$$\left(x_{1,1}+x_{1,2}\right) \left(x_{1,2}+x_{1,3}\right) \left(x_{1,1}+x_{1,2}+x_{1,3}\right) \left(x_{2,1}+x_{2,2}\right) \left(x_{2,2}+x_{2,3}\right) \left(x_{2,1}+x_{2,2}+x_{2,3}\right)$$
	$$\left(x_{1,1} x_{2,1}+x_{1,1} x_{2,2}+x_{1,2} x_{2,2}\right) \left(x_{1,2} x_{2,2}+x_{1,2} x_{2,3}+x_{1,3} x_{2,3}\right)$$
	$$\left(x_{1,1} x_{2,2}+x_{1,2} x_{2,2}+x_{1,1} x_{2,3}+x_{1,2} x_{2,3}+x_{1,3} x_{2,3}\right) \left(x_{1,1} x_{2,1}+x_{1,1} x_{2,2}+x_{1,2} x_{2,2}+x_{1,1} x_{2,3}+x_{1,2} x_{2,3}\right).$$
	Using SageMath we find that the f-vector of the Newton polytope $\text{Newt}(g_{3,6}(x))$ is 
	$$(1,42,84,56,14,1).$$

		\section{Generalized Roots: Facets of $\mathbb{K}^{(k)}_{n-k}$}

	Recall that, given a polytope $P$ and a linear function $f$, then $f$ is minimized on a unique face of $P$.  In what follows, we refine the Minkowski sum decomposition of faces of the PK associahedron $\mathbb{K}^{(k)}_{n-k}$, according to the minimum value of a given $\gamma_I$ on the Minkowski summands $\mathcal{F}^{(i)}_J$ of $\mathbb{K}^{(k)}_{n-k}$.  In the case $k=2$, the minimum value of $\gamma_{ab}$ is always zero or 1.  However, as $k$ increases larger minimum values are found.  For instance, the linear function
	$$\gamma_{1458} = \alpha_{1,1} + \alpha_{1,2} + \alpha_{3,3} + \alpha_{3,4}$$
	is identically $m=2$ on the planar face
	$$\mathcal{F}^{(1)}_{1256} = \mathcal{F}^{(\lbrack 1,3\rbrack)}_{\lbrack 1,2\rbrack, \lbrack 1,4\rbrack,\lbrack 3,4\rbrack}.$$

	For $m\ge 0$, denote by 
	$$\mathcal{E}_m(I) = \left\{ (i,J): \min(\{\gamma_I(v): v\in \mathcal{F}^{(i)}_J\}) = m  \right\}$$
	the set of pairs $(i,J)$ such that on $\mathcal{F}^{(i)}_J$ the minimum value of $\gamma_I$ is $m$.

	For a polyhedron $P$, let $\partial_{\gamma_I(\alpha)}\left(P\right)$
	be the face of $P$ where $\gamma_I$ attains its minimum.  Note that while $\partial_{\gamma_I(\alpha)}\left(P\right)$ is always a face of $P$, a priori it might have codimension 2 or more.  We conjecture that for all $2\le k\le n-2$, the (codimension 1) facets of $\mathbb{K}^{(k)}_{n-k}$ are exactly those that minimize the linear functions $\gamma_I$ as $I$ varies over all nonfrozen $k$-element subsets.
	
	Proposition \ref{prop: facets} adapts the standard result that for a Minkowski sum of polytopes $P = P_1\boxplus \cdots \boxplus P_\ell$, given a linear function $f$, then the face of $P$ that is minimized by $f$ is equal to the Minkowski sum of the faces of the $P_i$ that are minimized by $f$. 
	
	\begin{rem}
		By itself Proposition \ref{prop: facets} is not a priori extremely illuminating, but let us only remark that, in the case of $\mathbb{K}^{(k=2)}_{n-2}$ the two Minkowski summands (in this case, only $m=0,1$ contribute) in Proposition \ref{prop: facets} live in orthogonal subspaces; in fact \textit{this is tied to why factorization works so well for the cubic scalar theory}, in that when a subset $J$ of particles goes on-shell, that is we have $\sum_{i,j\in J}s_{ij}=0$, then the amplitude $m^{(2)}_n$ factors as a product of two amplitudes of the same type.  These two factors correspond, somewhat imprecisely speaking, to $m=0$ and $m=1$ in Proposition \ref{prop: facets}.
	\end{rem}
	
	However when $k\ge3$, it is not difficult to see that the Minkowski summands of faces of $\mathbb{K}^{(k)}_{n-k}$ are no longer in general orthogonal, and they are not in general simple polytopes.  For the last point, one can compute the facet of $\mathbb{K}^{(4)}_{8-4}$ that minimizes the linear function $\gamma_{1,4,5,8}$.  We return to this point after the Proposition.

	\begin{prop}\label{prop: facets}
		The face of $\mathbb{K}^{(k)}_{n-k}$ that minimizes a given $\gamma_I$ decomposes into a Minkowski sum of polyhedra, as 
		\begin{eqnarray}\label{eq: splitting facet}
			\partial_{\gamma_I(\alpha)}\left(\mathbb{K}^{(k)}_{n-k}\right) = \bigboxplus_{m\ge 0}\left(\bigboxplus_{(i,J)\in \mathcal{E}_m(I)}\partial_{\gamma_I(\alpha)}\left(\mathcal{F}^{(i)}_J\right)\right).
		\end{eqnarray}
	\end{prop}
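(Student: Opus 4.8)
The plan is to obtain Equation~\eqref{eq: splitting facet} from the elementary fact that passing to the face cut out by a linear functional commutes with Minkowski summation, followed by a regrouping of the index set of summands according to the value of the minimum. Essentially all of the content of the statement is bookkeeping; the genuine combinatorial work lies elsewhere (in Lemma~\ref{lem: minimum value} and its dimension count), and I will flag that at the end.

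First I would recall the standard lemma: if $P = P_1 \boxplus \cdots \boxplus P_\ell$ is a Minkowski sum of polytopes in a common real vector space and $f$ is a linear functional, then $\min_{p\in P} f(p) = \sum_{j=1}^\ell \min_{p_j\in P_j} f(p_j)$, and moreover
$$\partial_{f}\left(P\right) = \partial_{f}\left(P_1\right) \boxplus \cdots \boxplus \partial_{f}\left(P_\ell\right),$$
where $\partial_f(Q) = \{q\in Q: f(q) = \min_Q f\}$. The verification is immediate: writing $p = p_1 + \cdots + p_\ell$ with $p_j\in P_j$, linearity of $f$ forces each $p_j$ to individually minimize $f$ over $P_j$ whenever $p$ minimizes $f$ over $P$, and conversely any sum of such minimizers lies in $\partial_f(P)$. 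Applying this with $P = \mathbb{K}^{(k)}_{n-k}$, whose Minkowski summands are exactly the planar faces $\mathcal{F}^{(i)}_J$ as $(i,J)$ ranges over the indexing set $\bigsqcup_{m'=2}^{k}\bigl(\{1,\ldots,k-(m'-1)\}\times\binom{\lbrack (n-2)-(k-m')\rbrack}{m'}\bigr)$ of Definition~\ref{defn: PK associahedron}, and with $f = \gamma_I$ for a fixed nonfrozen $I$, yields
$$\partial_{\gamma_I(\alpha)}\left(\mathbb{K}^{(k)}_{n-k}\right) = \bigboxplus_{(i,J)}\partial_{\gamma_I(\alpha)}\left(\mathcal{F}^{(i)}_J\right).$$

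It then remains to reorganize this Minkowski sum. Since each planar face $\mathcal{F}^{(i)}_J$ is a polytope, the minimum $m = \min\{\gamma_I(v): v\in\mathcal{F}^{(i)}_J\}$ is a well-defined nonnegative integer (nonnegativity being guaranteed by the arguments of Lemma~\ref{lem: minimum value}), so the sets $\mathcal{E}_m(I)$ for $m\ge 0$ partition the full index set of summands. Because Minkowski summation is commutative and associative, grouping the summands by the value of $m$ gives exactly Equation~\eqref{eq: splitting facet}, completing the argument. The step I expect to be the real obstacle is \emph{not} in this proposition at all but in the external input it depends on: determining, for each nonfrozen $I$, precisely which pairs $(i,J)$ belong to each $\mathcal{E}_m(I)$, and then showing that the resulting face $\partial_{\gamma_I(\alpha)}(\mathbb{K}^{(k)}_{n-k})$ has codimension exactly one. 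That is where larger values of $m$ appear for $k\ge 3$, and where the failure of orthogonality and simplicity of the Minkowski summands manifests, as illustrated by the facet of $\mathbb{K}^{(4)}_{4}$ minimizing $\gamma_{1458}$.
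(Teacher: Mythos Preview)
Your proposal is correct and matches the paper's treatment: the paper does not give a detailed proof but simply states that Proposition~\ref{prop: facets} ``adapts the standard result that for a Minkowski sum of polytopes $P = P_1\boxplus \cdots \boxplus P_\ell$, given a linear function $f$, then the face of $P$ that is minimized by $f$ is equal to the Minkowski sum of the faces of the $P_i$ that are minimized by $f$,'' which is exactly the lemma you invoke and then regroup by $m$. Your identification of where the real work lies (the dimension count and determination of the $\mathcal{E}_m(I)$) also agrees with the paper's surrounding discussion.
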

	\begin{example}
		For $m^{(2)}_6$, when $\eta_{36} = s_{45}+s_{56}+s_{46} = \varepsilon$, then we calculate that 
		\begin{eqnarray*}
			m^{(2)}_6 & = & \frac{1}{\varepsilon}\left(\frac{1}{\eta_{26}} + \frac{1}{\eta_{13}}\right)\left(\frac{1}{\eta_{35}} + \frac{1}{\eta_{46}}\right) + \mathcal{O}(\varepsilon^0)\\
			& = & \frac{1}{\varepsilon}\left(\frac{1}{s_{12}} + \frac{1}{s_{23}}\right)\left(\frac{1}{s_{45}} + \frac{1}{s_{56}}\right) + \mathcal{O}(\varepsilon^0).
		\end{eqnarray*}
		This corresponds to one of the faces of the associahedron of the form $\mathbb{K}^{(2)}_{(2)}\times \mathbb{K}^{(2)}_{2}$.
		
		On the other hand, for the Newton polytope we minimize the linear function $\gamma_{36} = \alpha_{1,3} + \alpha_{1,4}$ over the associahedron to calculate the facet as a Newton polytope.  The standard associahedron is the Newton polytope 
		$$\mathbb{K}^{(2)}_{4} = \text{Newt}\left(x_{1,12}x_{1,23}x_{1,34}x_{1,123}x_{1,234}x_{1,1234}\right).$$
		
	\end{example}
	
	We find a decomposition into a product of two polynomials whose Newton polytopes are in orthogonal subspaces,
	\begin{eqnarray}
		\partial_{\gamma_{36}(\alpha)} \left(\mathbb{K}^{(2)}_4\right)  & = & \text{Newt}\left(\left(x_{1,2}^2 \left(x_{1,1}+x_{1,2}\right)^3\right)\left(x_{1,3}+x_{1,4}\right)\right),
	\end{eqnarray}
	where the first and second factors correspond to $m=0,1$ respectively in Proposition \ref{prop: facets}.  This recovers (metrically), the facet $\mathbb{K}^{(2)}_{2}\times \mathbb{K}^{(2)}_{2}$ of $\mathbb{K}^{(2)}_4$ which minimizes $\gamma_{36} = \alpha_{1,3} + \alpha_{1,4}$.
	
	Let us sound a cautionary note: it would be tempting to ask if the respective summands in the decomposition are all simple polytopes; but this is not the case.  To this end, we can study the facet that minimizes $\gamma_{1458} = \alpha_{1,12} + \alpha_{3,34}$, by giving the Minkowski factorization in Proposition \ref{prop: facets}, for the PK associahedron $\mathbb{K}^{(4)}_{4}$.  We find three groups $m=0,1,2$, according to the minimum $m$ value of $\gamma_{1458}$ on each.  The single contribution where $\gamma_{1458}$ takes the minimum value $m=2$ (identically, in fact) is given by 
	$$\text{Newt}(\tau_{2368}) = \mathcal{F}^{\lbrack 1,3\rbrack}_{\lbrack 1,2\rbrack,\lbrack 1,4\rbrack,\lbrack 3,4\rbrack}.$$
	However, one can easily check (using for instance SageMath) that this is not a simple polytope.
	
	\begin{conjecture}
		For any $2\le k\le n-2$, the PK associahedron $\mathbb{K}^{(k)}_{n-k}$ has exactly $\binom{n}{k}-n$ facets.  These are given by the set $\left\{\partial_{\gamma_I(\alpha)}\left(\mathbb{K}^{(k)}_{n-k} \right): I \in \binom{\lbrack n \rbrack}{k}^{nf}\right\}$.		
	\end{conjecture}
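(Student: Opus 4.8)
The plan is to run, for the full Minkowski sum $\mathbb{K}^{(k)}_{n-k}=\bigboxplus_{(i,J)}\mathcal{F}^{(i)}_J$ of Definition \ref{defn: PK associahedron}, the same three-step argument that established the facet description of the Minkowski subsummand $\Pi^{(k)}_{n-k}$ in Lemmas \ref{lem: minimum value} and \ref{lem: minimized simultaneously gamma} and Corollary \ref{cor: facets Pikn}. Two things must be shown: (a) for each nonfrozen $I\in\binom{\lbrack n\rbrack}{k}^{nf}$ the face $\partial_{\gamma_I(\alpha)}(\mathbb{K}^{(k)}_{n-k})$ really has codimension one, so it is a genuine facet; and (b) the normal fan of $\mathbb{K}^{(k)}_{n-k}$ is a coarsening of the noncrossing complete simplicial fan of Theorem \ref{thm: noncrossing subdivision Wkn}, so that every facet normal of $\mathbb{K}^{(k)}_{n-k}$ is one of the rays $v_J$, i.e.\ every facet minimizes a single $\gamma_I$. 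Since a coarsening of a complete pointed fan has its rays among those of the finer fan, (b) gives at most $\binom{n}{k}-n$ facets; (a) then produces a facet for each of the $\binom{n}{k}-n$ nonfrozen $I$, and linear independence of the $\gamma_I$ over nonfrozen $I$ (Proposition \ref{prop:planar basis}) makes the corresponding hyperplanes pairwise distinct, so the count is exact.

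For (b): minimizing a linear functional on a Minkowski sum amounts to minimizing it on each summand and adding, so it suffices to verify, for every noncrossing pair $(I,I')\in\mathbf{NC}_{k,n}$, that $\gamma_I$ and $\gamma_{I'}$ can be simultaneously minimized on each planar face $\mathcal{F}^{(i)}_J$ (this is exactly the condition that no cone of the normal fan straddles a wall of the noncrossing fan, as used in Lemma \ref{lem: minimized simultaneously gamma}). To this end I would first generalize Lemma \ref{lem: minimum value}: compute, for each nonfrozen $I$ and each pair $(i,J)$, the nonnegative integer $c^{(i)}_J(I):=-\min\{\gamma_I(\alpha):\alpha\in\mathcal{F}^{(i)}_J\}$; it is $0$ or $1$ in most configurations but occasionally larger, as with $\gamma_{1458}$ being identically $2$ on $\mathcal{F}^{(1)}_{1256}$. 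Then I would generalize Lemma \ref{lem: minimized simultaneously gamma}: using the cubical relations of Proposition \ref{prop:cubical relations 1} and their specialization Corollary \ref{cor: four term identity gammaJ}, whenever $\{I,I'\}$ is noncrossing and $\gamma_I+\gamma_{I'}=\gamma_{I''}+\gamma_{I'''}$ one shows the \emph{defect}
\[
\bigl(c^{(i)}_J(I'')+c^{(i)}_J(I''')\bigr)-\bigl(c^{(i)}_J(I)+c^{(i)}_J(I')\bigr)\ \ge\ 0
\]
for every $(i,J)$. Since Proposition \ref{prop:cubical relations 1} pins down a \emph{unique} noncrossing antipodal pair on each cubical cell, this forces $\gamma_I+\gamma_{I'}$ to attain its minimum on $\mathcal{F}^{(i)}_J$ at the noncrossing pair, hence $\gamma_I,\gamma_{I'}$ are co-minimized there, and therefore on $\mathbb{K}^{(k)}_{n-k}$; this is precisely the asserted coarsening.

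For (a): fix a nonfrozen $I$. By Proposition \ref{prop: facets},
\[
\partial_{\gamma_I(\alpha)}\bigl(\mathbb{K}^{(k)}_{n-k}\bigr)=\bigboxplus_{m\ge 0}\;\bigboxplus_{(i,J)\in \mathcal{E}_m(I)}\partial_{\gamma_I(\alpha)}\bigl(\mathcal{F}^{(i)}_J\bigr),
\]
and each summand $\partial_{\gamma_I(\alpha)}(\mathcal{F}^{(i)}_J)$ is again (up to translation) a product of sub-simplices cut out by staircase inequalities, i.e.\ the Newton polytope of a product of subpolynomials of the $\delta^{(i)}_J$ of Definition \ref{defn:deltaiJ}. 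Mirroring the dimension count in the proof of Lemma \ref{lem: minimum value} (carried out there for $\Pi^{(4)}_5$ and $\gamma_{1458}$), I would regroup this product of minimizing subpolynomials so that exactly one Minkowski summand loses one dimension while all others keep full dimension and remain transverse---their supports lying in disjoint blocks of $\alpha$-coordinates---so that the total dimension is $(k-1)(n-k-1)-1$. This exhibits $\partial_{\gamma_I(\alpha)}(\mathbb{K}^{(k)}_{n-k})$ as a facet, lying in the hyperplane $\gamma_I(\alpha)=-\sum_{m}\sum_{(i,J)\in\mathcal{E}_m(I)}c^{(i)}_J(I)$.

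\noindent\textbf{Main obstacle.} The crux is the defect inequality in the generalization of Lemma \ref{lem: minimized simultaneously gamma}. For the PK polytope the argument collapsed to $k=2$ because the minimum of $\gamma_J$ on each $\mathrm{Newt}(Q_j)$ was shown to depend only on the two outer indices of $J$; on a full planar face the staircase inequalities $\alpha_{a,\lbrack 1,b\rbrack}\ge\alpha_{a+1,\lbrack 1,b\rbrack}$ couple the rows---this is exactly what produces minimum values $\ge 2$---so the $c^{(i)}_J(I)$ are no longer read off from a bounded amount of data and one is forced into a case analysis of how the support path of $\gamma_I$, and of $\gamma_{I'},\gamma_{I''},\gamma_{I'''}$, interacts with the staircase region defining $\mathcal{F}^{(i)}_J$. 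Relatedly, since the Minkowski summands of facets of $\mathbb{K}^{(k)}_{n-k}$ are in general neither simple nor mutually orthogonal (unlike $k=2$, where orthogonality of the two summands is what makes factorization work), the dimension count in (a) cannot be reduced to a simplicity argument and must be done by the explicit regrouping above. A clean way to organize both steps would be to first determine the full irredundant facet description of the fibered simplex $\Omega^{(k)}_{n-k}$ and of its planar faces $\mathcal{F}^{(i)}_J$, from which the integers $c^{(i)}_J(I)$, the defect bounds, and the transverse regroupings should follow systematically.
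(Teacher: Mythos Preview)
The statement you are attempting to prove is labeled a \emph{Conjecture} in the paper; the paper does not supply a proof. Its only discussion is the remark immediately following: ``Noting that the PK polytope is a Minkowski sub-summand of $\mathbb{K}^{(k)}_{n-k}$, we can deduce that the $\gamma_J$ are minimized on (codimension 1) facets; however what does not seem so obvious is to show that no new facets are present in the full Newton polytope, the PK associahedron $\mathbb{K}^{(k)}_{n-k}$!'' So there is no paper proof to compare against; the paper itself regards your part (b) as open.

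Your part (a) is overworked. The summands $\mathrm{Newt}(P_i)$ and $\mathrm{Newt}(Q_j)$ defining $\Pi^{(k)}_{n-k}$ occur (up to translation) among the planar faces $\mathcal{F}^{(i)}_J$, so $\Pi^{(k)}_{n-k}$ is a Minkowski sub-summand of $\mathbb{K}^{(k)}_{n-k}$, and the normal fan of the larger polytope refines that of the smaller. Every ray of the normal fan of $\Pi^{(k)}_{n-k}$ is therefore a ray of the normal fan of $\mathbb{K}^{(k)}_{n-k}$, and Corollary~\ref{cor: facets Pikn} already gives you all $\binom{n}{k}-n$ of these. No dimension count or transversality regrouping is needed; this is exactly the paper's remark.

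For (b), your strategy---extend the defect inequality of Lemma~\ref{lem: minimized simultaneously gamma} from the $Q_j$ to all $\mathcal{F}^{(i)}_J$---is the natural one, and you correctly identify the obstacle: the reduction to $k=2$ worked because $c_J(Q_t)$ depended only on the outermost indices of $J$, and the staircase inequalities in a general $\mathcal{F}^{(i)}_J$ destroy that. One further gap to flag: your inference ``the defect inequality \dots forces $\gamma_I+\gamma_{I'}$ to attain its minimum on $\mathcal{F}^{(i)}_J$ at the noncrossing pair, hence $\gamma_I,\gamma_{I'}$ are co-minimized'' is not valid as stated. The defect inequality compares $c^{(i)}_J(I)+c^{(i)}_J(I')$ across decompositions $v_I+v_{I'}=v_{I''}+v_{I'''}$; co-minimization is the separate assertion $\min(\gamma_I+\gamma_{I'})=\min\gamma_I+\min\gamma_{I'}$ on $\mathcal{F}^{(i)}_J$, and the former does not by itself imply the latter. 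You would still need either a direct exhibition of a common minimizing vertex, or the wall-crossing criterion for when a support function is linear on each cone of a given simplicial fan. In short: your proposal is a sound plan of attack, but it remains a plan, and the paper offers nothing stronger.
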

	Noting that the PK polytope is a Minkowski sub-summand of $\mathbb{K}^{(k)}_{n-k}$, we can deduce that the $\gamma_J$ are minimized on (codimension 1) facets; however what does not seem so obvious is to show that no new facets are present in the full Newton polytope, the PK associahedron $\mathbb{K}^{(k)}_{n-k}$!

	\begin{example}

		In what follows, we calculate the 14 facets of the Newton polytope of the following product of polynomials
		$$\prod_{\{ijk\} \in \binom{6}{3}}\tau_{ijk},$$
		as Newton polytopes.  In particular, modulo relabeling there are only three facets.  Forgetting monomial factors, this equals the product of the following ten irreducible polynomials:
		$$\left(x_{1,1}+x_{1,2}\right), \left(x_{1,2}+x_{1,3}\right), \left(x_{1,1}+x_{1,2}+x_{1,3}\right), \left(x_{2,1}+x_{2,2}\right), \left(x_{2,2}+x_{2,3}\right), \left(x_{2,1}+x_{2,2}+x_{2,3}\right),$$
		$$\left(x_{1,1} x_{2,1}+x_{1,1} x_{2,2}+x_{1,2} x_{2,2}\right), \left(x_{1,2} x_{2,2}+x_{1,2} x_{2,3}+x_{1,3} x_{2,3}\right),$$
		$$\left(x_{1,1} x_{2,2}+x_{1,2} x_{2,2}+x_{1,1} x_{2,3}+x_{1,2} x_{2,3}+x_{1,3} x_{2,3}\right),$$
		$$ \left(x_{1,1} x_{2,1}+x_{1,1} x_{2,2}+x_{1,2} x_{2,2}+x_{1,1} x_{2,3}+x_{1,2} x_{2,3}\right).$$
		It is not difficult to verify from scratch using SageMath, say, that the fourteen linear functions $\gamma_{ijk}$ are minimized exactly on the 14 (codimension 1) facets of the Newton polytope $\text{Newt}\left(\prod_{\{ijk\} \in \binom{6}{3}}\tau_{ijk}\right)$; therefore by explicitly minimizing the functions $\gamma_{ikj}$ over the vertices of the whole Newton polytope, one derives the following Newton polytopal expressions of the codimension 1 strata of $\mathcal{W}_{3,6}$.
		\begin{figure}[h!]
			\centering
			\includegraphics[width=0.33\linewidth]{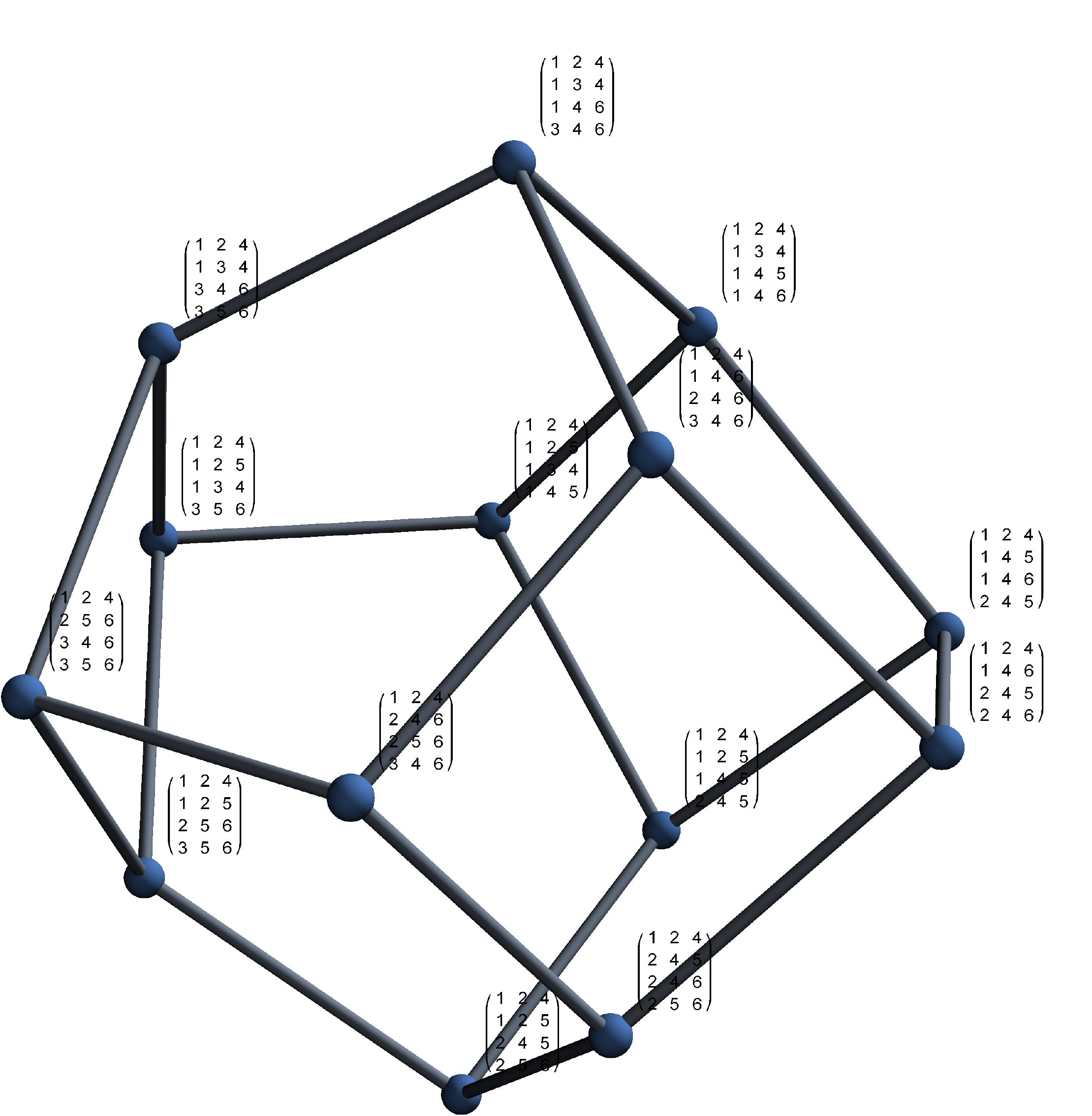}
			\includegraphics[width=0.33\linewidth]{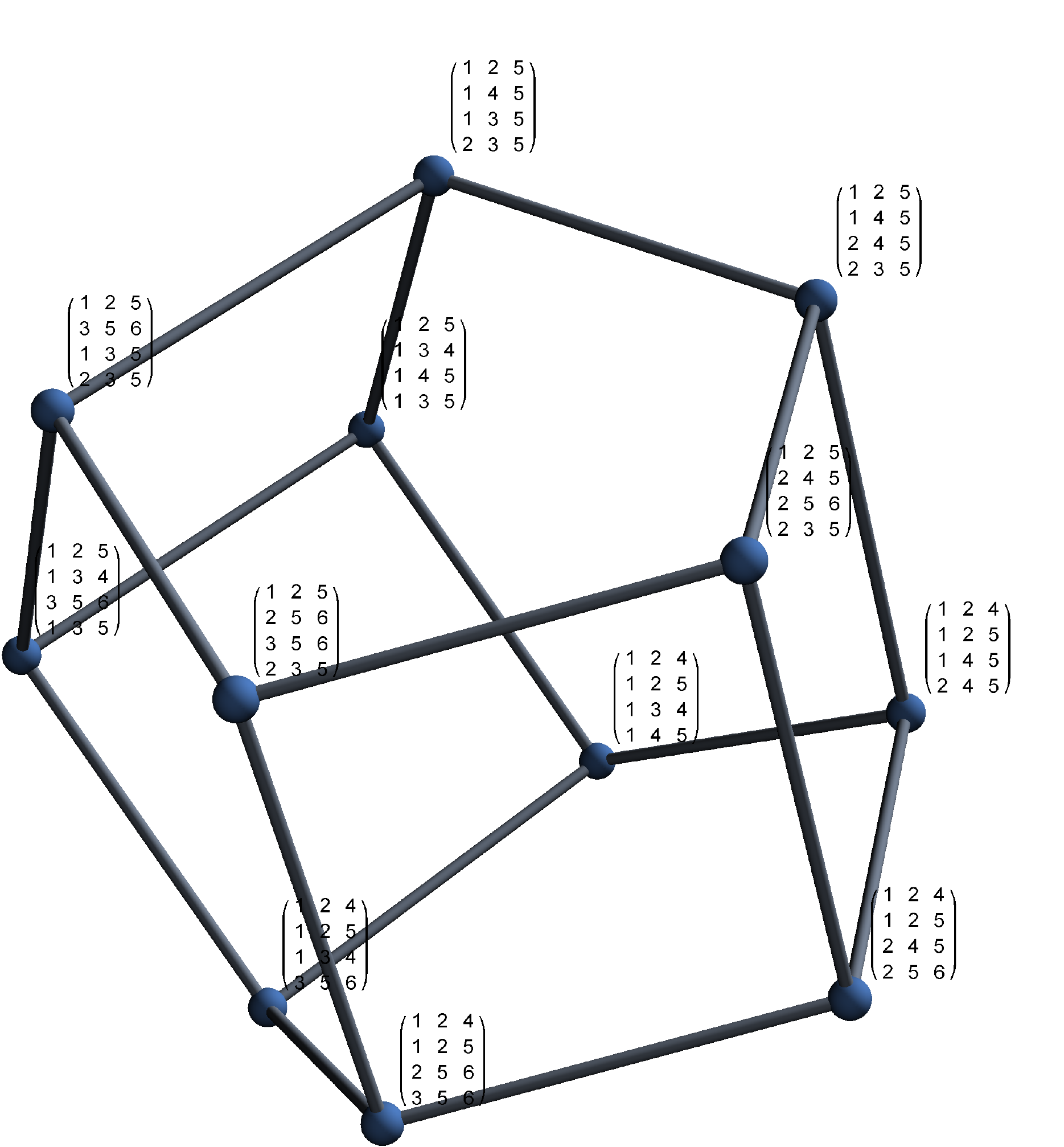}
			\includegraphics[width=0.32\linewidth]{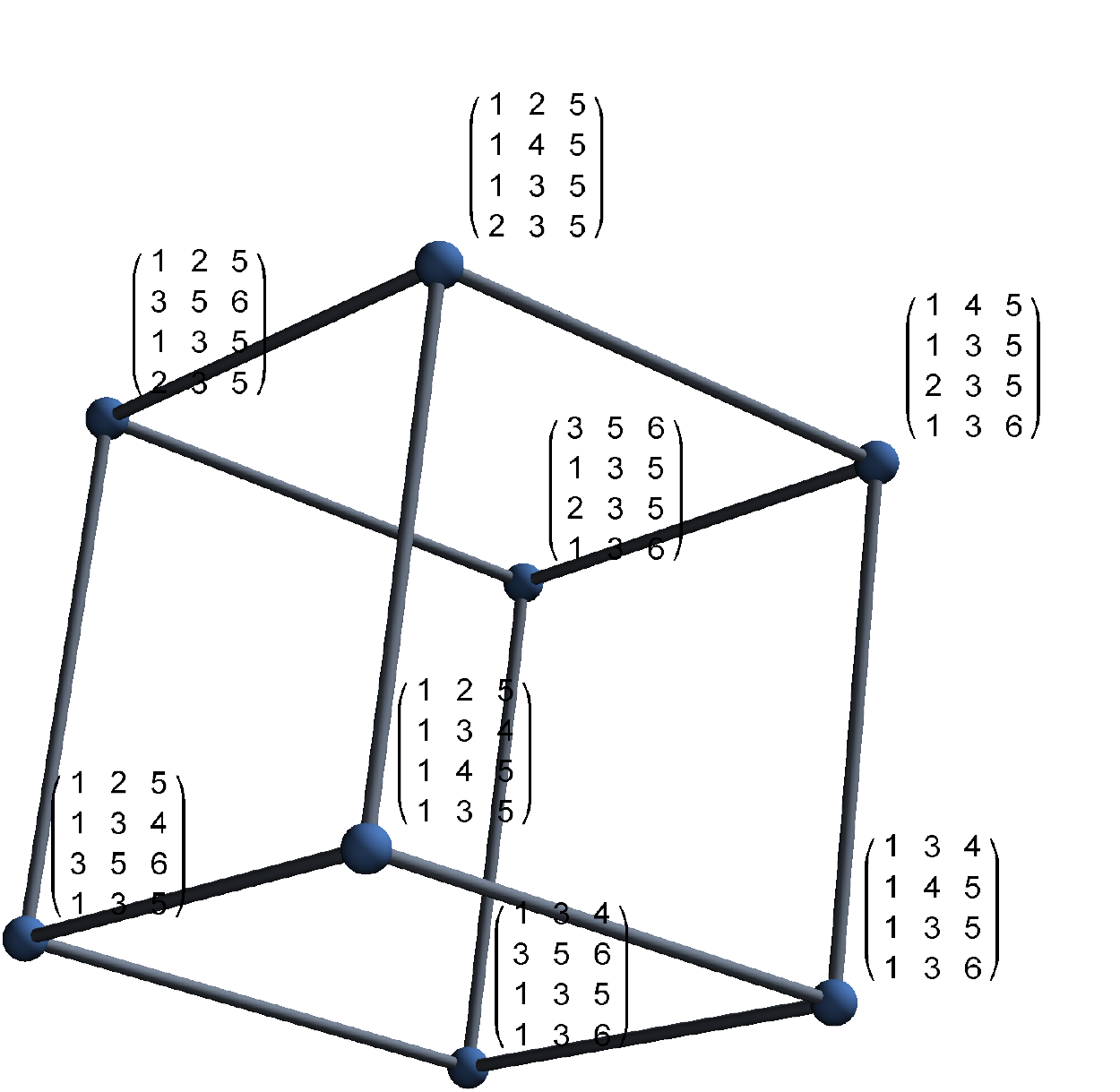}
			\caption{We use the noncrossing complex $\mathbf{NC}_{3,6}$ to classify, modulo index relabeling, the codimension one strata in the generalized worldsheet associahedron $\mathcal{W}^+_{3,6}$.  Left: $\{1,2,4\}$.  Middle: $\{1,2,5\}$.  Right: $\{1,3,5\}$.  In particular, they are simple polytopes.}
			\label{fig:factorizationA36}
		\end{figure}
		Consider the Newton polytope found by minimizing $\gamma_{124} = \alpha_{2,1}$, with polynomial the product of the following six irreducible polynomials:
		$$\left(x_{1,1}+x_{1,2}\right){}^3 \left(x_{1,2}+x_{1,3}\right) \left(x_{1,1}+x_{1,2}+x_{1,3}\right) x_{2,2}^2 \left(x_{2,2}+x_{2,3}\right){}^3$$
		$$\left(x_{1,2} x_{2,2}+x_{1,2} x_{2,3}+x_{1,3} x_{2,3}\right) \left(x_{1,1} x_{2,2}+x_{1,2} x_{2,2}+x_{1,1} x_{2,3}+x_{1,2} x_{2,3}+x_{1,3} x_{2,3}\right).$$
		One can argue informally, by simply counting faces of the Newton polytope that it is then isomorphic to the usual 3-dimensional associahedron; moreover, see Figure \ref{fig:factorizationA36} (left) for the corresponding noncrossing collections, as can be reconstructed from the binary relations for the $u_{ijk}$'s.

		One similarly concludes that the stratum of $\mathcal{W}^+_{3,6}$ characterized by $u_{125}=0$ is isomorphic to the Newton polytope found by minimizing $\gamma_{125} = \alpha_{2,1} + \alpha_{2,2}$; this gives the polynomial
		$$\left(x_{1,1}+x_{1,2}\right){}^2 \left(x_{1,2}+x_{1,3}\right){}^2 \left(x_{1,1}+x_{1,2}+x_{1,3}\right){}^2 \left(x_{2,1}+x_{2,2}\right) \left(x_{1,1} x_{2,1}+x_{1,1} x_{2,2}+x_{1,2} x_{2,2}\right)x_{2,3}^5$$
		
		The stratum of $\mathcal{W}_{3,6}$ characterized by $u_{135} = 0$ is a cube, isomorphic to the Newton polytope found by minimizing $\gamma_{135} = \alpha_{1,1} + \alpha_{2,2}$, giving
		$$x_{1,2}^2 \left(x_{1,2}+x_{1,3}\right){}^4 x_{2,1} \left(x_{1,1} x_{2,1}+x_{1,2} x_{2,2}\right) x_{2,3}^4 \left(x_{2,1}+x_{2,3}\right).$$
		
		Note not only the presence of the exponents on some of the factors above, but also the factor $\left(x_{1,1} x_{2,1}+x_{1,2} x_{2,2}\right)$, which is not among the ten irreducible polynomials!  A similar polynomial 
		$$x_{1,1}^4 x_{1,3} \left(x_{1,1}+x_{1,3}\right) x_{2,2}^2 \left(x_{2,1}+x_{2,2}\right){}^4 \left(x_{1,2} x_{2,2}+x_{1,3} x_{2,3}\right)$$
		can be found for the facet that minimizes the function $\gamma_{246}$.
	
		Using the command ``.Hrepresentation()'' in SageMath one can easily obtain the facet inequalities for the Newton polytope of the product of the irreducible face polynomials $\delta^{(i)}_J$.
		
		One finds
		\begin{eqnarray*}
			\alpha_{1,1} + \alpha_{1,2} + \alpha_{1,3} = \gamma_{134} + \gamma_{245} + \gamma_{356} & = & 7\\
			\alpha_{2,1} + \alpha_{2,2} + \alpha_{2,3} = \gamma_{124} + \gamma_{235} + \gamma_{346} & = & 7\\
			\alpha_{1,1} = \gamma_{134} & \ge & 0\\
			\alpha_{1,2} = \gamma_{245} & \ge & 0\\
			\alpha_{1,3} = \gamma_{356} & \ge & 0\\
			\alpha_{2,1} = \gamma_{124} & \ge & 0\\
			\alpha_{2,2} = \gamma_{235} & \ge & 0\\
			\alpha_{2,3} = \gamma_{346} & \ge & 0\\
			\alpha_{1,1}+\alpha_{1,2} = \gamma_{145} & \ge & 3\\
			\alpha_{1,2} + \alpha_{1,3}  = \gamma_{256} & \ge & 2\\
			\alpha_{2,1} + \alpha_{2,2} = \gamma_{125} & \ge & 2\\
			\alpha_{2,2} + \alpha_{2,3} = \gamma_{236}  & \ge & 3\\
			\alpha_{1,1} + \alpha_{2,2}  = \gamma_{135}& \ge & 1\\
			\alpha_{1,2} + \alpha_{2,3} = \gamma_{246} & \ge & 1\\
			\alpha_{1,1} + \alpha_{2,2} + \alpha_{2,3} = \gamma_{136} & \ge & 5\\
			\alpha_{1,1} + \alpha_{1,2} + \alpha_{2,3} = \gamma_{146} & \ge & 5.\\
		\end{eqnarray*}	
	\end{example}

	We present, in Figure \ref{fig:4-3-2021-x37facetshavinggamma147}, a (rather lossy) projection of the Newton polytope $\text{Newt}\left(\prod_{\{ijk\} \in \binom{\lbrack 7\rbrack}{3}} p_{ijk}\right)$.
	\begin{figure}[h!]
		\centering
		\includegraphics[width=0.4\linewidth]{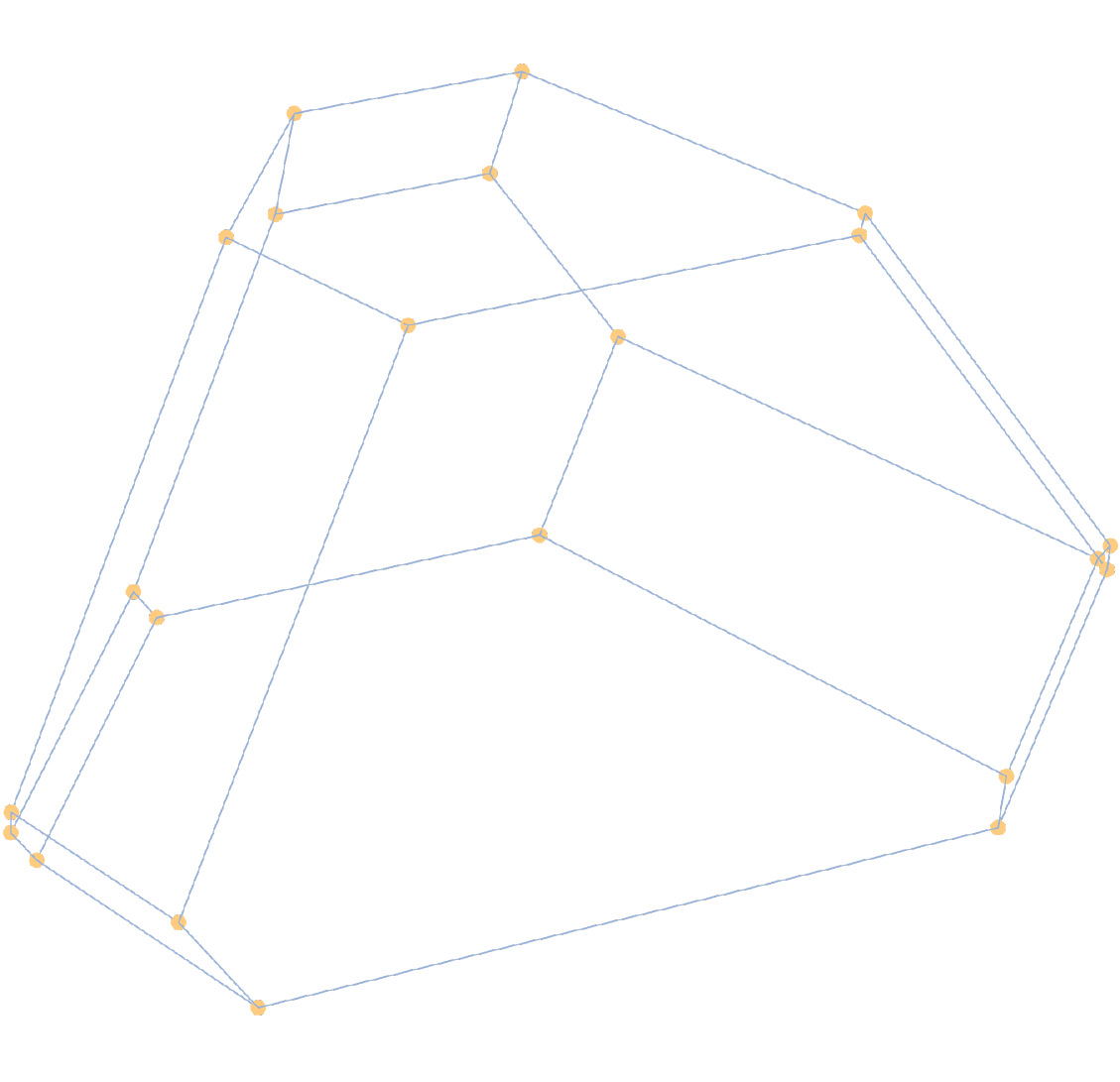}
		\includegraphics[width=0.4\linewidth]{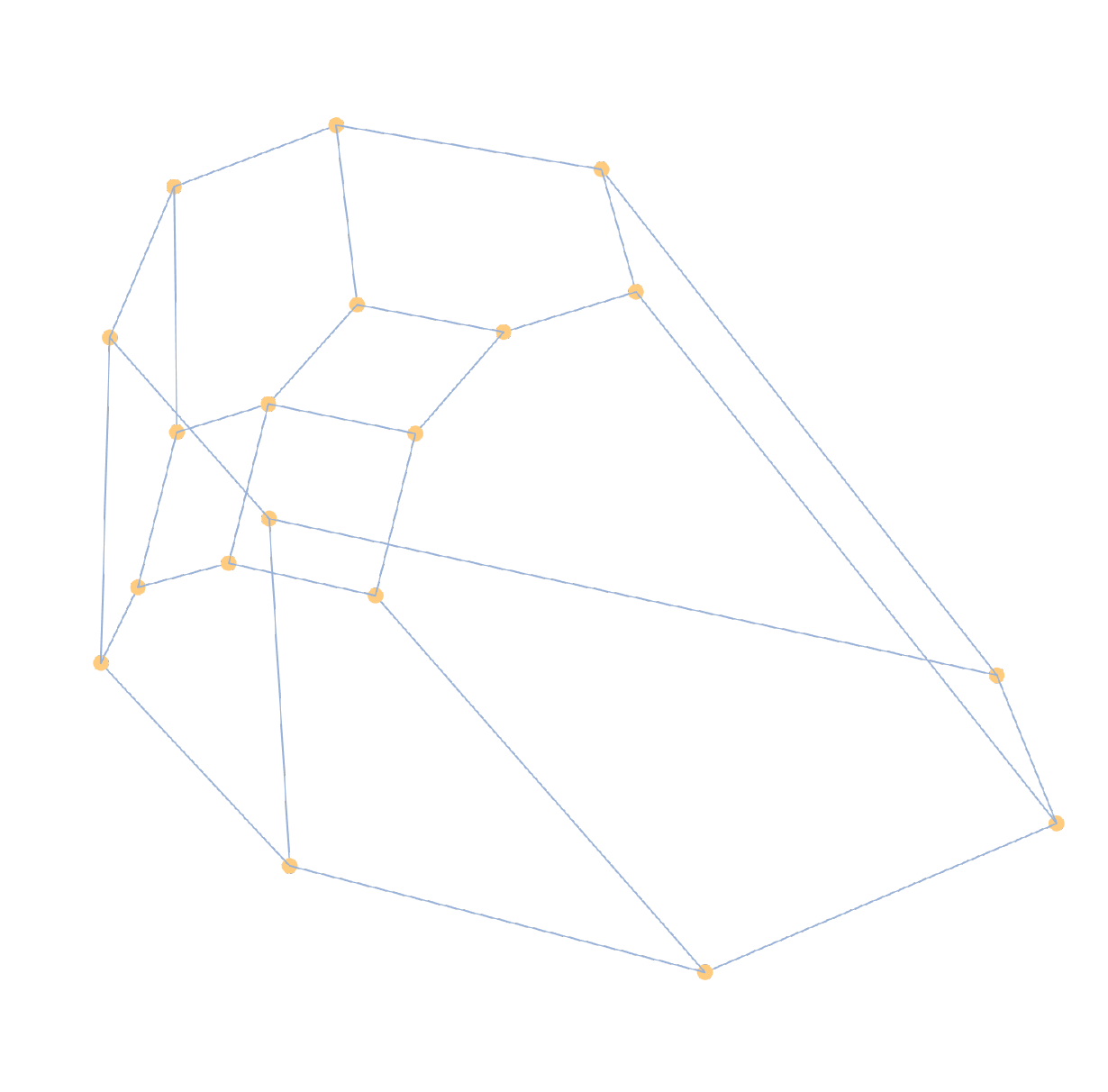}
		\caption{\textbf{Left:} A lossy projection $\mathbb{R}^{(3-1)\times (7-3)} \rightarrow \mathbb{R}^{7-3}$ given by $e_{i,j} \mapsto e_j$ of the (6-dimensional) Newton polytope of the product of all $3\times 3$ minors in $X(3,7)$ (in the positive parameterization).  Despite the lossiness, still one could start to extract some potentially very useful geometric intuition: for instance, just that the projection is the usual 3-d associahedron, but with some edges shaved off.  \textbf{Right:} Same projection, now of the (5-dimensional) facet of the $X(3,7)$ Newton polytope which minimizes the linear function $\gamma_{147} = \alpha_{1,1}+ \alpha_{1,2} + \alpha_{2,3} + \alpha_{2,4}$.  }
		\label{fig:4-3-2021-x37facetshavinggamma147}
	\end{figure}
	where the Newton polytope involves only minors, not resolved minors.

	To render the figure, we compute the facet of the Newton polytope that minimizes the linear function 
	$$\gamma_{147} = \alpha_{1,1}+ \alpha_{1,2} + \alpha_{2,3} + \alpha_{2,4}.$$
	This gives a 5-dimensional facet; then we project using $e_{i,j} \mapsto e_j$ to obtain a 3-dimensional polytope.  Both polytopes in the Figure are compatible with the notion that the Newton polytope of the product of all maximal $3\times 3$ minors,
	$$\text{Newt}\left(\prod_{\{i,j,k\}\in \binom{\lbrack 7\rbrack}{3}} p_{ijk}\right),$$
	in the positive parameterization, see Section \ref{sec: positive parameterization}, can be constructed from the Newton polytope $\mathbb{K}^{(3)}_{7-3}$ by shaving off some additional faces.  This shaving can be understood in terms of poles of the generalized biadjoint scalar $m^{(3)}_7$ that are not in the basis of planar kinematic invariants $\eta_J$, of the form $\sum_{\{i,j,k\}} c_{ijk} \eta_{ijk}$, for certain integers $c_{ijk}$.  It turns out that there are 14 such extra poles of $m^{(3)}_7$, see for instance \cite{arkani2020positive, BC2019,DrummondTropical2019A} for the full list of poles; in the present setting, the 14 extra poles corresponding to the pairs $\{I,J\}$ in the table in Example \ref{example: Plucker resolution explicit 37} take the form $\gamma_I + \gamma_J +c_{I,J}$ for some constant $c_{I,J}$.  Consequently the facet normal vectors are dual to linear functions which are (positive) linear combinations of $\gamma_J$ with at least two terms (however, it turns out that $(k,n) = (3,8)$ is the first time that one finds poles with three terms in the expansion in $\gamma_J$'s).
	
	\section{Kinematic Shift and Resolved Potential Function $\hat{\mathcal{S}}_{3,8}$}\label{sec:kinematic shift Appendix}
	To define the resolved potential function $\hat{\mathcal{S}}_{3,8}$ and the corresponding u-variables we introduce resolved planar kinematic invariants $\hat{\eta}_J$ and then straighten the potential function; we organize them suggestively in the style of weighted blade arrangements.  Then as usual the (resolved) planar cross-ratios are the arguments of the logarithms.  For each $I \in \binom{\lbrack 8\rbrack}{3}^{nf}$ such that $(I,J) \not \in \mathbf{NC}_{3,8}$ for some $J$ with $I$ lexicographically smaller than $J$, we define resolved planar kinematic invariants $\hat{\eta}_I$ by 
	\begin{eqnarray*}
		\hat{\eta}_{124}  + \eta_{378} & = & -\eta_{248} + \eta_{348} + \eta_{278} + \eta_{124}\\
		\hat{\eta}_{125}  + \eta_{378} & = & -\eta_{258} + \eta_{358} + \eta_{278} + \eta_{125}\\
		\hat{\eta}_{126}  + \eta_{378} & = & -\eta_{268} + \eta_{368} + \eta_{278} + \eta_{126}\\
		\hat{\eta}_{135}  + \eta_{478} & = & -\eta_{358} + \eta_{458} + \eta_{378} + \eta_{135}\\
		\hat{\eta}_{136}  + \eta_{578} & = & -\eta_{368} + \eta_{468} + \eta_{278} + \eta_{136}\\
		\hat{\eta}_{145}  + \eta_{238} & = & -\eta_{135} + \eta_{235} + \eta_{145} + \eta_{138}\\
		\hat{\eta}_{146}  + \eta_{238} + \eta_{578} & = & (-\eta_{136} + \eta_{236} + \eta_{146} + \eta_{138}) + (-\eta_{468} + \eta_{568} + \eta_{478} + \eta_{146}) - \eta_{146}\\
		\hat{\eta}_{147}  + \eta_{238} & = & -\eta_{137} + \eta_{237} + \eta_{147} + \eta_{138}\\
		\hat{\eta}_{156}  + \eta_{238} + \eta_{348} & = & (-\eta_{136} + \eta_{236} + \eta_{156} + \eta_{148}) + (-\eta_{146} + \eta_{346} + \eta_{156} + \eta_{148}) - \eta_{156}\\
		\hat{\eta}_{157}  + \eta_{238} + \eta_{348} & = & (-\eta_{137} + \eta_{237} + \eta_{157} + \eta_{148}) + (-\eta_{147} + \eta_{347} + \eta_{157} + \eta_{148}) - \eta_{157}\\
		\hat{\eta}_{167}  + \eta_{238} + \eta_{348} + \eta_{458} & = & (-\eta_{137} + \eta_{237} + \eta_{167} + \eta_{148}) + (-\eta_{147} + \eta_{347} + \eta_{167} + \eta_{148})\\
		& + & (-\eta_{157} + \eta_{457} + \eta_{167} + \eta_{158}) - 2\eta_{167}\\
	\end{eqnarray*}
	\begin{eqnarray*}
		\hat{\eta}_{235}  + \eta_{478} & = & (-\eta_{358} + \eta_{458} + \eta_{378} + \eta_{235})\\
		\hat{\eta}_{236} +\eta_{478} & = & (-\eta_{368} + \eta_{468} + \eta_{378} + \eta_{236})\\
		\hat{\eta}_{246} + \eta_{578} & = & (-\eta_{468} + \eta_{568} + \eta_{478} + \eta_{246})\\
		\hat{\eta}_{256} + \eta_{348} & = & (-\eta_{246} + \eta_{346} + \eta_{256} + \eta_{248})\\
		\hat{\eta}_{257} + \eta_{348} & = & (-\eta_{247} + \eta_{347} + \eta_{257} + \eta_{248})\\
		\hat{\eta}_{267}  + \eta_{348} + \eta_{458} & = & (-\eta_{247} + \eta_{347} + \eta_{267} + \eta_{248}) + (-\eta_{257} + \eta_{457} + \eta_{267} + \eta_{258}) - \eta_{267}\\
		\hat{\eta}_{346} + \eta_{578} & = & (-\eta_{468} + \eta_{568} + \eta_{478} + \eta_{346})\\
		\hat{\eta}_{367} + \eta_{458} & = & (-\eta_{357} + \eta_{457} + \eta_{367} + \eta_{358}).
	\end{eqnarray*}
	In all other cases we put $\hat{\eta}_J = \eta_J$.

	With this substitution, the potential function $\mathcal{S}_{3,8}$ is fully resolved and we find u variables from inside the logarithms in the potential function below.  
	\begin{eqnarray*}
		\hat{\mathcal{S}}_{3,8} &= & \sum_{J \in \binom{\lbrack 8\rbrack}{3}^{nf}}\hat{\eta}_J\log(u_J)\\
		&= &\hat{\eta} _{124} \log \left(\frac{\hat{p}_{125} \hat{p}_{134}}{\hat{p}_{124} \hat{p}_{135}}\right)+\hat{\eta} _{125} \log \left(\frac{\hat{p}_{126} \hat{p}_{135}}{\hat{p}_{125} \hat{p}_{136}}\right)+\hat{\eta} _{126} \log \left(\frac{\hat{p}_{127} \hat{p}_{136}}{\hat{p}_{126} \hat{p}_{137}}\right)+\hat{\eta} _{127} \log \left(\frac{\hat{p}_{128} \hat{p}_{137}}{\hat{p}_{127} \hat{p}_{138}}\right) \\ &+ 
		&\hat{\eta} _{134} \log \left(\frac{\hat{p}_{135} \hat{p}_{234}}{\hat{p}_{134} \hat{p}_{235}}\right)+\hat{\eta} _{135} \log \left(\frac{\hat{p}_{136} \hat{p}_{235}}{\hat{p}_{135} \hat{p}_{236}}\right)+\hat{\eta} _{136} \log \left(\frac{\hat{p}_{137} \hat{p}_{236}}{\hat{p}_{136} \hat{p}_{237}}\right)+\hat{\eta} _{235} \log \left(\frac{\hat{p}_{145} \hat{p}_{236}}{\hat{p}_{146} \hat{p}_{235}}\right) \\ &+ 
		&\hat{\eta} _{137} \log \left(\frac{\hat{p}_{138} \hat{p}_{237}}{\hat{p}_{137} \hat{p}_{238}}\right)+\hat{\eta} _{138} \log \left(\frac{\hat{p}_{145} \hat{p}_{238}}{\hat{p}_{138} \hat{p}_{245}}\right)+\hat{\eta} _{236} \log \left(\frac{\hat{p}_{146} \hat{p}_{237}}{\hat{p}_{147} \hat{p}_{236}}\right)+\hat{\eta} _{237} \log \left(\frac{\hat{p}_{147} \hat{p}_{238}}{\hat{p}_{148} \hat{p}_{237}}\right) \\ &+ 
		&\hat{\eta} _{145} \log \left(\frac{\hat{p}_{146} \hat{p}_{245}}{\hat{p}_{145} \hat{p}_{246}}\right)+\hat{\eta} _{146} \log \left(\frac{\hat{p}_{147} \hat{p}_{246}}{\hat{p}_{146} \hat{p}_{247}}\right)+\hat{\eta} _{147} \log \left(\frac{\hat{p}_{148} \hat{p}_{247}}{\hat{p}_{147} \hat{p}_{248}}\right)+\hat{\eta} _{238} \log \left(\frac{\hat{p}_{123} \hat{p}_{148} \hat{p}_{245}}{\hat{p}_{124} \hat{p}_{145} \hat{p}_{238}}\right) \\ &+ 
		&\hat{\eta} _{148} \log \left(\frac{\hat{p}_{156} \hat{p}_{248}}{\hat{p}_{148} \hat{p}_{256}}\right)+\hat{\eta} _{156} \log \left(\frac{\hat{p}_{157} \hat{p}_{256}}{\hat{p}_{156} \hat{p}_{257}}\right)+\hat{\eta} _{157} \log \left(\frac{\hat{p}_{158} \hat{p}_{257}}{\hat{p}_{157} \hat{p}_{258}}\right)+\hat{\eta} _{158} \log \left(\frac{\hat{p}_{167} \hat{p}_{258}}{\hat{p}_{158} \hat{p}_{267}}\right) \\ &+ 
		&\hat{\eta} _{167} \log \left(\frac{\hat{p}_{168} \hat{p}_{267}}{\hat{p}_{167} \hat{p}_{268}}\right)+\hat{\eta} _{168} \log \left(\frac{\hat{p}_{178} \hat{p}_{268}}{\hat{p}_{168} \hat{p}_{278}}\right)+\hat{\eta} _{245} \log \left(\frac{\hat{p}_{246} \hat{p}_{345}}{\hat{p}_{245} \hat{p}_{346}}\right)+\hat{\eta} _{246} \log \left(\frac{\hat{p}_{247} \hat{p}_{346}}{\hat{p}_{246} \hat{p}_{347}}\right) \\& + 
		&\hat{\eta} _{247} \log \left(\frac{\hat{p}_{248} \hat{p}_{347}}{\hat{p}_{247} \hat{p}_{348}}\right)+\hat{\eta} _{248} \log \left(\frac{\hat{p}_{256} \hat{p}_{348}}{\hat{p}_{248} \hat{p}_{356}}\right)+\hat{\eta} _{346} \log \left(\frac{\hat{p}_{156} \hat{p}_{347}}{\hat{p}_{157} \hat{p}_{346}}\right)+\hat{\eta} _{347} \log \left(\frac{\hat{p}_{157} \hat{p}_{348}}{\hat{p}_{158} \hat{p}_{347}}\right) \\ &+ 
		&\hat{\eta} _{256} \log \left(\frac{\hat{p}_{257} \hat{p}_{356}}{\hat{p}_{256} \hat{p}_{357}}\right)+\hat{\eta} _{257} \log \left(\frac{\hat{p}_{258} \hat{p}_{357}}{\hat{p}_{257} \hat{p}_{358}}\right)+\hat{\eta} _{258} \log \left(\frac{\hat{p}_{267} \hat{p}_{358}}{\hat{p}_{258} \hat{p}_{367}}\right)+\hat{\eta} _{348} \log \left(\frac{\hat{p}_{124} \hat{p}_{158} \hat{p}_{356}}{\hat{p}_{125} \hat{p}_{156} \hat{p}_{348}}\right) \\& + 
		&\hat{\eta} _{267} \log \left(\frac{\hat{p}_{268} \hat{p}_{367}}{\hat{p}_{267} \hat{p}_{368}}\right)+\hat{\eta} _{268} \log \left(\frac{\hat{p}_{278} \hat{p}_{368}}{\hat{p}_{268} \hat{p}_{378}}\right)+\hat{\eta} _{278} \log \left(\frac{\hat{p}_{124} \hat{p}_{378}}{\hat{p}_{134} \hat{p}_{278}}\right)+\hat{\eta} _{356} \log \left(\frac{\hat{p}_{357} \hat{p}_{456}}{\hat{p}_{356} \hat{p}_{457}}\right) \\ &+ 
		&\hat{\eta} _{357} \log \left(\frac{\hat{p}_{358} \hat{p}_{457}}{\hat{p}_{357} \hat{p}_{458}}\right)+\hat{\eta} _{358} \log \left(\frac{\hat{p}_{367} \hat{p}_{458}}{\hat{p}_{358} \hat{p}_{467}}\right)+\hat{\eta} _{457} \log \left(\frac{\hat{p}_{167} \hat{p}_{458}}{\hat{p}_{168} \hat{p}_{457}}\right)+\hat{\eta} _{458} \log \left(\frac{\hat{p}_{125} \hat{p}_{168} \hat{p}_{467}}{\hat{p}_{126} \hat{p}_{167} \hat{p}_{458}}\right) \\ &+ 
		&\hat{\eta} _{367} \log \left(\frac{\hat{p}_{368} \hat{p}_{467}}{\hat{p}_{367} \hat{p}_{468}}\right)+\hat{\eta} _{368} \log \left(\frac{\hat{p}_{378} \hat{p}_{468}}{\hat{p}_{368} \hat{p}_{478}}\right)+\hat{\eta} _{378} \log \left(\frac{\hat{p}_{125} \hat{p}_{134} \hat{p}_{478}}{\hat{p}_{124} \hat{p}_{145} \hat{p}_{378}}\right)+\hat{\eta} _{467} \log \left(\frac{\hat{p}_{468} \hat{p}_{567}}{\hat{p}_{467} \hat{p}_{568}}\right) \\ &+ 
		&\hat{\eta} _{468} \log \left(\frac{\hat{p}_{478} \hat{p}_{568}}{\hat{p}_{468} \hat{p}_{578}}\right)+\hat{\eta} _{478} \log \left(\frac{\hat{p}_{126} \hat{p}_{145} \hat{p}_{578}}{\hat{p}_{125} \hat{p}_{156} \hat{p}_{478}}\right)+\hat{\eta} _{568} \log \left(\frac{\hat{p}_{126} \hat{p}_{578}}{\hat{p}_{127} \hat{p}_{568}}\right)+\hat{\eta} _{578} \log \left(\frac{\hat{p}_{127} \hat{p}_{156} \hat{p}_{678}}{\hat{p}_{126} \hat{p}_{167} \hat{p}_{578}}\right).
	\end{eqnarray*}
	It is straightforward to verify exhaustively that the following binary relations from Equation \eqref{eq: binary relations} hold among the corresponding arguments of the logarithms above,
	\begin{eqnarray}
		u_J & =& 1 - \prod_{\{I:\ (I,J) \not\in\mathbf{NC}_{3,8}\}}\hat{u}^{c_{I,J}}_I,
	\end{eqnarray}
	where as above,
	$$c_{(i_1,i_2,i_3),(j_1,j_2,j_3)} = \begin{cases}
		2 & i_1<j_1<i_2<j_2<i_3<j_3\text{ or } j_1<i_1<j_2<i_2<j_3<i_3\\
		1 & \text{otherwise}
	\end{cases}$$
	whenever the pair $\{(i_1,i_2,i_3),(j_1,j_2,j_3)\}$ is crossing.
	
	Let us conclude by summarizing our findings more compactly.  For any alternating pair of triples $(A,B)$ with $(a_1<b_1<a_2<b_2<a_3<b_3)$ (or $(b_1<a_1<b_2<a_2<b_3<a_3)$), define (as in what we did for tripods $\tau_{I,J}$)
	$$\eta_{(\{a_1,a_2,a_3\},\{b_1,b_2,b_3\})} = -\eta_{a_1,a_2,a_3} + \eta_{b_1,a_2,a_3} + \eta_{a_1,b_2,a_3} + \eta_{a_1,a_2,b_3}.$$
	Note that $\eta_{A,B} \not = \eta_{B,A}$.
	
	Then for each $I = \{i_1,i_2,i_3\}\in \binom{\lbrack n\rbrack}{3}^{nf}$ such that $(I,J)$ is crossing for some $J \in \binom{\lbrack n\rbrack}{3}^{nf}$, define $\hat{\eta}_{i_1,i_2,i_3}$ by the equation
	\begin{eqnarray}\label{eq: kinematic shift 3n}
		& & \hat{\eta}_{i_1,i_2,i_3} + \sum_{\{j:\ i_1<j<j+1<i_2<i_3<n\}} \eta_{j,j+1,n} + \sum_{\{j:\ i_1<i_2<j<i_3<n-1<n \}} \eta_{j,n-1,n}\nonumber\\
		& = & \sum_{\{j:\ i_1<j<j+1<i_2<i_3<n\}}\eta_{(i_1,j+1,i_3),(j,i_2,n)} +  \sum_{\{j:\ i_1<i_2<j<i_3<n-1<n\}}\eta_{(i_2,i_3,n),(i_1,j,n-1)}\\
		& - & (n_{I}-1)(\eta_{i_1,i_2,i_3})\nonumber
	\end{eqnarray}
	where $n_{I} = (i_3-i_2-1) + (i_2-i_1-2) = i_3-i_1-3$ is the total number of summands in the second line.  Here the last term $\eta_{i_1,i_2,i_3}$ is included to compensate for over-counting.
	
	For all other $I = \{i_1,i_2,i_3\}$ define 
	$$\hat{\eta}_{i_1,i_2,i_3} = \eta_{i_1,i_2,i_3}.$$
	\begin{question}\label{conjecture: noncrossing triangulation}
		Suppose that $(s_0) \in \mathcal{K}_D(3,n)$ is interior.  Suppose that $(I,J) \in \mathbf{NC}_{3,n}$ and $I',J' \in \binom{\lbrack n\rbrack}{3}^{nf}$ such that 
		$$\gamma_I + \gamma_J = \gamma_{I'} + \gamma_{J'}.$$
		Do we have that 
		$$(\hat{\eta}_{I'}(s_0) + \hat{\eta}_{J'}(s_0))-(\hat{\eta}_{I}(s_0)+\hat{\eta}_{J}(s_0)) >0?$$
	\end{question}
		This would say that we could follow the methods of \cite{GrassmannAssociahedron}, this time using the height function $\mathfrak{t}_{s_0}(v_J) = \hat{\eta}_J(s_0)$ to induce the noncrossing triangulation (Corollary \ref{cor: triangulation root polytope}) of the generalized root polytope $\mathcal{R}^{(3)}_{n-3}$, so that the face poset of the triangulation would be isomorphic to $\mathbf{NC}_{3,n}$.  See Example \ref{example: noncrossing inequalities} and surrounding discussion for more details.

	We stress that Equation \eqref{eq: kinematic shift 3n} has limited validity as it is the outcome of our attempt to write down a closed form expression from computational results only for $n\le 9$.  It would be interesting and important either to give a combinatorial proof that it holds true for general $(3,n)$, perhaps using matroidal weighted blade arrangements \cite{Early2020WeightedBladeArrangements}, or to find a counterexample and explain what goes wrong.  In either case, such questions are left to future work.


\begin{thebibliography}{}
		\bibitem{PositiveGrassmannian} N. Arkani-Hamed, J. Bourjaily, F. Cachazo, A. Goncharov, A. Postnikov, and J. Trnka. ``Grassmannian geometry of scattering amplitudes.'' Cambridge University Press, 2016.
		\bibitem{DoubleSoft} Md. Abhishek, S. Hegde, D. Jatkar and A. Saha. ``Double Soft Theorem for Generalised Biadjoint Scalar Amplitudes.'' arXiv preprint arXiv:2008.07271 (2020).
		\bibitem{Oneloop integrans Generalized Scattering eqns} Md. Abhishek, S. Hegde, and A. Saha. ``One-loop integrand from generalised scattering equations.'' arXiv preprint arXiv:2012.10916 (2020).
		\bibitem{PositiveGeometries} N. Arkani-Hamed, Y. Bai, and T. Lam. ``Positive geometries and canonical forms.'' Journal of High Energy Physics 2017, no. 11 (2017): 39.
		\bibitem{Worldsheet2017} N. Arkani-Hamed, Y. Bai, S. He, and G. Yan. ``Scattering forms and the positive geometry of kinematics, color and the worldsheet.'' Journal of High Energy Physics 2018, no. 5 (2018): 1-78.
		\bibitem{AHL2019Stringy} N. Arkani-Hamed, S. He, and T. Lam. ``Stringy canonical forms.'' Journal of High Energy Physics 2021, no. 2 (2021): 1-62.
		\bibitem{AHLT2019}N. Arkani-Hamed, S. He, T. Lam, and H. Thomas. ``Binary geometries, generalized particles and strings, and cluster algebras.'' arXiv preprint arXiv:1912.11764 (2019).
		\bibitem{arkani2020positive} N. Arkani-Hamed, T. Lam, and M. Spradlin. ``Positive configuration space.'' Communications in Mathematical Physics (2021): 1-46.

		\bibitem{BC2019} F. Borges and F. Cachazo. ``Generalized planar Feynman diagrams: collections.'' Journal of High Energy Physics 2020, no. 11 (2020): 1-28.
		\bibitem{BCFW2005} R. Britto, F. Cachazo, B. Feng, and E. Witten. ``Direct proof of the tree-level scattering amplitude recursion relation in Yang-Mills theory.'' Physical review letters 94, no. 18 (2005): 181602.
		\bibitem{Brown2006} F. Brown. ``Multiple zeta values and periods of moduli spaces $\overline {\mathfrak {M}} _ {0, n} $.'' In Annales scientifiques de l'Ecole normale supérieure, vol. 42, no. 3, pp. 371-489. 2009.
		\bibitem{Brieding1} P. Breiding, K. Rose and S. Timme. ``Certifying zeros of polynomial systems using interval arithmetic.''  arXiv:2011.05000.
		\bibitem{Brieding2} P. Breiding and S. Timme. ``HomotopyContinuation.jl: A package for homotopy continuation in julia.'' International Congress on Mathematical Software, 458-465, Springer, 2018.

		\bibitem{FrancisBrownDihedral} F. Brown. ``Multiple zeta values and periods of moduli spaces $\overline {\mathfrak {M}} _ {0, n} $.'' In Annales scientifiques de l'Ecole normale superieure, vol. 42, no. 3, pp. 371-489. 2009.
		\bibitem{CHY2014A} F. Cachazo, S. He, and E. Yuan. ``Scattering equations and Kawai-Lewellen-Tye orthogonality.'' Physical Review D 90, no. 6 (2014): 065001.
		\bibitem{CHY2014B} F. Cachazo, S. He, and E. Yuan. ``Scattering of massless particles in arbitrary dimensions.'' Physical review letters 113, no. 17 (2014): 171601.
		\bibitem{CHY2014C} F. Cachazo, S. He, and E. Yuan. ``Scattering of massless particles: scalars, gluons and gravitons.'' Journal of High Energy Physics 2014, no. 7 (2014): 1-33.
		\bibitem{CHY2015} F. Cachazo, S. He, and E. Yuan. ``Scattering equations and matrices: from Einstein to Yang-Mills, DBI and NLSM.'' Journal of High Energy Physics 2015, no. 7 (2015): 1-43.
		\bibitem{CachazoMasonSkinner2014} F. Cachazo, L. Mason, and D. Skinner. ``Gravity in twistor space and its Grassmannian formulation.'' SIGMA. Symmetry, Integrability and Geometry: Methods and Applications 10 (2014): 051.
		\bibitem{DrummondTropical2019A} J. Drummond, J. Foster, O. Gurdogan, and C. Kalousios. ``Tropical Grassmannians, cluster algebras and scattering amplitudes.'' Journal of High Energy Physics 2020, no. 4 (2020).
		\bibitem{CEGM2019} F. Cachazo, N. Early, A. Guevara, and S. Mizera. ``Scattering equations: from projective spaces to tropical grassmannians.'' Journal of High Energy Physics 2019, no. 6 (2019): 39.
		\bibitem{CGUZ2019} F. Cachazo, A. Guevara, B. Umbert, and Y. Zhang. ``Planar Matrices and Arrays of Feynman Diagrams.'' arXiv preprint arXiv:1912.09422 (2019).
		\bibitem{CE2020} F. Cachazo and N. Early. ``Minimal Kinematics: An all $ k $ and $ n $ peek into ${\rm Trop}^+ G (k, n) $.'' arXiv preprint arXiv:2003.07958 (2020).
		\bibitem{CE2020B} F. Cachazo and N. Early. ``Planar Kinematics: Cyclic Fixed Points, Mirror Superpotential, k-Dimensional Catalan Numbers, and Root Polytopes.'' arXiv preprint arXiv:2010.09708 (2020).
		\bibitem{CachazoDiscussions} F. Cachazo.  Private correspondence.
		\bibitem{Danilov Purity} V. Danilov, A. Karzanov, and G. Koshevoy. ``On maximal weakly separated set-systems.'' J. Algebraic Combin., 32(4):497?531, 2010.
		\bibitem{EarlyPermutohedraKinematicSpace} N. Early. ``Generalized permutohedra in the kinematic space.'' arXiv preprint arXiv:1804.05460 (2018).
		\bibitem{EarlyBlades} N. Early. ``Honeycomb tessellations and canonical bases for permutohedral blades.'' arXiv preprint arXiv:1810.03246 (2018).
		\bibitem{Early19WeakSeparationMatroidSubdivision} N. Early.  ``From weakly separated collections to matroid subdivisions.'' arXiv preprint arXiv:1910.11522 (2019).
		\bibitem{Early2019PlanarBasis} N. Early. ``Planar kinematic invariants, matroid subdivisions and generalized Feynman diagrams.'' arXiv preprint arXiv:1912.13513 (2019).
		\bibitem{Early2020WeightedBladeArrangements} N. Early. ``Weighted blade arrangements and the positive tropical Grassmannian.'' arXiv preprint arXiv:2005.12305 (2020).
		\bibitem{Fairly} D. Fairlie. ``A coding of real null four-momenta into world-sheet coordinates.'' Advances in Mathematical Physics 2009 (2009).
		\bibitem{FZ2003Y-systems} S. Fomin and A. Zelevinsky. ``Y-systems and generalized associahedra.'' Annals of Mathematics 158, no. 3 (2003): 977-1018.
		\bibitem{GelfanGraevPostnikov1997} I. M. Gelfand, M. Graev, and A. Postnikov. ``Combinatorics of hypergeometric functions associated with positive roots.'' In The Arnold-Gelfand mathematical seminars, pp. 205-221. Birkhauser Boston, 1997.
		\bibitem{GuevaraZhang2020} A. Guevara and Y. Zhang. ``Planar Matrices and Arrays of Feynman Diagrams: Poles for Higher $ k$.'' arXiv preprint arXiv:2007.15679 (2020).
		\bibitem{HeRenZhang2020} S. He, L. Ren, and Y. Zhang. ``Notes on polytopes, amplitudes and boundary configurations for Grassmannian string integrals.'' arXiv preprint arXiv:2001.09603 (2020).
		\bibitem{Huh2013} J. Huh. ``Varieties with maximum likelihood degree one.'' arXiv preprint arXiv:1301.2732 (2013).
		\bibitem{HuhSturmfels} J. Huh and B. Sturmfels. ``Likelihood geometry.'' Combinatorial algebraic geometry, pp. 63-117. Springer, Cham, 2014.
		\bibitem{karp2019moment} S. Karp. ``Moment curves and cyclic symmetry for positive Grassmannians.'' Bulletin of the London Mathematical Society 51, no. 5 (2019): 900-916.
		\bibitem{KobaNielsen} Z. Koba and H. Nielsen. ``Generalized Veneziano model from the point of view of manifestly crossing-invariant parametrization.'' Zeitschrift für Physik A Hadrons and nuclei 229, no. 3-5 (1969): 243-263.
		\bibitem{LeclercZelevinsky} B. Leclerc and A. Zelevinsky. ``Quasicommuting families of quantum plucker coordinates.'' In Kirillov's seminar on representation theory, vol. 35, p. 85. 1998.
		\bibitem{Loday2003} J-L. Loday. ``Realization of the Stasheff polytope.'' Archiv der Mathematik 83, no. 3 (2004): 267-278.
		\bibitem{Meszaros2011} K. Meszaros. ``Root polytopes, triangulations, and the subdivision algebra. I.'' Transactions of the American Mathematical Society 363, no. 8 (2011): 4359-4382.
				\bibitem{LPW2020} T. Lukowski, M. Parisi, and L. Williams. ``The positive tropical Grassmannian, the hypersimplex, and the m= 2 amplituhedron.'' arXiv preprint arXiv:2002.06164 (2020).
		\bibitem{OcneanuVideo} A. Ocneanu.  ``Higher representation theory.'' Harvard Physics 267, Lecture 34.  \url{https://youtu.be/9gHzFLfPFFU}, 2017.
		\bibitem{oeisMDCatalan} OEIS Foundation Inc. (2019), The On-Line Encyclopedia of Integer Sequences. \url{https://oeis.org/A060854}
		\bibitem{Purity OhPostnikovSpeyer} S. Oh, A. Postnikov, and D. Speyer. ``Weak separation and plabic graphs.'' Proc. Lond. Math. Soc. (3), 110(3):721?754, 2015.
		\bibitem{PPPP2019} A. Padrol, Y. Palu, V. Pilaud, and P-G Plamondon. ``Associahedra for finite type cluster algebras and minimal relations between $\mathbf {g} $-vectors.'' arXiv preprint arXiv:1906.06861 (2019).
		\bibitem{PKPS}K. Petersen, P. Pylyavskyy, and D. Speyer. ``A non-crossing standard monomial theory.'' Journal of Algebra 324, no. 5 (2010): 951-969.
		\bibitem{Postnikov2006} A. Postnikov. ``Permutohedra, associahedra, and beyond.'' International Mathematics Research Notices 2009, no. 6 (2009): 1026-1106.
		\bibitem{PostnikovGrassmannian} A. Postnikov. ``Total positivity, Grassmannians, and networks.'' arXiv preprint math/0609764 (2006).
		\bibitem{Pylyavskyy2009} P. Pylyavskyy. ``Non-crossing tableaux.'' Annals of Combinatorics 13, no. 3 (2009): 323-339.
		\bibitem{Rietsch} K. Rietsch. ``A mirror construction for the totally nonnegative part of the Peterson variety.'' Nagoya Mathematical Journal 183 (2006): 105-142.
		\bibitem{RietschWilliams} K. Rietsch and L. Williams. ``Newton-Okounkov bodies, cluster duality, and mirror symmetry for Grassmannians.'' Duke Mathematical Journal 168, no. 18 (2019): 3437-3527.
		\bibitem{RobertsPh.D.} D. Roberts. ``Mathematical structure of dual amplitudes.'' PhD diss., Durham University, 1972.
		\bibitem{GrassmannAssociahedron} F. Santos, C. Stump, and V. Welker. ``Noncrossing sets and a Grassmann associahedron.'' In Forum of Mathematics, Sigma, vol. 5. Cambridge University Press, 2017.
		\bibitem{SchroeterThesis} B. Schroeter. ``Matroidal subdivisions, Dressians and tropical Grassmannians.'' (2018).
		\bibitem{ScottGrassmannian} J. Scott. ``Grassmannians and cluster algebras.'' Proceedings of the London Mathematical Society 92, no. 2 (2006): 345-380.
		\bibitem{SpeyerStumfelsTropGrass} D. Speyer and B. Sturmfels. ``The tropical grassmannian.'' Advances in Geometry 4, no. 3 (2004): 389-411.
		\bibitem{SpeyerWilliams2003} D. Speyer and L. Williams. ``The tropical totally positive Grassmannian.'' Journal of Algebraic Combinatorics 22, no. 2 (2005): 189-210.
		\bibitem{SpeyerWilliams2020} D. Speyer and L. Williams.  ``The positive Dressian equals the positive tropical Grassmannian.'' arXiv preprint arXiv:2003.10231 (2020).
		\bibitem{SturmfelsGroebner} B. Sturmfels.  Groebner bases and convex polytopes.'' University Lecture Series, Volume 8, American Mathematical Society, 1996.
		\bibitem{SturmfelsAlgorithms2008} B. Sturmfels, Algorithms in Invariant Theory (Texts and Monographs in Symbolic Computation, Springer-Verlag, Vienna, ed. 2, 2008).
		\bibitem{SturmfelsTelen} B. Sturmfels, S. Telen. ``Likelihood Equations and Scattering Amplitudes.'' arXiv preprint arXiv:2012.05041 (2020).
		\bibitem{TalaskaWilliams} K. Talaska and L. Williams.  ``Network parametrizations for the Grassmannian.'' Algebra and Number Theory 7 (2013) 2275-2311.

	\end{thebibliography}
\end{document}